\numberwithin{equation}{section}
\newtheorem{thm}{Theorem}[section]
\newtheorem{lemma}[thm]{Lemma}
\newtheorem{prop}[thm]{Proposition}
\newtheorem{cor}[thm]{Corollary}
\newtheorem{conj}[thm]{Conjecture}
\theoremstyle{definition}
\newtheorem{defn}[thm]{Definition}
\theoremstyle{remark}
\newtheorem{rmk}[thm]{Remark}
\newtheorem{eg}[thm]{Example}
\newcommand{\C}{{\mathbb C}}
\newcommand{\Z}{{\mathbb Z}}
\newcommand{\N}{{\mathbb N}}
\newcommand{\R}{{\mathbb R}}
\newcommand{\K}{{\mathbb K}}
\renewcommand{\H}{{\mathbb H}}
\newcommand{\tild}{\widetilde}
\newcommand{\ol}{\overline}
\newcommand{\ssigma}{{\boldsymbol\sigma}}
\newcommand{\glu}[2]{\tensor[_{#1}]{\#}{_{#2}}}
\newcommand{\into}{\hookrightarrow}
\newcommand{\Fuk}{\mathrm{Fuk}}
\newcommand{\W}{\mathcal W}
\newcommand{\Ws}{\mathcal W_\ssigma}
\newcommand{\Wsp}{\mathcal W_{\ssigma'}}
\newcommand{\Bs}{{\mathcal B(\sigma)}}
\newcommand{\Bss}{{\mathcal B_\ssigma(\sigma)}}
\newcommand{\Bsps}{{\mathcal B_{\ssigma'}(\sigma)}}
\title{On partially wrapped Fukaya categories}
\author{Zachary Sylvan}
\begin{document}

\begin{abstract}
We define a new class of symplectic objects called ``stops'', which roughly speaking are Liouville hypersurfaces in the boundary of a Liouville domain. Locally, these can be viewed as pages of a compatible open book. To a Liouville domain with a collection of disjoint stops, we assign an $A_\infty$-category called its partially wrapped Fukaya category. An exact Landau-Ginzburg model gives rise to a stop, and the corresponding partially wrapped Fukaya category is meant to agree with the Fukaya category one is supposed to assign to the Landau-Ginzburg model. As evidence, we prove a formula that relates these partially wrapped Fukaya categories to the wrapped Fukaya category of the underlying Liouville domain. This operation is mirror to removing a divisor.
\end{abstract}

\maketitle

\section{Introduction}

\subsection{Stops}

Our basic goal is to use geometric data to enhance the wrapped Fukaya category of a Liouville domain to the so-called ``partially wrapped Fukaya category''. Recall that the wrapped Fukaya category \cite{Abou-Seid_osa} of a Liouville domain $(M,\lambda)$ is the $A_\infty$ category $\W(M)$ whose objects are properly embedded exact Lagrangians, and whose morphism spaces are Floer cochain complexes generated, roughly speaking, by interior intersections and positive-time Reeb chords on the boundary. For this paper we take all Floer complexes to have coefficients in a field $\K$ of characteristic 2, though in general $\W(M)$ can be defined over $\Z$. The wrapped Fukaya category was introduced to satisfy mirror symmetry for open Calabi-Yau manifolds, where it has proven extremely successful and notoriously difficult to compute.

To enhace it, we introduce the notion of a \emph{stop}. Roughly speaking, a stop $\sigma$ in a Liouville domain $(M,\lambda)$ is a hypersurface with boundary of the boundary $\partial M$ such $(\sigma,\lambda|_\sigma)$ is itself a Liouville domain. For an appropriate presentation of $\W(M)$, the intersection number of a Reeb chord with $\sigma$ gives a filtration on the wrapped Floer cochain complexes, and we refer to the zero-filtered part as the \emph{partially wrapped} Floer cochain complex. The \emph{partially wrapped Fukaya category} of $(M,\lambda,\sigma)$, denoted $\W_\sigma(M)$, is then the unital, non-full $A_\infty$ subcategory of $\W(M)$ consisting of those objects which avoid $\sigma$ and those morphisms which have intersection number zero with $\sigma$. 

The main result of this paper, Theorem \ref{thm:quotient}, characterizes the inclusion functor $\W_\sigma(M)\to\W(M)$. Here, the key technical assumption is that a stop, treated as a Liouville domain, is \emph{strongly nondegenerate}. For a Liouville domain $F$, this means that the following two conditions hold.
\begin{enumerate}
 \item For each connected component $F_i\subset F$, the symplectic cohomology $SH^*(F_i)$ is not zero.
 \item There is an element $e\in HH_*(\mathcal W(F))$ of action zero such that $\mathcal{OC}(e)$ is the unit $\mathbf{1}\in SH^*(F)$, where 
 \[
  \mathcal{OC}\colon HH_*(\mathcal W(F))\to SH^{*+n}(F)
 \]
 is the open-closed map. Here, the action filtration on Hochschild homology is induced from the action filtration on Floer cochain groups; for a precise definition, see Section \ref{sec:Hochschild action}.
\end{enumerate}
In particular, any punctured Riemann surface other than $\C$ and any cotangent bundle is strongly nondegenerate.

\begin{rmk}
 A nondegenerate Liouville domain $F$ is one for which some element $e\in HH_*(\mathcal W(F))$ satisfies $\mathcal{OC}(e)=\mathbf{1}$. This condition is Abouzaid's generation criterion \cite{Abou_gcgfc}, which says that such an $F$ admits a finite collection of Lagrangians which split-generate $\mathcal W(F)$. Ganatra further proved that in this case, $\mathcal{OC}$ is an isomorphism \cite{Gana_scdwc}, which in particular implies that $e$ is unique.
\end{rmk}

An approximate version of Theorem \ref{thm:quotient} can be stated as follows:

\begin{thm}\label{thm:quotient-intro}
 Let $M$ be a Liouville domain, and let $\sigma$ be a strongly nondegenerate stop in $M$. Let $\mathcal B\subset\W_\sigma(M)$ be the full subcategory of objects supported near $\sigma$. Then the inclusion $\W_\sigma(M)\to\W(M)$ induces a fully faithful functor
 \[
  \W_\sigma(M)\big/\mathcal B\to\W(M),
 \]
 where the quotient is a quotient of an $A_\infty$ category by a full subcategory in the sense of Lyubashenko-Ovsienko \cite{Lyub-Ovsi}.
\end{thm}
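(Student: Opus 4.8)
The plan is to build the functor by localization. Since $\W_\sigma(M) \to \W(M)$ is an $A_\infty$-functor between $A_\infty$-categories over a field, and since every object of $\W(M)$ avoids $\sigma$ after a compactly supported Liouville isotopy (so every object of $\W(M)$ is quasi-isomorphic to one in the image of $\W_\sigma(M)$), the functor is essentially surjective; the content is full faithfulness on morphisms. I would first observe that objects supported near $\sigma$ — the subcategory $\mathcal B$ — become \emph{zero objects} in $\W(M)$: a small Lagrangian disk fiber sitting near $\sigma$ can be displaced off itself by wrapping, so its wrapped Floer cohomology with any object is acyclic. Hence $\W_\sigma(M) \to \W(M)$ kills $\mathcal B$, and by the universal property of the Lyubashenko--Ovsienko quotient it factors through $\W_\sigma(M)/\mathcal B$. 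This gives the functor; what remains is to show the induced map on $\hom$-complexes is a quasi-isomorphism.

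For full faithfulness, the strategy is to compare both sides to algebra built out of the geometry near $\sigma$. On the $\W(M)$ side, a Lagrangian that passes through the region near $\sigma$ contributes Reeb chords that "wind" around $\sigma$; tracking the intersection-number filtration, the associated graded of the wrapped complex decomposes into the partially wrapped part (filtration zero) plus pieces indexed by how many times a chord crosses $\sigma$. The key input is the continuation functors without cascades constructed in the body of the paper: these let me identify the filtration-$k$ piece with a convolution involving the wrapped Floer theory \emph{of the stop $\sigma$ itself}, i.e. with $\W(F)$ where $F = \sigma$. So morphism spaces in $\W(M)$ are, up to quasi-isomorphism, an iterated cone/telescope over copies of morphism complexes in $\W_\sigma(M)$ glued by bimodule maps factoring through $\W(F)$.

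The heart of the argument — and the reason strong nondegeneracy enters — is to match this telescope with the bar-type resolution computing the Lyubashenko--Ovsienko quotient $\W_\sigma(M)/\mathcal B$. The quotient $\hom$-complex is a sum over chains of objects $B_1, \dots, B_k \in \mathcal B$ of tensor products $\hom(B_k, Y) \otimes \hom(B_{k-1}, B_k) \otimes \cdots \otimes \hom(X, B_1)$ with the bar differential. Since the objects of $\mathcal B$ are (copies of) Lagrangians living in a collar of $\sigma$ that look like the zero section / fiber datum of $F$, the subcategory $\mathcal B$ is quasi-equivalent to a category built from $\W(F)$ (roughly, $\W(F)$ itself, or modules over it). Then the bar complex for the quotient is exactly a two-sided bar resolution over this $\W(F)$-like algebra, and the Hochschild-homology/open-closed package — nondegeneracy (condition (2)) says $\mathcal{OC}$ is an isomorphism so $SH^*(F)$ is the "dual" of the diagonal bimodule, $SH^*(F) \neq 0$ (condition (1)) guarantees the relevant bimodule quasi-isomorphism is not the zero map, and the vanishing of the action of $\mathcal{OC}^{-1}(\mathbf 1)$ (condition (3)) ensures the telescope converges / the bar resolution is eventually acyclic with no residual action-filtered contribution — lets me identify the two complexes. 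Concretely, I expect to exhibit a filtered map from the quotient bar complex to the associated graded of the wrapped complex and check it is an isomorphism degree by degree in the filtration, with the three conditions controlling respectively the nonvanishing, the acyclicity, and the convergence of the resulting spectral sequence.

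The main obstacle is precisely this last identification: proving that the geometric "winding around $\sigma$" telescope is quasi-isomorphic to the algebraic bar resolution of the quotient. This requires (a) the cascade-free continuation functors to have good enough functoriality and compatibility with composition that the telescope is genuinely an $A_\infty$-module resolution and not merely a filtered complex, and (b) a Künneth-type or "acceleration" argument identifying the $k$-fold winding piece with the $k$-fold bar term, for which the open-closed map and the unit $\mathbf 1 \in SH^*(F)$ — equivalently the coproduct structure making $F$'s wrapped category "smooth" in the sense of Ganatra — are the essential algebraic ingredients. I expect conditions (1) and (3) to be used exactly to guarantee convergence of the comparison spectral sequence (nonvanishing of $SH^*$ ruling out a degenerate collapse, and the action vanishing killing the error terms that would otherwise obstruct the telescope from computing the quotient), while condition (2) supplies the duality that makes the bimodule maps in the telescope the correct ones.
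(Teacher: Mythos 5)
Your setup is right: $\mathcal B$ is contractible in $\W(M)$ (the paper shows the unit of any $L\in\mathcal B$ is exact by isotoping $L$ clockwise through the divisor to a Lagrangian admitting no low-action chords back to $L$, so the continuation isomorphism vanishes for energy reasons), hence the inclusion factors through the quotient by the universal property, and the content is full faithfulness. But the core of your argument has a genuine gap. You propose to identify the filtration-$k$ graded piece of $CW^*(L_0,L_1)$ with a $k$-fold convolution over $\W(F)$ and then match this telescope against the bar complex computing the Lyubashenko--Ovsienko quotient. That identification is not supplied by the continuation functors constructed in the paper (they compare different presentations of the same category and preserve the stop filtration); it is essentially the content of the Orlov-type functor $\W(\sigma)\to\W_\sigma(M)$ and the gluing formula, both of which the paper explicitly defers to future work. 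Without it your comparison spectral sequence has no identified $E_1$ page; and even granting it, the filtration is unbounded and each bar-length stratum of the quotient complex is infinite-dimensional, so convergence of the comparison is not automatic and would itself need an argument.

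The paper takes a more direct route that avoids computing any associated graded in terms of $\W(F)$. It works inside the Drinfeld-style quotient complex $\hom_{\W_{\ssigma\setminus\{\sigma\}}(M)/\mathcal B}(L_0,L_1)$, which contains $\hom_{\W_\ssigma(M)/\mathcal B}(L_0,L_1)$ as the minimal part of a lexicographic filtration by (total intersection number with $\sigma$, bar length), well-ordered of type $\omega^2$. It then constructs an explicit degree $-1$ operator $\Delta_y$ so that $R_y=\mathrm{id}+\mu^1\Delta_y+\Delta_y\mu^1$ is the identity on the zero-filtered part and strictly decreases the filtration elsewhere; transfinite iteration of $R_y$ gives the retraction. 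Here $y$ is a Hochschild cycle in $CC_*(\mathcal B_\ssigma(\sigma))$ with $\mathcal{OC}(y)$ a saddle unit, and $\Delta_y$ is the coproduct (annulus-trick) operation; its commutator with $\mu^1$ is computed through a chain of explicit homotopies ending in an operation which, via an incidence condition on a hypersurface bounding a neighborhood of $\sigma$, acts as the identity on every generator with positive intersection number. This also corrects your account of the hypotheses: conditions (1) and (2) produce the Hochschild cycle hitting the unit, while condition (3) (vanishing of the normalized action of $\mathcal{OC}^{-1}(\mathbf 1)$) is used to choose $y$ with arbitrarily small action, so that a degeneration argument confines the relevant holomorphic curves to the central fiber of the stop and makes the coproduct well defined --- it is not a spectral-sequence convergence condition.
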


At the most basic level, Theorem \ref{thm:quotient-intro} shows that the partially wrapped Fukaya category, together with $\mathcal B$, really knows at least as much as the wrapped Fukaya category. That is, restricting to the zero-filtered part but remembering the stop doesn't lose information. At an intuitive level, this happens because the ``nice'' presentation of $\W(M)$ giving rise to the stop filtration uses a contact form with a large number of canceling Reeb chords. These chords live at different levels in the filtration, so passing to the zero-filtered part results in them no longer canceling. In a more minimal presentation, these chords could be eliminated geometrically.

The key ingredient of the proof of Theorem \ref{thm:quotient-intro} is an auxiliary filtration on $\mathcal W(M)$, presented as the trivial quotient $\mathcal A=\mathcal W(M)\big/\mathcal B$. The benefit of this quotient presentation is that it naturally contains the category $\mathcal A_0=\mathcal W_\sigma(M)\big/\mathcal B(\sigma)$ as the minimally filtered part, which makes it possible to build a homotopy which retracts $\mathcal A$ onto $\mathcal A_0$. The homotopy itself requires a filtered version of the annulus trick, which was introduced in \cite{Abou_gcgfc} and extended in \cite{Gana_scdwc} and \cite{Abou-Gana}. Specifically, one factors the identity operation as a composition of a product and a coproduct, where the coproduct is required to have one component land in the partially wrapped complex.

\begin{rmk}
  The condition that $\sigma$ is strongly nondegenerate, rather than just nondegenerate in the usual sense, is used in two ways. First, the zero-action condition allows us to confine certain holomorphic curves in $M$ to a neighborhood of $\sigma$ without leaving the world of contact-type Floer data (Lemma \ref{lem:no escape with small action}). Second, the condition that $SH^*(\sigma)\ne0$ implies that zero is not a representative of $\mathbf{1}\in SH^*(\sigma)$, so that the sum of the Morse minima is the unique low-action representative of the unit. Together with the action condition, this implies that $\mathcal{OC}(e)$ is the sum of the Morse minima at chain level. This allows us to avoid a large-energy homotopy which would fall outside the scope of Lemma \ref{lem:no escape with small action}. In the presence of a better confinement lemma, such as Lemma 4.11 of \cite{GPS_gen}, one could probably weaken the condition on $\sigma$ from strong nondegeneracy to ordinary nondegeneracy.
  
  On the other hand, it is unlikely that the condition that $\sigma$ is nondegenerate could be completely eliminated. That said, very little is currently known about degenerate Liouville domains, so producing a counterexample would be difficult at the moment.
\end{rmk}

\subsection{Discussion}

\begin{figure}
 \def\svgwidth{4cm}
 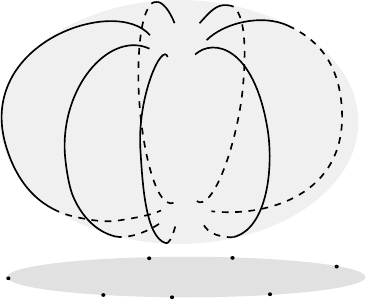
 \caption{The prototypical pumpkin domain. $\partial M$ is equipped with a finite collection of pages from an open book.}\label{fig:pumpkin}
\end{figure}

\subsubsection{Multiple stops} In fact, we consider not just a single stop $\sigma$ but a collection $\ssigma$ of disjoint stops. We refer to the triple $(M,\lambda,\ssigma)$ as a \emph{pumpkin domain} (see Figure \ref{fig:pumpkin}), and assign to it as above its partially wrapped Fukaya category $\Ws(M)$. This is the subcategory of $\W(M)$ consisting of objects which avoid all the stops in $\ssigma$, and whose morphism complexes are generated by chords and intersections which avoid all the stops in $\ssigma$. While this is the same category as $\W_{\bar\sigma}(M)$, where
\[
 \bar\sigma=\bigcup_{\sigma\in\ssigma}\sigma,
\]
the analog of Theorem \ref{thm:quotient-intro} is more refined. In this case, again assuming that $\sigma\in\ssigma$ is strongly nondegenerate, it states that the canonical functor
\[
 \Ws(M)\big/\mathcal B\to\W_{\ssigma\setminus\{\sigma\}}(M)
\]
is fully faithful.

\subsubsection{Mirror symmetry}\label{sec:mirror symmetry} Stops are meant to be a symplectic way of encoding the mirror of an anticanonical divisor. To see how, recall that there are two constructions of the mirror of a toric Fano variety $X$. The first \cite{Kont_98 notes,Hori-Vafa} produces a Landau-Ginzburg model $((\C^*)^n,W)$, where $W$ is a Laurent polynomial depending on the fan of $X$. This comes with a Fukaya-Seidel category, defined in \cite{Seid_plt} whenever $W$ is a Lefschetz fibration. Mirror symmetry in various forms has been proved via this approach in \cite{Give_mirthm}, \cite{Abou_hmstv}, and many others.

The second approach is less direct and assigns to $X$ a singular Lagrangian skeleton $\Lambda\subset(\C^*)^n$. The pair $((\C^*)^n,\Lambda)$ then has two flavors of Fukaya category, called partially wrapped and infinitesimally wrapped, which are meant to be equivalent to the dg-categories $Coh(X)$ and $Perf(X)$, respectively. The equivalence proceeds by combining the coherent-constructible correspondence \cite{FLTZ_Morelli} and the Nadler--Zaslow correspondence \cite{Nadl-Zasl}. This approach is explored by Fang--Liu--Treumann--Zaslow in \cite{FLTZ_hmstv}.

In good cases, each of these mirror objects gives rise to a stop in $M=(\C^*)^n$. Indeed, if $W\colon M\to\C$ is a superpotential with a compact set of critical values, then the fiber $W^{-1}(p)$ for $|p|\gg1$ projects along the Liouville vector field to a stop $\sigma_W$. Similarly, if $M$ is equipped with a singular Lagrangian skeleton $\Lambda\subset M$ with $\partial\Lambda$ a smooth Legendrian, then we may apply the Legendrian neighborhood theorem to thicken $\partial\Lambda\subset\partial M$ in the contact directions and obtain a stop $\sigma_\Lambda$. In the situations where $W$ and $\Lambda$ are mirror to smooth toric Fano varieties, $\partial\Lambda$ is meant to be a Lagrangian skeleton for the generic fiber of $W$, so the stops $\sigma_W$ and $\sigma_\Lambda$ are isotopic.

Going further, in the Landau-Ginzburg picture, Theorem \ref{thm:quotient-intro} can be thought of as a characterization of the acceleration functor
\[
 A\colon\Fuk(M,W)\to\Fuk(M,0).
\]
This characterization is dual to that of \cite{Abou-Seid_lft} and can be thought of as extending Abouzaid and Seidel's result to more general Landau-Ginzburg models. In fact, one could dream of a situation in which the theory of pumpkin domains has been extended to intersecting stops. In this case, a theorem analogous to Theorem \ref{thm:quotient-intro} would give a strong refinement of the acceleration functor:
\begin{conj}\label{conj:acceleration}
 Suppose $W=\sum_{i=1}^dW_i$ is a sum of monomials. Then $\sigma_{W_i}$ and $\sigma_{W_j}$ are generically expected to intersect. However, if partially wrapped Fukaya categories are developed for intersecting stops, one expects
 \[
  \mathcal W_{\sigma_W}(M)\cong\mathcal W_{\{\sigma_{W_1},\dotsc,\sigma_{W_d}\}}(M).
 \]
 In this case, deleting a stop corresponds to deleting a monomial from $W$. For $(M,W)$ mirror to a toric variety $X$, this in turn corresponds to deleting a toric divisor from $X$.
\end{conj}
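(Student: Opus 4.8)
The plan is to split Conjecture~\ref{conj:acceleration} into two logically independent pieces. The first is the \emph{core identification} $\mathcal W_{\{\sigma_W\}}(M)\cong\Ws(M)$ for $W=\sum_{i=1}^dW_i$ and $\ssigma=\{\sigma_{W_1},\dotsc,\sigma_{W_d}\}$; the second is the claim that, once this is known, deleting a monomial from $W$ implements the passage to the Lyubashenko--Ovsienko quotient by the subcategory supported near the corresponding stop. The second piece should follow formally from the multiple-stop refinement of Theorem~\ref{thm:quotient-intro}. Each stop $\sigma_{W_i}$ is a disjoint union of copies of the cotangent bundle $T^*T^{n-1}$, hence is strongly nondegenerate by the discussion in the introduction, so that refinement applies; deleting $\sigma_{W_d}$ yields a fully faithful functor $\Ws(M)\big/\mathcal B(\sigma_{W_d})\to\mathcal W_{\{\sigma_{W_1},\dotsc,\sigma_{W_{d-1}}\}}(M)$, whose source is $\mathcal W_{\{\sigma_W\}}(M)\big/\mathcal B$ and whose target is $\mathcal W_{\{\sigma_{W'}\}}(M)$ with $W'=\sum_{i<d}W_i$, both by the core identification. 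Composing with the expected toric homological mirror equivalence, and using that the quotient of Theorem~\ref{thm:quotient-intro} is the mirror of Verdier localization of $Coh(X)$ along a divisor, gives the toric interpretation. Thus everything reduces to (i) extending the formalism of this paper to \emph{intersecting} stops and (ii) establishing the core identification within that formalism.

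For (i) the essential problem is to pin down the correct local model near a stratum where several stops meet. I would replace the compatible open book near a single stop by a fan-indexed open book: near a codimension-$k$ intersection the binding becomes a stratified contact submanifold and the pages are indexed by the chambers of a rational simplicial fan, so that locally one recovers the product of $k$ one-stop models. The intersection-number filtration then refines to a $\Z_{\ge0}^k$-grading on wrapped Floer complexes, one summand per local stop, and the partially wrapped complex is the part that is zero-filtered in every coordinate. The continuation functors without cascades constructed in this paper, together with the filtered annulus trick, should carry over essentially verbatim to this multiply-filtered setting; the genuinely new work is to prove invariance of $\Ws(M)$ under isotopies of the whole stop arrangement, including those that move intersection strata, and to establish the descent statement used below.

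Granting (i), I would prove (ii) by a tropical degeneration, in the spirit of recent tropical approaches to toric mirror symmetry. Rescaling the coordinates of $(\C^*)^n$ is a Liouville automorphism, so replacing $W$ by $W_\epsilon=\sum_{i=1}^d\epsilon^{w_i}W_i$ for generic positive weights $w_i$ leaves the category $\mathcal W_{\{\sigma_{W_\epsilon}\}}(M)$ unchanged. As $\epsilon\to0$ the amoeba of a generic fiber $W_\epsilon^{-1}(p)$ with $|p|\gg0$ converges to the tropical hypersurface of $\mathrm{trop}_w(W)$, which subdivides $(\C^*)^n$ into chambers on each of which a single $W_i$ dominates. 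Quantitatively --- and this, not the combinatorics, is where analytic estimates on the amoeba degeneration are needed --- the stop $\sigma_{W_\epsilon}$ breaks into pieces, the piece over the $i$-th chamber being Liouville-isotopic to the corresponding piece of $\sigma_{W_i}$ and glued to its neighbors along the tropical walls. In the limit $\sigma_{W_\epsilon}$ is a thickening of the arrangement $\ssigma$, and combining invariance from (i) with a cosheaf/descent property of the partially wrapped Fukaya category over the tropical skeleton yields $\mathcal W_{\{\sigma_W\}}(M)\cong\Ws(M)$.

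The main obstacle is (i): there is at present no theory of intersecting stops at all, and the crux is the local model near an intersection stratum --- which hypersurface-with-corners data in $\partial M$ should count as transverse stops, how the resulting multi-grading is compatible with the $A_\infty$ operations, and whether the Reeb flow can be made simultaneously adapted to every stop. A secondary difficulty is to verify that $(\,(\C^*)^n,\sigma_{W_\epsilon})$ remains a valid pumpkin-with-corners domain for all small $\epsilon>0$ and converges \emph{exactly} onto $\ssigma$ rather than onto a further degeneration; this is a genuine question in the symplectic geometry of amoebas, not a formal consequence of the tropical picture.
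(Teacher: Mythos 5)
This statement is a conjecture, and the paper offers no proof of it: it is explicitly conditional (``if partially wrapped Fukaya categories are developed for intersecting stops, one expects\dots''), and the surrounding discussion makes clear that the theory of intersecting stops does not yet exist. Your text is accordingly not a proof but a research program, and you are candid about this. The parts of your sketch that lean on results actually established in the paper are used correctly: the fibers of a monomial superpotential on $(\C^*)^n$ are copies of $(\C^*)^{n-1}\cong T^*T^{n-1}$, hence strongly nondegenerate, and the multiple-stop form of Theorem~\ref{thm:quotient-intro} does give the fully faithful functor $\Ws(M)/\mathcal B(\sigma_{W_d})\to\mathcal W_{\ssigma\setminus\{\sigma_{W_d}\}}(M)$ you invoke. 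So the reduction of the ``deleting a monomial'' clause to the core identification is sound, \emph{granting} that identification.

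The genuine gap is everything you label (i) and (ii), and it cannot be closed by the methods of this paper. For (i), the difficulty is not merely choosing a local model: the entire Floer-theoretic package --- compatible Hamiltonians tangent to every divisor simultaneously, positivity of intersections with each stop, the multi-graded filtration being preserved by the $A_\infty$ operations, and above all the invariance of $\Ws(M)$ under isotopies that move intersection strata --- must be rebuilt, and the paper's continuation-functor argument (Lemma~\ref{lem:invariance under small stop isotopy}) uses in an essential way that the stops are disjoint, so it does not ``carry over essentially verbatim.'' The remark in Section~\ref{ch:geom setup} on orthogonal stops indicates the intended local model but proves nothing about it. For (ii), the cosheaf/descent property of partially wrapped Fukaya categories over a skeleton that your tropical argument requires is not available in this framework (it is the subject of the cited work in preparation), and the quantitative statement that $\sigma_{W_\epsilon}$ converges as a Liouville hypersurface onto a thickening of the arrangement $\ssigma$, rather than onto some further degeneration, is itself an unproved analytic claim about amoebas. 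You identify these obstacles yourself, which is the correct assessment; but as written the proposal establishes nothing beyond what the conjecture already asserts.
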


\subsubsection{Other frameworks} When $M$ is equipped with a Lefschetz fibration $W$, one expects that
\[
Tw^\pi\mathrm{FS}(W)\cong Tw^\pi\W_{\sigma_W}(M),
\]
where $\mathrm{FS}(W)$ is the Fukaya-Seidel category of the Lefschetz Fibration. The machinery of Fukaya-Seidel categories can be extended at least partially to more general Landau-Ginzburg models \cite{Abou-Auro,Abou-Gana}, in which case one still expects the above to hold when $W$ has a compact set of critical points. When the critical locus is noncompact, then one can produce two Fukaya-Seidel type categories which differ on whether fiberwise noncompact Lagrangians are allowed as objects. The partially wrapped Fukaya category is meant to be the one in which they are.

When $M$ is instead equipped with a Lagrangian skeleton $\Lambda$, Nadler defines a category of``wrapped microlocal sheaves'' \cite{Nadl_wmls} as the full subcategory of compact objects a large category of microlocal sheaves. It is expected that when $\partial\Lambda$ is the skeleton for a stop $\sigma$, then Nadler's category will agree with $\W_\sigma(M)$.

\subsubsection{Generation} The primary application of Theorem \ref{thm:quotient-intro} is to give generation statements for partially wrapped Fukaya categories:
\begin{cor}\label{cor:generation}
 Under the assumptions of Theorem \ref{thm:quotient-intro}, suppose $\mathcal A_0\subset\W(M)$ and $\mathcal I\subset\mathcal B$ are split-generating full subcategories. Let $\mathcal A\subset\W_\sigma(M)$ be a collection of objects such that $A(\mathcal A)=\mathcal A_0$, where $A\colon\W_\sigma(M)\to\W(M)$ is the inclusion. Then the full subcategory $\mathcal A\cup\mathcal I$ of $\W_\sigma(M)$ split-generates.
\end{cor}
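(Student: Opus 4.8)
The plan is to deduce Corollary \ref{cor:generation} formally from Theorem \ref{thm:quotient-intro}: all of the symplectic content is already in that theorem, and what remains is a standard manipulation of split-generation under a quotient. Throughout I would work in the split-closed triangulated categories $\mathrm{Tw}^\pi$ attached to the relevant $A_\infty$-categories, since ``split-generates'' refers to these.

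First I would upgrade Theorem \ref{thm:quotient-intro} to show that $\mathcal A$ split-generates $\mathcal C:=\W_\sigma(M)\big/\mathcal B$. Let $G\colon\mathcal C\to\W(M)$ be the fully faithful functor of Theorem \ref{thm:quotient-intro}, so that the inclusion $A$ factors as $A=G\circ Q$ with $Q\colon\W_\sigma(M)\to\mathcal C$ the tautological quotient. The induced functor $\overline G\colon\mathrm{Tw}^\pi\mathcal C\to\mathrm{Tw}^\pi\W(M)$ is again cohomologically fully faithful, and it is exact, so its essential image is a full triangulated subcategory; moreover that image is split-closed, since a cohomology-level idempotent on $\overline G(x)$ corresponds to one on $x$ and the latter splits in the idempotent-complete category $\mathrm{Tw}^\pi\mathcal C$. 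This essential image contains $\overline G(Q(\mathcal A))=A(\mathcal A)=\mathcal A_0$, which split-generates $\W(M)$ by hypothesis, so $\overline G$ is essentially surjective, hence a quasi-equivalence. Transporting $\mathcal A_0$ back along $\overline G^{-1}$ shows that $\mathcal A$ split-generates $\mathcal C$.

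Next I would pass from $\mathcal C$ to $\W_\sigma(M)$ using the compatibility of the Lyubashenko--Ovsienko quotient with twisted complexes \cite{Lyub-Ovsi}: the projection $Q$ identifies $\mathrm{Tw}^\pi\mathcal C$ with the split-closed Verdier quotient of $\mathrm{Tw}^\pi\W_\sigma(M)$ by the thick subcategory $\mathrm{Tw}^\pi\mathcal B$, which is split-generated by $\mathcal I$ since $\mathcal I$ split-generates $\mathcal B$. Let $\mathcal E\subseteq\mathrm{Tw}^\pi\W_\sigma(M)$ be the split-closed triangulated subcategory generated by $\mathcal A\cup\mathcal I$; then $\mathrm{Tw}^\pi\mathcal B\subseteq\mathcal E$, so $Q$ restricts to a fully faithful functor $\mathcal E\big/\mathrm{Tw}^\pi\mathcal B\to\mathrm{Tw}^\pi\mathcal C$ whose essential image contains $Q(\mathcal A)$ and is therefore split-dense in $\mathrm{Tw}^\pi\mathcal C$ by the previous step. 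I would then invoke the elementary density criterion for Verdier quotients: if $\mathcal S\subseteq\mathcal E\subseteq\mathcal T$ are thick subcategories of an idempotent-complete triangulated category $\mathcal T$ and every object of $\mathcal T/\mathcal S$ is a retract of one in the image of $\mathcal E$, then $\mathcal E=\mathcal T$. One proves this by splitting the idempotent triangle to write the retract as an honest summand $q(E)\cong q(X\oplus Z)$ in $\mathcal T/\mathcal S$, representing the isomorphism by a roof $E\leftarrow U\to X\oplus Z$ with both cones in $\mathcal S\subseteq\mathcal E$, and chasing two triangles to get first $U\in\mathcal E$ and then $X\in\mathcal E$. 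Applied here, this yields $\mathcal E=\mathrm{Tw}^\pi\W_\sigma(M)$, i.e.\ $\mathcal A\cup\mathcal I$ split-generates $\W_\sigma(M)$.

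The only real obstacle is the appeal to Theorem \ref{thm:quotient-intro} in the first step; everything afterwards is formal, and the two categorical inputs — reflection of split-generation along a fully faithful functor, and the behaviour of split-generation under the Lyubashenko--Ovsienko quotient — are routine and appear in essentially this form in \cite{Gana_scdwc} and \cite{Abou-Seid_lft}. The one point deserving a line of care is that the objects of $\mathcal B$ really do become zero in $\W(M)$, so that the inclusion $A$ genuinely factors through $Q$; but this is itself part of the content of Theorem \ref{thm:quotient-intro}.
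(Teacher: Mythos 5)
Your argument is correct, and since the paper states Corollary \ref{cor:generation} without proof, there is nothing to compare it against except the intended formal deduction from Theorem \ref{thm:quotient-intro} --- which is exactly what you have written: the fully faithful functor $\W_\sigma(M)/\mathcal B\to\W(M)$ has split-closed essential image containing the split-generators $\mathcal A_0$, hence is a quasi-equivalence, and split-generation then lifts through the quotient because the kernel is split-generated by $\mathcal I$. The only terse step is producing the complement $Z$ with $q(E)\cong q(X)\oplus q(Z)$ in $\mathrm{Tw}^\pi\W_\sigma(M)\big/\mathrm{Tw}^\pi\mathcal B$; this does work (replace the split epimorphism by an honest morphism $p\colon U\to X$ with $U\in\mathcal E$ via a roof and take $Z=\mathrm{cone}(p)[-1]$), though the density lemma also follows in one line by composing Verdier quotients: every object of $\mathcal T/\mathcal E=(\mathcal T/\mathcal S)\big/(\mathcal E/\mathcal S)$ is a retract of $0$, and $\mathcal E$ is the kernel of $\mathcal T\to\mathcal T/\mathcal E$ since it is thick.
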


In fact, in future work we will show that the subcategory $\mathcal B$ is the image of an $A_\infty$ functor $\imath_\sigma\colon\W(\sigma)\to\W_\sigma(M)$, which is the partially wrapped version of Orlov's functor
\[
 \Fuk(F)\to\mathrm{FS}(W)
\]
for a Lefschetz fibration $W$ with smooth fiber $F$. In the mirror picture, this corresponds to the pushforward functor from sheaves on a divisor to sheaves on the total space. The upshot is that a split-generating subcategory $\mathcal I$ can be found as the image under $\imath_\sigma$ of a split-generating subcategory $\tild{\mathcal I}\subset\W(\sigma)$. This reduces the problem of split-generation of partially wrapped Fukaya categories with strongly nondegenerate stops to that of fully wrapped Fukaya categories, for which \cite{GPS_gen} gives a good answer.

Concretely, consider the Landau-Ginzburg model $(M,W)=\left(\C^3,xyz\right)$, which is mirror to the pair of pants. Recall from Section \ref{sec:mirror symmetry} that this gives rise to a stop $\sigma_W$ which is symplectomorphic to the generic fiber of $W$. In this case the generic fiber is $(\C^*)^2$, whose wrapped Fukaya category is generated by the single Lagrangian $\tild{L}=(\R_+)^2$. Because $\mathcal B$ is the image of $\imath_{\sigma_W}$, we can replace $\mathcal B$ with a single Lagrangian $L=\imath_{\sigma_W}(\tild{L})$, which geometrically is the parallel transport of $\tild L$ over the arc that curves around $\sigma_W$. This expresses the trivial category $\mathcal W(\C^3)$ as a quotient of $\mathcal W_\sigma(M)$ by $L$, which means $L$ split-generates $\mathcal W_\sigma(M)$. This result is predicted by mirror symmetry in \cite{Abou-Auro}, where Abouzaid and Auroux compute the endomorphism algebra of $L$ in a ``fiberwise wrapped'' Fukaya category which is expected to agree with ours.

\subsection{Outline of the paper}

In Section \ref{ch:geom setup}, we define stops and pumpkin domains in terms of local models. We prove Proposition \ref{prop:stops exist}, which justifies that definition, and we use it to give basic examples of pumpkin domains. We then describe how to glue pumpkin domains along stops. This is used in a basic way to control the geometry in Sections \ref{ch:nondeg stops} and \ref{ch:stop removal}.

In Section \ref{ch:PWFC}, we define partially wrapped Fukaya categories. We then state some invariance properties in Proposition \ref{prop:pwfc properties}, which are proven in the expanded version \cite{Sylv_pwfc-big}.

Section \ref{ch:nondeg stops} begins by constructing the action filtration on Hochschild chains which goes into the definition of a strongly nondegenerate stop. This allows us to state the precise version of Theorem \ref{thm:quotient-intro}. We then introduce a chain-level nondegeneracy condition for the pair $(M,\sigma)$ and prove that it holds whenever $\sigma$ is strongly nondegenerate (Proposition \ref{prop:nondegeneracy}). This is the key technical ingredient we use to pass from data in the stop to data on the ambient pumpkin domain.

Finally, in Section \ref{ch:stop removal}, we prove the main theorem. We begin by constructing the coproduct operation. Then we construct a sequence of smaller homotopies which interpolate between the composition of product with coproduct and a projection to the partially wrapped part. The key observation here is that every time a long $X_H$-chord intersects a stop $\sigma$, it does so by first entering a neighborhood of $\sigma$, then intersecting $\sigma$, and then leaving the neighborhood. Thus, by carefully choosing incidence conditions with the boundary of a neighborhood of $\sigma$, we construct in Section \ref{sec:last htopy} an operation which looks like the identity but is homotopic to zero.

\subsection*{Acknowledgments}
Above all, I'd like to thank my advisor Denis Auroux for his support, guidance, and patience. A substantial fraction of the good ideas in this paper are his, and it would be a sad husk of a text without them. I am also grateful to Mohammed Abouzaid, Sheel Ganatra, David Nadler, and Katrin Wehrheim for their feedback and for many valuable discussions. Finally, the idea of Section \ref{ch:nondeg stops} sprang from discussions with Paul Biran and Dietmar Salamon, whom I would like to thank as well.

This work was partially supported by NSF grants DMS-0838703, DMS-1007177, and DMS-1264662.

\section{Geometric setup}\label{ch:geom setup}

\subsection{Liouville domains}

Our basic objects of study will be Liouville domains $(M,\lambda_M)$, which are compact manifolds with boundary such that $\omega_M:=d\lambda_M$ is symplectic, and such that the Liouville vector field $Z_M$ defined by $\imath_{Z_M}\omega_M=\lambda_M$ points outward along the boundary. This implies that $\alpha=\lambda_M|_{\partial M}$ is a contact form, and flowing along $-Z_M$ gives a collar
\[
(U,\lambda_M)\cong((0,1]\times\partial M,r\alpha).
\]
Attaching the rest of the symplectization of $\partial M$ gives the \textbf{completion} $\hat M:=M\cup_{\partial M}[1,\infty)\times\partial M$, which comes with a natural 1-form $\hat\lambda_M$, symplectic form $\hat\omega_M$, and Liouville vector field $\hat Z_M$.

A good class of mappings between Liouville domains $F$ and $M$ is that of \textbf{Liouville maps}, which are proper embeddings $\phi\colon\hat F\hookrightarrow\hat M$ such that
\begin{equation}
\label{eq:liouville map}
\begin{aligned}
  &\phi^*\hat\lambda_M=\hat\lambda_F+df \quad &&\text{for some compactly supported $f$, and}\\
  &\phi_*\hat Z_F=\hat Z_M &&\text{away from a compact set.}
\end{aligned}
\end{equation}
Note that the second condition is redundant for codimension zero maps. In general, it can be rephrased as saying that the symplectic orthogonal of the image of $\phi$ lies in the kernel of $\hat \lambda_M$. A Liouville isomorphism, then, is just a Liouville map that is a diffeomorphism. Note that we are viewing these as maps of Liouville domains, even though geometrically they are only defined at the level of completions.

A version of Moser's lemma holds in this setting \cite{Ciel-Elia_swb}:

\begin{lemma}\label{lem:moser}
Let $(M,\lambda_M^t)$ be a smooth family of Liouville domains parametrized by $t\in[0,1]$. Suppose $K\subset M$ is a codimension zero closed subset which near $\partial M$ is invariant under the Liouville vector field for all $t$, and that $\lambda_M^t$ is $t$-independent on $K$. Then there is a family of Liouville isomorphisms $\phi_t\colon(M,\lambda_M^0)\to(M,\lambda_M^t)$ which is the identity on $K$.
\qed
\end{lemma}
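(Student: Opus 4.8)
The plan is to run Moser's trick on the completion $\hat M$, using the exact term $df$ permitted in a Liouville isomorphism to soak up the noncompactness at infinity. For each $t$ the form $\hat\omega_M^t:=d\hat\lambda_M^t$ is symplectic on all of $\hat M$, and near infinity $\hat\lambda_M^t$ is conical, equal to $r\,\alpha^t$ with $\alpha^t:=\lambda_M^t|_{\partial M}$. Let $X_t$ be the time-dependent vector field on $\hat M$ determined by $\iota_{X_t}\hat\omega_M^t=-\partial_t\hat\lambda_M^t$; it is smooth in $t$ since $\hat\omega_M^t$ is nondegenerate and the family $\hat\lambda_M^t$ is smooth. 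Granting for the moment that $X_t$ has a well-defined flow $\phi_t$ on $[0,1]$, the Cartan formula together with $\iota_{X_t}\hat\omega_M^t=-\partial_t\hat\lambda_M^t$ gives $\frac{d}{dt}\phi_t^*\hat\lambda_M^t=\phi_t^*(\partial_t\hat\lambda_M^t+\mathcal L_{X_t}\hat\lambda_M^t)=\phi_t^*(d\,\iota_{X_t}\hat\lambda_M^t)=d\big((\iota_{X_t}\hat\lambda_M^t)\circ\phi_t\big)$, so $\phi_t^*\hat\lambda_M^t=\hat\lambda_M^0+df_t$ with $f_t:=\int_0^t(\iota_{X_s}\hat\lambda_M^s)\circ\phi_s\,ds$, and in particular $\phi_t^*\hat\omega_M^t=\hat\omega_M^0$.

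The crux is understanding $X_t$ on the conical end. Using the splitting $T(\partial M)=\ker\alpha^t\oplus\R\,R_{\alpha^t}$ into the contact distribution and the Reeb direction, a direct computation shows that there $X_t=-\big(\partial_t\alpha^t(R_{\alpha^t})\big)\,r\,\partial_r+V_t$, where $V_t$ is the $r$-independent section of $\ker\alpha^t$ characterized by $\iota_{V_t}d\alpha^t=-\partial_t\alpha^t$ on $\ker\alpha^t$ (the Gray-stability field of the family $\alpha^t$). Two facts follow. First, $\hat\lambda_M^t(X_t)=r\,\alpha^t(X_t)=0$ on the end, since both $\partial_r$ and $V_t$ are annihilated by $\alpha^t$. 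Second, the radial coefficient of $X_t$ is $r$ times a function bounded uniformly over the compact manifold $\partial M$, and the remaining component is tangent to the compact slices $\{r\}\times\partial M$; hence along any trajectory $r$ grows at most exponentially, so no trajectory reaches $r=\infty$ in finite time. Combined with relative compactness of the truncation $M\cup([1,2]\times\partial M)$, a standard continuation argument produces the flow $\phi_t\colon\hat M\to\hat M$ for all $t\in[0,1]$, with $\phi_0=\mathrm{id}$.

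Finally I would check the three conclusions. The hypothesis that $\lambda_M^t$ is $t$-independent on $K$ means $\partial_t\hat\lambda_M^t$ vanishes at every point of $K$, hence so does $X_t$ by nondegeneracy of $\hat\omega_M^t$; since a point at which the time-dependent field vanishes for all $t$ is fixed by the flow, $\phi_t|_K=\mathrm{id}$. Next, $\iota_{X_s}\hat\lambda_M^s\equiv 0$ on the conical end by the first fact above, and by the radial estimate the flow carries every sufficiently far-out region back into the conical end, so $(\iota_{X_s}\hat\lambda_M^s)\circ\phi_s$ vanishes outside a fixed compact set for all $s\in[0,1]$; thus $f_t$ has compact support. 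Outside that support $\phi_t^*\hat\lambda_M^t=\hat\lambda_M^0$, and since $\phi_t$ is globally a symplectomorphism $(\hat M,\hat\omega_M^0)\to(\hat M,\hat\omega_M^t)$ it there intertwines the Liouville vector fields $\hat Z_M^0$ and $\hat Z_M^t$. Hence each $\phi_t$ is a Liouville isomorphism $(M,\lambda_M^0)\to(M,\lambda_M^t)$ equal to the identity on $K$. The only genuinely non-formal step is the analysis of $X_t$ at the conical end in the second paragraph; from it, completeness of the flow, compactness of $\mathrm{supp}\,f_t$, and the fixing of $K$ all follow at once, and the Moser identity itself is purely formal. (One could instead first use Gray stability for the family $\alpha^t$ to normalize the $\hat\lambda_M^t$ to agree near infinity, reducing to compactly supported $\partial_t\hat\lambda_M^t$; but arranging that normalization to be the identity near $K$ takes extra care, so the direct route above is cleaner.)
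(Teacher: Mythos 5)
Your argument is correct and is exactly the standard Moser-type proof of this lemma, which the paper itself does not spell out but delegates to \cite{Ciel-Elia_swb}: define $X_t$ by $\iota_{X_t}\hat\omega_M^t=-\partial_t\hat\lambda_M^t$, check that on the conical end it has no Reeb component and a radial component of size $O(r)$, and deduce completeness of the flow, compact support of the primitive $f_t$, and the fixing of $K$. All the computations (the splitting of $X_t$ into the rescaled radial direction and the Gray field, the vanishing of $\hat\lambda_M^t(X_t)$ near infinity, and the exponential bound on $r$ along trajectories) check out.
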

\begin{cor}\label{cor:moser}
Let $(F,\lambda_F^t)$ and $(M,\lambda_M^t)$ be smooth families of Liouville domains for $t\in[0,1]$. Suppose there exists a Liouville map $\phi\colon(F,\lambda_F^0)\to(M,\lambda_M^0)$. Then $\phi$ extends to an isotopy of Liouville maps $\phi^t\colon(F,\lambda_F^t)\to(M,\lambda_M^t)$.
\qed
\end{cor}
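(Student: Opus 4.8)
The plan is to reduce the statement to two applications of Lemma~\ref{lem:moser}, one on the source and one on the target, and then to conjugate $\phi$ by the resulting ambient isotopies. Concretely, I would first apply Lemma~\ref{lem:moser} to the family $(F,\lambda_F^t)$ with $K=\emptyset$ to obtain a smooth family of Liouville isomorphisms $\psi_t\colon(F,\lambda_F^0)\to(F,\lambda_F^t)$ with $\psi_0=\mathrm{id}$, and likewise to $(M,\lambda_M^t)$ to obtain $\chi_t\colon(M,\lambda_M^0)\to(M,\lambda_M^t)$ with $\chi_0=\mathrm{id}$. Then I would set
\[
 \phi^t:=\chi_t\circ\phi\circ\psi_t^{-1}\colon\hat F\to\hat M.
\]
By construction $\phi^0=\phi$, and $t\mapsto\phi^t$ is smooth, since $\psi_t$ and $\chi_t$ are smooth families and $\phi$ is fixed.

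It then remains to check that $\phi^t$ is a Liouville map from $(F,\lambda_F^t)$ to $(M,\lambda_M^t)$ for every $t$, which follows from two elementary closure properties of the conditions \eqref{eq:liouville map}. First, the inverse of a Liouville isomorphism is again one: if $\psi^*\hat\lambda^1=\hat\lambda^0+df$ with $f$ compactly supported, then $(\psi^{-1})^*\hat\lambda^0=\hat\lambda^1-d(f\circ\psi^{-1})$, with $f\circ\psi^{-1}$ compactly supported because $\psi$ is a diffeomorphism, and the Liouville vector fields are intertwined off a compact set because they are for $\psi$. Second, a composition $\phi_2\circ\phi_1$ of Liouville maps is a Liouville map: the composite of proper embeddings is a proper embedding, the primitive of $(\phi_2\circ\phi_1)^*\hat\lambda-\hat\lambda$ is $f_1+f_2\circ\phi_1$, which is compactly supported since $\phi_1$ is proper, and the Liouville vector fields are intertwined off a compact set because each factor has this property and $\phi_1^{-1}$ of a compact set is compact. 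Since $\psi_t^{-1}\colon(F,\lambda_F^t)\to(F,\lambda_F^0)$, $\phi\colon(F,\lambda_F^0)\to(M,\lambda_M^0)$, and $\chi_t\colon(M,\lambda_M^0)\to(M,\lambda_M^t)$ are all Liouville, so is their composite $\phi^t$.

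I do not expect any real obstacle: the only point requiring care is the bookkeeping of the ``compactly supported'' and ``away from a compact set'' clauses of \eqref{eq:liouville map} under composition and inversion, all of which reduce to properness of the maps involved. The choice $K=\emptyset$ is harmless, since the Corollary asserts nothing about fixing a subset; if one wanted $\phi^t$ to agree with $\phi$ on a prescribed closed set one would instead feed that set, and its image, into Lemma~\ref{lem:moser}.
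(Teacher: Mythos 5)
Your proof is correct and is exactly the intended argument behind the paper's omitted proof: apply Lemma \ref{lem:moser} separately to the source and target families to get ambient Liouville isotopies $\psi_t$, $\chi_t$, and set $\phi^t=\chi_t\circ\phi\circ\psi_t^{-1}$. The bookkeeping of the compactly supported primitives and of the intertwining of Liouville vector fields under composition and inversion, which you carry out via properness of $\phi$, is precisely the only content to check.
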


Occasionally we will use the stronger notion of an \textbf{isomorphism of exact symplectic manifolds}, which is a diffeomorphism $\phi\colon M\to M'$ of exact symplectic manifolds (not necessarily Liouville domains or their completions) satisfying $\phi^*\lambda_{M'}=\lambda_M$.

Given two Liouville domains $M$ and $M'$, one can attempt to form their product. The result is an exact symplectic manifold with corners. One can non-canonically round the corners to obtain a Liouville domain. The result completes to $(\hat M\times\hat M',\hat \lambda_M+\hat\lambda_{M'})$, so the product is at least well-defined up to completion, and hence up to isomorphism. We'll use ${M\times M'}$ to denote the resulting Liouville domain for any choice of boundary.

If we additionally have a Liouville map $\phi\colon F\to M$, then the product
\[
 \phi\times\mathrm{id}_{M'}\colon F\times M'\to M\times M'
\]
is not quite a Liouville map, but it becomes one if we replace $\lambda_F$ by $\phi^*\lambda_M$. By definition, this doesn't change the Liouville isomorphism class of $F$, and in the sequel we'll often make such compactly supported changes implicitly when talking about products.

\subsection{Stops}

Symplectic manifolds often come with additional data, such as a global meromorphic function or a distinguished collection of Lagrangians. For Floer theoretic purposes, this data can often be encoded as a set of framed complex hypersurfaces.

\begin{defn}\label{defn:stop}
Let $(M^{2n},\lambda_M)$ and $(F^{2n-2},\lambda_D)$ be Liouville domains. For $\rho>0$, denote by $\H_\rho$ the set $\{z\in\C\mid\Re(z)\ge-\rho\}$ with the standard exact symplectic structure coming from $\C$, i.e. $\lambda_\C=\frac12\left(xdy-ydx\right)$. A \textbf{stop} of width $\rho$ in $M$ with fiber $F$ is a proper embedding $\sigma\colon\hat F\times\H_\rho\to\hat M$ satisfying
\[
  \sigma^*\hat\lambda_M=\hat\lambda_F+\lambda_{\H_\rho}+df
\]
for some compactly supported $f$. If $\sigma$ is a stop, then $D_\sigma:=\sigma|_{\hat F\times\{0\}}$ is a Liouville map, which we'll call its \textbf{divisor}. In what follows, we'll often identify $D_\sigma$ with its image.
\end{defn}

The requirement that a stop be a proper map is important. It means that all of the data lives on the boundary, which will be needed to obtain well behaved gluing operations. The notion of width, on the other hand, is just a notational convenience. Specifically, if $\rho'=t\rho$, then $\H_\rho$ and $\H_{\rho'}$ are isomorphic as exact symplectic manifolds via 
\[
(x,y)\mapsto(tx,t^{-1}y).
\]
We will also sometimes wish to narrow a stop, that is to embed $\H_\rho$ into some enlarged angular sector
\[
S_{\rho,s}=\bar D^2_\rho\cup\left\{re^{i\theta}\in\C\mid r>0, |\theta|\le s\right\}
\]
with $\rho>0$ and $s\in(0,\frac\pi2)$. While this can't be done in a way that strictly preserves the Liouville form, it can be done in a way that only modifies the Liouville form in some small annulus around zero and fixes the positive real axis. For this, one can take the large time flow of a Hamiltonian which, outside of the annulus, takes the form $r^2\sin\theta$. Crossing with the fiber, this might cause the Liouville vector field to fail to point outward along $\partial F\times D^2_\rho$. However, since the modification to $\lambda_\C$ is bounded, we replace $F$ with a larger piece of $\hat F$ so that $\lambda_\C$ is small compared to $\lambda_F$, and hence outward pointingness will be preserved at this new boundary. This shows

\begin{lemma}\label{lem:narrowing}
Let $(M,\lambda_M)$ be a Liouville domain and $\sigma_0\colon\hat F\times S_{\rho,s}\to\hat M$ be a proper codimension zero embedding with
\[
  \sigma_0^*\hat\lambda_M=\hat\lambda_F+\lambda_{S_{\rho,s}}+df
\]
for some compactly supported $f$. Then there is a new Liouville form $\hat\lambda_M'=\hat\lambda_M+dg$, where $g$ is supported in a small tube around $\sigma_0(\hat F\times\{0\})$, such that as a map into $(\hat M,\hat\lambda_M')$, $\sigma_0|_{(\hat F\times\{0\})}$ extends to a stop $\sigma$ with $\sigma(\hat F\times\R_+)=\sigma_0(\hat F\times\R_+)$.
\qed
\end{lemma}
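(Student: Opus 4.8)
The plan is to make the sketch preceding the statement precise; everything reduces to a construction on $\C$. I want a proper embedding $\iota\colon\H_\rho\into S_{\rho,s}$ that fixes the origin, maps the positive real axis onto itself, and satisfies $\iota^*\lambda_\C=\lambda_\C+dG$ with $G$ supported in a small disc about the origin (I normalise $\lambda_\C=\tfrac12r^2\,d\theta$, so $d\lambda_\C$ is the standard symplectic form). Granting such an $\iota$, I set $\sigma:=\sigma_0\circ(\mathrm{id}_{\hat F}\times\iota)$, which is again a proper codimension zero embedding, agrees with $\sigma_0$ on $\hat F\times\{0\}$, and has $\sigma(\hat F\times\R_+)=\sigma_0(\hat F\times\R_+)$. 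Pulling back, $\sigma^*\hat\lambda_M=\hat\lambda_F+\lambda_{\H_\rho}+d\bigl(G\circ\mathrm{pr}_{\H_\rho}+f\circ(\mathrm{id}\times\iota)\bigr)$, so transporting $-G$ through $\sigma$ and extending by zero produces a function $g$, supported in the tube $\sigma_0(\hat F\times\iota(\operatorname{supp}G))$ around the divisor, with $\sigma^*(\hat\lambda_M+dg)=\hat\lambda_F+\lambda_{\H_\rho}+df'$ and $f'=f\circ(\mathrm{id}\times\iota)$ compactly supported. It then remains to see that $\hat\lambda_M+dg$ still makes $M$ a Liouville domain: the correction $dg$ is bounded, so after enlarging the fibre $F$ to a larger subdomain of $\hat F$ (which does not change its Liouville isomorphism type) the term $\hat\lambda_F$ dominates $dg$ near the relevant part of $\partial M$ and the Liouville vector field still points outward; Lemma~\ref{lem:moser} then puts the result in standard form.

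For $\iota$ I would flow along (a smoothing of) the Hamiltonian $H_0=r^2\sin\theta$. Since $X_{H_0}=r\cos\theta\,\partial_r-2\sin\theta\,\partial_\theta$, one computes $\lambda_\C(X_{H_0})=-H_0$, hence $\mathcal L_{X_{H_0}}\lambda_\C=d\bigl(\lambda_\C(X_{H_0})+H_0\bigr)=0$: the flow of $H_0$ preserves $\lambda_\C$ on the nose. Moreover $\dot r=r\cos\theta$ and $\dot\theta=-2\sin\theta$, so the origin and the positive real axis are fixed and $\tan(\theta(t)/2)=e^{-2t}\tan(\theta(0)/2)$ contracts angular sectors: for each $\eta>0$ there is $T=T(\eta,s)$ with $\phi_{H_0}^T(\{|\theta|\le\pi-\eta\})\subseteq\{|\theta|\le s\}$. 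Because $H_0=y\sqrt{x^2+y^2}$ is not smooth at $0$, I replace it by a smooth $H$ equal to $H_0$ outside a small disc, odd in $\theta$ (so $0$ and $\R_+$ stay fixed), and chosen so that the time-$T$ flow $\iota:=\phi_H^T$ both carries the bounded region $\H_\rho\cap\bar D^2_{2\rho}$ into $S_{\rho,s}$ and so that the only part of its modification of $\lambda_\C$ that is seen by $\H_\rho$ lies in a small disc about the origin. Given this, $\iota(\H_\rho)\subseteq S_{\rho,s}$: on $\H_\rho\cap\{r>2\rho\}$ the half-plane condition $r\cos\theta\ge-\rho$ forces $|\theta|\le\pi-\eta_0$ with $\eta_0=\pi-\arccos(-\tfrac12)$, so this part is squeezed into the sector once $T\ge T(\eta_0,s)$, while the bounded part is handled directly; and $G$ is supported in a small disc about the origin, as required.

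I expect the genuine difficulty to be the construction of $H$: the half-plane is honestly wider than the sector, so $\iota$ must compress the angular coordinate everywhere, including near $\partial\H_\rho$, where the unmodified flow drifts inward toward the origin; one has to design the smoothing so that this inward drift is interrupted before it can carry the support of the $\lambda_\C$-modification out of a tube around the divisor, and so that the resulting $G$ extends by zero to a smooth function on $\hat M$. This calls for the hands-on interpolation near the origin (and along $\partial\H_\rho$) indicated in the paragraph before the statement. Everything else — that $\sigma$ is a proper codimension zero embedding, that $f'$ is compactly supported, that the new one-form is still that of a Liouville domain — is routine bookkeeping, the only structural inputs being the identity $\lambda_\C(X_{H_0})+H_0=0$, the exponential contraction of angles, and Lemma~\ref{lem:moser}.
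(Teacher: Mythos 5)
Your proposal is correct and follows the same route as the paper, whose proof of this lemma is precisely the informal paragraph preceding it: embed $\H_\rho$ into $S_{\rho,s}$ by the large-time flow of a smoothing of $r^2\sin\theta$ (which preserves $\lambda_\C$ exactly where unsmoothed, fixes $\R_+$, and contracts angles), then restore outward-pointing of the Liouville field by enlarging $F$. You in fact supply more detail than the paper (the identity $\lambda_\C(X_{H_0})=\pm H_0$, the contraction rate $\tan(\theta/2)\mapsto e^{-2t}\tan(\theta/2)$), and the smoothing near the origin that you flag as the remaining work is exactly what the paper also leaves implicit; the sign discrepancy in $X_{H_0}$ is only the paper's convention $dH=-\imath_{X_H}\omega$ versus yours and merely reverses the time direction of the flow.
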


\begin{defn}\label{defn:narrow stop}
A map satisfying the properties of $\sigma_0$ above will be called a \textbf{narrow stop}.
\end{defn}

A stop also constrains the behavior of the Liouville form near its divisor. This too will be needed for gluing, though it is not hard to modify a given Liouville map to look like the divisor of a stop. In fact, we have the following:

\begin{prop}\label{prop:stops exist}
Let $(M^{2n},\lambda_M)$ be a Liouville domain, and let $P\subset\partial M$ be a compact hypersurface with boundary such that $(P,\lambda_M|_P)$ is a Liouville domain. Choose $f\colon P\to[\frac12,1]$ to be a continuous function such that
\begin{enumerate}
\item $f$ is smooth and less than 1 on the interior of $P$.
\item $f|_{\partial P}=1$.
\item\label{cd:contact collar for new stops} $f^{-1}(r)$ is transversely cut out and contact for $r>\frac12$.
\item\label{cd:Z invariance for new stops} $F=\mathrm{graph}(f)\subset M$ is a smooth submanifold that is parallel to $Z$ to infinite order along its boundary. See Figure \ref{fig:page to divisor}.
\end{enumerate}
Then $(F,\lambda_M|_F)$ is a Liouville domain, and its inclusion into $M$ extends to a Liouville map $\phi$. Moreover, one can construct a new Liouville form $\lambda_M'=\lambda_M+dh$ such that, after moving $\partial M$ out, $\phi$ becomes the divisor of a stop in $(M,\lambda_M')$ with fiber $F$.
\end{prop}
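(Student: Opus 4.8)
The plan is to establish the three assertions in turn, the bulk of the work going into the last. I work throughout in the collar $\bigl((0,1]\times\partial M,\,r\alpha\bigr)$ of $\partial M$, where $\alpha=\lambda_M|_{\partial M}$, and I write $\lambda_P:=\alpha|_P$ for the Liouville form making $P$ a Liouville domain.

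First, that $(F,\lambda_M|_F)$ is a Liouville domain whose inclusion extends to a Liouville map. The graph identifies $F$ with $P$, under which $\lambda_M|_F=f\cdot\lambda_P$, so $d(\lambda_M|_F)=df\wedge\lambda_P+f\,\omega_P$; I would verify nondegeneracy pointwise. At a critical point of $f$ this is $f\,\omega_P$, nondegenerate as $f>0$. At a regular point, contracting with the characteristic direction $X$ of the level set $H=f^{-1}(f(p))$ and using that $\iota_X\omega_P$ is a multiple of $df$ reduces matters to the inequality $f+df(Z_P)\ne0$; since $(\lambda_P\wedge\omega_P^{n-2})|_H=\tfrac1{n-1}(\iota_{Z_P}\omega_P^{n-1})|_H$, condition~(3) (``$H$ is $\lambda_P$-contact'') is precisely transversality of $Z_P$ to $H$, and with the coorientation making $f$ increase along $Z_P$ one gets $df(Z_P)\ge0$, hence $f+df(Z_P)>0$. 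By condition~(4), $F$ agrees to infinite order near $\partial F$ with the $Z_M$-orbit tube over $\partial P$, so there $Z_F=Z_M|_F$, which is tangent to $F$ and points outward along $\partial F$; this same structure lets one extend $F$ to a proper embedding $\phi\colon\hat F\to\hat M$ by flowing outward along $\hat Z_M$, and one reads off $\phi^*\hat\lambda_M=\hat\lambda_F+df'$ with $f'$ compactly supported and $\phi_*\hat Z_F=\hat Z_M$ away from a compact set, so $\phi$ is a Liouville map.

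For the main point I would construct the standard stop neighborhood. Being a Liouville map, $\phi$ makes $D:=\phi(\hat F)$ a codimension-two symplectic submanifold, and the key geometric input is that its symplectic normal bundle is trivial, $\nu(D\subset\hat M)\cong\hat F\times\C$: pushing $P$ slightly into $M$ along $-Z_M$, its normal bundle is framed by the radial direction together with the Reeb field of $\alpha$ along $P$ (automatically transverse to $P$, as it would otherwise lie in $\ker\omega_P=0$), one matches this near $\partial F$ with the canonical framing of the normal bundle of the binding $\partial P$, and $F$ is isotopic to this pushed-in copy of $P$ through graphical submanifolds, carrying the trivialization along. Weinstein's neighborhood theorem then yields a symplectomorphism $\psi$ from a neighborhood of $\hat F\times\{0\}$ in $(\hat F\times\C,\,\hat\omega_F+\omega_\C)$ onto a neighborhood of $D$, extending $\phi$ and respecting the collar and completion structures near the ends. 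Now $\psi^*\hat\lambda_M$ and $\hat\lambda_F+\lambda_\C$ are two primitives of $\hat\omega_F+\omega_\C$ whose difference is closed and restricts to an exact form on $\hat F\times\{0\}$ by the previous step, hence is exact, say $dg$ with $g$ vanishing near the ends; cutting $g$ off in the $\C$-direction and transporting it back gives a compactly supported $h$ with $\psi^*(\lambda_M+dh)=\hat\lambda_F+\lambda_\C$ on a smaller neighborhood of $\hat F\times\{0\}$. Setting $\lambda_M':=\lambda_M+dh$, the Liouville field $\hat Z_M'$ there corresponds to $\hat Z_F+Z_\C$, so propagating $\psi$ outward by its flow — equivalently the Liouville rescaling of $\C$ — covers $\hat F\times(\C\setminus\{0\})$ and produces a proper embedding $\sigma\colon\hat F\times\C\to\hat M$ with $\sigma^*\hat\lambda_M'=\hat\lambda_F+\lambda_\C$ and $\sigma|_{\hat F\times\{0\}}=\phi$. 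Restricting to the closed subset $\hat F\times\H_\rho\subset\hat F\times\C$ (any width $\rho$ being permissible by the scaling remark following Definition~\ref{defn:stop}, with Lemma~\ref{lem:narrowing} available if one wishes to narrow further) and enlarging $M$ within $\hat M$ to absorb the part of $\sigma(\hat F\times\H_\rho)$ lying beyond the old boundary exhibits $\phi$ as the divisor of a stop with fiber $F$ in $(M,\lambda_M')$.

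I expect the main obstacle to be the passage from a symplectic to a genuinely \emph{standard} neighborhood: arranging the trivialization of $\nu(D)$ compatibly with the collar data near $\partial F$ and the completion end, then forcing the Liouville forms to agree up to compactly supported exact error uniformly out to both kinds of ends, and dovetailing this with the enlargement of $M$. The local ingredients — Weinstein's theorem, the identity $(\lambda_P\wedge\omega_P^{n-2})|_H=\tfrac1{n-1}(\iota_{Z_P}\omega_P^{n-1})|_H$ translating ``contact'' into ``transverse to $Z_P$'', and the sign check producing $f+df(Z_P)>0$ — are routine; it is the noncompactness of $\hat F$ and of the $\C$-direction that demands care.
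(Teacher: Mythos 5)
Your first half — the verification that $(F,\lambda_M|_F)$ is a Liouville domain via the contact condition on the levels of $f$, and the extension of the inclusion to a Liouville map by flowing along $\hat Z_M$ — is essentially the paper's argument (the paper computes $(\tild f^*\omega_M)^{n-1}=(\omega_M^{n-1})|_P+df\wedge\lambda_M|_P\wedge(\omega_M^{n-2})|_P$ globally where you contract pointwise; note that checking $\iota_X\Omega\ne0$ for the characteristic direction $X$ alone does not yet give nondegeneracy, though the argument is easily completed). The setup of the second half — a framed symplectic neighborhood $\psi$ of the divisor and a Moser-type correction $\lambda_M'=\lambda_M+dh$ with $h$ a cutoff of a primitive of $\lambda_{F\times D^2}-\psi^*\hat\lambda_M$ — also matches the paper.

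The gap is the sentence asserting that propagating $\psi$ outward by the Liouville flow of $\lambda_M'$ ``covers $\hat F\times(\C\setminus\{0\})$ and produces a proper embedding $\sigma\colon\hat F\times\C\to\hat M$.'' This is exactly the assertion the paper's proof is devoted to, and as stated for all of $\C$ it is false. After cutting off, $\hat Z_M'$ equals $\psi_*(\hat Z_F+Z_\C)$ only on the small region where the cutoff is identically $1$; a trajectory leaving that region immediately enters the annulus where the cutoff interpolates, and there $\hat Z_M'$ is dual to $\kappa\lambda_{std}+(1-\kappa)\lambda_M+h_0d\kappa$, an uncontrolled mixture. In the $-x$ direction the original $\hat Z_M$ points \emph{into} $M$ toward the skeleton, so the flow-out of the full circle cannot be proper — this is precisely why stops are modeled on $\H_\rho$ rather than $\C$, and why the paper chooses the framing $X$ to have nonnegative radial component. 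Even restricted to a sector around the positive real axis, one must prove that trajectories traverse the interpolation annulus without their angular coordinate escaping $[-s_0,s_0]$ and then reach a region ($|z|\ge\frac{2t}3$ with $(\hat Z_M)_x>0$, or $r>1-\delta$) from which the unmodified flow demonstrably escapes to infinity; this is the content of the estimate \eqref{cd:stays narrow} and the ensuing computation, which occupy most of the paper's proof and which still only yield a \emph{narrow} stop, to be widened by Lemma \ref{lem:narrowing}. You correctly identify this as ``the main obstacle'' in your closing paragraph, but the proposal does not resolve it, and resolving it is the substance of the proposition.
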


\begin{figure}
 \def\svgwidth{9cm}
 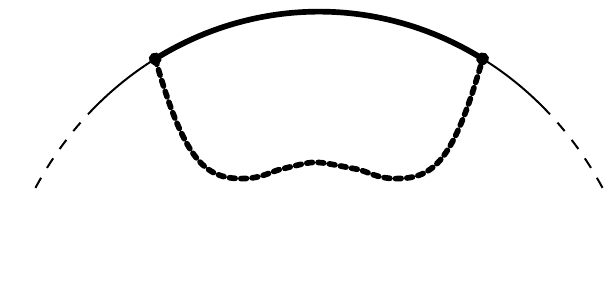
 \caption{The divisor $F$ and its framing vector field $X$.}\label{fig:page to divisor}
\end{figure}

Note that $h$ was not required to vanish in a neighborhood of $\partial M$. Of course, by Corollary \ref{cor:moser}, one can arrange that it does, at the expense of an isotopy of $F$.

\begin{proof}
To see that $F$ is a Liouville domain, it suffices to show that $\omega_M|_F$ is symplectic, since then outward-pointingness is clear from condition \ref{cd:Z invariance for new stops}. In fact, this is automatic near the boundary, since there $F$ is close to the symplectization of the contact manifold $\partial P$. Thus, we can consider only what happens away from the boundary, which allows us to transport the question to $P$. For this, let $\tild f\colon P\to M$ be the graph map $p\mapsto(f,p)$, so that we are interested in whether $\tild f^*\omega_M$ is symplectic on the interior of $P$. Then $\tild f^*\lambda_M=f\lambda_M|_P$, so that $\tild f^*\omega_M=df\wedge\lambda_M|_P+f\omega_M|_P$. We compute
\[
 (\tild f^*\omega_M)^{n-1}=(\omega_M^{n-1})|_P+df\wedge\lambda_M|_P\wedge(\omega_M^{n-2})|_P.
\]
The first term is positive because $(P,\lambda_M|_P)$ is a Liouville domain, while the second term is nonnegative by condition \ref{cd:contact collar for new stops}. This implies that $F$ is a Liouville domain, and it follows from the definitions that $\phi$ is a Liouville map.

Our next step then is to exhibit $F$ locally as the divisor of a stop. To do this, we will use Moser's argument to modify the Liouville form on $M$ in a neighborhood of $F$. For that to be effective, we will want to frame $F$ so that, when we try to extend $\phi$ to a stop, it will know which way points out.

Choose a nonvanishing vector field $X\in\Gamma(T\hat M|_{\hat F})$ that is symplectically orthogonal to $\hat F$ and, in the symplectization coordinates $(r,p)$ on $(0,\infty)\times\partial M$, is of the form $X=(g\frac\partial{\partial r},X_\partial)$, where $g\ge0$ and $X_\partial$ is tangent to $P$. The choice is unique up to scaling by a positive function. Next, pick a second vector field $Y\in\Gamma(T\hat M|_{\hat F})$, also orthogonal to $\hat F$, such that the radial component of $Y$ vanishes identically and $\omega_M(X,Y)=1$. By the symplectic neighborhood theorem on a compact part of $\hat M$, we can find a number $\rho>0$ and a symplectic embedding $\psi\colon F_2\times D^2_\rho\to\hat M$, where $F_2$ is the part of $\hat F$ with $r\le2$, such that
\begin{enumerate}[(i)]
  \item $\psi|_{F_2\times\{0\}}=\phi$\\
  \item $\psi_*\frac\partial{\partial x}=X$ along $F_2$ \label{cd:framing 1}\\
  \item $\psi_*\frac\partial{\partial y}=Y$ along $F_2$. \label{cd:framing 2}
\end{enumerate}
where $x=\Re(z)$ and $y=\Im(z)$ are the coordinates on $D^2_\rho$.

It is time to change $\lambda$. Let $\theta=\lambda_{F_2\times D^2_\rho}-\psi^*\hat\lambda_M$. Then $\theta$ is closed and $\theta|_{F_2}=0$, so we can find a primitive $h_0$ of $\theta$ on a neighborhood of $F_2$ with $h_0|_{F_2}=0$. Shrinking $\rho$, we can assume that $h_0$ is defined on all of $F_2\times D^2_\rho$. Consider a family of cutoff functions $\kappa_t\colon F_2\times D^2_\rho\to[0,1]$ indexed by $t\in(0,\rho)$ and satisfying the following conditions:
\begin{enumerate}[(i)]
\setcounter{enumi}{3}
  \item $\kappa_t$ is independent of the $F$ component and is rotationally invariant and radially nonincreasing in the $D^2$ component when $r\le1$ \label{cd:cutoff isotropy a}\\
  \item $\kappa_t=1$ when $|z|\le\frac t3$ and $r\le1$ \label{cd:cutoff isotropy b}\\
  \item $\kappa_t=0$ when $|z|\ge\frac{2t}3$ or $r\ge\frac32$ \label{cd:cutoff support}\\
  \item $|d\kappa_t|<\frac4t$ with respect to some fixed $t$-independent product metric on $F_2\times D^2_\rho$ which is Euclidean on the $D^2_\rho$ factor \label{cd:cutoff niceness}\\
  \item $\kappa_{t_0}(p,t_0z)=\kappa_{t_1}(p,t_1z)$ for all $(p,z)\in F_2\times D^2$ and $t_i\in(0,\rho)$. \label{cd:cutoff scaling}
\end{enumerate}
We can rephrase this last condition as saying that shrinking $t$ corresponds to conjugation by a rescaling of the $D^2$ component.

We will see that the function $h$ in the statement of the lemma can be taken to be $\psi_*(\kappa_th_0)$ for sufficiently small $t$. For now, let us denote that function by $h_t$. The first thing to notice is that for $t$ sufficiently small, the Liouville vector field $Z_M^t$ associated to $\lambda_M^t=\lambda_M+dh_t$ points out along the boundary of $M$, so that $(M,\lambda_M^t)$ is a Liouville domain. To see this, note that $\lambda_M$ vanishes on the symplectic orthogonal to $[1,2]\times\partial F$, where $[1,2]\subset(0,\infty)$ is the symplectization component, so $\theta$ does as well. Thus, $h_0$ vanishes quadratically on $[1,2]\times\partial F$. This, combined with conditions \eqref{cd:cutoff support} and \eqref{cd:cutoff niceness}, implies that $dh_t$ has magnitude $O(t)$. Since the condition that $Z$ points outward is open, this gives the desired conclusion.

It remains to find some $t$ for which $\phi$ extends to a stop in $(M,\lambda_M^t)$. By Lemma \ref{lem:narrowing}, it is enough to extend $\phi$ to a narrow stop. By \eqref{cd:cutoff isotropy b}, we can find the disk part of a narrow stop, so we need only find an angular sector over which we can finish extending $\phi$. The naive solution here is to just pick a small angular sector and flow out via $Z$, which works, but one needs to ensure that this doesn't get snagged on some interesting piece of $M$. This is where the framing of $\psi$ becomes important.

\begin{figure}
 \def\svgwidth{7cm}
 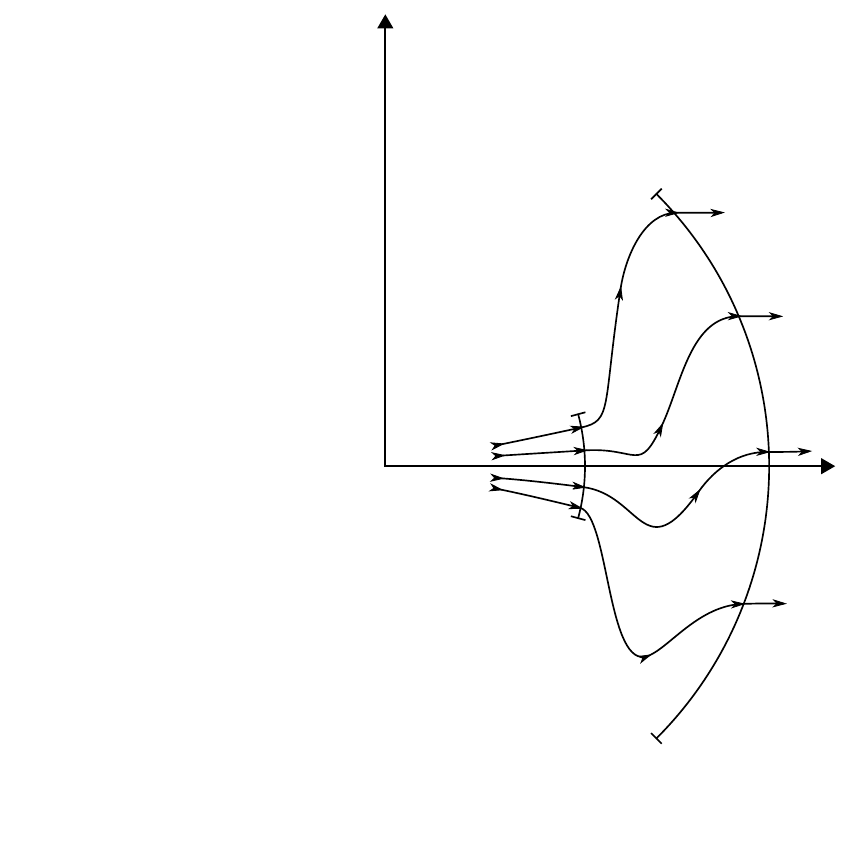
 \caption{One possibility for the image of the narrow stop.}\label{fig:finding the narrow stop}
\end{figure}

For convenience, let's now identify $\hat F\times D^2_\rho$ with its image under $\psi$. Let $\delta>0$ be such that if $r>1-\delta$, then for all sufficiently small $t$ the flow of $Z_M^t$ escapes to infinity. Because of \eqref{cd:framing 1}, for $t$ sufficiently small and $r\le1-\delta$, the original Liouville vector field $\hat Z_M$ has an $x$-component $(\hat Z_M)_x$ which is positive and bounded away from zero along $\hat F\times\{\frac{2t}3\}$. By \eqref{cd:cutoff support}, the same is true of $\hat Z_M^t$. By openness, we can find some angle $s_0\in(0,\frac\pi2)$ such that $\hat Z_M^t$ points out of $\hat F\times D^2_{\frac{2t}3}$ when $r\le1-\delta$ and the angular $D^2$-coordinate $\theta$ belongs to $[-s_0,s_0]$. Since $(\hat Z_M^t)_y$ vanishes on $F$, $s_0$ can be chosen to be independent of $t$. Indeed, as $t\to0$, $s_0$ could be taken to increase to $\frac\pi2$. We want to show that there exists some smaller $s$ such that $\hat Z_M^t$ takes
\[
\textstyle \hat F\times\left\{\frac t3e^{i\theta}\mid-s\le\theta\le s\right\}
\]
through
\[
\textstyle \left(\hat F\times\left\{\frac{2t}3e^{i\theta}\mid-s_0\le\theta\le s_0\right\}\right)\cup\big(\partial M\times(1-\delta,\infty)\big),
\]
as in Figure \ref{fig:finding the narrow stop}.

If we can, then we're done, since we know that the flow of $\hat Z_M^t$ starting anywhere in the latter set escapes to infinity. To accomplish this, it is enough to show that and  there exist positive constants $C$ and $D$, with $D$ small (strictly less than $\frac{c-1}{2^{c-1}-1}\tan s$), such that
\begin{equation}\label{cd:stays narrow}
\left|\frac{(\hat Z_M^t)_y}{(\hat Z_M^t)_x}\right|<C\cdot\left|\frac yx\right|+D\quad\text{ and }\quad(\hat Z_M^t)_x>0
\end{equation}
when $t$ is small, $\theta\in[-s_0,s_0]$, and $r\le1-\delta$. It turns out we can take $C=1+\varepsilon$ and $D=\varepsilon$ for arbitrary small $\varepsilon>0$. Specifically, remember that $Z_M^t$ is the vector field dual to
\[
\lambda_M^t=\kappa_t\cdot\left(\hat\lambda_F+\frac{xdy-ydx}2\right)+(1-\kappa_t)\lambda_M+h_0d\kappa_t.
\]
Note we've switched to the interior part of $M$, since that is where all our current problems live. Note also that $\left|(Z_M^t)_y\right|$ is given by $\left|\lambda_M^t(\frac\partial{\partial x})\right|$ and similarly with $x$ and $y$ switched. In the above formula, the $\hat\lambda_F$ term doesn't affect \eqref{cd:stays narrow}, so it can be ignored. Furthermore, to lowest order, the $h_0d\kappa_t$ term is strictly beneficial from the perspective of \eqref{cd:stays narrow}. To see this, note that there is a positive function $E\in C^\infty(F)$ such that $dh_0=-Edy+O(|z|)$ near $F\times\{0\}$, so that $h_0=-Ey+O(|z|^2)$. Thus, using conditions \eqref{cd:cutoff niceness}, we see that
\[
 h_0d\kappa_t=-E\frac{xy}{|z|}\frac{\partial\kappa_t}{\partial|z|}dx-E\frac{y^2}{|z|}\frac{\partial\kappa_t}{\partial|z|}dy+O(t).
\]
Now $\frac{\partial\kappa_t}{\partial|z|}$ is nonpositive by assumption, and $dx$ is dual to $-\frac\partial{\partial y}$, so the first term leads to a negative $y$-component when $y$ is positive and a positive $y$-component when $y$ is negative. This means that its contribution to $Z_M^t$ will never enlarge $|\theta|$. Similarly, the second term is dual to a nonnegative function times $\frac\partial{\partial x}$, so it too will never enlarge $|\theta|$. Hence, it suffices to show that \eqref{cd:stays narrow} can be satisfied with $h_0$ replaced by $\tild h_0=h_0+Ey$, i.e. after discarding the leading order term. 

For this, we will need to separately consider two pieces. Pick $w\in(\frac13,\frac23)$ to be such that $\kappa_t(p,wt)\ne1$ and, if $|z|<wt$, $\theta\in[-s_0,s_0]$, and $r\le1-\delta$, then
\begin{align}\label{cd:small z error 1}
\|\tild h_0d\kappa_t(p,z)\|&<\frac{\varepsilon\kappa_t}4x\\
\big(1-\kappa_t(p,wt)\big)\cdot\left\|d\left(\lambda_M\left({\textstyle \frac\partial{\partial x}}\right)\right)(p,0)\right\|&<\frac{\varepsilon\kappa_t}6.\label{cd:small z error 2}
\end{align}
When $|z|\ge wt$, there is a positive $t$-independent lower bound on $(1-\kappa_t)\lambda_M(\frac\partial{\partial y})$, whereas after replacing $h_0$ by $\tild h_0$ every other term of $\lambda_M^t(\frac\partial{\partial x})$ and $\lambda_M^t(\frac\partial{\partial y})$ tends uniformly to zero as $t\to0$. Thus, we can satisfy \eqref{cd:stays narrow} as long as we can reach $|z|=wt$. But when $|z|<wt$, we can again shrink $t$ so that \eqref{cd:small z error 2} implies
\begin{equation}\label{cd:small z error 3}
 (1-\kappa_t)|\lambda_M({\textstyle \frac\partial{\partial x}})|<\frac{\varepsilon\kappa_t}5x.
\end{equation}
But now, using the positivity of $\lambda_M(\frac\partial{\partial y})$, we have
\begin{align*}
 \left|\frac{(\hat Z_M^t)_y}{(\hat Z_M^t)_x}\right|&=\left|\frac{\frac12\kappa_ty-(1-\kappa_t)\lambda_M({\textstyle \frac\partial{\partial x}})-\tild h_0d\kappa_t({\textstyle \frac\partial{\partial x}})}{\frac12\kappa_tx+(1-\kappa_t)\lambda_M({\textstyle \frac\partial{\partial y}})+\tild h_0d\kappa_t({\textstyle \frac\partial{\partial y}})}\right|\\
 &\le\frac{|y|+\frac25\varepsilon x+\frac12\varepsilon x}{x-\frac12\varepsilon x}\\
 &<(1+\varepsilon)\cdot\left|\frac yx\right|+\varepsilon
\end{align*}
as desired.
\end{proof}

\begin{eg}\label{eg:boundary fibers}
Let $W\colon M\to\C$ be an exact Lefschetz fibration \cite{Seid_plt}, and let $\gamma\colon[0,\infty)\to\C$ be a properly embedded, asymptotically radial ray that avoids the critical values of $W$. Then we can modify the Liouville structure on $M$ in a neighborhood of $W^{-1}(\gamma)$ to obtain a stop modeled on $W^{-1}(\gamma(0))$.

More generally, we can do the above for any holomorphic fibration. This is the construction that we will use to define the partially wrapped Fukaya category of a Landau-Ginzburg model.
\end{eg}

\begin{eg}\label{eg:Legendrians}
Let $\Lambda\subset\partial M$ be a smooth closed Legendrian submanifold. By the Legendrian neighborhood theorem, $\Lambda$ has a tubular neighborhood $U\subset\partial M$ which is isomorphic as a contact manifold with 1-form to a convex neighborhood of the zero section in the 1-jet bundle $J^1\Lambda$. Identifying $U$ with that neighborhood, the submanifold
\[
 T^*\Lambda\cap U\subset U\subset M
\]
is then a Liouville hypersurface of $\partial M$, and applying Proposition \ref{prop:stops exist} gives a stop $\sigma_\Lambda$.
\end{eg}

\subsection{Pumpkin domains}

\begin{defn}\label{defn:ldds}
 From here on, the basic geometric object we will deal with is a \textbf{Liouville domain with disjoint stops}, or \textbf{pumpkin domain} for short. This is a triple $(M,\lambda_M,\ssigma)$ with $(M,\lambda_M)$ a Liouville domain and $\ssigma=\{\sigma_1,\dotsc,\sigma_k\}$ a collection of stops in $M$ such that the images of $\sigma_i$ and $\sigma_j$ are disjoint for $i\ne j$. For technical simplicity, we make the additional assumption that every stop $\sigma_i$ strictly preserves the Liouville form on $\hat M\setminus M$. Since a pumpkin domain only has finitely many stops, this can always be achieved by moving $\partial M$ out.

An \textbf{equivalence} of pumpkin domains $(M,\lambda_M,\ssigma)$ and $(M',\lambda_{M'},\ssigma')$ is a homotopy of collections of disjoint stops $\ssigma^t=\{\sigma_1^t,\dotsc,\sigma_k^t\}$, where $\sigma_i^t$ is a stop in $M$ with fiber $(F_i,\lambda_{F_i}^t)$, together with a Liouville isomorphism $\psi\colon M\to M'$ such that $\ssigma^0=\ssigma$ and $\psi\circ\sigma_i^1=\sigma_i'$. Here, $M$ and $M'$ are required to have the same number of stops.
\end{defn}

We'll usually abuse notation and use $M$ to refer to the pumpkin domain $(M,\lambda_M,\ssigma)$.

\begin{eg}\label{eg:C_n}
Fix a positive integer $n$, and consider the map $u_n\colon\C\to\C$ given by 
\[
 u_n(z)=z^{n+1}-1.
\]
Then $u^*\lambda_\C$ is almost a Liouville form on $\C$, except that its derivative vanishes at the origin. Choose a cutoff function $\kappa\colon\R\to[0,1]$ with $\kappa(x)=1$ for $x\le\frac14$ and $\kappa(x)=0$ for $x\ge\frac12$. Next, choose $\epsilon>0$ such that $u^*\omega_\C+\epsilon d(\kappa(|z|)\lambda_\C)$ is symplectic. Let $\ssigma^n=\{\sigma^n_0,\dotsc,\sigma^n_n\}$ be the set of $u_n$-lifts of the inclusion $\H_{\frac12}\into\C$ ordered counterclockwise with $\sigma^n_0$ specified by $\sigma^n_0(0)=1$. Then this data describes a pumpkin domain
\[
\C_n:=(\C,u^*\lambda_\C+\epsilon\kappa(|z|)\lambda_\C,\ssigma^n).
\]
Its underlying Liouville domain is Liouville isomorphic to $\C$, and it has $n+1$ stops, all with fiber the point. Note that $i\R\subset\C_1$ is invariant under the flow of the Liouville vector field.
\end{eg}

\begin{defn}\label{defn:stab}
Let $M$ be a Liouville domain. Then the $stabilization$ of $M$ is the pumpkin domain
\[
\Sigma M=M\times\C_1.
\]
As a Liouville domain, $\Sigma M$ is just isomorphic to the product $M\times\C$. As for the stops, there are two of them, both with fiber $M$, and their divisors sit over $1$ and $-1$.
\end{defn}

\begin{rmk}
Though we will not deal with them, one is sometimes given manifolds with stops that intersect. In this situation, it is reasonable to ask that the stops are \textbf{orthogonal}: it should be the case that if $\sigma_i$ has fiber $F_i$ and width $\rho_i$, then there is a Liouville splitting
\[
 \mathrm{image}(\sigma_1)\cap\mathrm{image}(\sigma_2)=(D_{\sigma_1}\cap D_{\sigma_2})\times \H_{\rho_1}\times \H_{\rho_2}
\]
that induces the splittings given by each of the stops individually. This gives rise to a more natural setting of Liouville domains with stops, not necessarily disjoint, and here the fiber of a stop will again be a Liouville domain with stops. Well definedness is achieved by induction on dimension. This approach has the advantage of being closed under products; in particular it admits arbitrary stabilizations.
\end{rmk}

\subsection{Hamiltonians for stops}

Let $M$ be a pumpkin domain. To obtain an invariant Floer theory, we will need to find a class of Hamiltonians on $M$ that is well adapted to the pumpkin structure. To state a compatibility condition, we need a convention for Hamiltonian vector fields, which we set as $dH=-\imath_{X_H}\omega$.

\begin{defn}\label{defn:compatible H}
A \textbf{compatible Hamiltonian} on $(M,\lambda_M,\ssigma)$ is a function $H\in C^\infty(\hat M)$ such that
\begin{enumerate}
\item $H$ is strictly positive.
\item $dH(\hat Z_M)=2H$ outside of a compact set. In other words, $H$ is quadratic in the symplectization coordinate. \label{cd:quadratic H}
\item $X_H$ is tangent to $D_\sigma$ for each stop $\sigma\in\ssigma$.\label{cd:Hamiltonian tangency}
\item For each stop $\sigma\in\ssigma$, $d\theta(X_H)$ is nowhere negative on a neighborhood of $\sigma(\hat F\times\R_+)$. Here, $\theta$ is the angular coordinate on the right half plane.\label{cd:Hamiltonian positivity}
\end{enumerate}
In particular, this last condition says that any integral curve for $X_H$ has only positive intersections with $\sigma(\hat F\times\R_+)$.
\end{defn}

It's worth noting that the space of Hamiltonians compatible with a given pumpkin domain forms a convex cone. Additionally, if one thinks of a Liouville domain $M$ as a pumpkin domain with no stops, then a compatible Hamiltonian on $M$ is just a positive quadratic Hamiltonian, as usual.

\begin{lemma}\label{lem:compatible H exist}
Every pumpkin domain admits a compatible Hamiltonian.
\end{lemma}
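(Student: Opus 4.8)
The plan is to build a compatible Hamiltonian by starting from a standard quadratic Hamiltonian on the completion and then correcting it near each stop so that conditions \eqref{cd:Hamiltonian tangency} and \eqref{cd:Hamiltonian positivity} hold, using that the stops are disjoint so the corrections can be made independently. First I would fix, for each stop $\sigma_i$ with fiber $F_i$ and width $\rho_i$, the product coordinates provided by Definition \ref{defn:stop}, in which $\sigma_i^*\hat\lambda_M=\hat\lambda_{F_i}+\lambda_{\H_{\rho_i}}$ (after absorbing the exact term, which we may do up to a compactly supported change that doesn't affect the completed structure). In these coordinates a natural local candidate is $H_i=H_{F_i}+\tfrac12|z|^2$, for which $X_{H_i}$ splits as $X_{H_{F_i}}$ on the fiber plus the rotational vector field $\partial_\theta$ on $\H_{\rho_i}$: this is tangent to $D_{\sigma_i}=\{z=0\}$ and has $d\theta(X_{H_i})\equiv 1>0$ near $\sigma_i(\hat F_i\times\R_+)$, so both \eqref{cd:Hamiltonian tangency} and \eqref{cd:Hamiltonian positivity} hold on the whole product neighborhood. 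Moreover $H_i$ is positive (after adding a constant, or using that $H_{F_i}$ can be chosen positive) and satisfies $dH_i(\hat Z)=2H_i$ outside a compact set, since the Liouville field also splits as $\hat Z_{F_i}+\tfrac12(x\partial_x+y\partial_y)$.

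Next I would patch these local models together with the global one. Choose any positive quadratic Hamiltonian $H_0$ on $\hat M$ (which is compatible for the stop-free structure), and for each $i$ choose a cutoff $\chi_i$ supported in the product neighborhood $N_i=\sigma_i(\hat F_i\times \H_{\rho_i})$, equal to $1$ on a smaller neighborhood of $\sigma_i(\hat F_i\times[0,\infty))$, and — crucially — invariant under the Liouville flow outside a compact set so that the quadratic-growth condition \eqref{cd:quadratic H} is preserved. Since the $N_i$ are pairwise disjoint, set
\[
 H=\Big(1-\sum_i\chi_i\Big)H_0+\sum_i\chi_i\,\big(c_i H_i\big),
\]
where the constants $c_i>0$ are chosen so that $c_iH_i$ agrees to sufficiently high order with $H_0$ along the locus where $\chi_i$ transitions — or, more simply, one first isotopes $H_0$ so that it already equals $c_iH_i$ on a neighborhood of each $\overline{N_i'}$, which is possible because there is no constraint on $H_0$ there beyond positivity and asymptotic quadraticity. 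Then on the support of each $\chi_i$ the Hamiltonian $H$ interpolates between two functions whose Hamiltonian vector fields both satisfy the tangency and positivity conditions.

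The main obstacle is that conditions \eqref{cd:Hamiltonian tangency} and \eqref{cd:Hamiltonian positivity} are conditions on $X_H$, not on $H$ itself, and convex combinations of Hamiltonians do not in general produce convex combinations of their vector fields unless the cutoff is constant — the $d\chi_i$ terms contribute $(\,H_0-c_iH_i\,)$ times (the Hamiltonian vector field of $\chi_i$), which could spoil tangency along $D_{\sigma_i}$ or positivity of $d\theta(X_H)$. The remark after Definition \ref{defn:compatible H} that compatible Hamiltonians form a convex cone is exactly what saves us: I would arrange that on the region where $d\chi_i\neq 0$ the two functions being interpolated, $H_0$ and $c_iH_i$, are literally equal (not merely close), so that $H$ equals a single compatible Hamiltonian there and the cutoff does nothing. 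This reduces the lemma to the prior isotopy step, which only requires matching a quadratic Hamiltonian to a prescribed one on a region carrying no constraints, and to checking that $c_iH_i$ is positive and asymptotically quadratic — both immediate from the product structure. Finally one checks positivity of the global $H$: away from the stops it is a positive combination of positive functions, and near each stop it equals $c_iH_i>0$, so $H$ is everywhere positive, completing the construction.
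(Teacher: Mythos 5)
There is a genuine gap: your local model $H_i=H_{F_i}+\tfrac12|z|^2$ does not satisfy the quadraticity condition \eqref{cd:quadratic H} outside a compact set. The complement of a compact subset of $\hat F_i\times\H_{\rho_i}$ contains the non-compact region where $p$ stays in a compact part of $\hat F_i$ while $|z|\to\infty$; there $dH_i(\hat Z)=dH_{F_i}(\hat Z_{F_i})+|z|^2$, and $dH_{F_i}(\hat Z_{F_i})\ne 2H_{F_i}$ because $p$ is not in the conical end of $\hat F_i$. So the assertion that ``$dH_i(\hat Z)=2H_i$ outside a compact set, since the Liouville field splits'' is false (and adding a constant to force positivity destroys quadraticity further). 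Since a stop is a \emph{proper} embedding, this bad region maps to a non-compact subset of $\hat M$, so the failure cannot be absorbed into the compact set of Definition \ref{defn:compatible H}. This is exactly the difficulty the paper's proof is built around: it replaces the naive sum by the Liouville-rescaled interpolation \eqref{eq:sum H}, which is genuinely quadratic at infinity on the product.

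Once that correction is made, your claim that the angular condition \eqref{cd:Hamiltonian positivity} comes for free also breaks down: for the corrected candidate, the $x$-derivative of the fiber term along $\R_+$ equals $e^{2h\circ f}(h\circ f)'(x)\bigl(2g-dg(\hat Z_F)\bigr)$, which is nonnegative only if one additionally arranges $dg(\hat Z_F)\le2g$ globally on the fiber; the paper achieves this by enlarging $g$ on the interior of $F$. Your patching step is essentially fine once a compatible local model on $\hat F\times\H_\rho$ exists --- the paper likewise treats the patching as immediate --- but constructing that local model is the actual content of the lemma, and that is what is missing from your argument.
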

\begin{proof}
We need to show that conditions \eqref{cd:Hamiltonian tangency} and \eqref{cd:Hamiltonian positivity} can be achieved. For this, let $(M,\lambda_M,\ssigma)$ be a pumpkin domain, and assume without loss of generality that $\ssigma$ has only one element $\sigma$ with fiber $F$ and width $\rho$. Fix a compatible Hamiltonian $g$ on $F$. We want to extend this compatibly to all of $\hat F\times \H_\rho$, since then we can just patch it into $M$. For that, choose a nondecreasing smooth function $a\colon\R_{\ge0}\to[0,1]$ with $a|_{[0,1]}=0$ and $a|_{[2,\infty]}=1$, and set $f(z)=|z|^4a(|z|^4)$ as a function on $\H_\rho$. Define $h\colon\R_{\ge0}\to\R_{\ge0}$ by $h(x)=\frac12a(x)\log x$. Our candidate Hamiltonian is given by
\begin{equation}\label{eq:sum H}
H_\text{cand}(p,z)=e^{2h(f(z))}g\bigl(\phi_F(-h(f(z)),p)\bigr)+e^{2h(g(p))}f\bigl(\phi_\C(-h(g(p)),z)\bigr)
\end{equation}
where $\phi_F(t,\cdot)$ and $\phi_\C(t,\cdot)$ are the time $t$ flows of the Liouville vector fields of $F$ and $\C$, respectively. One readily checks that $H_\text{cand}$ satisfies conditions (1)-(3), so we need only to find some condition under which it satisfies (4). Now, since $f$ is rotationally invariant, condition (4) is equivalent to the requirement that $\frac\partial{\partial x}H_\text{cand}\ge0$ for $z=x\in\R_+$. This clearly holds for the second term, and for the first we compute
\[
	\frac\partial{\partial x}e^{2h\circ f}g\bigl(\phi_F(-h(f(x)),p)\bigr)
	=e^{2h\circ f}(h\circ f)'(x)\cdot\bigl(2g-dg(\hat Z_F)\bigr).
\]
Since $h\circ f$ is nonnegative and nondecreasing, it is enough to require $dg(\hat Z_F)\le2g$ globally. This can be achieved by just making $g$ bigger on the interior of $F$.
\end{proof}

\begin{rmk}
We will usually have not one, but a family of compatible Hamiltonians parametrized by some space $\Sigma$. In this situation, we require that the compact set in condition \ref{cd:quadratic H} in Definition \eqref{defn:compatible H} can be chosen $\Sigma$-independently.
\end{rmk}

\subsection{Geometric gluing}

Let $(M,\lambda_M,\ssigma)$ and $(M',\lambda_{M'},\ssigma')$ be pumpkin domains. Let $F_i$ and $F_j'$ be the fibers of $\sigma_i$ and $\sigma_j'$, respectively. When $F_i$ and $F_j'$ are isomorphic, we would like to form a new pumpkin domain $M\glu{\sigma_i}{\sigma_j'}M'$. To do this, let us fix an isomorphism $\phi\colon F_i\to F_j'$. Replacing the 1-form $\lambda_{F_j'}$ by $\phi^*\lambda_{F_i}$, we can assume $\phi$ is an isomorphism of exact symplectic manifolds. Due to the noncompact $\H_\rho$ factor in the domain of $\sigma_j'$, this causes $\sigma_j'$ to cease being a stop. To fix that, modify $\hat\lambda_{M'}$ so that it agrees with the new $\hat\lambda_{F_j'}+\lambda_{\H_\rho}$ on $\sigma_j'(\hat F\times\H_\rho)$, where $\rho$ is half the width of $\sigma_j'$, and is unchanged outside the image of $\sigma_j'$. By choosing the modification to be exact and translation-invariant in the imaginary direction, we can ensure that it is globally bounded, so that the new 1-form is still outward-pointing near infinity. Reducing the width $\sigma_j'$ by half turns it back into an honest stop. At this point $\sigma_i$ and $\sigma_j'$ are stops with the {\em same} fiber $F$, and so we can make one last modification of $\lambda_M$ and $\lambda_{M'}$, this one compactly supported and exact, to assume that $\sigma_i$ and $\sigma_j'$ themselves strictly preserve the Liouville forms.

That done, we can write down the gluing. Pick a positive number $a$ that is smaller than the widths $\rho_i$ and $\rho_j'$ of $\sigma_i$ and $\sigma_j'$. With this data, we can define the underlying Liouville domain of $M\glu{\sigma_i}{\sigma_j'}M'$ as
\[
	\bigl(\hat M\setminus\sigma_i(\hat F\times\{\Re(z)\ge a\})\bigr)\amalg\bigl(\hat M'\setminus\sigma_j'(\hat F\times\{\Re(z)\ge a\})\bigr)\big/\sim
\]
where $\sim$ is the identification
\[
	\sigma_i(\hat F\times\{-a<\Re(z)<a\})=\sigma_j'(\hat F\times\{-a<\Re(z)<a\})
\]
via $(p,z)\mapsto(p,-z)$. The stops are just
\[
	\ssigma_{M\glu{\sigma_i}{\sigma_j'}M'}:=(\ssigma\setminus\{\sigma_i\})\amalg(\ssigma'\setminus\{\sigma_j'\}),
\]
which makes sense since the stops are disjoint.

Suppose now that we had a 1-parameter family of pumpkin domains $(M,\lambda_M,\ssigma^t)$, that is an equivalence between $(M,\lambda_M,\ssigma^0)$ and $(M,\lambda_M,\ssigma^1)$. Then the diffeomorphism type of $M\glu{\sigma_i^t}{\sigma_j'}M'$ is independent of $t$, and by Moser's lemma we get a family of Liouville isomorphisms
\[
 \Psi_t\colon M\glu{\sigma_i^0}{\sigma_j'}M'\to M\glu{\sigma_i^t}{\sigma_j'}M'.
\]
Pulling back the stops in $M\glu{\sigma_i^t}{\sigma_j'}M'$ via $\Psi_t$, we see that our homotopy of stops in $M$ results only in a homotopy of stops in the gluing. Repeating this on the $M'$ side, we obtain

\begin{lemma}\label{lem:geom gluing}
Gluing descends to an operation on equivalence classes of pumpkin domains, and at this level it depends only on the triple $(\sigma_i,\sigma_j',[\phi])$. Here, $[\phi]$ is the connected component that $\phi$ belongs to in the space of Liouville isomorphisms from $F_i$ to $F_j'$.
\qed
\end{lemma}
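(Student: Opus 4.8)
The plan is to trace through the gluing construction and verify that each of its choices either does not affect the outcome up to equivalence, or affects it only through the data listed in the statement. Recall the choices made in building $M\glu{\sigma_i}{\sigma_j'}M'$: (a) an isomorphism $\phi\colon F_i\to F_j'$; (b) the various compactly supported exact modifications of $\lambda_M$ and $\lambda_{M'}$ needed to make $\phi$ an isomorphism of exact symplectic manifolds and to make $\sigma_i,\sigma_j'$ strictly preserve the Liouville forms; (c) the gluing parameter $a$; and (d), implicitly, the representatives of the equivalence classes $[M,\lambda_M,\ssigma]$ and $[M',\lambda_{M'},\ssigma']$. We must show the equivalence class of the glued pumpkin domain depends only on $(\sigma_i,\sigma_j',[\phi])$.

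First I would handle the parameter $a$. If $a<a'$ are two admissible choices, then the two glued manifolds are manifestly Liouville isomorphic: both are obtained from the same pieces of $\hat M$ and $\hat M'$ by the same identification, and changing $a$ only changes how much of the collar neighborhood $\sigma_i(\hat F\times\{-a<\Re z<a\})$ one retains versus re-attaches as part of the symplectization. Since the stops strictly preserve the Liouville form on $\hat M\setminus M$ (part of the definition of pumpkin domain, and arranged in step (b)), the identification region is a genuine Liouville subdomain and the two gluings differ only by moving out $\partial M$; this is a Liouville isomorphism carrying stops to stops, hence an equivalence. Next I would handle the compactly supported exact modifications in step (b): any two such choices differ by a compactly supported exact $1$-form, and the interpolating family is a family of pumpkin domains with $t$-independent behavior near infinity, so Lemma \ref{lem:moser} (Moser) produces a Liouville isomorphism; one checks it can be taken to fix the relevant stops, giving an equivalence. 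The same Moser argument, using Corollary \ref{cor:moser}, shows independence of the isomorphism $\phi$ within its connected component $[\phi]$: a path from $\phi_0$ to $\phi_1$ in the space of Liouville isomorphisms $F_i\to F_j'$ yields a $1$-parameter family of gluings, to which we apply the last paragraph of the subsection verbatim (a homotopy of the identification data is absorbed into a homotopy of stops, i.e.\ an equivalence).

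It then remains to show the gluing is well-defined on equivalence classes of the inputs, which is exactly what the paragraph preceding the statement establishes: an equivalence between $(M,\lambda_M,\ssigma^0)$ and $(M,\lambda_M,\ssigma^1)$, i.e.\ a homotopy of stops $\ssigma^t$ together with a Liouville isomorphism $\psi\colon M\to M'$, induces via Moser a family of Liouville isomorphisms $\Psi_t\colon M\glu{\sigma_i^0}{\sigma_j'}M'\to M\glu{\sigma_i^t}{\sigma_j'}M'$, and pulling back the stops shows the homotopy of stops in $M$ becomes a homotopy of stops in the gluing; repeating on the $M'$ side (and using the Liouville isomorphism $\psi$ to identify glued manifolds) shows the gluing descends. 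Assembling these pieces — $a$-independence, independence of the auxiliary exact modifications, $\phi$-independence within $[\phi]$, and descent to equivalence classes of the inputs — proves that the operation factors through equivalence classes and depends only on $(\sigma_i,\sigma_j',[\phi])$. The main obstacle is bookkeeping rather than conceptual: at each stage one must verify that the Moser isomorphisms can be chosen to respect \emph{all} the stops (not just $\sigma_i,\sigma_j'$), which requires checking that the closed subset $K$ in Lemma \ref{lem:moser} can be taken to contain neighborhoods of the other divisors where nothing is being changed, and that the various modifications in step (b) are supported away from those divisors — this is possible because the stops are disjoint and all modifications are localized near $\sigma_i(\hat F\times\R_+)$ or $\sigma_j'(\hat F\times\R_+)$.
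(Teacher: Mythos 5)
Your proposal is correct and follows essentially the same route as the paper: the paper's entire proof is the Moser argument in the paragraph immediately preceding the lemma (a homotopy of stops yields, via Lemma \ref{lem:moser}, a family of Liouville isomorphisms of the glued domains under which the stops are pulled back to a homotopy of stops), and your verification of independence from the auxiliary choices ($a$, the compactly supported exact corrections, and $\phi$ within $[\phi]$) uses the same Moser/Corollary \ref{cor:moser} toolkit that the paper leaves implicit. The extra bookkeeping you flag about choosing the Moser isotopies to respect all the stops is exactly the right point to check and is handled as you describe, since the stops are disjoint and the modifications are localized.
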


We will often want to study objects which live ``near'' a stop but do not intersect its divisor. To give this a precise meaning, we will often make use of the following construction.

\begin{defn}\label{defn:trivial gluing}
 Let $(M,\lambda_M,\ssigma)$ be a pumpkin domain, and let $\sigma\in\ssigma$ be a stop with fiber $F$. Then the \textbf{trivial gluing} at $\sigma$, written $M[\sigma]$, is the pumpkin domain $M\glu{\sigma}{\sigma_0}\Sigma F$.
\end{defn}

Trivial gluing effectively replaces $\sigma$ with $\sigma_1$ and doesn't change the pumpkin equivalence class of $M$. Indeed, it can be achieved by homotoping $\lambda_M$ in the class of Liouville forms and moving $\sigma$ out. The benefit of trivial gluing is that it gives rise to the globally $\hat Z_M$-invariant hypersurface $\hat F\times i\R$.

\section{Partially wrapped Fukaya categories}\label{ch:PWFC}

\subsection{Lagrangian Floer cohomology}

For convenience of notation, we'll assume everything in sight is graded. Specifically, we require that all of our Liouville domains satisfy $2c_1(M)=0$, and further that they come with a choice of fiberwise universal cover $\tild{LGr}(M)$ of their bundle of unoriented Lagrangian Grassmannians. Given two Liouville domains $M_1$ and $M_2$, their product is graded in the unique way that extends $\tild{LGr}(M_1)\times_\Z\tild{LGr}(M_2)$. All codimension zero symplectic embeddings will be assumed to preserve these covers.

\begin{defn}\label{defn:pumpkin Lag}
  Given a pumpkin domain $M$, a \textbf{Lagrangian} $L\subset M$ is an exact, properly embedded Lagrangian submanifold of $\hat M$ which is parallel to $\hat Z_M$ outside of a compact set. It is required to be graded in the standard sense, namely that it is equipped with a lift to $\tild{LGr}(M)$ of the natural section $L\to LGr(M)$. For compatibility with the pumpkin structure, we require that $L$ does not intersect any $\sigma_i(\hat F\times\R_{\ge0})$.
 
 An \textbf{interior Lagrangian} is a Lagrangian which completely avoids the images of the stops. It is easy to see that any Lagrangian is isotopic to an interior Lagrangian.
\end{defn}

Given a compatible Hamiltonian $H$ on $M$, we want to consider a class $\mathcal J(M,H)$ of almost complex structures which are adapted to $H$. An element $J\in\mathcal J(M,H)$ is a smooth almost complex structure on $\hat M$ which is compatible with $\hat\omega_M$ and satisfies the following three conditions. First, there is some $c>0$ such that
\begin{gather}\label{eq:compatible J}
	dH\circ J=-cH\hat\lambda_M
\end{gather}
outside of a compact set. Second, the restriction $J|_{\ker dH\cap\ker\hat\lambda_M}$, i.e. the contact portion of $J$, is asymptotically $\hat Z_M$-invariant: there is some compatible $J'$ which is $\hat Z_M$-invariant outside a compact set such that the associated metric $g_J$ approaches $g_{J'}$ in the $C^0$ norm defined by $g_{J'}$. Third, for each stop $\sigma\in\ssigma$, we require that the projection to $\H_\rho$ is holomorphic along $D_\sigma$. In other words, the divisor of each stop is required to be an almost complex submanifold, and the restriction of $J$ to its symplectic orthogonal coincides with multiplication by $i$ in the base.

\begin{lemma}\label{lem:Js exist}
For any pumpkin domain $(M,\lambda_M,\ssigma)$ and compatible Hamiltonian $H$, the space $\mathcal J(M,H)$ is contractible and non-empty.
\end{lemma}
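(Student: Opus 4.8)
The plan is to imitate the classical proof that the space of $\hat\omega_M$-compatible almost complex structures on $\hat M$ is contractible --- namely, that this space is a deformation retract, via the polar decomposition $g\mapsto J_g$, of the convex space of all Riemannian metrics on $\hat M$ --- and to check that the three conditions cutting out $\mathcal J(M,H)$ survive this picture once the constant $c$ in \eqref{eq:compatible J} is normalized. Throughout I will use that outside a compact set the two-plane $\Pi=\langle\hat Z_M,X_H\rangle$ is symplectic (since $\hat\omega_M(\hat Z_M,X_H)=dH(\hat Z_M)=2H\neq0$, $H$ being positive), that $\hat\omega_M$-orthogonally $T\hat M=\Pi\oplus\xi$ with $\xi=\ker dH\cap\ker\hat\lambda_M$ the contact portion, that $\xi$ is invariant under the Liouville flow outside a compact set (because $\ker\hat\lambda_M$ and $\ker dH$ are), and that \eqref{eq:compatible J} is equivalent to the rigid prescription $JX_H=-cH\hat Z_M$ on $\Pi$.

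For \textbf{non-emptiness} I would build an element of $\mathcal J(M,H)$ by a local-to-global argument. Fix $c>0$ once and for all. On the part of $\hat M$ near infinity lying away from the stops, set $J|_\Pi$ to be the structure forced by \eqref{eq:compatible J} --- one checks directly that this is $\hat\omega_M|_\Pi$-compatible and squares to $-1$ for every $c>0$ --- and let $J|_\xi$ be any $\hat Z_M$-invariant compatible complex structure, which exists by pushing forward a choice made over $\partial M$. Near each divisor $D_\sigma$, working in the stop chart where $\sigma^*\hat\lambda_M=\hat\lambda_F+\lambda_{\H_\rho}+df$, I would take $J$ to preserve $TD_\sigma$ and to equal $i$ on the symplectic normal bundle $N$; the point to verify is that at points of $D_\sigma$ this is consistent with \eqref{eq:compatible J}, which works because $X_H$ is tangent to $D_\sigma$ and $\hat Z_M$ restricts there to $\hat Z_F$, so both $dH$ and $\hat\lambda_M$ annihilate $N$, while on $TD_\sigma$ one only needs $J$ adapted to the quadratic Hamiltonian induced by $H|_{D_\sigma}$ on $\hat F$ with the same $c$ --- possible by the classical existence statement for Liouville domains without stops. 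These prescriptions are defined on a closed subset of $\hat M$, and they extend over the relatively compact remainder because compatible almost complex structures are sections of a bundle with contractible fibers, hence carry no extension obstruction; since the three conditions only constrain $J$ on the closed subset, the extension lies in $\mathcal J(M,H)$.

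For \textbf{contractibility} I would proceed in two steps. First, since $c$ is recovered from $J$ via \eqref{eq:compatible J}, deformation-retract $\mathcal J(M,H)$ onto its subspace $\mathcal J(M,H)_1$ where $c=1$, along $c_t=(1-t)c+t$, altering only $J|_\Pi$ and leaving $J|_\xi$ and the behavior along each $D_\sigma$ fixed; the only subtle point in checking that this stays inside $\mathcal J(M,H)$ is that $\Pi\subset TD_\sigma$ along $D_\sigma$, so modifying $J|_\Pi$ does not disturb the conditions at the divisors. Second, run the polar-decomposition argument on $\mathcal J(M,H)_1$: in terms of the metric $g=\hat\omega_M(\cdot,J\cdot)$, condition \eqref{eq:compatible J} with $c=1$ says that $g$ equals a fixed metric on $\Pi$ (affine), asymptotic $\hat Z_M$-invariance of $J|_\xi$ is a convex condition on $g|_\xi$, and the divisor condition says that $TD_\sigma$ and $N$ are $g$-orthogonal along $D_\sigma$ with $g|_N$ the standard metric of the $\H_\rho$-factor (affine). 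The metrics obeying all of these form a non-empty (by the previous step) convex subset of the space of metrics, hence are contractible, and $g\mapsto J_g$ carries this subset onto $\mathcal J(M,H)_1$ while restricting to the identity on it; so $\mathcal J(M,H)_1$, and therefore $\mathcal J(M,H)$, is contractible.

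The step I expect to be the main obstacle is the \emph{simultaneous} realizability of the three conditions in the region near infinity where the stop divisors live: one has to confirm that the rigid prescription $JX_H=-cH\hat Z_M$ on $\Pi$, the asymptotic $\hat Z_M$-invariance on $\xi$, and $J|_N=i$ on $N$ can be met at once --- this is exactly what forces the inclusions $\Pi\subset TD_\sigma$ and $N\subset\xi$ along $D_\sigma$, and uses the observation that the $\hat Z_M$-flow acts on $N$ by a positive real scalar, hence commutes with $i$ --- and, for contractibility, that these same conditions become affine or convex in the metric picture so the polar-decomposition retraction preserves them. Everything else (compatibility of the prescribed structures, closedness of the various subsets of metrics, continuity of the homotopies) is routine.
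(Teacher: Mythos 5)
Your non-emptiness argument is essentially the paper's: the paper also reduces to the divisors, picks $J_F\in\mathcal J(F,H|_{\hat F})$, observes that the symplectic orthogonal to $D_\sigma$ lies in $\ker dH\cap\ker\hat\lambda_M$ outside a compact set (so setting $J=i$ there is consistent with \eqref{eq:compatible J}), and then extends over the rest of $\hat M$ using contractibility of the fibers. That is in fact all the paper proves; it leaves contractibility to the reader, so your second half is extra content rather than a divergence.

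One wrinkle in that extra content. Your first retraction, ``deform $c$ to $1$ along $c_t=(1-t)c+t$, altering only $J|_\Pi$,'' cannot be performed literally: \eqref{eq:compatible J} is imposed only outside a fixed compact set $K$, and inside $K$ the almost complex structure is unconstrained, so replacing $J|_\Pi$ by the $c_t$-prescription outside $K$ while leaving $J$ untouched inside produces a discontinuity along $\partial K$ whenever $c_t\ne c$. This matters because, as you implicitly recognize by normalizing $c$ first, the union over $c>0$ of the metric conditions $g(X_H,X_H)=2cH^2$, $g(\hat Z_M,\hat Z_M)=2/c$ is a hyperbola in the $\Pi$-block and is not convex, so one cannot simply skip the normalization. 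The fix is easy: the paper's convention leaves a collar (e.g.\ $H^{-1}([r,r+1])$) on which $H$ is already quadratic but \eqref{eq:compatible J} is not yet required, and one should realize the homotopy $c\to1$ by imposing the $c_t$-prescription only outside the collar and interpolating the associated metrics across it (then converting back via polar decomposition), which keeps everything smooth and inside $\mathcal J(M,H)$. With that adjustment, and granting the routine verification that asymptotic $\hat Z_M$-invariance on $\xi$ and the divisor conditions are convex in the metric picture, your contractibility argument goes through.
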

\begin{proof}
  We prove only the last part, for which it is enough to construct such an almost complex structure near the divisor of a stop $\sigma$. Let $F$ be the fiber of $\sigma$, and pick an almost complex structure $J_F\in\mathcal J(F,H|_{\hat F})$. Since the symplectic orthogonal to $D_\sigma$ lies in the kernel of both $dH$ and $\lambda_M$ outside of a compact set, there is no obstruction to extending $J_F$ to $T\hat M|_{D_\sigma}$ while satisfying Equation \eqref{eq:compatible J}. Now just extend to the rest of $\hat M$.
\end{proof}

To endow $\mathcal J(M,H)$ with the structure of a complete metric space, one needs to fix the compact set for \eqref{eq:compatible J}. This prevents the existence of a sequence of almost complex structures which satisfies \eqref{eq:compatible J} only outside of ever larger compact sets, so that the limit satisfies it nowhere. To obtain transversality results, we will require that $H$ is quadratic and all Lagrangians are $\hat Z_M$-invariant outside the compact set. We choose the compact sets implicitly as part of the data of $H$, for example to equal $H^{-1}((-\infty,r+1])$, where $H^{-1}((-\infty,r])$ is the smallest sublevel set of $H$ outside of which it is strictly quadratic and the Lagrangians are strictly conical.

We will in fact need time-dependent, or more generally domain-dependent almost complex structures. For this, suppose $\Sigma$ is a smooth manifold, possibly with boundary or corners, and that we've chosen a $\Sigma$-parametrized family of compatible Hamiltonians $H$. Denote by $\mathcal J^\Sigma(M,H)$ the set of smooth maps $J\colon\Sigma\to\mathcal J(\hat M,\hat \omega_M)$ satisfying
\[
	J(z)\in\mathcal J(M,H(z))
\]
for all $z\in\Sigma$, and such that \eqref{eq:compatible J} holds pointwise outside of a $\Sigma$-independent compact subset of $\hat M$. Here, $\mathcal J(\hat M,\hat \omega_M)$ is the space of all $\hat\omega_M$-compatible almost complex structures. Likewise, for families of domain-dependent almost complex structures, we require that the compact set can be chosen uniformly for the family. In practice, we will choose the compact set implicitly to be a sublevel set for the family of Hamiltonians.

Now suppose $L_0$ and $L_1$ are Lagrangians in $M$, and $H$ is a compatible Hamiltonian. $H$ is called \textbf{nondegenerate} for the pair $(L_0,L_1)$ if $\phi(L_0)$ is transverse to $L_1$, where $\phi$ is the time 1 flow of $X_H$. If $H$ is nondegenerate, set $\mathscr X(L_0,L_1;H)$ to be the set of time 1 $X_H$-chords starting on $L_0$ and ending on $L_1$. Since everything was graded, chords $\gamma\in\mathscr X(L_0,L_1)$ are equipped with a degree $\deg(\gamma)$ given by topological intersection number with the Maslov cycle.

For $J\in\mathcal J^{[0,1]}(M,H)$, we consider maps
\[
	u\colon\R\times[0,1]\to\hat M
\]
mapping $\R\times\{0\}$ to $L_0$ and $\R\times\{1\}$ to $L_1$. For fixed $\gamma_+$ and $\gamma_-$ in $\mathscr X(L_0,L_1,H)$, let $\tild{\mathcal R}(\gamma_+;\gamma_-)$ be the collection of such maps satisfying Floer's equation
\begin{equation}\label{eq:Floer}
	\partial_s u+J(t)(\partial_t u-X_H)=0
\end{equation}
with $s$ and $t$ the coordinates on $\R$ and $[0,1]$, respectively, and such that $\lim_{s\to\pm\infty}u(s,\cdot)=\gamma_\pm$. The transversality arguments in \cite{Floe-Hofe-Sala} show that for generic $J\subset\mathcal J^{[0,1]}(M,H)$, $\tild{\mathcal R}(\gamma_+;\gamma_-)$ is transversely cut out for every $\gamma_\pm\in\mathscr X(L_0,L_1)$. In this case, it has dimension $\deg(\gamma_-)-\deg(\gamma_+)$, and the translation $\R$-action on $\tild{\mathcal R}(\gamma_+;\gamma_-)$ is free if and only if $\gamma_+\ne\gamma_-$.

Fixing a regular $J\subset\mathcal J^{[0,1]}(M,H)$, i.e. which achieves transversality, define $\mathcal R(\gamma_+,\gamma_-):=\tild{\mathcal R}(\gamma_+;\gamma_-)/\R$ whenever $\deg(\gamma_-)>\deg(\gamma_+)$. This is again a smooth manifold, and it has a Gromov-Floer compactification $\ol{\mathcal R}(\gamma_+;\gamma_-)$ given by adding broken Floer trajectories \cite{Floe_mtli}.

In particular, if $\deg(\gamma_-)-\deg(\gamma_+)=1$, then $\ol{\mathcal R}(\gamma_+;\gamma_-)=\mathcal R(\gamma_+;\gamma_-)$ is a finite set, and if $\deg(\gamma_-)-\deg(\gamma_+)=2$, then $\ol{\mathcal R}(\gamma_+;\gamma_-)$ is a compact 1-manifold with boundary, where
\[
 \partial\ol{\mathcal R}(\gamma_+;\gamma_-)=\hspace{-.7cm}\coprod_{\substack{
   \tild\gamma\in\mathscr X(L_0,L_1)\\ 
   \deg(\tild\gamma)=\deg(\gamma_+)+1}}\hspace{-.8cm}
  \bigl(\mathcal R(\gamma_+;\tild\gamma)\times\mathcal R(\tild\gamma;\gamma_-)\bigr).
\]
Note that this sum is finite for action reasons.

Let $\K$ be a field of characteristic 2. We define a graded vector space $CW^*(L_0,L_1)$ by degree as
\[
	CW^k(L_0,L_1)=\hspace{-.3cm}\bigoplus_{\substack{\gamma\in\mathscr X(L_0,L_1)\\
				\deg(\gamma)=k}}\hspace{-.4cm}\K\gamma.
\]
The wrapped Floer differential
\[
	\delta\colon CW^k(L_0,L_1)\to CW^{k+1}(L_0,L_1)
\]
is given by
\[
	\delta\gamma_+=\hspace{-.8cm}\sum_{\deg(\gamma_-)-\deg(\gamma_+)=1}\hspace{-.9cm}
			\#\mathcal R(\gamma_+;\gamma_-)\cdot\gamma_-,
\]
where $\#\mathcal R(\gamma_+;\gamma_-)$ is the mod-2 count of elements of $\mathcal R(\gamma_+;\gamma_-)$. Now, $\delta^2$ counts broken trajectories connecting chords of index difference 2, which are precisely elements of the boundary of some one-dimensional moduli space of the above type. This implies $\delta^2=0$ as usual, so $\big(CW^*(L_0,L_1),\delta\big)$ is a cochain complex, called the \textbf{wrapped Floer cochain complex} of $L_0$ with $L_1$.

Each stop $\sigma\in\ssigma$ induces a filtration by $\N$ on $CW^*(L_0,L_1)$ as follows: Condition \eqref{cd:Hamiltonian positivity} in Definition \ref{defn:compatible H} means that for any $\gamma\in\mathscr X(L_0,L_1)$, the intersections of $\gamma$ with $\sigma(\hat F\times\R_+)$ are all positive. Denote the number of such intersections $n_\sigma(\gamma)$.

\begin{lemma}\label{lem:intersections filter}
The Floer differential $\delta$ never increases $n_\sigma$. In other words, $n_\sigma$ induces a filtration on wrapped Floer cochain complexes.
\end{lemma}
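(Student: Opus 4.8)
The plan is to reduce to a statement about a single Floer strip and then run an intersection-theoretic argument against the divisor $D_\sigma$. Concretely, it suffices to show that any $u\in\tild{\mathcal R}(\gamma_+;\gamma_-)$ satisfies $n_\sigma(\gamma_+)\ge n_\sigma(\gamma_-)$; I would in fact prove the sharper identity $n_\sigma(\gamma_+)-n_\sigma(\gamma_-)=\sum_{p\in u^{-1}(D_\sigma)}m_p$, where $m_p\in\Z_{>0}$ is the local intersection multiplicity of $u$ with $D_\sigma$ at $p$. The inputs are all in place: $X_H$ is tangent to $D_\sigma$ by condition \eqref{cd:Hamiltonian tangency}, $D_\sigma$ is a $J$-holomorphic submanifold of codimension two by the defining properties of $\mathcal J(M,H)$, and $L_0,L_1$ avoid $\sigma(\hat F\times\R_{\ge0})\supseteq D_\sigma$ by Definition \ref{defn:pumpkin Lag}. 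Since the $\gamma_\pm$ follow $X_H$ and have endpoints on the $L_i$, tangency of $X_H$ forces them off $D_\sigma$ entirely, so $u^{-1}(D_\sigma)$ is a compact subset of the interior of $Z$; and $u$ is not contained in $D_\sigma$ because its boundary is not. Standard positivity-of-intersection arguments — applying the similarity principle near each point of $u^{-1}(D_\sigma)$ to $u$ composed with $\sigma^{-1}$ and the projection to $\H_\rho$, which is legitimate precisely because $D_\sigma$ is $J$-complex and $X_H$ is tangent to it — then give that $u^{-1}(D_\sigma)$ is finite, carries a positive multiplicity $m_p$ at each point, and in suitable local holomorphic coordinates near $p$ the $\H_\rho$-component of $u$ agrees to leading order with $w\mapsto w^{m_p}$.

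Next I would set $W:=\sigma(\hat F\times\R_+)$, a hypersurface limiting onto $D_\sigma$, co-oriented by the angular coordinate $\theta$; condition \eqref{cd:Hamiltonian positivity} says exactly that every $X_H$-chord meets $W$ transversally and positively, so $n_\sigma(\gamma_\pm)$ is a transverse, positively-signed count of $\gamma_\pm\cap W$. After a $C^1$-small perturbation of $W$ supported away from $D_\sigma$ — small enough to keep it transverse to the finitely many chords, hence not to change the numbers $n_\sigma(\gamma_\pm)$ — I may also assume $u$ is transverse to $W$ on $Z\setminus u^{-1}(D_\sigma)$. Then $C:=u^{-1}(W)$ is a $1$-manifold whose boundary is $u^{-1}(D_\sigma)$, where near $p$ it is the preimage of a ray under $w\mapsto w^{m_p}$, hence $m_p$ arcs emanating from $p$. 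It meets $\R\times\{0,1\}$ nowhere (the $L_i$ avoid $W$), and for $|s|$ large it consists of exactly $n_\sigma(\gamma_\pm)$ horizontal ends, one over each point of $\gamma_\pm\cap W$.

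Finally I would orient $C$ using the orientation of $Z$ together with the co-orientation of $W$, and read off the orientations at the three kinds of boundary/end points. Because $\gamma_+$ and $\gamma_-$ both cross $W$ in the positive co-normal direction, the $n_\sigma(\gamma_+)$ ends at $s=+\infty$ turn out to be the sinks of their arcs while the $n_\sigma(\gamma_-)$ ends at $s=-\infty$ are sources; and the local model $w\mapsto w^{m_p}$ makes each of the $m_p$ arcs at $p$ point out of $p$, so $p$ is a source. Compactifying the horizontal ends to boundary points turns $C$ into a compact oriented $1$-manifold with boundary in which every non-closed component has exactly one source and one sink, so $n_\sigma(\gamma_-)+\sum_p m_p=n_\sigma(\gamma_+)$, as claimed; in particular $n_\sigma(\gamma_-)\le n_\sigma(\gamma_+)$, and since $\delta\gamma_+$ is supported on such $\gamma_-$, the differential never increases $n_\sigma$.

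The one genuinely delicate step is the local analysis at a point of $u^{-1}(D_\sigma)$: one must know that there $u^{-1}(W)$ is exactly $m_p$ positively-crossing arcs running out of $p$, and control their orientations. This is where the $J$-complexity of $D_\sigma$ and the tangency of $X_H$ are used essentially, through the local structure theory for pseudoholomorphic curves near a $J$-complex submanifold; everything else is soft and only uses that the Lagrangians and chords are disjoint from $D_\sigma$ and cross $W$ positively. A cosmetic alternative is to phrase the whole count as the equality $n_\sigma(\gamma_+)-n_\sigma(\gamma_-)=u\cdot D_\sigma$ via a transgression form for the Thom class of $D_\sigma$, but this trades the local model for fussier global bookkeeping with the support of the primitive along the $L_i$, so I would keep the direct argument above.
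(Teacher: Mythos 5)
Your proof is correct and follows essentially the same route as the paper's: identify $n_\sigma(\gamma_+)-n_\sigma(\gamma_-)$ with the topological intersection number of $u$ with $D_\sigma$ (the paper does this via the winding number of $\partial u$, you via the cobordism $u^{-1}(W)$), and then conclude by positivity of intersections with the $J$-complex divisor $D_\sigma$ (the paper via Gromov's trick, you via the similarity principle applied to the $\H_\rho$-projection, which are the same mechanism). Your version just makes the bookkeeping more explicit.
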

\begin{proof}
  Suppose $u\in\mathcal R(\gamma_+;\gamma_-)$. Since our Lagrangians avoid $\sigma(\hat F\times\R_{\ge0})$, the winding number of $\partial u$ about $D_\sigma$ coincides with the difference ${n_\sigma}(\gamma_+)-n_\sigma(\gamma_-)$. By definition, this winding number also gives the topological intersection number of $u$ with $D_\sigma$. Thus, it is enough to show that $u$ has only positive intersections with $D_\sigma$.

Recall Gromov's trick, which interprets $H$-perturbed holomorphic curves as unperturbed holomorphic sections of $\R\times[0,1]\times\hat M$, for a special choice of almost complex structure. Since $J_t$ fixes $D_\sigma$ and $X_H$ is tangent to $D_\sigma$, Gromov's trick will present $\R\times[0,1]\times D_\sigma$ as an almost complex submanifold of $\R\times[0,1]\times\hat M$. This means its intersections with the section given by $u$ are all positive \cite[Theorem 2.88]{Wend_lhc}, and linear algebra shows that the same holds for the original intersections.
\end{proof}

Combining the above for all the stops $\sigma_i\in\ssigma$, we get a filtration on $CW^*(L_0,L_1)$ by $\N^{|\ssigma|}$.

\begin{defn}\label{defn:pw Floer complex}
The \textbf{partially wrapped Floer cochain complex} of $L_0$ with $L_1$, denoted $CW_\ssigma^*(L_0,L_1)$, is the 0-filtered part of $CW^*(L_0,L_1)$. In other words, it is the subcomplex generated by those $H$-chords which don't traverse any of the stops.
\end{defn}

\subsection{$A_\infty$ categories}

We'll now construct the Fukaya $A_\infty$-categories that enhance the above Floer complexes. To begin, we establish some notation for associahedra and strip-like ends.

Following \cite{Seid_plt}, for $d\ge2$, let $\mathcal R^{d+1}$ denote the space of disks with $d+1$ boundary punctures, labeled $\zeta_0$ to $\zeta_d$ and ordered counterclockwise, modulo conformal equivalence. $\mathcal R^{d+1}$ lives naturally as interior of the $d$'th Stasheff associahedron $\ol{\mathcal R}^{d+1}$, where the boundary faces are products of lower dimensional associahedra indexed by irreducible rooted trees with $d$ ordered leaves. To be explicit, by irreducible we mean that the root vertex has valency at least two and the internal vertices have valency at least three.

Associated to the associahedra are their dg-operad of top cells, and an \textbf{$A_\infty$-category} is a category over this operad. Explicitly, an $A_\infty$-category $\mathcal A$ consists of
\begin{enumerate}
 \item A collection of objects $\mathrm{Ob}\mathcal A$.
 \item For each pair of objects $a_0,a_1\in\mathrm{Ob}\mathcal A$, a graded $\K$-vector space $\hom(a_0,a_1)$.
 \item For $k\ge1$ and all sequences of $k$ objects $L_0,\dotsc,L_k$, a map of degree $2-k$
 \begin{equation}\label{eq:Ainfty domains}
\mu^k\colon\hom(L_{k-1},L_k)\otimes\dotsm\otimes\hom(L_0,L_1)\to\hom(L_0,L_k)
\end{equation}
 satisfying the \textbf{$A_\infty$ associativity relations}
 \begin{equation}\label{eq:Ainfty rels}
  \sum_{k=1}^d\sum_{i=1}^k\mu^k(\gamma_d,\dotsc,\gamma_{i+d-k+1},
  \mu^{d-k+1}(\gamma_{i+d-k},  \dotsc,\gamma_i),\gamma_{i-1},\dotsc,\gamma_1)=0.
\end{equation}
\end{enumerate}
For an detailed treatment of $A_\infty$-categories, we refer the reader to chapter 1 of \cite{Seid_plt}.

For $\Sigma$ a boundary-punctured Riemann surface and $\zeta\in\ol{\Sigma}$ a boundary puncture, a \textbf{positive strip-like end} is a holomorphic embedding
\begin{equation}\label{eq:pos sle}
  \epsilon\colon\R_{\ge0}\times[0,1]\to\Sigma
\end{equation}
sending $\R_{\ge0}\times\{0\}$ and $\R_{\ge0}\times\{1\}$ to $\partial\Sigma$, and satisfying
\[
	\lim_{s\to\infty}\epsilon(s,t)=\zeta.
\]
Similarly, a \textbf{negative strip-like end} for $\zeta$ is a holomorphic embedding
\begin{equation}\label{eq:neg sle}
  \epsilon\colon\R_{\le0}\times[0,1]\to\Sigma
\end{equation}
sending $\R_{\le0}\times\{0\}$ and $\R_{\le0}\times\{1\}$ to $\partial\Sigma$, and satisfying
\[
	\lim_{s\to-\infty}\epsilon(s,t)=\zeta.
\]
If $\Sigma^+$ has a positive strip-like end $\epsilon^+$ and $\Sigma^-$ has a negative strip-like end $\epsilon^-$, then we can glue $\Sigma^+$ and $\Sigma^-$ with length $\ell>0$ by removing $\epsilon^+([\ell,\infty)\times[0,1])$ and $\epsilon^-((-\infty,-\ell])\times[0,1])$ and identifying, for $s\in(0,\ell)$, $\epsilon^+(s,t)$ with $\epsilon^-(s-\ell,t)$. The resulting glued surface inherits any data on $\Sigma^\pm$ supported away from the images of $\epsilon^\pm$.

A \textbf{boundary-punctured Riemann surface with strip-like ends} is a boundary-punctured Riemann surface $\Sigma$, along with a choice of a positive or negative strip-like end for each boundary puncture, such that the images of the strip-like ends are pairwise disjoint. For a disk $\Sigma^{d+1}\in\mathcal R^{d+1}$, we require this to be a choice of strip-like end $\epsilon_i$ for each $\zeta_i$, where $\epsilon_i$ is positive for $i>0$ and negative for $i=0$. Seidel has shown that we can make a \textbf{universal and consistent choice} of strip like ends: we can choose, for all $d\ge2$, a collection of strip-like ends for each $\Sigma^{d+1}$ varying smoothly over $\mathcal R^{d+1}$, and such that near $\partial\ol{\mathcal R}^{d+1}$ they agree with the strip-like ends induced by gluing. See \cite{Seid_plt} for details.  

A universal and consistent choice of strip-like ends gives rise to a \textbf{thick-thin decomposition} of each $\Sigma^{d+1}\in\mathcal R^{d+1}$, which we modify slightly from Seidel's convention. Namely, for a strip-like end $\epsilon$, define its \textbf{$m$-shift} $\epsilon^m$ by
\begin{equation}\label{eq:shifted sle}
\epsilon^m(s,t)=\begin{cases}
		  \epsilon(s+m,t)&\text{if $\epsilon$ is a positive strip-like end}\\
		  \epsilon(s-m,t)&\text{if $\epsilon$ is a negative strip-like end}.
		\end{cases}
\end{equation}
Similarly, if $S\in\Sigma^{d+1}$ is a finite-length strip obtained as the overlap from gluing $\epsilon^+$ and $\epsilon^-$ with length $\ell$, then $S^m\subset S$ is the possibly empty finite-length strip obtained as the overlap from gluing $(\epsilon^+)^m$ and $(\epsilon^-)^m$ with length $\ell-2m$. Now we can declare the thin part of $\Sigma^{d+1}$ to be the union of the images of all 3-shifts of strip-like ends and all 3-shifts of gluing regions, and the thick part to be its complement. This is the 3-shift of Seidel's thick-thin decomposition.

In everything that follows, we will assume that that we've fixed a universal and consistent choice of strip-like ends. 

Next, we recall Abouzaid's rescaling trick from \cite{Abou_gcgfc}. Departing slightly from our earlier notation, let $\phi^\tau$ be the diffeomorphism of $\hat M$ given by the time $\log\tau$ flow of the Liouville vector field. Note that pullback by $\phi^\tau$ sends Lagrangians to Lagrangians, compatible Hamiltonians to compatible Hamiltonians, and preserves equation \eqref{eq:compatible J}. Suppose then that we've fixed Lagrangians $L_0$ and $L_1$, along with a nondegenerate Hamiltonian $H$ and regular almost complex structure $J$. Then we get a natural bijection between solutions to \eqref{eq:Floer} with boundary conditions $(L_0,L_1)$ and solutions to
\begin{equation}\label{eq:pullback Floer}
	\partial_s u+(\phi^\tau)^*J(t)(\partial_t u-(\phi^\tau)^*X_H)=0
\end{equation}
with boundary conditions $\bigl((\phi^\tau)^*L_0,(\phi^\tau)^*L_1\bigr)$. The identity
\[
	(\phi^\tau)^*X_H=X_{\frac1\tau(\phi^\tau)^*H}
\]
lets us rewrite \eqref{eq:pullback Floer} as
\begin{equation}\label{eq:rescaled Floer}
	\partial_s u+J_\tau(t)(\partial_t u-X_{H_\tau}),
\end{equation}
where
\[
(J_\tau,H_\tau)=\left((\phi^\tau)^*J,\frac1\tau(\phi^\tau)^*H\right)
\]
again satisfies equation \eqref{eq:compatible J}. Since $H_\tau=\tau H$ near infinity, we can make $H_\tau$ bigger than any other given compatible Hamiltonian by taking $\tau$ sufficiently large.

Let us fix, for {\em each} pair of Lagrangians $(L_i,L_j)$ in $M$, a nondegenerate Hamiltonian $H^{i,j}$, along with a regular almost complex structure $J^{i,j}\in\mathcal J^{[0,1]}(M,H^{i,j})$. The pair $(H^{i,j},J^{i,j})$ is known as a Floer datum for $(L_0,L_1)$, and it singles out well defined wrapped and partially wrapped Floer complexes. In the sequel, this choice will be usually be implicit, and we will write, e.g., $\mathscr X(L_i,L_j)$ instead of $\mathscr X(L_i,L_j,H^{i,j})$.

For $d\ge2$ and a $d+1$-tuple of Lagrangians $(L_0,\dotsc,L_d)$, we wish to define a family of maps
\begin{equation}\label{eq:mu^d domains}
	\mu^d\colon CW^*(L_{d-1},L_d)\otimes\dotsm\otimes CW^*(L_0,L_1)
			\to CW^*(L_0,L_d)
\end{equation}
of degree $2-d$ which satisfy an analog of Lemma \ref{lem:intersections filter}. Let $\Sigma\in\mathcal R^{d+1}$. From our consistent and universal choice, $\Sigma$ is equipped with a collection of strip-like ends. Let $\partial_i\Sigma$ be the edge of $\Sigma$ between $\zeta_i$ and $\zeta_{i+1}$, or in the case $i=d$ between $\zeta_d$ and $\zeta_0$, and label $\partial_i\Sigma$ with the Lagrangian $L_i$.

The following definition is important to the present situation, but we state it in enough generality that we won't need to rewrite it too many times.

\begin{defn}\label{defn:Floer datum}
A \textbf{Floer datum} on a boundary-punctured Riemann surface $\Sigma$ with strip-like ends and Lagrangian labels consists of
\begin{enumerate}
	\item A positive real number $w_i$ for each puncture $\zeta_i$.
	\item A 1-form $\beta$ on $\Sigma$ satisfying $d\beta\le0$, $\beta|_{\partial\Sigma}=0$, and $(\epsilon_i^1)^*\beta=w_idt$ for all $i$.
	\item A $\Sigma$-parametrized compatible Hamiltonian $H$ on $M$ satisfying the following conditions.
	\begin{enumerate}
		\item\label{cd:Floer data monotonicity} $d^\Sigma H\wedge\beta\le0$ outside of a compact set. Here we view $H$ as a function on $\Sigma\times\hat M$, and $d^\Sigma H$ is the component of $dH$ in the $\Sigma$-direction. Moreover, $d^\Sigma H$ vanishes on outward normal vectors at $\partial\Sigma$, and $d\beta$ is strictly negative and bounded away from zero on the support of $d^\Sigma H$.
		\item\label{cd:Ham pos end} For each positive strip-like end $\epsilon_i$, let $L_0$ and $L_1$ be the Lagrangians assigned to the boundary components of $\Sigma$ containing $\epsilon_i(\R_{\ge0}\times\{0\})$ and $\epsilon_i(\R_{\ge0}\times\{1\})$, respectively. Then there is a scaling constant $\tau_i>0$ such that
		\[
		w_iH=H^{0,1}_{\tau_i}
		\]
		on the image of $\epsilon_i$.
		\item\label{cd:Ham neg end} For each negative strip-like end $\epsilon_i$, let $L_0$ and $L_1$ be the Lagrangians assigned to the boundary components of $\Sigma$ containing $\epsilon_i(\R_{\le0}\times\{0\})$ and $\epsilon_i(\R_{\le0}\times\{1\})$, respectively. Then there is a scaling constant $\tau_i>0$ such that
		\[
		w_iH=H^{0,1}_{\tau_i}
		\]
		on the image of $\epsilon_i$.
	\end{enumerate}
	\item A $\Sigma$-parametrized almost complex structure $J\in\mathcal J^\Sigma(M,H)$ such that
	\begin{enumerate}
	 \item\label{cd:Adapted ACS} For each strip-like end as above, $J$ satisfies
	\[
	J=J_{\tau_i}^{0,1}
	\]
	on the image of $\epsilon_i^2$.
	 \item\label{cd:ACS rescaling restriction} Let $c\colon\Sigma\to\R_+$ be the constant in the compatibility condition \eqref{eq:compatible J}. Then the support of $dc$ is disjoint from the support of $d^\Sigma H$.
	\end{enumerate}
	\item A smooth function $\tau_E\colon \partial\Sigma\to(0,\infty)$ such that $\tau_E(z)=\tau_i$ for all ends $\zeta_i$ and all points $z\in\partial\Sigma\cap\text{image}(\epsilon_i)$.
\end{enumerate}
For a family of Floer data, we strengthen condition \ref{cd:Floer data monotonicity} to require that $d\beta$ is uniformly bounded away from zero on the support of $d^\Sigma H$.

A Floer datum for a boundary-punctured Riemann surface without Lagrangian labels consists of a Floer datum for every Lagrangian labeling of that Riemann surface.
\end{defn}

\begin{lemma}\label{lem:Floer data exist}
Let $\Sigma$ be a boundary-punctured Riemann surface with strip-like ends and Lagrangian labels. Then the space of Floer data on $\Sigma$ is nonempty and contractible.
\end{lemma}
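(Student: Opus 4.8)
The plan is to establish nonemptiness and contractibility separately, treating the various pieces of Floer data one layer at a time, since each layer is either a convex choice or a choice over a contractible target, and the constraints imposed by the strip-like ends are always of "prescribed germ" type. I would organize the argument as a sequence of fibrations, each with contractible (in fact convex, or convex-after-reduction) fiber, so that the total space of Floer data deformation-retracts onto a point.

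\emph{Nonemptiness.} First I would choose the weights $w_i>0$ arbitrarily and then construct the sub-closed $1$-form $\beta$ with $\beta|_{\partial\Sigma}=0$ and $(\epsilon_i^1)^*\beta=w_i\,dt$: this is the standard construction (cf. \cite{Abou_gcgfc}, \cite{Seid_plt}) of extending the prescribed $1$-forms on the ends by a partition of unity, then correcting by an exact form supported in the thick part to make it sub-closed with $d\beta$ strictly negative on a prescribed compact region — the point being that $\Sigma$ has negative Euler characteristic relative to its punctures so there is room. Next, for the Hamiltonian, on the image of each strip-like end $\epsilon_i$ I am forced to take $w_iH = H^{0,1}_{\tau_i}$ for some scaling $\tau_i$; I would fix the $\tau_i$ large enough (using that $H^{0,1}_{\tau_i}=\tau_i H^{0,1}$ near infinity, so that the rescalings dominate) so that these prescribed germs can be interpolated over the thick part by a single $\Sigma$-parametrized compatible Hamiltonian. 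The conditions of Definition \ref{defn:compatible H} (positivity, quadraticity at infinity, tangency to and angular positivity near each $D_\sigma$) are all preserved under convex combinations — this is the remark that compatible Hamiltonians form a convex cone, extended $\Sigma$-fiberwise — so the interpolation exists; condition \ref{cd:Floer data monotonicity}, $d^\Sigma H\wedge\beta\le 0$ outside a compact set with $d\beta$ negative there, is arranged by the same trick as in \cite{Abou_gcgfc}: choose the scalings $\tau_i$ consistently with the direction of $\beta$ so that $H$ is ``increasing along $\beta$''. Finally, choose $J\in\mathcal J^\Sigma(M,H)$ equal to $J^{0,1}_{\tau_i}$ on each $\epsilon_i^2$: by Lemma \ref{lem:Js exist} the space $\mathcal J(M,H(z))$ is nonempty and contractible for each $z$, and the constraint near the divisors (holomorphicity of the projection to $\H_\rho$ along $D_\sigma$) is the same convex-type constraint handled there, so the prescribed germs on the shifted ends extend; condition \ref{cd:ACS rescaling restriction} is arranged by choosing the conformal factor $c$ to be locally constant where $d^\Sigma H\ne 0$. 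The function $\tau_E$ is then just any smooth extension of the $\tau_i$, which exists since $(0,\infty)$ is convex.

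\emph{Contractibility.} I would run the same construction with parameters. Given two Floer data, or a family parametrized by a sphere, contract in stages: (1) the weights $w_i$ and scalings $\tau_i$ live in the convex set $(0,\infty)^{d+1}$, so linearly interpolate them (rescaling $\beta$ and $H$ on the ends accordingly); (2) having fixed the boundary germs, the space of sub-closed $\beta$ with the prescribed behavior and with $d\beta$ negative on a fixed compact set is convex, hence contractible; (3) having fixed $\beta$, the space of $\Sigma$-parametrized compatible Hamiltonians agreeing with the prescribed germs on the ends and satisfying the monotonicity condition \ref{cd:Floer data monotonicity} is again convex (all the defining conditions are convex or open-convex), hence contractible; (4) having fixed $(\beta,H)$, the space of admissible $J$ with the prescribed germs on the $\epsilon_i^2$ is a space of sections of a bundle with contractible fibers (Lemma \ref{lem:Js exist}) over a base with a contractible subspace prescribed, hence contractible, and condition \ref{cd:ACS rescaling restriction} cuts out a still-convex subset once $c$ is handled as above; (5) the remaining choice of $\tau_E$ is a section of a trivial $(0,\infty)$-bundle rel a prescribed value on the ends, hence contractible. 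Composing these contractions — or phrasing the whole thing as an iterated fibration with convex/contractible fibers and invoking that such fibrations have contractible total space when the base does — gives the result.

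\emph{Main obstacle.} The one genuinely non-formal point is simultaneously arranging the monotonicity condition \ref{cd:Floer data monotonicity} ($d^\Sigma H\wedge\beta\le 0$ with $d\beta$ strictly negative on the support of $d^\Sigma H$) together with the prescribed-germ conditions \ref{cd:Ham pos end}–\ref{cd:Ham neg end} on the ends, in the pumpkin setting where $H$ must also remain compatible (tangent to and angularly positive near each $D_\sigma$). The resolution is that the angular-positivity and tangency conditions are local near the stops and stable under the $\Sigma$-fiberwise convex combinations used to build $H$, so they don't interfere with the global ``$H$ increases along $\beta$'' bookkeeping, which is exactly Abouzaid's argument; one just has to check that the interpolation can be taken to be supported away from the regions where compatibility is delicate, or equivalently that compatibility is a closed convex condition preserved throughout. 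I expect this to require a careful but routine choice of the scalings $\tau_i$ relative to the ``flux'' of $\beta$ through $\Sigma$, exactly as in the wrapped (no-stops) case, with the stop conditions coming along for free.
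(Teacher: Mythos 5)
Your proposal is correct and follows essentially the same route as the paper: the proof in the text is exactly this staged argument, choosing $w_i$, $\beta$, $\tau_i$, $H$, $J$, and $\tau_E$ in order, with each successive space of choices contractible (convex or convex-after-reduction) over the previous ones. The only cosmetic difference is that for existence the paper picks $\beta$ first (which determines the $w_i$) and then sidesteps the monotonicity bookkeeping by taking $H$ to be $\Sigma$-independent outside the strip-like ends, whereas you carry out the Abouzaid-style interpolation explicitly; both are instances of the same mechanism.
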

\begin{proof}
For existence, choose $\beta$, which determines $w_i$. Then choose $H$ to be $\Sigma$-independent outside of the strip-like ends. A choice of $H$ determines $\tau_i$, and from there we can fill in choices of $J$ and $\tau_E$.

For contractibility, we choose data in the order $w_i$, then $\beta$, then $\tau_i$, then $H$, $J$, and $\tau_E$. Each space of choices forms a contractible set depending on the previous choices.
\end{proof}

Following Abouzaid, we consider conformal rescalings of Floer data. Namely, we say that the Floer data $(\beta,H,J,\tau_E)$ and $(\beta',H',J',\tau'_E)$ are conformally equivalent if there are constants $C,W>0$ such that
\begin{equation}\label{eq:conf Equiv Floer data}
	\beta=W\beta',\quad H=\frac1W(H')_C,\quad J=(J')_C,\quad\tau_E=C\tau'_E.
\end{equation}
If $\Sigma^+$ has a positive strip-like end $\epsilon_i^+$ and $\Sigma^-$ has a negative strip-like end $\epsilon_j^-$, and the corresponding Lagrangian labels agree, then Floer data on $\Sigma^+$ and $\Sigma^-$ can be glued by rescaling one and patching together the data. Specifically, one chooses $C$ and $W$ so that $\tau_i^+=C\tau_j^-$ and $w_i^+=Ww_j^-$ and uses those constants in \eqref{eq:conf Equiv Floer data} to define a new Floer datum on $\Sigma^+$. The precise Floer datum obtained by iterated gluing depends on the order of the gluings, but its conformal equivalence class does not.

We now specialize back to the disks with which we will construct the $A_\infty$ structure. For that we will need coordinate charts near $\partial\mathcal R^{d+1}$, which we choose as in \cite{Seid_plt}, but with the exponential gluing profile. In other words, if $S\subset\partial\ol{\mathcal R}^{d+1}$ is a boundary stratum corresponding to a rooted tree $T$ with labeled leaves, then a chart for $\ol{\mathcal R}^{d+1}$ near $\Sigma\in S$ is
\begin{equation}\label{eq:Stasheff chart}
  \prod_{\text{internal vertices }v}\hspace{-2mm}U_v\times\prod_{\text{internal edges }e}[0,a_e).
\end{equation}
Here, $U_v$ is a subset of the space $\mathcal R^{m+1}$ corresponding to the vertex $v$, and $[0,a_e)$ is an interval of gluing parameters corresponding to the edge $e$, where gluing parameter $\rho$ corresponds to the length $\ell=e^\frac1\rho$. The identity map from such a chart to one obtained from the logarithmic gluing profile $\ell=\frac{-1}\pi\log\rho$ is smooth, and hence any smooth data on the classical associahedra can be pulled back to smooth data in these charts.

\begin{defn}\label{defn:consistent Floer data}
  A \textbf{universal and conformally consistent} choice of Floer data for $\mathcal R^{d+1}$ consists of, for all $d\ge2$, a Floer datum $\mathbf K(\Sigma)=(\beta,H,J,\tau_E)$ for each $\Sigma$ varying smoothly over $\mathcal R^{d+1}$, and such that near $\partial\ol{\mathcal R}^{d+1}$ it satisfies the following consistency condition.
\begin{enumerate}
  \item For $\Sigma$ sufficiently close to the boundary of $\mathcal R^{d+1}$, $\mathbf K(\Sigma)$ coincides on the thin part up to a conformal rescaling with the Floer datum induced by gluing.
  \item In a chart of the form \eqref{eq:Stasheff chart}, we can consider the restriction of $\mathbf K(\Sigma)$ to each piece $\Sigma_i\in\mathcal R^{m+1}$ from which $\Sigma$ is glued. This gives a family of Floer data on $\Sigma_i$ parametrized by
  \[
    U\times\prod_e(0,a_e)\times E,
  \]
  where $U\subset\mathcal R^{m+1}$ is a neighborhood of $\Sigma_i$, the intervals consist of the gluing parameters for gluing regions adjacent to $\Sigma_i$, and $E$ contains all the remaining terms in \eqref{eq:Stasheff chart}. We require that this family extends smoothly to 
  \[
    U\times\prod_e[0,a_e)\times E,
  \]
  and that on $U\times\prod_e\{0\}\times E$ it agrees up to a family of conformal rescalings with the family of Floer data that was chosen for $\mathcal R^{m+1}$.
\end{enumerate}
Write $\mathcal K(M)$ for the space of universal and conformally consistent choices of Floer data for $\mathcal R^{d+1}$.
\end{defn}

Though our situation is slightly different from Abouzaid's, Lemma 4.3 from \cite{Abou_gcgfc} still holds, namely

\begin{lemma}\label{lem:consistent FD exist}
Universal and conformally consistent choices of Floer data exist. Moreover, if $\mathbf K_0$ is such a choice and $K_\Sigma$ is another Floer datum on some $\Sigma\in\mathcal R^{d+1}$, then $K_\Sigma$ can be extended to a universal and asymptotically consistent choice that agrees with $\mathbf K_0$ on $\mathcal R^{m+1}$ for all $m<d$.
\qed
\end{lemma}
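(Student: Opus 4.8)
The plan is to construct the Floer data by induction on $d$, following the scheme of Abouzaid \cite{Abou_gcgfc} (itself modeled on Seidel's construction of perturbation data in \cite{Seid_plt}), and to use the contractibility half of Lemma \ref{lem:Floer data exist} to carry out each extension step. For $d=2$ the space $\mathcal R^3$ is a point, so one just picks a single Floer datum, which exists by Lemma \ref{lem:Floer data exist}; this is the base case.

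For the inductive step, suppose a universal and conformally consistent choice has been made on $\mathcal R^{m+1}$ for every $2\le m<d$. The gluing operation for Floer data, applied in a chart \eqref{eq:Stasheff chart} around a boundary stratum of $\ol{\mathcal R}^{d+1}$ indexed by a tree $T$, produces out of the already-chosen lower data a Floer datum for each $\Sigma$ in a neighborhood $N_T$ of that stratum, canonical up to the conformal rescalings \eqref{eq:conf Equiv Floer data}. Because we use the exponential gluing profile, the identification with the classical (logarithmic) charts is smooth, so this assignment depends smoothly on $\Sigma$ and extends smoothly across the stratum. On the overlap $N_T\cap N_{T'}$ — which sits near a codimension $\ge 2$ stratum, hence corresponds to gluing along at least two internal edges — the two induced families agree up to conformal equivalence; this is the statement that iterated gluing of Floer data is order-independent up to conformal rescaling, and it follows from the inductive consistency hypothesis together with the associativity built into Definition \ref{defn:consistent Floer data}. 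Pinning down the conformal constants $W,C$ (equivalently the weights $w_i$ and scalings $\tau_i$) by a smooth choice, which is possible since they range over the contractible $\R_{>0}$, I would assemble the $N_T$ into a single neighborhood $N$ of $\partial\ol{\mathcal R}^{d+1}$ carrying a genuine smooth Floer datum $\mathbf K$ that satisfies the consistency condition.

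It then remains to extend $\mathbf K$ from $N$ to all of $\mathcal R^{d+1}$. The total space of Floer data fibers over $\mathcal R^{d+1}$ with fiber over $\Sigma$ the space of Floer data on $\Sigma$, and Lemma \ref{lem:Floer data exist} says this fiber is nonempty and contractible (and it varies continuously). Since $\ol{\mathcal R}^{d+1}$ is a polytope and $N$ a neighborhood of its boundary, the inclusion $\ol N\into\ol{\mathcal R}^{d+1}$ is a cofibration, so the partial section $\mathbf K$ extends over the whole of $\mathcal R^{d+1}$; this finishes the induction and proves existence. For the \textbf{moreover} statement I would run the same induction starting from $\mathbf K_0$ on the lower $\mathcal R^{m+1}$. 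At stage $d$, after building the boundary datum on $N$ exactly as above (it now agrees with the one $\mathbf K_0$ induces), shrink $N$ so that $\Sigma\notin\ol N$; then $\ol N\amalg\{\Sigma\}$ is again a subcomplex of $\ol{\mathcal R}^{d+1}$, and the section given by $\mathbf K$ on $N$ and $K_\Sigma$ at the point $\Sigma$ extends over $\mathcal R^{d+1}$ by the same cofibration-plus-contractible-fiber argument. For $m>d$ one continues unchanged, now feeding in the datum just chosen on $\mathcal R^{d+1}$.

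The hard part is not the extension steps — those are soft, given Lemma \ref{lem:Floer data exist} — but the verification that the data induced near $\partial\ol{\mathcal R}^{d+1}$ by gluing is truly smooth up to and across all boundary and corner strata, and that the order-independence of iterated gluing (up to conformal equivalence) genuinely lets the locally glued pieces be patched into one consistent datum. This is exactly where the exponential gluing profile and the careful bookkeeping of the rescaling constants in \eqref{eq:conf Equiv Floer data} are needed, and it is the content that Abouzaid's Lemma 4.3 in \cite{Abou_gcgfc} handles; since our Floer data differ from his only in the extra tangency and holomorphicity conditions along the divisors $D_\sigma$ — conditions which are preserved both by gluing and by conformal rescaling — his argument applies essentially verbatim.
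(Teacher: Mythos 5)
Your proposal is correct and is essentially the argument the paper intends: the paper gives no proof of its own but defers to Abouzaid's Lemma 4.3 in \cite{Abou_gcgfc}, whose content is precisely the induction you describe — glue near $\partial\ol{\mathcal R}^{d+1}$ using the exponential-profile charts and the order-independence of iterated gluing up to conformal rescaling, then extend inward using the contractibility statement of Lemma \ref{lem:Floer data exist}. Your closing observation, that the extra tangency and holomorphicity conditions along $D_\sigma$ are preserved by gluing and rescaling so Abouzaid's argument carries over verbatim, is exactly the point the paper is relying on.
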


Let $(L_0,\dotsc,L_d)$ be a $(d+1)$-tuple of Lagrangians, and let
\begin{equation}\label{eq:gamma_i location}
\gamma_i\in\begin{cases}
	    \mathscr X(L_{i-1},L_i)&i\ne0\\
	    \mathscr X(L_0,L_d)&i=0.
	   \end{cases}
\end{equation}
Given a Floer datum $\mathrm K=(\beta,H,J,\tau_E)$ on some $\Sigma\in\mathcal R^{d+1}$, we can consider maps $u\colon\Sigma\to\hat M$ satisfying the generalized Floer equation
\begin{equation}\label{eq:gen Floer}
J\circ(du-X_H\otimes\beta)=(du-X_H\otimes\beta)\circ j
\end{equation}
and such that $u(\partial_i\Sigma)\subset(\phi^{\tau_E})^*L_i$ and $u(\zeta_i)=(\phi^{\tau_i})^*\gamma_i$. More, given a universal and conformally consistent choice $\mathbf K$, we can consider $\mathcal R^{d+1}(\gamma_d,\dotsc,\gamma_1;\gamma_0)$, the space of such maps as $\Sigma$ varies in $\mathcal R^{d+1}$ and $\mathrm K$ varies with $\Sigma$. Note that as with solutions to the Floer's equation on strips, a conformal rescaling of $\mathbf K$ induces a {\em canonical} identification of the corresponding versions of $\mathcal R^{d+1}(\gamma_d,\dotsc,\gamma_1;\gamma_0)$. In the sequel, we will usually make this identification implicitly.

In the expanded version \cite{Sylv_pwfc-big} we prove the following maximum principle.

\begin{lemma}\label{lem:maximum principle}
 For any compact family $\mathcal R$ of boundary punctured Riemann surfaces with strip-like ends, Lagrangian labels, and Floer data, there is some $R>0$ such that the following holds. For any holomorphic curve $u\colon\Sigma\to\hat M$ with $\Sigma\in\mathcal R$, the function $H\circ u$ has no local maximum with value greater than $R$.
\end{lemma}
\begin{proof}[Remark on proof]
 The proof is essentially standard. The extra needed ingredient is that one can bound the terms coming from $d^\Sigma H$ by those coming from $d\beta$.
\end{proof}

This shows that the maps $u$ as above are constrained to take values in some compact part of $\hat M$, so that the Gromov compactness theorem applies. This says that $\mathcal R^{d+1}(\gamma_d,\dotsc,\gamma_1;\gamma_0)$ has a natural compactification $\ol{\mathcal R}^{d+1}(\gamma_d,\dotsc,\gamma_1;\gamma_0)$ obtained by adding in broken configurations similar to those for Floer trajectories. We enumerate those broken configurations with exactly two components:
\begin{subequations}\label{eq:Ainfty binary breaks}
\begin{equation}\label{eq:Ainfty bdry breaks}
\begin{gathered}
  \mathcal R^{m+1+1}(\gamma_d,\dotsc,\gamma_{i+d-m+1},\tild\gamma,\gamma_{i},\dotsc,\gamma_1;\gamma_0)\\
  \times\mathcal R^{d-m+1}(\gamma_{i+d-m},\dotsc,\gamma_{i+1};\tild\gamma)
\end{gathered}
\hspace{.4cm}
\begin{gathered}
  \mbox{\footnotesize $1\le m\le d-2$}\\
  \mbox{\footnotesize$0\le i\le m$}\\
  \mbox{\footnotesize$\tild\gamma\in\mathscr X(L_{i},L_{i+d-m})$}
\end{gathered}
\end{equation}
\begin{equation}\label{eq:Ainfty Floer prebreaks}
\begin{gathered}
 \mathcal R^{d+1}(\gamma_d,\dotsc,\gamma_{i+1},\tild\gamma,\gamma_{i-1},\dotsc,\gamma_1;\gamma_0)\\
  \times\mathcal R(\gamma_i;\tild\gamma)
\end{gathered}
\hspace{.8cm}
\begin{gathered}
  \mbox{\footnotesize$1\le i\le d$}\\
  \mbox{\footnotesize$\tild\gamma\in\mathscr X(L_{i-1},L_i)$}
\end{gathered}
\end{equation}
\begin{equation}\label{eq:Ainfty Floer postbreaks}
\mathcal R(\tild\gamma;\gamma_0)\times\mathcal R^{d+1}(\gamma_d,\dotsc,\gamma_1;\tild\gamma)
\hspace{2cm}
\mbox{\footnotesize$\tild\gamma\in\mathscr X(L_0,L_d)$}
\end{equation}
\end{subequations}

The first kind occur when a sequence of curves has domains approaching $\partial\mathcal R^{d+1}$, and the other two occur when energy escapes through one of the strip-like ends. The configurations with more than two components are in general some combination of the above, but since they don't show up in the construction of Fukaya categories, we won't worry about them. As with Floer trajectories, there are only finitely many intermediate chords $\tild\gamma$ for which at least one of the above products is nonempty.

The key analytic ingredient is

\begin{lemma}\label{lem:Ainfty transversality}
  For generic universal and conformally consistent choices $\mathbf K\subset\mathcal K(M)$, the moduli space $\mathcal R^{d+1}(\gamma_d,\dotsc,\gamma_1;\gamma_0)$ is a transversely cut out smooth manifold of dimension
  \[\deg(\gamma_0)-\sum_{i=1}^d\deg(\gamma_i)+d-2\]
  for every $d\ge 2$, every $L_0,\dotsc,L_d$, and every $\gamma_i$ as in \eqref{eq:gamma_i location}. In this case, Gromov compactness holds and says the following.
  \begin{enumerate}
    \item If $\deg(\gamma_0)-\sum_{i=1}^d\deg(\gamma_i)=2-d$, then $\mathcal R^{d+1}(\gamma_d,\dotsc,\gamma_1;\gamma_0)$ is compact.
    \item If $\deg(\gamma_0)-\sum_{i=1}^d\deg(\gamma_i)=3-d$, then $\ol{\mathcal R}^{d+1}(\gamma_d,\dotsc,\gamma_1;\gamma_0)$ is a compact topological 1-manifold with boundary, and its boundary is the union of all binary broken curves \eqref{eq:Ainfty binary breaks}.
  \end{enumerate}
\end{lemma}
\begin{proof}
  The proof is explained in \cite{Seid_plt}, Section 9.
\end{proof}

Fix a regular $\mathbf K\in\mathcal K(M)$, and hence a moduli space $\mathcal R^{d+1}(\gamma_d,\dotsc,\gamma_1;\gamma_0)$ for all $d$ and all $\gamma_i$. We can now define what will be the $A_\infty$ operations $\mu^d$. Namely, one sets $\mu^1$ to be the Floer differential $\delta$, and
\[
\mu^d(\gamma_d,\dotsc,\gamma_1)=\hspace{-1.2cm}\sum_{\substack{\gamma_0\in\mathscr X(L_0,L_d)\\
    \deg(\gamma_0)-\sum_{i=1}^d\deg(\gamma_i)=2-d}}\hspace{-1.3cm}
 \#\mathcal R^{d+1}(\gamma_d,\dotsc,\gamma_1;\gamma_0)\cdot\gamma_0
\]
if $d\ge2$. One can check that, when they're zero-dimensional, the products in \eqref{eq:Ainfty binary breaks} encode all possible ways of composing two $\mu^d$'s with the given inputs and output. Since these make up the boundary of a compact 1-manifold, the total number of elements is even, so the $\mu^k$ satisfy the $A_\infty$ relations \eqref{eq:Ainfty rels}.

\begin{defn}\label{defn:wrapped cat}
  The \textbf{wrapped Fukaya category} of a pumpkin domain $(M,\lambda_M,\ssigma)$, denoted $\W(M,\lambda_M)$, is the $A_\infty$-category whose objects are Lagrangians in $M$, in the sense of Definition \ref{defn:pumpkin Lag}, and such that $\hom(L_0,L_1)=CW^*(L_0,L_1)$. The $A_\infty$ structure is given by the $\mu^d$ described above.

  The \textbf{interior wrapped Fukaya category} $\W^{int}(M,\lambda_M)$ is the full subcategory of the wrapped category containing only the interior Lagrangians of $M$. It is of course quasi-equivalent to $\W(M,\lambda_M)$, but being able to pass between these two versions will be useful for technical purposes in the sequel.
\end{defn}

\begin{rmk}
  Seidel has observed that while our wrapped Fukaya category of a pumpkin domain embeds as a full subcategory of the wrapped Fukaya category of the underlying Liouville domain, the latter category can have strictly more objects. These take the form of Lagrangian submanifolds which intersect the stops in an essential way. This is, however, impossible when the fiber $F$ is a Weinstein domain.
\end{rmk}

As with the Floer differential, the disks defining the higher compositions have only isolated positive intersections with the divisors of the stops. The result is that the $A_\infty$ operations preserve the intersection filtrations induced by the stops.

\begin{lemma}\label{lem:compositions preserve filtration}
  Let $\sigma\in\ssigma$ be a stop. Then, for any $d\ge1$ and composable sequence of morphisms $\gamma_1,\dotsc,\gamma_d$, we have
  \[
  n_\sigma\left(\mu^d(\gamma_d,\dotsc,\gamma_1)\right)\le\sum_{i=1}^dn_\sigma(\gamma_i).
  \]
\qed
\end{lemma}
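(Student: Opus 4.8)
The plan is to reduce everything to a single topological fact about each curve contributing to $\mu^d$, mirroring the proof of Lemma \ref{lem:intersections filter}. Fix a stop $\sigma\in\ssigma$ with fiber $F$ and divisor $D_\sigma$, fix $\mathbf K\in\mathcal K_{reg}(M)$, and fix composable chords $\gamma_1,\dotsc,\gamma_d$ together with a chord $\gamma_0$ appearing in $\mu^d(\gamma_d,\dotsc,\gamma_1)$. It suffices to show that every $u\in\mathcal R^{d+1}(\gamma_d,\dotsc,\gamma_1;\gamma_0)$ satisfies $n_\sigma(\gamma_0)\le\sum_{i=1}^d n_\sigma(\gamma_i)$. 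The first observation is that each chord avoids $D_\sigma$: since $D_\sigma$ is preserved by the flow of $X_H$ (condition \eqref{cd:Hamiltonian tangency} says $X_H$ is tangent to it), a chord meeting $D_\sigma$ would have an endpoint on a Lagrangian lying in $D_\sigma$, contradicting Definition \ref{defn:pumpkin Lag}. Together with the facts that the Lagrangian boundary arcs of $u$ avoid $D_\sigma$ and that $u$ takes values in a compact set (Lemma \ref{lem:maximum principle}) with all preimages of $D_\sigma$ confined to a relatively compact subsurface $\Sigma_0\subset\Sigma$ — away from the far parts of the strip-like ends, where $u$ is $C^0$-close to the chords — this endows $u$ with a well-defined topological intersection number $u\cdot D_\sigma\in\Z$.

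Next I would compute $u\cdot D_\sigma$ combinatorially. The wall $W_\sigma:=\sigma(\hat F\times\R_+)$ is a hypersurface having $D_\sigma$ among its boundary components, so for any loop in $\hat M\setminus D_\sigma$ the signed count of its transverse crossings with $W_\sigma$ computes its linking number with $D_\sigma$; applied to $u|_{\partial\Sigma_0}$ this gives $u\cdot D_\sigma$. The loop $u|_{\partial\Sigma_0}$ is a concatenation of arcs lying on the Lagrangians $L_0,\dotsc,L_d$ — which avoid $W_\sigma$ entirely by Definition \ref{defn:pumpkin Lag} and so contribute nothing — together with arcs running along the chords $\gamma_1,\dotsc,\gamma_d$ (traversed in the direction of $X_H$, as inputs) and along $\gamma_0$ (traversed backwards, as the output). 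By condition \eqref{cd:Hamiltonian positivity} the crossings of each $\gamma_i$ with $W_\sigma$ are all positive and total $n_\sigma(\gamma_i)$, so, exactly as in the $d=1$ case of Lemma \ref{lem:intersections filter}, one gets $u\cdot D_\sigma=\sum_{i=1}^d n_\sigma(\gamma_i)-n_\sigma(\gamma_0)$.

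Finally I would show $u\cdot D_\sigma\ge0$ by positivity of intersections, running the Gromov's-trick argument of Lemma \ref{lem:intersections filter} with the Floer strip replaced by the disk $\Sigma$. Reinterpreting the solutions of the generalized Floer equation \eqref{eq:gen Floer} as honest pseudoholomorphic sections of $\Sigma\times\hat M$, the hypotheses that $X_H$ is tangent to $D_\sigma$ and that $J$ makes $D_\sigma$ an almost complex submanifold (part of the definition of $\mathcal J(M,H)$) guarantee that $\Sigma\times D_\sigma$ is an almost complex submanifold of $\Sigma\times\hat M$ for the almost complex structure witnessing the trick. The graph of $u$ is an almost complex submanifold not contained in $\Sigma\times D_\sigma$ (its boundary avoids $D_\sigma$), so its intersections with $\Sigma\times D_\sigma$, which are in bijection with $u^{-1}(D_\sigma)$, are isolated and positive, and the usual linear algebra transfers positivity to the intersections of $u$ with $D_\sigma$ in $\hat M$. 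Combined with the previous paragraph this yields $n_\sigma(\gamma_0)\le\sum_{i=1}^d n_\sigma(\gamma_i)$; since $\gamma_0$ was an arbitrary chord contributing to $\mu^d(\gamma_d,\dotsc,\gamma_1)$, this is exactly the assertion for $d\ge2$, while the case $d=1$ is Lemma \ref{lem:intersections filter}. The only points needing real care are pure bookkeeping: checking that no intersection with $D_\sigma$ escapes to infinity along a strip-like end, so that $u\cdot D_\sigma$ is well-defined, and matching the orientation conventions in the linking-number computation with the positivity statement — both are handled just as in the codimension-two divisor arguments already used for Lemma \ref{lem:intersections filter}.
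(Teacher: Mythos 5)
Your proof is correct and is exactly the argument the paper intends: the lemma is stated with no written proof beyond the remark that the disks defining $\mu^d$ have only isolated positive intersections with $D_\sigma$, i.e.\ the same winding-number-plus-Gromov's-trick argument as Lemma \ref{lem:intersections filter}, which is what you carry out. Your write-up is in fact more careful than the paper about the bookkeeping (compactness of $u^{-1}(D_\sigma)$, the Lagrangian arcs contributing nothing to the linking number), so there is nothing to correct.
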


In particular, this says that the $A_\infty$ operations preserve the partially wrapped complexes, so we can define

\begin{defn}\label{defn:partially wrapped cat}
  For $(M,\lambda_M,\ssigma)$ a pumpkin domain, its \textbf{partially wrapped Fukaya category} $\Ws(M,\lambda_M)$ is the subcategory of $\W(M,\lambda_M)$ with all the same objects and such that
  \[
   \hom_{\Ws}(L_0,L_1)=CW_\ssigma^*(L_0,L_1)
  \]

  The \textbf{interior partially wrapped Fukaya category} $\Ws^{int}(M,\lambda_M)$ is the full subcategory of the partially wrapped category containing only the interior Lagrangians of $M$.
\end{defn}

These categories have a number of important properties, which we summarize below. For proofs, see the expanded version \cite{Sylv_pwfc-big}.

\begin{prop}\label{prop:pwfc properties}
 The following hold.
 \begin{enumerate}
  \item $\Ws(M,\lambda_M)$ is a cohomologically unital subcategory of $\W(M,\lambda_M)$.
  \item Isotopic Lagrangians are isomorphic in $\W(M)$ and $\Ws(M)$. In particular, the inclusion $\Ws^{int}(M,\lambda_M)\hookrightarrow\Ws(M,\lambda_M)$ is a quasi-equivalence.
  \item Given two presentations $\Ws^1(M,\lambda_M)$ and $\Ws^2(M,\lambda_M)$ of the partially wrapped Fukaya category, there is a quasi-equivalence $F\colon\Ws^1(M,\lambda_M)\to\Ws^2(M,\lambda_M)$ called a \textbf{continuation functor} which is canonical up to homotopy. These continuation functors are obtained by counting holomorphic disks similar to those which appear in the $A_\infty$ structure.
  \item Equivalent pumpkin domains have quasi-equivalent partially wrapped Fukaya categories.
 \end{enumerate}
 \qed
\end{prop}
 
\section{Nondegenerate stops}\label{ch:nondeg stops}

\subsection{Closed strings}\label{sec:closed strings}

The proof of Theorem \ref{thm:quotient-intro} relies on the existence of a closed string version of partially wrapped Floer homology. This will take the form of a filtration on the symplectic homology chain complex, which we now explain.

Let $(M,\lambda_M,\ssigma)$ be a pumpkin domain. Pick a compatible Hamiltonian $\tild H$ and an $S^1$-family of perturbing Hamiltonians $P_t\colon\hat M\to\R_{\ge0}$, where $t\in\R/\Z\cong S^1$, which satisfy the following conditions.
\begin{equation}
 \parbox{14cm}{$P_t$ is bounded, and $\|X_{P_t}\|$ decays exponentially in the symplectization coordinate $\sqrt{\tild H}$ for any metric of the form $\hat\omega_M(\cdot,J\cdot)$ with $J\in\mathcal J(M,H)$.}
\end{equation}
\begin{equation}
 \parbox{14cm}{$H_t:=\tild H+P_t$ is \textbf{nondegenerate} in the sense that for any 1-periodic orbit $x$ of the time-dependent vector field $X_{H_t}$, the linearized Poincar\'e return map of $x$ does not have 1 as an eigenvalue.}
\end{equation}
\begin{equation}\label{cd:SH Ham nicely split}
 \parbox{14cm}{For each $\sigma\in\ssigma$, $\tild H$ is of the form \eqref{eq:sum H} near $D_\sigma$, with $f(z)=c|z|^2$ and $c>0$. Similarly, $P_t$ is independent of the $\H_\rho$-coordinate near $D_\sigma$, and $X_{H_t}$ satisfies condition \eqref{cd:Hamiltonian positivity} of Definition \ref{defn:compatible H}.}
\end{equation}
Note that perturbing Hamiltonians $P$ can be constructed by taking sums of functions of the form $\kappa\circ H$, where $H$ is a compatible Hamiltonian and $\kappa\colon\R_+\to\R_+$ is a positive nondecreasing function which is eventually constant.

Let $\mathscr X(H_t)$ be the space of 1-periodic orbits of $H_t$. Because $H_t$ is nondegenerate, this is a discrete space. The symplectic cochain complex $SC^*(M)$ is the graded $\K$-vector space generated by $\mathscr X(H_t)$ with grading given by the cohomological Conley-Zehnder index, see \cite{Abou_shbook}. For this situation, we switch to a different convention for almost complex structures. Namely, we follow Ganatra \cite{Gana_scdwc} and define the space of almost complex structures of \textbf{rescaled contact type} $\mathcal J^{S^1}_{resc}(M,H_t)$. This consists of $S^1$-families of almost complex structures $J_t$ which are $\hat\omega_M$-compatible and satisfy the following three conditions. First, there is some $t$-independent constant $c_{resc}>0$ such that, for all $t$,
\begin{equation}\label{eq:rescaled J}
 d\tild H\circ J_t=-c_{resc}\hat\lambda_M
\end{equation}
outside of a $t$-independent compact set. Second, the restriction $J_t|_{\ker dH\cap\ker\hat\lambda_M}$ is asymptotically $\hat Z_M$-invariant. Third, for each stop $\sigma\in\ssigma$, the projection to $\H_\rho$ is holomorphic along $D_\sigma$. Given $J_t\in\mathcal J^{S^1}_{resc}(M,H_t)$, we can consider maps $u\colon\R\times S^1\to\hat M$ satisfying Floer's equation
\[
 \partial_su+J_t(\partial_tu-X_{H_t})=0
\]
and asymptotic as $s\to\pm\infty$ to orbits $x_\pm\in\mathscr X(H_t)$. The moduli space of such maps, denoted $\tild{\mathcal Q}(x_+,x_-)$, satisfies the usual transversality and compactness statements, i.e.
\begin{lemma}\label{lem:SH transversality}
 For generic choices of $P_t$ and $J_t\in\mathcal J^{S^1}_{resc}(M,H_t)$, $\tild{\mathcal Q}(x_+;x_-)$ is transversely cut out of dimension $\deg(x_-)-\deg(x_+)$ for every $x_\pm\in\mathscr X(H_t)$. In this case, the translation $\R$-action on $\tild{\mathcal Q}(x_+;x_-)$ is free if and only if $x_+\ne x_-$. Writing $\mathcal Q(x_+;x_-)$ for the quotient $\tild{\mathcal Q}(x_+;x_-)/\R$, we have
 \begin{enumerate}
  \item If $\deg(x_-)-\deg(x_+)=1$, then $\mathcal Q(x_+;x_-)$ is compact.
  \item If $\deg(x_-)-\deg(x_+)=2$, then $\mathcal Q(x_+;x_-)$ admits a Gromov compactification as a topological 1-manifold with boundary, and its boundary is in natural bijection with the once-broken configurations $\coprod_{y\in\mathscr X(H_t)}\mathcal Q(y;x_-)\times\mathcal Q(x_+;y)$.
 \end{enumerate}
\end{lemma}
\begin{proof}[Remark on proof]
 Because \eqref{eq:rescaled J} is so stringent, we allow small perturbations of $P_t$. These can be made without changing the set $\mathscr X(H_t)$, and in concert with the freedom to perturb $c_{resc}$ they allow us to achieve transversality even when $\dim M=2$. This was not an issue for chords because, when $\dim M=2$, all chords outside of a compact set in a given end live in different relative homotopy classes.
 
 With regards to compactness, our maximum principle does not apply in the presence of a time-dependent perturbing Hamiltonian, but by choosing a symplectization coordinate $r=\sqrt{\tild H}$ we find ourselves in Ganatra's setup and can apply Theorem A.1 of \cite{Gana_scdwc}.
\end{proof}

Fix a regular $J_t\in\mathcal J^{S^1}_{resc}(M,H_t)$. The differential $\partial$ on $SC^*(M)$ is given by
\[
 \partial x_+=\hspace{-.8cm}\sum_{\deg(x_-)-\deg(x_+)=1}\hspace{-.9cm}
			\#\mathcal Q(x_+;x_-)\cdot x_-
\]
and satisfies $\partial^2=0$ by the usual argument which looks at ends of 1-dimensional moduli spaces. The cohomology $SH^*(M):=H^*(SC^*(M),\partial)$ is known as symplectic cohomology.

The pumpkin structure $\ssigma$ endows $SC^*(M)$ with a filtration similar to that for open strings but slightly more subtle due to the fact that orbits can live on the divisor of a stop. We describe a part of it, which will suffice for our purposes. Let $\mathscr X_\ssigma(H_t)\subset\mathscr X(H_t)$ be the set of orbits which do not intersect $\sigma(\hat F\times\R_+)$ and do not live in $D_\sigma$ for any $\sigma\in\ssigma$. Let $SC_\ssigma^*(M)\subset SC^*(M)$ be the graded linear subspace generated by $\mathscr X_\ssigma(H_t)$.

\begin{lemma}\label{lem:SH intersections filter}
 $SC_\ssigma^*(M)$ is a subcomplex of $SC^*(M)$.
\end{lemma}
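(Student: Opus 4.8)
The plan is to run the proof of Lemma \ref{lem:intersections filter} with Floer cylinders in place of strips, and then to add one extra observation handling orbits that lie on a divisor. Fix a stop $\sigma\in\ssigma$ with fiber $F$ and divisor $D_\sigma$, a generator $x_+\in\mathscr X_\ssigma(H_t)$, and a Floer cylinder $u\in\tild{\mathcal Q}(x_+;x_-)$; translating in $s$ does not affect intersection numbers, so it suffices to show that such a $u$ forces $x_-\in\mathscr X_\ssigma(H_t)$. First I would extract two consequences of the splitting condition \eqref{cd:SH Ham nicely split}: near $D_\sigma$, $X_{H_t}=X_{\tild H}+X_{P_t}$ is tangent to $D_\sigma$ (the $\H_\rho$-factor of $\tild H$ is $c|z|^2$ and $P_t$ is $\H_\rho$-independent there), and its normal component is the $t$-independent \emph{positive} rotation generated by $c|z|^2$ with $c>0$. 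Since $x_+$ avoids $D_\sigma$, the curve $u$ is not contained in $D_\sigma$; combining the tangency of $X_{H_t}$ with the requirement that $J_t$ make the projection to $\H_\rho$ holomorphic along $D_\sigma$, Gromov's trick presents $(\R\times S^1)\times D_\sigma$ as an almost complex submanifold of $(\R\times S^1)\times\hat M$, exactly as in Lemma \ref{lem:intersections filter}, so $u$ meets $D_\sigma$ in finitely many interior points, each counting positively; write $m\ge0$ for the total.

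Next I would set $W=\sigma(\hat F\times\R_{\ge0})$, a hypersurface with $\partial W=D_\sigma$. For a loop $\gamma$ disjoint from $D_\sigma$ the intersection number $I(\gamma):=\gamma\cdot W$ is a homotopy invariant of loops avoiding $D_\sigma$ (perturbing $\gamma$ changes the count by $\gamma\cdot\partial W=\gamma\cdot D_\sigma$, which is empty for dimension reasons), and it equals $n_\sigma$ on periodic orbits. Along $u$, the function $s\mapsto I(u(s,\cdot))$ is defined and locally constant away from the finitely many $s$ over which $u$ meets $D_\sigma$, and — with the sign convention of Lemma \ref{lem:intersections filter}, so that positivity of intersections makes each jump nonnegative — it increases as $s$ increases, with total increase $m$. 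Thus $I(u(s_+,\cdot))-I(u(s_-,\cdot))=m$ for $s_-\ll0\ll s_+$. As $s\to+\infty$ the loop $u(s,\cdot)$ converges to $x_+$, which avoids $W$ because $x_+\in\mathscr X_\ssigma(H_t)$, so $I(u(s_+,\cdot))=0$ and hence $I(u(s_-,\cdot))=-m\le0$.

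Now I would distinguish two cases for the behavior of $u$ at $-\infty$. If $x_-\cap D_\sigma=\emptyset$, then $I(u(s,\cdot))\to n_\sigma(x_-)\ge0$, so $n_\sigma(x_-)=-m$ forces $m=0$ and $n_\sigma(x_-)=0$; i.e.\ $x_-$ avoids $\sigma(\hat F\times\R_+)$, as needed. If instead $x_-\subset D_\sigma$, I would invoke the asymptotic analysis of finite-energy Floer trajectories transverse to $D_\sigma$: the normal component $z\circ u$ solves a Cauchy--Riemann-type equation whose zeroth-order term near the puncture is the positive rotation found above, so, not being identically zero, it decays exponentially as $s\to-\infty$ with a leading mode winding a \emph{strictly positive} number of times $k_0\ge1$ around $0\in\H_\rho$ — this is the closed-string analogue of positivity of intersections, and $k_0\ge1$ is precisely what the positive rotation (equivalently condition \eqref{cd:Hamiltonian positivity}) buys, while nondegeneracy of $H_t$ (which gives $2c\notin2\pi\Z$) ensures $k_0$ is a well-defined integer. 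Then $I(u(s,\cdot))\to k_0\ge1$ as $s\to-\infty$, contradicting $I(u(s_-,\cdot))=-m\le0$; so this case never occurs. Since $\sigma\in\ssigma$ was arbitrary, $x_-\in\mathscr X_\ssigma(H_t)$, and as $\partial x_+$ is a $\K$-linear combination of such $x_-$, the subspace $SC_\ssigma^*(M)$ is a subcomplex.

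The first two paragraphs are a routine transcription of Lemma \ref{lem:intersections filter}, so the real content — and the step I expect to need the most care — is the $x_-\subset D_\sigma$ case: one must pin down the precise transverse asymptotics of $u$ at a $D_\sigma$-orbit and, above all, keep the signs consistent, matching the direction of the normal rotation coming from $c|z|^2$ (with $c>0$) against the coorientation of $W$ used to define $I(\gamma)$ and against the orientation conventions in Floer's equation, so that the two positivity statements (positive interior intersections, positive asymptotic winding at a $D_\sigma$-orbit) genuinely clash rather than reinforce one another. This is also the only place where the nondegeneracy of $H_t$, and not merely of the chords, enters in an essential way.
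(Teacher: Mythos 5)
Your proof is correct and follows the same strategy as the paper's (much terser) proof: positivity of intersections, exactly as in Lemma \ref{lem:intersections filter}, handles the case where $x_-$ meets $\sigma(\hat F\times\R_+)$, while the Siefring-type asymptotic analysis at the negative puncture, combined with the splitting assumption \eqref{cd:SH Ham nicely split}, shows that an output orbit lying in $D_\sigma$ would carry strictly positive asymptotic winding, contradicting the same positivity argument. The bookkeeping with the intersection number $I$ and the sign checks you flag are precisely the content the paper leaves implicit.
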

\begin{proof}
 We need to show that if $x_-$ intersects $\sigma(\hat F\times\R_+)$ or lives in $D_\sigma$ for some $\sigma\in\ssigma$, then $\tild Q(x_+;x_-)$ is empty for any $x_+\in\mathscr X(H_t)$. In the first case, the conclusion follows from positivity of intersections as in Lemma \ref{lem:intersections filter}. In the second, the asymptotics in \cite{Sief_asymp}, combined with assumption \eqref{cd:SH Ham nicely split}, ensure that $x_-$ behaves as if it had strictly positive winding number around $D_\sigma$. This puts us back in the regime where we can use positivity of intersections.
\end{proof}

\begin{defn}\label{defn:PW SH}
 $SC_\ssigma^*(M)$ is called the \textbf{partially wrapped symplectic cochain complex}.
\end{defn}

Note that unlike the $CW_\ssigma^*(L,L)$, $SC_\ssigma^*(M)$ is in general non-unital. This is because unlike in the open string case, some of the holomorphic caps which contribute to the unit in usual symplectic cohomology live entirely in some $D_\sigma$.

We will be interested in holomorphic curves which interpolate between the open and the closed string worlds. For this, we make the following definitions
\begin{defn}\label{defn:punctured surface w/ cylinders}
 A \textbf{punctured Riemann surface with boundary, ends, and cylinders} is a Riemann surface
 \[
  \Sigma=\ol\Sigma\setminus(Z_{\partial\Sigma}\cup Z_\Sigma),
 \]
 where $\ol\Sigma$ is a compact Riemann surface with boundary, $Z_{\partial\Sigma}$ is a finite subset of the boundary of $\ol\Sigma$, and $Z_\Sigma$ is a finite subset of the interior of $\ol\Sigma$, together with the following additional data.
 \begin{enumerate}
  \item For each $\zeta\in Z_{\partial\Sigma}$, a positive or negative strip-like end at $\zeta$.
  \item For each $\zeta\in Z_\Sigma$, a positive or negative cylindrical end at $\zeta$. These are holomorphic embeddings
  \begin{equation}\label{eq:cylindrical end}
   \epsilon_+\colon\R_{\ge0}\times S^1\to\Sigma\qquad\text{ or }\qquad\epsilon_-\colon\R_{\le0}\times S^1\to\Sigma,
  \end{equation}
  respectively, such that $\lim_{s\to\pm\infty}\epsilon_\pm(s,t)=\zeta$.
  \item A finite number of \textbf{finite cylinders} $\delta_i$. These are holomorphic embeddings
  \[
   \delta_i\colon[a_i,b_i]\times S^1\to\mathrm{int}(\Sigma).
  \]
 \end{enumerate}
 Additionally, we require that all ends and finite cylinders have disjoint images. For cylindrical ends and finite cylinders, we define their \textbf{$m$-shifts} as with strips and define the \textbf{thin part} of $\Sigma$ to be the union of the 3-shifts of all ends, finite cylinders, and, if $\Sigma$ comes with an implicit gluing decomposition, finite strip-like gluing regions.
 
 A \textbf{punctured Riemann surface with labeled boundary, ends, and cylinders} is a punctured Riemann surface $\Sigma$ with boundary, ends, and cylinders, along with an assignment of a Lagrangian $L_i\subset\hat M$ to each boundary component $\partial_i\Sigma$ of $\Sigma$.
\end{defn}

\begin{defn}\label{defn:Floer datum w/ cylinders}
 Let $\Sigma$ be a punctured Riemann surface with labeled boundary, ends, and cylinders. A \textbf{Floer datum} on $\Sigma$ is a 5-tuple $(\beta,H^{strict},P,J,\tau_E)$, where
 \begin{itemize}
  \item $\beta$ is a 1-form on $\Sigma$
  \item $H^{strict}$ is a $\Sigma$-parametrized compatible Hamiltonian
  \item $P$ is a function $P\colon\Sigma\times\hat M\to\R_+$
  \item $J$ is a $\Sigma$-parametrized $\hat\omega_M$-compatible almost complex structure
  \item $\tau_E$ is a function $\tau_E\colon\partial\Sigma\to\R_+$
 \end{itemize}
 with the following properties.
 \begin{enumerate}
  \item Outside the images of the cylindrical ends and finite cylinders, $(\beta,H^{strict},J,\tau_E)$ satisfy the conditions of Definition \ref{defn:Floer datum}.
  \item $d\beta$, $d^\Sigma H^{strict}\wedge\beta$, and $d^\Sigma P\wedge\beta$ are nonpositive everywhere.
  \item For each cylindrical end $\epsilon_i$, $(\epsilon_i^1)^*\beta=w_idt$ for some positive real number $w_i$. Similarly, for each finite cylinder $\delta_i$, $(\delta_i^1)^*\beta=w_idt$ for some positive real number $w_i$.
  \item For each cylindrical end or finite cylinder, there is a scaling constant $\tau_i>0$ such that
  \[
   w_iH^{strict}=\tild H_{\tau_i}
  \]
  on the image of that cylindrical end or finite cylinder.
  \item There is some strictly positive function $g\colon\Sigma\times\R_+\to\R_+$ such that
   \[
    dH^{strict}\circ J=-g(H^{strict})\hat\lambda_M
   \]
   outside a $\Sigma$-independent compact set.
  \item The restriction $J|_{\ker dH^{strict}\cap\ker\hat\lambda_M}$ is asymptotically $\hat Z_M$-invariant.
  \item For each stop $\sigma\in\ssigma$, the projection to $\H_\rho$ is holomorphic along $D_\sigma$.
  \item For each cylindrical end or finite cylinder,
   \[
   J(s,t)=(J_t)_{\tau_i}:=(\phi^{\tau_i})^*J_t
   \]
  in the 2-shift of that cylindrical end or finite cylinder.
  \item $P$ is globally bounded, and $\|X_P\|$ decays exponentially in the symplectization coordinate. Moreover, $P$ is locally constant outside the 2-shifts of the cylindrical ends and finite cylinders, and in the 3-shifts of the cylindrical ends and finite cylinders it satisfies
  \[
   w_iP(s,t)=(P_t)_{\tau_i}+A_i:=\frac1{\tau_i}(\phi^{\tau_i})^*P_t+A_i
  \]
  for some constant $A_i$ depending on the cylindrical region.
 \end{enumerate}
 For simplicity, define $H:=H^{strict}+P$.
 
 A Floer datum for a punctured Riemann surface $\Sigma$ with boundary, ends, cylinders, but no Lagrangian labels, consists of a Floer datum for each Lagrangian labeling of $\Sigma$.
\end{defn}

For such Floer data, there is again a notion of conformal equivalence. Namely, two Floer data $(\beta,H^{strict},P,J,\tau_E)$ and $(\beta',(H^{strict})',P',J',\tau_E')$ are \textbf{conformally equivalent} if there are constants $A,C,W$ with $C,W>0$ such that
\[
 \beta=W\beta',\quad H^{strict}=\frac1W((H^{strict})')_C,\quad P=\frac1W(P')_C+A,\quad J=(J')_C,\quad\tau_E=C\tau'_E.
\]
Compared to the situation without cylinders, we now have the additional freedom to shift $H$ by a constant.

As before, solutions $u\colon\Sigma\to\hat M$ to
\begin{equation}\label{eq:gen Floer w/ cylinders}
 J\circ(du-X_H\otimes\beta)=(du-X_H\otimes\beta)\circ j
\end{equation}
with boundary conditions $u(\partial_iE)\subset(\phi^{\tau_E})^*L_i$ are related to solutions $u'\colon\Sigma\to\hat M$ to
\begin{equation}
 J'\circ(du'-X_{H'}\otimes\beta')=(du'-X_{H'}\otimes\beta')\circ j
\end{equation}
with boundary conditions $u'(\partial_iE)\subset(\phi^{\tau_E'})^*L_i$ via Liouville pullback. Here, of course, $H'$ means $(H^{strict})'+P'$.

As indicated after Lemma \ref{lem:SH transversality}, our maximum principle Lemma \ref{lem:maximum principle} doesn't apply inside the 2-shifts of the cylinders. However, here we still have action control, so Ganatra's $C^0$ estimates apply. Together with the maximum principle outside the 2-shifts of the cylinders, this will be enough to obtain compactness of our moduli spaces.

\subsection{Action inequalities}\label{sec:action}

One generally thinks of a nondegenerate Liouville domain \cite{Gana_scdwc} as one with ``enough Lagrangians''. More precisely though, it is one with enough \emph{isotopy classes} of Lagrangians, and strong nondegeneracy (Definition \ref{defn:strongly nondegenerate}) enhances this to geometric Lagrangians. Morally speaking, it suggests that the ``space-filling holomorphic disk'' counted by the open-closed map can be cut into arbitrarily small pieces by introducing additional Lagrangians. In all known examples, these new Lagrangians are translates of the original split-generators.

In this case, the additional structure comes from an action filtration. To this effect, we review the behavior of the action functional under the $A_\infty$ and similar operations. Let $\Sigma$ be a punctured Riemann surface with labeled boundary, ends, and cylinders, and let $K$ be a Floer datum on $\Sigma$. We consider the space $\mathcal M$ of maps $u\colon\Sigma\to\hat M$ sending every $z\in\partial_i\Sigma$ to a point $L_i(z)$ and satisfying the inhomogeneous Cauchy-Riemann equation \ref{eq:gen Floer w/ cylinders}. Given a K\"ahler metric on $\Sigma$, define the \textbf{geometric energy} of such $u$ as
\[
 E^{geom}(u)=\int_\Sigma\lVert du-X_H\otimes\beta\rVert^2 dvol,
\]
where the metric on $\hat M$ is $\Sigma$-dependent and is obtained from $J$. This is independent of the choice of K\"ahler metric on $\Sigma$ and is in fact given by
\[
 E^{geom}(u)=\int_\Sigma\left(u^*\hat\omega_M-u^*(d^{\hat M}H)\wedge\beta\right).
\]

We also define the \textbf{topological energy} of $u$ by
\[
\begin{aligned}
 E^{top}(u)&=\int_\Sigma\left(u^*\hat\omega_M-d(u^*H\beta)\right)\\
 &=E^{geom}(u)-\int_\Sigma\left(u^*Hd\beta+u^*(d^\Sigma H)\wedge\beta\right).
\end{aligned}
\]
\begin{lemma}\label{lem:E^geom E^top inequality}
 There is some constant $C\in\R$ depending only on $\Sigma$, $H$, and $\beta$ for which
 \[
  E^{top}(u)\ge E^{geom}(u)+C
 \]
 for all $u\in\mathcal M$.
\end{lemma}
\begin{proof}
 $Hd\beta$ is nowhere positive, and $d^\Sigma H\wedge\beta$ is only positive on a compact subset of $\Sigma\times\hat M$.
\end{proof}
\begin{cor}\label{cor:E^geom E^top bound}
 The above constant $C$ is bounded below by $-\int_\Sigma\max_{\hat M}(d^\Sigma H\wedge\beta)$
 \qed
\end{cor}

We now examine the relation between action and topological energy . For any $u\in\mathcal M$ with $E^{geom}(u)<\infty$, $u$ converges at each interior puncture $\zeta_j$ to a 1-periodic orbit $x_j$ of $w_jX_H$ and at each boundary puncture $\zeta^k$ to a chord $\gamma^k$ of $w^kX_H=w^kX_{H^{strict}}$, for appropriate 1-parameter specializations of $H$ and $P$. Before we define the action, choose for each Lagrangian $L$ a primitive $f_L\in C^\infty(L)$ of the exact 1-form $\hat\lambda_M|_L$. For an orbit $x$, define
\[
 A(x)=\int_{S^1}\left(x^*\hat\lambda_M-wx^*Hdt\right).
\]
For a chord $\gamma$, let $L_0$ and $L_1$ be the Lagrangians containing $\gamma(0)$ and $\gamma(1)$. Define
\[
\begin{aligned}
 A_0(\gamma)&=\int_{[0,1]}\left(\gamma^*\hat\lambda_M-w\gamma^*Hdt\right)\\
 A(\gamma)&=\int_{[0,1]}\left(\gamma^*\hat\lambda_M-w\gamma^*Hdt\right)+\frac1\tau f_{L_0}(\phi^\tau\gamma(0))-\frac1\tau f_{L_1}(\phi^\tau\gamma(1)),
\end{aligned}
\]
where $\tau$ is the rescaling factor associated to the boundary puncture mapping to $\gamma$. A short computation gives
\begin{lemma}
 If $K$ is a Floer datum on $\Sigma$, then the action of a chord or orbit with respect to $K$ is $\tau$ times the action of the time $\log\tau$ Liouville pullback of that chord or orbit with respect to the Floer datum $K_\tau$.
 \qed
\end{lemma}

With this set up, we have the standard action-energy identity
\begin{lemma}\label{lem:action-energy identity}
 Every $u\in\mathcal M$ with $E^{geom}(u)<\infty$ satisfies
 \[
  E^{top}(u)=\hspace{-2mm}\sum_{\substack{\text{positive}\\ \text{cylindrical}\\ \text{ends }\epsilon_j}}\hspace{-2mm}A(x_j)+\hspace{-2mm}\sum_{\substack{\text{positive}\\ \text{strip-like}\\ \text{ends }\epsilon^k}}\hspace{-2mm}A(\gamma^k)-\hspace{-2mm}\sum_{\substack{\text{negative}\\ \text{cylindrical}\\ \text{ends }\epsilon_j}}\hspace{-2mm}A(x_j)-\hspace{-2mm}\sum_{\substack{\text{negative}\\ \text{strip-like}\\ \text{ends }\epsilon^k}}\hspace{-2mm}A(\gamma^k)+\int_{\partial\Sigma}\frac{(\phi^{\tau_E}\circ u)^*f_L}{\tau_E^2}d\tau_E.
 \]
\end{lemma}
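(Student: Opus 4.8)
The plan is to refine the proof of Lemma \ref{lem:action-energy inequality} by keeping track of every inequality that was introduced and checking that, in the present setting, each one is actually an equality. Recall that the estimate there passed through three steps where a genuine inequality was used: first, replacing $\langle\hat\lambda_M,du-\eta(u)\rangle$ by $\langle d^{L_{(i)}}f_i,du-\eta(u)\rangle$ along the boundary; second, absorbing the $\beta^\ell$-terms into an additive constant $D_1$; third, bounding the integrand $-\sum_i d^{\partial_i\Sigma}f_i(u)$ by an additive constant $D_2$. The first replacement is in fact an equality, because $du|_{\partial_i\Sigma}-\eta(u)$ is valued in vectors tangent to $L_i(z)$ and $\hat\lambda_M$ restricted to $L_i(z)$ equals $d$ of the chosen primitive for $L_i(z)$. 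The second: since $\beta^\ell\equiv0$ in this setting (we are dealing with honest Floer data in the sense of Definition \ref{defn:Floer datum}, not moving-label data), the entire $\eta$, $\sqrt{\tild H}\beta^\ell$ business disappears — one can take $\eta=0$ — so there is nothing to absorb. The only genuinely surviving term is the third one, and that is exactly what will produce the explicit boundary integral in the statement.

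Concretely, first I would specialize: set $\beta^\ell=0$, $\eta=0$, and $L_i(z)=(\phi^{\tau_E(z)})^*L_i^0$ with the universal choice of primitive $f_i(z)=f_i/\tau_E(z)$ made just before the statement, where $f_i$ is a fixed primitive of $\hat\lambda_M|_{L_i^0}$. Then I would rerun the computation of $E^{top}(u)$ exactly as in the proof of Lemma \ref{lem:action-energy inequality}, but now all ``$\le$'' become ``$=$'':
\[
 E^{top}(u)=\int_{\partial\ol\Sigma}\left(u^*\hat\lambda_M-u^*H\beta\right)=A(u)-\sum_i\int_{\partial_i\Sigma}d^{\partial_i\Sigma}\!\big[f_i(z)\circ u\big](u),
\]
where now $f_i(z)\circ u$ must be read correctly: along $\partial_i\Sigma$ the primitive itself varies with the point $z$, since $L_i(z)$ varies. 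Here $A(u)$ is the alternating sum of the actions of the asymptotic chords and orbits, computed with respect to the primitives $f_i$ (i.e. the $\tau=1$ normalization that appears in the definitions of $A(\gamma)$ in Appendix \ref{Apx:action}, since the strip-like ends are where $\tau_E$ is constant — but not necessarily $1$; this point needs a small argument, see below). The remaining boundary term is $-\sum_i\int_{\partial_i\Sigma}d^{\partial_i\Sigma}\big[f_i/\tau_E\circ(\phi^{\tau_E}\circ u)\big]$; expanding $d^{\partial_i\Sigma}$ and using that the ``fixed'' function $f_i$ on $L_i^0$ contributes $d^{\partial_i\Sigma}\big[f_i\circ(\phi^{\tau_E}\circ u)\big]/\tau_E$, which integrates to zero around each boundary component (it is $d$ of a function on $\partial_i\Sigma$, or, where there is a puncture, the endpoint contributions are absorbed into $A(u)$), what survives is precisely $+\sum_i\int_{\partial_i\Sigma}\big(f_i\circ(\phi^{\tau_E}\circ u)\big)\,d\tau_E/\tau_E^2$. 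Writing $f_L$ for the collection $\{f_i\}$ this is the claimed term $\int_{\partial\Sigma}(\phi^{\tau_E}\circ u)^*f_L\cdot d\tau_E/\tau_E^2$.

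The step I expect to require the most care is the bookkeeping of the endpoint terms where $\partial_i\Sigma$ meets a strip-like end $\zeta^k$: there $\tau_E$ is locally constant (equal to the end's rescaling factor $\tau_k$), and the integration by parts $\int_{\partial_i\Sigma}u^*df_i=\int_{\partial_i\Sigma}d^{\partial_i\Sigma}[f_i\circ u]$ produces boundary contributions at the punctures which must be reconciled with the $f_0(\gamma(0))-f_1(\gamma(1))$ terms in the definition of $A(\gamma)$. Because of the $\tau_E^{-1}$ normalization of the primitive and the $\tau_k$-rescaling of the asymptotic chord, the preceding scaling lemma (action scales by $\tau$ under Liouville pullback, primitives scale by $\tau^{-1}$) makes these cancel exactly, so that $A(u)$ can be taken to be the sum of the \emph{unrescaled} actions — the same quantity that enters the shifted action in Section \ref{sec:Hochschild action}. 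I would also note explicitly that $d\tau_E$ is compactly supported (indeed $\tau_E$ is constant on the images of the strip-like ends), so the final integral is over a compact portion of $\partial\Sigma$ and is manifestly finite; combined with Lemma \ref{lem:E^geom E^top inequality} and $E^{geom}(u)\ge0$ this is the refined, leak-controlled action bound used in the proof that $A^\varepsilon$ filters the Hochschild complex.
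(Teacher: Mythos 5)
Your proposal is correct and is essentially the paper's proof written out in full: the paper's entire argument for this lemma is the single phrase ``Stokes's theorem,'' and your computation is exactly the Stokes/integration-by-parts bookkeeping that phrase leaves implicit, with the residual $d\tau_E/\tau_E^2$ term arising with the correct sign from the $z$-dependence of the primitive $f_i/\tau_E$ and the puncture endpoint terms reconciled via the scaling lemma. One small point of wording: with $\beta^\ell=0$ the labels still move along $\partial_i\Sigma$ (by Liouville rescaling through $\tau_E$), so rather than setting $\eta=0$ one should take $\eta$ proportional to $\hat Z_M$ and use $\hat\lambda_M(\hat Z_M)=0$ --- which is exactly the cancellation your computation implicitly performs.
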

\begin{proof}
 Stokes's theorem.
\end{proof}
\begin{cor}\label{cor:action bound}
 Suppose $\Sigma$ has exactly one negative end $\epsilon_-$, either strip-like or cylindrical. Let $u\in\mathcal M$ be a finite-energy holomorphic curve converging at $\epsilon_-$ to an orbit or chord $y$. Then
 \[
  A(y)\le\hspace{-2mm}\sum_{\substack{\text{positive}\\ \text{cylindrical}\\ \text{ends }\epsilon_j}}\hspace{-2mm}A(x_j)+\hspace{-2mm}\sum_{\substack{\text{positive}\\ \text{strip-like}\\ \text{ends }\epsilon^k}}\hspace{-2mm}A(\gamma^k)+\int_\Sigma\max_{\hat M}(d^\Sigma H\wedge\beta)+\lVert f_L\rVert_{C^0}\!\int_{\partial\Sigma}\frac{|d\tau_E|}{\tau_E^2}.
\]
 \qed
\end{cor}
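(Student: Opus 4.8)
The plan is to obtain Corollary \ref{cor:action bound} directly by combining the exact action--energy identity of Lemma \ref{lem:action-energy identity} with the topological-versus-geometric energy comparison of Lemma \ref{lem:E^geom E^top inequality} and the nonnegativity of geometric energy. First I would apply Lemma \ref{lem:action-energy identity} to the given $u$. Since $\Sigma$ has a single negative end, which carries $y$, and all remaining ends are positive, the identity rearranges to
\[
 A(y) = \sum_{\text{pos. cyl. }\epsilon_j} A(x_j) + \sum_{\text{pos. strip }\epsilon^k} A(\gamma^k) - E^{top}(u) + \int_{\partial\Sigma}\frac{(\phi^{\tau_E}\circ u)^*f_L}{\tau_E^2}\,d\tau_E,
\]
so it remains only to bound the last two terms from above.

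For the energy term I would use that $\beta^\ell$ vanishes in the regime where Corollary \ref{cor:action bound} is invoked (as noted just before Lemma \ref{lem:action-energy identity}), so that $E^{top}(u) = E^{geom}(u) - \int_\Sigma u^*(H\,d\beta) - \int_\Sigma u^*(d^\Sigma H\wedge\beta)$. Since $H\ge0$ and $d\beta\le0$, the $2$-form $u^*(H\,d\beta)$ on $\Sigma$ is pointwise nonpositive, and since $E^{geom}(u)\ge0$ I get $E^{top}(u)\ge -\int_\Sigma u^*(d^\Sigma H\wedge\beta)\ge -\int_\Sigma\max_{\hat M}(d^\Sigma H\wedge\beta)$, the last step being the pointwise comparison $(d^\Sigma H\wedge\beta)|_{(z,u(z))}\le\max_{p\in\hat M}(d^\Sigma H\wedge\beta)|_{(z,p)}$. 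Hence $-E^{top}(u)\le\int_\Sigma\max_{\hat M}(d^\Sigma H\wedge\beta)$. For the boundary term I would estimate the integrand pointwise, using $|(\phi^{\tau_E}\circ u)^*f_L|\le\lVert f_L\rVert_{C^0}$ and $\tau_E>0$, to get $\int_{\partial\Sigma}\tfrac{(\phi^{\tau_E}\circ u)^*f_L}{\tau_E^2}\,d\tau_E\le\lVert f_L\rVert_{C^0}\int_{\partial\Sigma}\tfrac{|d\tau_E|}{\tau_E^2}$. Substituting both bounds into the displayed identity gives exactly the asserted inequality.

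This is essentially pure bookkeeping, so I do not expect a genuine obstacle; the only points needing care are confirming that $\beta^\ell\equiv0$ here (so no linear-Hamiltonian contribution enters the energy identity) and interpreting the symbols $\max_{\hat M}(d^\Sigma H\wedge\beta)$ and $|d\tau_E|$ consistently, as a $2$-form on the oriented surface $\Sigma$ and a density on $\partial\Sigma$ respectively, so that the pointwise comparisons integrate in the right direction.
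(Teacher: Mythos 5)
Your proposal is correct and follows exactly the route the paper intends: the corollary is stated with no separate proof precisely because it is Lemma \ref{lem:action-energy identity} combined with the explicit formula relating $E^{top}$ and $E^{geom}$, nonnegativity of $E^{geom}$ and of $-Hd\beta$, and the pointwise bounds on $d^\Sigma H\wedge\beta$ and on the boundary term. The two points you flag for care (that $\beta^\ell\equiv0$ in this regime, and the orientation conventions making the pointwise comparisons integrate the right way) are indeed the only delicate spots, and you handle them correctly.
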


\subsection{Action filtrations for Hochschild homology}\label{sec:Hochschild action}

To define a strongly nondegenerate stop, we will need to transfer the action filtration from wrapped Floer homology to Hochschild homology. Unfortunately, the energy leaks of Corollary \ref{cor:action bound} prevent the action from extending directly to a filtration on the Fukaya category, so we will need to find a way to package those leaks.

To begin, recall that for any $A_\infty$-category $\mathcal A$, the Hochschild homology of $\mathcal A$ can be given as the homology of a chain complex
\begin{equation}\label{eq:cyclic complex}
 CC_*(\mathcal A)=\bigoplus_{d=1}^\infty\bigoplus_{\text{words}}\K\gamma_d\otimes\dotsm\otimes\gamma_1
\end{equation}
where $\gamma_i\in\hom(L_i,L_{i+1})$ is a cyclically composable sequence of morphisms in $\mathcal A$. The grading is cohomological and is given by
\[
 \deg(\gamma_d\otimes\dotsm\otimes\gamma_1)=\sum_{i=1}^d\deg(\gamma_i)+1-d.
\]
The differential $\delta\colon CC_*(\mathcal A)\to CC_{*+1}(\mathcal A)$ comes from the $A_\infty$ structure on $\mathcal A$, namely
\[
\begin{gathered}
 \delta(\gamma_d\otimes\dotsm\otimes\gamma_1)=\sum_{\substack{i,j\ge0\\i+j<d}}\mu^{i+j+1}(\gamma_{i},\dotsc,\gamma_{d-j})\otimes\gamma_{d-j-1}\otimes\dotsm\otimes\gamma_{i+1}\\
 \hspace{3cm}+\sum_{\substack{i,j\ge1\\i+j\le d}}\gamma_d\otimes\dotsc\otimes\gamma_{i+j}\otimes\mu^{j}(\gamma_{i+j-1},\dotsc,\gamma_{i})\otimes\gamma_{i-1}\otimes\dotsm\otimes\gamma_1
\end{gathered}
\]
where $\gamma_0:=\gamma_d$.

Now let $(F,\lambda_F)$ be a Liouville domain, which we think of as the fiber of a stop. Choose an auxiliary metric on $\hat F$ which $\hat Z_F$-invariant outside of a compact set. A \textbf{normalizing Hamiltonian} is a function $H^b\colon\hat F\to\R_{\ge0}$ which is smooth and satisfies
\begin{enumerate}
  \item\label{cd:normalizing Ham form} $H^b=Q\kappa(Q)$, where
  \begin{enumerate}
    \item $Q\colon\hat F\to\R_{\ge0}$ is a proper, strictly quadratic continuous function.
    \item $\kappa\colon\R_{\ge0}\to[0,1]$ is a nondecreasing cutoff function which is $0$ in a neighborhood of $0$ and is $1$ outside a compact set.
  \end{enumerate}
  \item Whenever $dH^b(Z)\ne2H^b$, $\lVert H^b\rVert_{C^2}$ is small enough to rule out nonconstant time 1 orbits a priori. Note that this can be achieved by multiplying any function of the above form by a small positive constant.
\end{enumerate}
\begin{rmk}\label{rmk:rescaling normalizing Hams}
 Condition \eqref{cd:normalizing Ham form} implies that $dH^b(\hat Z)=2H^b$ outside a compact set, and that $H^q_\tau\ge\tau H^q$ globally for any $\tau\ge1$. Moreover, given any other $H^b\,'$ of this form, there is some $\tau$ such that $(H^b\,')_\tau\ge H^b$ globally.
\end{rmk}

Let $H^b$ be a normalizing Hamiltonian. As the name suggests, we will use it as a normalization for our action filtration. For $\varepsilon>0$, consider the class of functions $\mathcal H_\varepsilon(F)$ consisting of those compatible Hamiltonians $H$ on $\hat F$ which satisfy
\begin{subequations}\label{cd:close to normalizing Hamiltonian}
 \begin{equation}
  \left\lVert\log\frac H{H^b}\right\rVert_{C^1}<\frac\varepsilon2\quad\mbox{whenever}\quad H^b>1
 \end{equation}
 \begin{equation}
  \lVert H-H^b\rVert_{C^2}<\frac\varepsilon2\quad\mbox{whenever}\quad H^b\le1
 \end{equation}
\end{subequations}
These conditions imply that there is a constant $A_{\min}>0$ such that for sufficiently small $\varepsilon$, any $H\in\mathcal H_\varepsilon(F)$ satisfies
\begin{equation}\label{eq:Amin}
 \parbox{14cm}{Every nonconstant time 1 orbit of $X_H$ has action greater than $A_{\min}$.}
\end{equation}
On the other hand, it is always the case that a constant orbit has negative action.

Consider now a presentation $\W^\varepsilon(F)$ of the wrapped Fukaya category $\W(F)$ which satisfies the following conditions.
\begin{enumerate}
 \item The objects consist of all Lagrangians in the sense of Definition \ref{defn:pumpkin Lag} on which a primitive of $\hat\lambda_F$ is globally constant outside a compact set has total variation bounded by $\varepsilon$. Note that every Lagrangian is isotopic to such a Lagrangian via a combination of Moser flow to fix the ends and inverse Liouville flow to reduce the variation.
 \item For each pair $(L_0,L_1)$ of Lagrangians, the Hamiltonian $H^{0,1}$ belongs to $\mathcal H_\varepsilon(F)$.
 \item All Floer data for $\Sigma^{d+1}$ (cf. Definition \ref{defn:Floer datum}) have the following three properties. First, $e^{1-d}\tau_0<\tau_E<\tau_0$. Second, each boundary component $E$ of $\Sigma$ contains disjoint open intervals $E^+$ and $E^-$ such that $\tau_E$ is nonincreasing outside $E^+$ and nondecreasing outside $E^-$. Third
 \[
  \int_{\Sigma^{d+1}}\max_{\hat F}(d^\Sigma H\wedge\beta)<\frac{2de^{d-1}\varepsilon}{\tau_0}.
 \]
\end{enumerate}
In the last item, the first two conditions say that the Floer data doesn't rescale excessively, while the last says that it doesn't wobble too much on the compact part of $\hat F$. To see that this can be achieved, note first that the thick-thin decomposition of $\Sigma^{d+1}$ has at most $2d$ thin pieces aside from the output strip-like end. We can choose the Floer data so that $d^\Sigma H$ is supported near the thin part, where near each piece, as one moves towards the negative end, $H$ changes within a rescaled $\mathcal H_\varepsilon(F)$ while experiencing a further small positive rescaling. The extent to which it can decrease on the interior is bounded by $a\varepsilon$, for some $a$ slightly greater than 1, times a rescaling factor which depends on the location in $\Sigma^{d+1}$. By the first inequality, this local factor is bounded by $e^{d-1}\tau_0$.

One could reduce the exponential part of the coefficient from $e^{d-1}$ to $(2+\varepsilon')^{d-1}$ for some small $\varepsilon'$ depending on $\varepsilon$, but conformal consistency prevents us from doing any better than that. This is because the bijection from  $X_H$-chords to $X_{H_\tau}$-chords multiplies action by $\frac1\tau$, and $\mathcal R^{d+1}$ has corners which come from sequentially gluing $d-1$ triangles. Each of these triangles rescales the Floer data by a factor of at least 2. That said, in Sections \ref{sec:stabilizations} and \ref{sec:collapsing energy}, it will be convenient to allow rescalings more severe than $2+\varepsilon'$.

The above conditions allow us to write down a \textbf{shifted action}
\[
A^\varepsilon\colon CC_*(\W^\varepsilon(F))\to\R_+
\]
given by
\begin{equation}\label{eq:shifted action}
 A^\varepsilon(\gamma_d\otimes\dotsm\otimes\gamma_1):=e^{d-1}\sum_{i=1}^d(A(\gamma)+8\varepsilon)
\end{equation}
and
\[
 A^\varepsilon(\vec\gamma+\vec\gamma')=\max\{A^\varepsilon(\vec\gamma),A^\varepsilon(\vec\gamma')\}.
\]
Here, $A(\gamma)$ is taken with respect to the Floer data defining the Floer cochain complex (i.e. $\tau=1$). Even though the action of a chord $A(\gamma)$ depends on the choice of a primitive of $\hat\lambda_M$ for each Lagrangian, the shifted action doesn't because it cancels on the cyclic chain.

\begin{lemma}
 $A^\varepsilon$ strictly filters the Hochschild chain complex.
\end{lemma}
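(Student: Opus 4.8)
The plan is to prove the stronger assertion that $\delta$ strictly lowers $A^\varepsilon$, i.e. $A^\varepsilon(\delta\vec\gamma)<A^\varepsilon(\vec\gamma)$ for every nonzero chain; by the $\max$-formula for $A^\varepsilon$ this reduces to cyclic words $\vec\gamma=\gamma_d\otimes\dots\otimes\gamma_1$. Each summand of $\delta$ collapses either an internal block $\gamma_{i+j-1},\dots,\gamma_i$ by $\mu^j$ or a block wrapping past $\gamma_0=\gamma_d$ by $\mu^{i+j+1}$; since $A^\varepsilon$ is manifestly cyclic-invariant, it is enough to treat the collapse of the first $\ell$ entries of a length-$d$ word by $\mu^\ell$, with $1\le\ell\le d$, and to show
\[
A^\varepsilon\bigl(\gamma_d\otimes\dots\otimes\gamma_{\ell+1}\otimes\mu^\ell(\gamma_\ell,\dots,\gamma_1)\bigr)<A^\varepsilon(\gamma_d\otimes\dots\otimes\gamma_1).
\]
A preliminary observation, used repeatedly: in the presentation $\W^\varepsilon(F)$ one has $A(\gamma)+8\varepsilon>0$ for every generator $\gamma$ of $CW^*$. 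For nonconstant orbits this is \eqref{eq:Amin}; for constant orbits the action is $O(\varepsilon)$, controlled by the $C^2$-smallness of the defining Hamiltonian near its critical points (where the model $H^b$ vanishes) together with the $\varepsilon$-bound on primitives of $\hat\lambda_F$ on objects, hence $>-8\varepsilon$ once $\varepsilon$ is small.

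For $\ell=1$ the word length is unchanged and the claim reduces to $A(\mu^1\gamma_1)<A(\gamma_1)$, the standard fact that the Floer differential strictly lowers action (the datum on a strip is $s$-independent, so positivity of geometric energy forces a strict drop). For $\ell\ge2$, use conformal invariance of $\mathcal R^{\ell+1}$ to normalize the datum on $\Sigma^{\ell+1}$ so that the output scaling is $\tau_0=1$; the defining conditions of $\W^\varepsilon(F)$ then give input scalings $\tau_k\in(e^{1-\ell},1)$ and the leak bound $\int_{\Sigma^{\ell+1}}\max_{\hat F}(d^\Sigma H\wedge\beta)<2\ell e^{\ell-1}\varepsilon$, both sides of the $\W^\varepsilon$-inequality being conformally invariant. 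Positivity of geometric energy together with the topological energy identity and \eqref{cd:Floer data monotonicity} (cf. Lemma~\ref{lem:E^geom E^top inequality}) then bounds every chord $\eta$ occurring in $\mu^\ell(\gamma_\ell,\dots,\gamma_1)$, hence $A(\mu^\ell(\gamma_\ell,\dots,\gamma_1))$ itself, by
\[
A(\eta)\le\sum_{k=1}^\ell\tfrac1{\tau_k}A(\gamma_k)+2\ell e^{\ell-1}\varepsilon .
\]
Since $\tfrac1{\tau_k}<e^{\ell-1}$ and $A(\gamma_k)+8\varepsilon>0$, one checks $\tfrac1{\tau_k}A(\gamma_k)\le e^{\ell-1}(A(\gamma_k)+8\varepsilon)$ in all cases, with per-index slack of order $e^{\ell-1}\varepsilon$; a short arithmetic check — in which the shift $8\varepsilon$ and the weight $e^{\ell-1}$ are used precisely to beat the leak $2\ell e^{\ell-1}\varepsilon$, which works out exactly because $\ell\ge2$ — gives $A(\eta)+8\varepsilon<e^{\ell-1}\sum_{k=1}^\ell(A(\gamma_k)+8\varepsilon)$. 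Multiplying by $e^{d-\ell}$, bounding the untouched entries via $e^{d-\ell}\le e^{d-1}$ and $A(\gamma_k)+8\varepsilon>0$, and recombining yields the displayed strict inequality; summing over the summands of $\delta$ and invoking the $\max$-formula finishes the proof.

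The one genuinely delicate point is that last arithmetic: the weight $e^{d-1}$ and the shift $8\varepsilon$ in \eqref{eq:shifted action} have to be tuned so that the gain $e^{1-\ell}$ from shortening the word simultaneously dominates (i) the worst-case inflation of the input actions by the conformal rescaling factor $e^{\ell-1}$ — no worse than $e^{\ell-1}$ precisely because conformal consistency presents a corner of $\ol{\mathcal R}^{\ell+1}$ as a chain of $\ell-1$ triangle-gluings, each rescaling the datum by at least $2<e$ — and (ii) the energy leak on the compact part of $\hat F$, which the third defining condition of $\W^\varepsilon(F)$ caps at $2\ell e^{\ell-1}\varepsilon$. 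Everything else — reduction to cyclic words, cyclic invariance of $A^\varepsilon$, the $\mu^1$ case, and positivity of $A(\gamma)+8\varepsilon$ — is routine bookkeeping.
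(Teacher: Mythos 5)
Your argument is correct and follows the same route as the paper's proof (positivity of each shifted term $A(\gamma)+8\varepsilon$, the appendix action--energy inequality, and the rescaling and leak bounds built into the definition of $\W^\varepsilon(F)$); you simply carry out explicitly the final arithmetic that the paper leaves implicit. The only slip is that the leak in Corollary \ref{cor:action bound} also contains the boundary term $\lVert f_L\rVert_{C^0}\int_{\partial\Sigma}|d\tau_E|/\tau_E^2$, which the variance bound on the primitives and the range and monotonicity conditions on $\tau_E$ cap at another $2\ell e^{\ell-1}\varepsilon$ (whence the paper's total of $4de^{d-1}\varepsilon$); your per-term slack of $8\varepsilon$ absorbs this extra contribution without any change to the conclusion.
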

\begin{proof}
 The condition on the total variation of primitives of Lagrangians, ensures that the primitive contributes no less than $-\varepsilon$ to the action of a chord, while the definition of $\mathcal H_\varepsilon(F)$ ensures that
 \[
  A_0(\gamma)>-\frac\varepsilon2
 \]
 for any Floer generator $\gamma$. This implies that each term of the sum in \eqref{eq:shifted action} is greater than $\frac{13}2\varepsilon$, and in particular it's positive.
 
 Next, note that the Floer differential strictly decreases the action filtration, so we focus on the nonlinear terms in the Hochschild differential. For this, Corollary \ref{cor:action bound} shows that the unshifted action
 \[
  A(\gamma_d\otimes\dotsm\otimes\gamma_1):=e^{d-1}\sum_{i=1}^dA(\gamma)
 \]
 filters the Hochschild complex up to a leak bounded by
 \[
  \int_\Sigma\max_{\hat F}d^\Sigma H\wedge\beta+\lVert f_L\rVert_{C^0}\!\int_{\partial\Sigma}\frac{|d\tau_E|}{\tau_E^2}.
 \]
 The conditions on the Floer data defining $\W^\varepsilon(F)$ ensure that, after normalizing $\tau_0=1$, this is bounded by $4de^{d-1}\varepsilon$.
\end{proof}

For a homology class $c\in HH_*(\W^\varepsilon(F))$, we define its \textbf{length-$k$ shifted action} to be
\[
 A^{\varepsilon,k}(c)=\inf\left\{A^\varepsilon(\vec\gamma)\;\middle|\;[\vec\gamma]=c\mbox{ and }\vec\gamma\in\bigoplus_{i=1}^k\bigoplus_{\substack{\text{words}\\ \text{of length $i$}}}\K\gamma_i\otimes\dotsm\otimes\gamma_1\right\},
\]
i.e. the minimal shifted action of any representative of $c$ with length at most $k$. Now any functor between $A_\infty$-categories induces a map on Hochschild homology, and hence we get for any continuation functor $\mathscr F\colon\W^\varepsilon(F)\to\W(F)$ a homomorphism of graded vector spaces
\[
 HH(\mathscr F)\colon HH_*(\W^\varepsilon(F))\to HH_*(\W(F)).
\]
From the existence of homotopies between continuation functors, it follows that there is a \emph{canonical} isomorphism $HH_*(\W^\varepsilon(F))\cong HH_*(\W(F))$. Identifying Hochschild homology for the various presentations, we define
\begin{defn}\label{defn:HH action}
 Let $c\in HH_*(\W(F))$. Its \textbf{length-$k$ $H^b$-normalized action} is defined to be
 \begin{equation}\label{eq:HH action}
  A^{b,k}(c)=\liminf_{\varepsilon\to0}\{A^{\varepsilon,k}(c)\mbox{ for some presentation of }\W^\varepsilon(F)\}.
 \end{equation}
\end{defn}

With the normalized action in hand, we can state the nondegeneracy criterion.

\begin{defn}\label{defn:strongly nondegenerate}
 A Liouville domain $F$ is called \textbf{strongly nondegenerate} if following conditions hold.
 \begin{enumerate}
  \item For every connected component $F_i$ of $F$, $SH^*(F_i)\ne0$. Here, $SH^*(F)$ is symplectic cohomology, i.e. the cohomology of $SC^*(F)$, first defined in \cite{Floe-Hofe,Ciel-Floe-Hofe}.
  \item $F$ is nondegenerate, i.e. the unit $\mathbf1\in SH^*(F)$ is in the image of the standard homological open-closed map 
  \[
   \mathcal{OC}\colon HH_*(\W(F))\to SH^*(F).
  \]
  See Section \ref{sec:OC maps} for a definition.. Ganatra \cite{Gana_scdwc} showed that this implies that $\mathcal{OC}$ is an isomorphism.
  \item There is some $k\in\N$ and some normalizing Hamiltonian $H^b$ for which $A^{b,k}(e)=0$, where $e=\mathcal{OC}^{-1}(\mathbf1)$ is the \textbf{Hochschild fundamental class}.
 \end{enumerate}
 A stop is called \textbf{strongly nondegenerate} if its fiber is strongly nondegenerate.
\end{defn}

\begin{eg}
 Abouzaid's description of the wrapped Fukaya category of a cotangent bundle \cite{Abou_wfcbl,Abou_cfgfc} shows that any cotangent bundle $F=T^*X$ is strongly nondegenerate. In this case we may take $k=\dim(X)+1$, and a low-action, length-$k$ Hochschild fundamental cycle (i.e. a representative of $e$) is obtained by picking a fine triangulation of $X$. Likewise, for $F$ a punctured Riemann surface other than $\C$, a low-action, length-3 fundamental cycle can be obtained from a fine decomposition of $F$ into triangles and strips. More generally, it is likely that a careful analysis of the continuation elements in \cite{GPS_gen} would show that any Weinstein domain with nontrivial symplectic cohomology admitting arbitrarily fine covers by nondegenerate Liouville sectors (i.e. pumpkin domains minus the interior of the images of their stops) is strongly nondegenerate.
\end{eg}

As with the $A_\infty$ operations, the energy leakage of the $d$'th term of a continuation functor between presentations of $\W^\varepsilon(F)$ can be arranged to be bounded by a constant times $de^{d-1}\varepsilon$. For a low-action representative of $e\in HH_*(\W^\varepsilon(F))$, this quantity is small because $d$ is a priori bounded, and thus continuation functors map low-action fundamental cycles to low-action fundamental cycles. Hence, we see that the condition $A^{b,k}(e)=0$ can be detected by \emph{any} sequence of presentations of $\W^\varepsilon(F)$ with $\varepsilon$ tending to $0$.
 
In fact, in view of Remark \ref{rmk:rescaling normalizing Hams}, the above argument shows

\begin{lemma}\label{lem:any hamiltonian detects action zero}
 If a Liouville domain is strongly nondegenerate with $A^{b,k}(e)=0$ for a single normalizing Hamiltonian, then $A^{b,k}(e)=0$ for every normalizing Hamiltonian.
 \qed
\end{lemma}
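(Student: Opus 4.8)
The plan is to reduce the claim to the two facts about normalizing Hamiltonians already recorded in Remark \ref{rmk:rescaling normalizing Hams}, namely that if $H^b$ and $H^{b\,\prime}$ are both normalizing, then there is a $\tau\ge1$ with $(H^{b\,\prime})_\tau\ge H^b$ globally, and symmetrically there is a $\tau'\ge1$ with $(H^b)_{\tau'}\ge H^{b\,\prime}$. The point is that rescaling a normalizing Hamiltonian by $\tau\ge1$ changes the class $\mathcal H_\varepsilon(F)$ only by a controlled conformal rescaling, and the induced map on Floer generators multiplies action by $\tfrac1\tau\le1$; so the length-$k$ shifted action computed with respect to $H^b$ and the one computed with respect to $(H^b)_\tau$ differ by at most a bounded multiplicative factor $e^{k-1}\tau^{\pm1}$ plus the usual $O(de^{d-1}\varepsilon)$ energy-leak terms, which vanish in the $\varepsilon\to0$ limit because $d\le k$ is bounded a priori.

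Concretely, first I would observe that a presentation $\W^\varepsilon(F)$ adapted to $H^b$ is, after applying the global Liouville rescaling $\phi^\tau$ to all Floer data (Abouzaid's rescaling trick, as recalled around Equation \eqref{eq:rescaled Floer}), a valid presentation $\W^{\varepsilon'}(F)$ adapted to $(H^b)_\tau$ for a suitably comparable $\varepsilon'$, and the canonical identification $HH_*(\W^\varepsilon(F))\cong HH_*(\W(F))\cong HH_*(\W^{\varepsilon'}(F))$ matches up the Hochschild fundamental class $e$ with itself. Under this identification the length-$k$ shifted action transforms multiplicatively: a cyclic chain $\gamma_d\otimes\dotsm\otimes\gamma_1$ in the first presentation corresponds to one whose unshifted action is scaled by $\tfrac1\tau$, so $A^\varepsilon$ is replaced by something bounded above by $\tau$ times $A^{\varepsilon'}$ (and below by a similar factor), up to the additive $8\varepsilon e^{d-1}$ shift terms and the energy leaks, all of which go to zero with $\varepsilon$. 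Taking $\liminf_{\varepsilon\to0}$ over presentations as in Definition \ref{defn:HH action}, the additive error terms disappear and we conclude $A^{b,k}(e)\le\tau\cdot A^{(b)_\tau,k}(e)$ and, symmetrically using $\tau'$, that $A^{(b)_\tau,k}(e)\le\tau'\cdot A^{b,k}(e)$. In particular $A^{b,k}(e)=0$ if and only if $A^{(b)_\tau,k}(e)=0$.

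It remains to handle two arbitrary normalizing Hamiltonians $H^b$ and $H^{b\,\prime}$ rather than one and its rescaling. Here I would use Remark \ref{rmk:rescaling normalizing Hams} together with the comparison of presentations: since $(H^{b\,\prime})_\tau\ge H^b$ globally for some $\tau\ge1$, any Hamiltonian in $\mathcal H_{\varepsilon/2}(F)$ relative to $H^b$ can, after a small further rescaling, be placed in $\mathcal H_\varepsilon(F)$ relative to $(H^{b\,\prime})_\tau$, so a presentation $\W^\varepsilon(F)$ normalized by $H^b$ also serves (with comparable $\varepsilon$) as a presentation normalized by $(H^{b\,\prime})_\tau$; the shifted actions agree up to the bounded factors above. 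Combining this with the previous paragraph's rescaling-invariance for $H^{b\,\prime}$ gives $A^{b,k}(e)=0\iff A^{(b')_\tau,k}(e)=0\iff A^{b',k}(e)=0$, possibly after increasing $k$ by a bounded amount (which is harmless since the statement is ``there is some $k$'').

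The main obstacle I expect is bookkeeping rather than conceptual: one must check that all the additive discrepancies entering the comparison of $A^\varepsilon$ for two presentations — the $8\varepsilon$ shift constants in \eqref{eq:shifted action}, the $O(de^{d-1}\varepsilon)$ energy-leak bound established in the lemma preceding Definition \ref{defn:HH action}, the $C^0$-variation of Lagrangian primitives, and the error introduced when a Hamiltonian in one $\mathcal H_\varepsilon$ is reinterpreted as a member of a rescaled one — are all bounded by (constant depending on $k$)$\times\varepsilon$, and hence contribute nothing after $\liminf_{\varepsilon\to0}$. The key structural input that makes this work, and which must be invoked carefully, is that the length restriction $d\le k$ means the exponential factors $e^{d-1}$ and the rescaling factors are bounded uniformly in $\varepsilon$; without the length cutoff the argument would fail, which is precisely why the definition of strong nondegeneracy fixes a single $k$. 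The continuation-functor argument already sketched in the paragraph after Definition \ref{defn:strongly nondegenerate} — that continuation functors send low-action fundamental cycles to low-action fundamental cycles — is exactly what licenses passing between different presentations, and I would cite it directly rather than redo it.
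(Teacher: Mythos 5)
Your argument is essentially the paper's own proof: the paper deduces the lemma by combining the observation (made just before the statement) that continuation functors between presentations have energy leaks bounded by a constant times $de^{d-1}\varepsilon$ with $d\le k$ a priori bounded, so that they carry low-action length-$\le k$ fundamental cycles to low-action fundamental cycles, with the domination-after-rescaling statement of Remark \ref{rmk:rescaling normalizing Hams} and the fact that Liouville rescaling multiplies action by a fixed factor $\tfrac1\tau$ --- exactly the two inputs you invoke. One minor point: your hedge about ``increasing $k$ by a bounded amount'' is both unnecessary and undesirable (the subsequent definition of Hochschild length needs the same $k$ to work for every normalizing Hamiltonian); since continuation functors and Liouville rescalings do not increase Hochschild word length, your argument as written already yields the conclusion for the original $k$.
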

\begin{cor}\label{cor:any sequence detects action zero}
 In the above situation, given any normalizing Hamiltonian and any sequence of presentations $\W^{\varepsilon_i}(F)$ for that Hamiltonian with $\varepsilon_i\to0$, one has $\lim_{i\to\infty}A^{\varepsilon_i,k}(e)=0$.
 \qed
\end{cor}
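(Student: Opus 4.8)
\emph{Proof proposal.} The plan is to feed a low-action Hochschild representative through a continuation functor and track the action that leaks out. By Lemma~\ref{lem:any hamiltonian detects action zero}, the hypothesis guarantees $A^{b,k}(e)=0$ for the given normalizing Hamiltonian $H^b$. Unwinding Definition~\ref{defn:HH action}, and recalling that $A^\varepsilon$ takes only strictly positive values (so $A^{\varepsilon,k}(e)\ge 0$ for every presentation), the vanishing of the $\liminf$ furnishes a sequence $\delta_j\to 0$, presentations $\W^{\delta_j}(F)$ modeled on $H^b$, and cyclic cycles $\vec\gamma_j\in\bigoplus_{i=1}^k\bigoplus_{\text{words}}\K\gamma_i\otimes\dotsm\otimes\gamma_1$ of length at most $k$ with $[\vec\gamma_j]=e$ and $A^{\delta_j}(\vec\gamma_j)\to 0$. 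Fix an auxiliary sequence $\eta_i\to0$.

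Now let $\W^{\varepsilon_i}(F)$, $\varepsilon_i\to0$, be the given sequence of presentations. For each $i$, since $\delta_j\to0$ and $A^{\delta_j}(\vec\gamma_j)\to0$, choose $j(i)$ with $\delta_{j(i)}\le\varepsilon_i$ and $A^{\delta_{j(i)}}(\vec\gamma_{j(i)})<\eta_i$. Choose a continuation functor $\mathscr F_i\colon\W^{\delta_{j(i)}}(F)\to\W^{\varepsilon_i}(F)$ as in Section~\ref{sec:Functors}; by Section~\ref{sec:htopies} any two such functors are homotopic, and the induced map on Hochschild homology is the canonical identification $HH_*(\W^{\delta_{j(i)}}(F))\cong HH_*(\W^{\varepsilon_i}(F))$ used in Definition~\ref{defn:HH action}. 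Hence the pushforward $c_i:=\mathscr F_{i,*}(\vec\gamma_{j(i)})\in CC_*(\W^{\varepsilon_i}(F))$ is a cycle representing $e$. Since the Hochschild pushforward of an $A_\infty$-functor replaces a cyclic word by a sum of cyclic words $\mathscr F_i^{d_1}(\cdots)\otimes\dotsm\otimes\mathscr F_i^{d_s}(\cdots)$ in which $\mathscr F_i^{d_a}$ consumes $d_a$ of the original inputs and $\sum_a d_a\le k$, each such word has $s\le\sum_a d_a\le k$ entries; thus $c_i$ lies in the length-$\le k$ part, and $A^{\varepsilon_i,k}(e)\le A^{\varepsilon_i}(c_i)$.

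It remains to estimate $A^{\varepsilon_i}(c_i)$. Arrange $\mathscr F_i$ so that the leakage of its $d$-th term, in the sense used throughout Section~\ref{sec:Hochschild action}, is bounded by $C\,d\,e^{d-1}\varepsilon_i$ for a fixed constant $C$; this is the analogue for continuation functors of the bound already built into the Floer data of $\W^\varepsilon(F)$, using $\delta_{j(i)}\le\varepsilon_i$. Then for a word $\mathscr F_i^{d_1}(\cdots)\otimes\dotsm\otimes\mathscr F_i^{d_s}(\cdots)$ with underlying inputs $\gamma_1,\dotsc,\gamma_m$ ($m=\sum_a d_a\le k$), writing $A(\mathscr F_i^{d_a}(\cdots))\le\sum_{\ell\in\text{block }a}A(\gamma_\ell)+C\,d_a\,e^{d_a-1}\varepsilon_i$ and feeding this into \eqref{eq:shifted action} together with $s,d_a\le k$, $\sum_a d_a\le k$, and $\sum_{\ell=1}^m A(\gamma_\ell)<\eta_i$ (which follows from $A^{\delta_{j(i)}}(\vec\gamma_{j(i)})<\eta_i$ since $\delta_{j(i)}>0$), one obtains
\[
  A^{\varepsilon_i}(c_i)\;\le\;e^{k-1}\bigl(\eta_i+C\,k\,e^{k-1}\varepsilon_i+8k\varepsilon_i\bigr),
\]
whose right-hand side tends to $0$ as $i\to\infty$ because $k$ is fixed. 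Hence $0\le A^{\varepsilon_i,k}(e)\le A^{\varepsilon_i}(c_i)\to0$, which is the claim. I expect the only nontrivial point to be the uniform leakage bound for continuation functors between nearby presentations; but this is obtained verbatim from the leakage analysis already carried out for the $A_\infty$ structure maps in the definition of $\W^\varepsilon(F)$, so the remaining work is purely the combinatorics of how the Hochschild pushforward recombines cyclic words without increasing their length past $k$, together with the bookkeeping of the $e^{d-1}$ and $8\varepsilon$ factors in \eqref{eq:shifted action}.
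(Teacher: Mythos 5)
Your proposal is correct and follows essentially the same route as the paper, which derives this corollary from the remark immediately preceding Lemma \ref{lem:any hamiltonian detects action zero}: continuation functors between presentations of $\W^\varepsilon(F)$ have $d$-th term leakage bounded by a constant times $de^{d-1}\varepsilon$, and since the length of a low-action fundamental cycle is a priori bounded by $k$, they carry low-action fundamental cycles to low-action fundamental cycles. Your elaboration — the choice of $\delta_{j(i)}\le\varepsilon_i$, the observation that the Hochschild pushforward of an $A_\infty$-functor does not increase word length, and the explicit bookkeeping against \eqref{eq:shifted action} — is exactly the content the paper leaves implicit.
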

\begin{defn}
 We will refer to the smallest $k$ such that $A^{b,k}(e)=0$ as the \textbf{Hochschild length} of $F$.
\end{defn}

\subsection{Statement of the theorem}
To make Theorem \ref{thm:quotient-intro} precise, we need one more notion:

\begin{defn}\label{defn:stop Lag}
 Let $(M,\lambda_M,\ssigma)$ be a pumpkin domain, and let $\sigma\in\ssigma$ be a stop with fiber $F$. Recall from Definitions \ref{defn:stab} and \ref{defn:trivial gluing} that the stabilization $\Sigma F$ of $F$ is the product $F\times\C_1$, and that the trivial gluing $M[\sigma]$ is the gluing $M\glu{\sigma}{\sigma_0}\Sigma F$. Note that interior Lagrangians of $M$ and $\Sigma F$ both give rise to interior Lagrangians of $M[\sigma]$ via inclusion. An interior Lagrangian $L$ in $M[\sigma]$ is said to be \textbf{supported in $\sigma$} if it is isomorphic in $\Ws(M[\sigma])$ to an interior Lagrangian of $\Sigma F$. Let $\Bs\subset\W(M[\sigma])$ and $\Bss\subset\Ws(M[\sigma])$ denote the full subcategories of objects supported in $\sigma$. More generally, for a subset $\ssigma'\subset\ssigma$, let $\Bsps$ denote the full subcategory of $\Wsp(M[\sigma])$ composed of objects supported in $\sigma$.
 
 Although we will generally work in $M[\sigma]$ when dealing with a given stop $\sigma$, we will want to state results in $M$ alone. In this case, we abuse notation and denote again by $\Bss\subset\Ws(M)$ the full subcategory of objects whose image under the quasi-equivalence $\Ws(M)\to\Ws(M[\sigma])$ lie in $\Bss$.
\end{defn}

\begin{prop}\label{prop:B is contractible}
 Let $\ssigma'\subset\ssigma$ be any collection of stops not containing $\sigma$. Then $\Bsps$ is contractible in the sense that, for all $L\in\Bsps$, the unit $e_L\in\hom^0_{\Wsp(M)}(L,L)$ is exact.
\end{prop}
\begin{proof}
 We work in $\widehat{\Sigma F}$, so $\sigma=\sigma_1$ is supported near the negative real axis. Fix $L\subset\widehat{\Sigma F}$. We will start by isotoping $L$ through objects of $\Ws(\Sigma F)$ to some $L'$ supported very close to $\sigma(\hat F\times\R_{\ge0})$. By Proposition \ref{prop:pwfc properties}, $L'$ will be isomorphic to $L$.
 
 Let $f_0\colon\hat\C_1\to\R$ have the form $|z|^4\sin\theta$ outside some large disk $D$ and globally satisfy $\frac{\partial f_0}{\partial y}>1+|z|^2$. Then $f_0$ is very nearly linear in the sense that, outside $D$,\[df_0(\hat Z_{\C_1})\approx f_0\] for $Z_{\C_1}$ the Liouville form on $\C_1$
 \[
  \left(|z|^2+\frac12\epsilon\kappa(|z|)-1\right)xdy-\left(|z|^2+\frac12\epsilon\kappa(|z|)+1\right)ydx
 \]
 from Example \ref{eg:C_n}. Additionally, $f_0$ satisfies
 \begin{equation}\label{eq:sign constraint for pushing Ham}
  \mathrm{sign}\left(\frac{\partial f_0}{\partial x}\right)=\mathrm{sign}(xy)
 \end{equation}
 outside $D$.

 Perturb $f_0$ to a function $f$ which is genuinely linear outside $D$, still satisfies $\frac{\partial f_0}{\partial y}>1+|z|^2$ globally, and satisfies \ref{eq:sign constraint for pushing Ham} (with $f_0$ replaced by $f$) for $|xy|>\epsilon$. Consider now the the pullback $f\circ\pi$ under the projection $\pi\colon F\times\hat\C_1\to\hat\C_1$, where $F\subset\hat F$ is chosen large enough that $L$ is strictly conical outside $F\times\hat\C_1$. Let $\tilde f\colon\widehat{\Sigma F}\to\R$ be an extension of $f\circ\pi$ which becomes linear after a very short neck
 \[
  \bigl([1,1+\epsilon']\times\partial F\bigr)\times\hat\C_1.
 \]
 Note that we began with data which was defined on the horizontal completion, and that passing from $f\circ\pi$ to $\tilde f$ only involves a vertical completion. This implies, for sufficiently small $\epsilon'$, $\tilde f$ still satisfies
 \[
  \frac{\partial\tilde f}{\partial\tilde y}>1+|z|^2
 \]
 over the region
 \[
  F_+:=\bigl(F\cup_{\partial F}[1,1+\epsilon']\times\partial F\bigr)\times\hat\C_1,
 \]
 where $\frac{\partial}{\partial\tilde y}$ is the horizontal lift of $\frac{\partial}{\partial y}$.
 
 Indeed, this is tautological over the interior $F\times\hat\C_1$. On the neck, because the hypersurfaces
 \[
  P_r:=\bigl(\{r\}\times\partial F\bigr)\times\hat\C_1
 \]
 are horizontal, it suffices to perform the calculation on $P_r$. But the Liouville vector field is split, so $\tilde f|_{P_r}$ is given by a scalar times a short-time Liouville pushforward of $\tilde f|_{P_1}=f\circ\pi|_{p_1}$. The quantity $\frac{\partial\tilde f}{\partial\tilde y}$ depends continuously on all of this, and hence the inequality follows. The situation with \eqref{eq:sign constraint for pushing Ham} is similar.
 
 Observe now that the Hamiltonian vector field $X_{\tilde f}$ has strictly negative $x$-component on $F_+$. Moreover, once $x$ is sufficiently negative, $X_{\tilde f}$ is non-expanding in the $y$-direction away from a small neighborhood of the $x$-axis. Because $X_{\tilde f}$ is linear, this implies that its large time flow will eventually move $L$ entirely into the image of $\sigma$. In particular, it will move $L$ past $D_\sigma$. To prevent $L$ from moving past $D_\sigma$, instead set $L'$ be the image of $L$ under the large time flow of $X_{\kappa \tilde f}$, where $\kappa\colon\widehat{\Sigma F}\to\R$ is an appropriate cutoff function supported outside a small neighborhood of $\sigma(\hat F\times\R_{\ge0})$.
 
 To complete the proof, construct a second linear Hamiltonian $\tilde g$ whose Hamiltonian vector field has a positive $y$-component. Flowing $L'$ first by $X_{\tilde f}$ past $D_\sigma$ and then by $X_{\tilde g}$, we obtain a new Lagrangian $L''$ which is isomorphic to $L'$ in any partially wrapped Fukaya category not stopped by $\sigma$. However, the isomorphism $e\in CW_{\ssigma'}^*(L',L'')$ comes from a count of holomorphic caps \cite[Section 3.3]{Sylv_pwfc-big} and has action bounded by the data of $L'$, $\tilde f$, and $\tilde g$. On the other hand, there are no chords from $L'$ to $L''$ for the standard Hamiltonian \eqref{eq:sum H} on the image of $\sigma$, and we can arrange that all chords outside the image of $\sigma$ have very large action. This implies $e$ is the zero morphism, and the proposition follows.
\end{proof}

By the universal property of a quotient category \cite{Drin_quot,Lyub-Manz}, since the image of $\Bss\to\mathcal B_{\ssigma\setminus\{\sigma\}}$ is contractible, the inclusion $\Ws(M)\to\W_{\ssigma\setminus\{\sigma\}}(M)$ factors up to homotopy through the quotient $\Ws(M)/\Bss$:
\begin{equation}\label{eq:SR universal property}
 \begin{tikzcd}[column sep=small]
  {}
  &\Ws(M)/\Bss
  \ar[dashrightarrow]{dr}{\mathcal{SR}}
  &\\
  \Ws(M)
  \ar{ur}
  \ar{rr}
  &&\W_{\ssigma\setminus\{\sigma\}}(M)
 \end{tikzcd}
\end{equation}
where $\mathcal{SR}$ stands for ``stop removal''. With this, the precise statement of Theorem \ref{thm:quotient-intro} is
\begin{thm}\label{thm:quotient}
 Let $(M,\lambda_M,\ssigma)$ be a pumpkin domain, and let $\sigma\in\ssigma$ be a strongly nondegenerate stop. Then the map $\mathcal{SR}\colon\Ws(M)/\Bss\to\W_{\ssigma\setminus\{\sigma\}}(M)$ from \eqref{eq:SR universal property} is fully faithful.
\end{thm}

\subsection{Open-closed maps}\label{sec:OC maps}

The main purpose of the action condition in Definition \ref{defn:strongly nondegenerate} of strong nondegeneracy is to pass in a controlled way from holomorphic curves in the fiber to holomorphic curves in the total space. In particular, we will define a weaker version of a nondegenerate stop using an open-closed string map $\mathcal{OC}\colon CC_*(\Bss)\to SC^*_\ssigma(M[\sigma])$ which counts punctured holomorphic disks, as described in \cite{Abou_gcgfc}. Following Abouzaid, let $\mathcal R^1_d$ be the space of disks with one interior puncture and $d\ge1$ boundary punctures, one of which is distinguished. For $\Sigma\in\mathcal R^1_d$, label the interior puncture by $\zeta_-$ and the boundary punctures $\zeta_1$ through $\zeta_d$, ordered counterclockwise, with the distinguished puncture labeled $\zeta_d$. $\mathcal R^1_d$ has a natural compactification to a manifold with corners $\ol{\mathcal R}^1_d$ whose codimension one faces can be canonically identified with
\begin{equation}\label{eq:OC base faces}
 \coprod_{\substack{2\le k\le d\\1\le i\le k}}\ol{\mathcal R}^1_{d+1-k}\times\ol{\mathcal R}^{k+1,i}
 \quad\amalg\;\coprod_{\substack{2\le k\le d-1\\1\le i\le d-k}}\ol{\mathcal R}^1_{d+1-k}\times\ol{\mathcal R}^{k+1}.
\end{equation}
Here, $\ol{\mathcal R}^{k+1,i}$ is diffeomorphic to $\ol{\mathcal R}^{k+1}$, but if $\Sigma^d\in\ol{\mathcal R}^{k+1}$, then the corresponding point of $\ol{\mathcal R}^{k+1,i}$ is $\Sigma^d$ with the additional datum that $\zeta_i\in\Sigma^d$ is distinguished. In other words, it is the space of disks with one negative puncture, $d$ positive punctures, and such that the $i$th puncture is considered special. The first term in \eqref{eq:OC base faces} corresponds then to a collection of punctures which includes $\zeta_d$ colliding, while the second corresponds to some other collection colliding. In this case, the additional index $i$ keeps track of where the collision occurred.

A collection of ends for $\Sigma\in\mathcal R^1_d$, making it into a punctured Riemann surface with boundary, ends, and cylinders, consists of a positive strip-like end $\epsilon_i$ for each boundary puncture $\zeta_i$, along with a negative cylindrical end $\epsilon_-$ at $\zeta_-$. In this case, we ask that $\epsilon_-$ has a very special form. Specifically, in the holomorphic coordinates on $\Sigma$ where $\mathrm{int}(\Sigma)=\{z\in\C\mid0<|z|<1\}$ and $\zeta_d=1$, we require that
\begin{equation}\label{eq:OC neg end alignment}
\epsilon_-(s,t)=ae^{2\pi(s+it)}\qquad\text{with $a\in\R$ positive.}
\end{equation}
for some positive number $a\in\R$. A \textbf{universal family of ends} for $\mathcal R^1_d$ consists of a collection of ends on each $\Sigma\in\mathcal R^1_d$ for every $d$, such that near the boundary of $\ol{\mathcal R}^1_d$ it agrees up to a rotation of $\epsilon_-$ with the collection induced by gluing. This rotation correction is unavoidable, since boundary components $\ol{\mathcal R}^1_{d+1-k}\times\ol{\mathcal R}^{k+1,i}$ have the same ends for all $i$, so that without rotation at most one of them could glue to a configuration which satisfies \eqref{eq:OC neg end alignment}. As a concrete example, consider the boundary
\[
 \partial\ol{\mathcal R}^1_2\;=\;\mathcal R^1_1\times\mathcal R^{3,1}\,\amalg\,\mathcal R^1_1\times\mathcal R^{3,2}.
\]
Because we have not chosen separate Floer data for $\mathcal R^{3,1}$ and $\mathcal R^{3,2}$, the negative strip-like end can be aligned with at most one of the punctures. As a result, after any naive finite gluing on each end with the same gluing parameter, we will have two punctured disks with isomorphic Floer data but different labelings of the boundary marked points. Because the cylindrical end at $\epsilon_-$ can be aligned with at most one boundary puncture, one of these naive gluings will fail to satisfy \eqref{eq:OC neg end alignment}.

However, because we are using an exponential gluing profile the magnitude of the misalignment vanishes to infinite order at the boundary, and hence the family of strip-like ends extends smoothly to $\ol{\mathcal R}^1_d$. One sees as with $\mathcal R^{d+1}$ that universal families of ends for $\mathcal R^1_d$ exist, and we fix one once and for all.

\begin{rmk}
 An alternative to these rotation corrections would be to choose separate Floer data for the $\mathcal R^{d+1,i}$ for different $i$, giving a bimodule quasi-isomorphic but not equal to the diagonal bimodule. This would be equally effective for our purpose but would unnecessarily increase the number of moduli spaces we need to construct.
\end{rmk}

A \textbf{universal and conformally consistent} choice of Floer data for $\mathcal R^1_d$ consists of, for all $d\ge1$, a Floer datum $(\beta,H,P,J,\tau_E)$ for each $\Sigma\in\mathcal R^1_d$ varying smoothly over $\mathcal R^1_d$, and such that near $\partial\ol{\mathcal R}^1_d$ it agrees to infinite order with the conformal class of not-quite Floer datum determined by gluing. We say not-quite due to the rotation corrections for the strip-like ends, which among other things cause the glued datum to not be a Floer datum in the above sense. Denote by $\mathcal K^{\mathcal{OC}}(M[\sigma])$ the space of universal and conformally consistent choices of Floer data for $\mathcal R^1_d$.

Given $\mathbf K\subset\mathcal K^{\mathcal OC}(M[\sigma])$, we can consider the resulting spaces of holomorphic curves. Given a collection of Lagrangian labels $L_i$ and asymptotic ends
\[
 \gamma_i\in\mathscr X(L_i,L_{i+1})\quad\text{ and }\quad x_-\in\mathscr X(H_t),
\]
we are interested in the space
\[
 \mathcal R^1_d(\gamma_d,\dotsc,\gamma_i;x_-).
\]
This consists of all maps $u\colon\Sigma\to\widehat{M[\sigma]}$ for $\Sigma\in\mathcal R^1_d$ satisfying \eqref{eq:gen Floer w/ cylinders} with $u(E_i)\subset (\phi^{\tau_{E}})^*L_i$, $u(\zeta_i)=(\phi^{\tau_i})^*\gamma_i$, and $u(\zeta_-)=(\phi^{\tau_-})^*x_-$.

\begin{lemma}\label{lem:OC transversality}
 For generic $\mathbf K\in\mathcal K^{\mathcal{OC}}(M[\sigma])$, all moduli spaces $\mathcal R^1_d(\gamma_d,\dotsc,\gamma_i;x_-)$ are transversely cut out of dimension $\deg(x_-)-\sum_{i=1}^d\deg(\gamma_i)+d-n-1$, where $n$ is half the dimension of $M$. In this case, Gromov compactness applies, and the codimension $1$ boundary strata are in natural bijection with
  \begin{equation}\label{eq:OC boundary strata}
  \begin{gathered}
   \coprod_{\substack{2\le k\le d\\1\le i\le k\\ \tild\gamma\in\mathscr X(L_{d+1-i},L_{k+1-i})}}\hspace{-6mm}\mathcal R^1_{d+1-k}(\tild\gamma,\gamma_{d-i},\dotsc,\gamma_{1+k-i};x_-)\times\mathcal R^{k+1}(\gamma_{k-i},\dotsc,\gamma_{d+1-i};\tild\gamma)\\
   \hspace{2cm}\amalg\coprod_{\tild\gamma\in\mathscr X(L_d,L_1)}\mathcal R^1_d(\tild\gamma,\gamma_{d-1},\dotsc,\gamma_1;x_-)\times\mathcal R(\gamma_d;\tild\gamma)\\
 \amalg\hspace{-2mm}\coprod_{\substack{2\le k\le d-1\\1\le i\le d-k\\ \tild\gamma\in\mathscr X(L_i,L_{i+k})}}\mathcal R^1_{d+1-k}(\gamma_d,\dotsc,\gamma_{i+k},\tild\gamma,\gamma_{i-1}\dotsc,\gamma_1;x_-)\times\mathcal R^{k+1}(\gamma_{i+k-1},\dotsc,\gamma_i;\tild\gamma)\\
 \amalg\coprod_{\substack{1\le i<d\\ \tild\gamma\in\mathscr X(L_i,L_{i+1})}}\mathcal R^1_d(\gamma_d,\dotsc,\gamma_{i+1},\tild\gamma,\gamma_{i-1}\dotsc,\gamma_1;x_-)\times\mathcal R(\gamma_i;\tild\gamma)\\
 \hspace{2cm}\amalg\coprod_{\tild x\in\mathscr X(H_t)}\mathcal Q(\tild x;x_-)\times\mathcal R^1_d(\gamma_d,\dotsc,\gamma_1;\tild x).
  \end{gathered}
  \end{equation}
\end{lemma}
\begin{proof}[Remark on proof]
 To address the compactness, recall from Definition \ref{defn:Floer datum w/ cylinders} that the perturbing Hamiltonian has no effect outside the 2-shifts of the cylinders. On the other hand, $J$ is of rescaled contact type in the 2-shifts of the cylinders. Thus, we can separate the compactness problem into the 2-shifts of the cylinders and their complement.
 
 In the first case, the $C^0$ estimates in \cite{Gana_scdwc} give a bound on how far elements $u\in\mathcal R^1_d(\gamma_d,\dotsc,\gamma_i;\gamma_-)$ can protrude into the symplectization. Everywhere else, Lemma \ref{lem:maximum principle} applies, even with varying compatibility between $J$ and $H$. Together, these show that the image of any $u$ is constrained to lie in a compact set depending only on $\mathbf K_\Delta$ and the ends $\gamma_i$ and $\gamma_-$.
\end{proof}

Define $\mathcal{OC}\colon CC_*(\Bss)\to SC^{*+n}(M[\sigma])$ by
\[
 \mathcal{OC}(\gamma_d\otimes\dotsm\otimes\gamma_1)=\hspace{-15mm}\sum_{\substack{x\in\mathscr X(H_t)\\ \deg(x)=\sum_{i=1}^d\deg(\gamma_i)+n+1-d}}\hspace{-15mm}\#\mathcal R^1_d(\gamma_d,\dotsc,\gamma_i;x)\cdot x.
\]
The boundary strata in \eqref{eq:OC boundary strata} tell us that $\mathcal{OC}$ is a chain map. Further, arguing as in Lemma \ref{lem:SH intersections filter} gives
\begin{lemma}\label{lem:OC intersections filter}
 The image of $\mathcal{OC}$ lies in $SC_\ssigma^*(M[\sigma])$.
 \qed
\end{lemma}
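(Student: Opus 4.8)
The plan is to run the positivity-of-intersections argument of Lemmas \ref{lem:intersections filter} and \ref{lem:SH intersections filter}, now for the open-closed curves parametrized by $\mathcal R^1_d$. Fix a stop of the pumpkin domain $M[\sigma]$, call its divisor $D$, and suppose $u\in\mathcal R^1_d(\gamma_d,\dotsc,\gamma_1;x)$ is a rigid curve contributing to $\mathcal{OC}(\gamma_d\otimes\dotsm\otimes\gamma_1)$, so that each $\gamma_i$ is a morphism of $\Bss$ and hence lies in the partially wrapped complex: $n(\gamma_i)=0$ for every stop, and in particular the winding of $\gamma_i$ about $D$ vanishes. We must show that $x$ neither meets the positive ray over $D$ nor lies in $D$, which is exactly the condition $x\in\mathscr X_\ssigma(H_t)$.

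First I would invoke Gromov's trick. The Floer data of Definition \ref{defn:Floer datum w/ cylinders} force the projection to $\H_\rho$ to be $J$-holomorphic along $D$, and $X_H=X_{H^{strict}}+X_P$ is tangent to $D$ (the compatible part by Definition \ref{defn:compatible H}(3), the perturbation by condition \eqref{cd:SH Ham nicely split}, which makes $P$ independent of the $\H_\rho$-coordinate near $D$). Hence $u$ either takes values in $D$ or has only isolated positive intersections with it. The former is impossible: since $d\geq1$ we have $\partial\Sigma\neq\emptyset$, and $u$ maps $\partial\Sigma$ into Lagrangians which by Definition \ref{defn:pumpkin Lag} avoid all the stops, hence avoid $D$. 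Therefore $u\cdot D\geq0$.

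Next I would read off $u\cdot D$ from the asymptotics. As $D$ has real codimension $2$ and $\partial u$ avoids it, $u\cdot D$ is the signed sum of local winding contributions at the punctures: each positive strip-like end $\zeta_i$ contributes $n(\gamma_i)$ exactly as in the proof of Lemma \ref{lem:intersections filter}, and the negative cylindrical end $\zeta_-$ contributes $-w(x)$, where $w(x)$ is the winding of $x$ about $D$ measured in the splitting coordinates of condition \eqref{cd:SH Ham nicely split} (which also govern $u$ near $\zeta_-$, since there $w_-H^{strict}=\tild H_{\tau_-}$ is of the split form). Because every $\gamma_i$ lies in the partially wrapped complex, $n(\gamma_i)=0$, so $u\cdot D=-w(x)$. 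Now if $x$ met the positive ray over $D$, condition \eqref{cd:Hamiltonian positivity} would make those intersections positive, giving $w(x)>0$; and if $x$ lay inside $D$, the asymptotic analysis of \cite{Sief_asymp} combined with $\tild H$ being split with $f(z)=c|z|^2$, $c>0$, near $D$ shows — just as in Lemma \ref{lem:SH intersections filter} — that $u$ near $\zeta_-$ behaves as if $x$ had strictly positive winding, so again the effective $w(x)$ is positive. Either way $u\cdot D<0$, contradicting $u\cdot D\geq0$.

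Running this for every stop of $M[\sigma]$ gives $x\in\mathscr X_\ssigma(H_t)$, hence $\mathcal{OC}(\gamma_d\otimes\dotsm\otimes\gamma_1)\in SC^*_\ssigma(M[\sigma])$, as desired. The only genuinely delicate point is the case $x\subset D$, where one must interpret "winding of $u$ around a curve lying inside $D$" via the precise exponential convergence at the cylindrical end; but that input has already been set up in Lemma \ref{lem:SH intersections filter}, so no new analysis is needed here — the rest is the bookkeeping of signs at positive versus negative ends.
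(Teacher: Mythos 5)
Your argument is exactly the one the paper intends: it states the lemma with only the remark that one argues "as in Lemma \ref{lem:SH intersections filter}," i.e.\ positivity of intersections with $D_\sigma$ for curves not contained in the divisor, plus the Siefring asymptotics of \cite{Sief_asymp} together with condition \eqref{cd:SH Ham nicely split} to handle outputs lying inside $D_\sigma$, combined with the winding bookkeeping at the ends as in Lemma \ref{lem:intersections filter}. Your proposal fills in precisely these steps correctly, so it matches the paper's approach.
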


\begin{defn}\label{defn:nondegenerate stop}
 A stop $\sigma\in\ssigma$ is \textbf{weakly nondegenerate} if, for some choice of Floer data for symplectic cohomology, the Fukaya category and the open-closed map, there is a Hochschild cycle $y\in CC_{1-n}(\Bss)$ such that $\mathcal{OC}(y)=f_\sigma$, where $f_\sigma\in SC_\ssigma^1(M[\sigma])$ is a \textbf{saddle unit} of $\sigma$ as described below.
\end{defn}

Morally speaking, a saddle unit is any cocycle which lives in the central fiber of $\Sigma F\subset M[\sigma]$ and represents the unit of $SH^*(F)$ when thought of as a chain in $SC^0(F)$. The drop in degree from 1 to 0 comes from the fact that the central fiber lives at a saddle point of the Liouville vector field for $\C_1$, which translates to an index 1 Morse critical point for nice choices of compatible Hamiltonian. However, such a cocycle is often exact in $SC_\ssigma^*(M[\sigma])$, so the careful definition of $f_\sigma$ instead involves a count of holomorphic caps.

Concretely, let $\Sigma$ be $\C$ equipped with the negative cylindrical end $\epsilon^f$ asymptotic to $\infty$ given by
\[
 \epsilon^f(s,t)=e^{-2\pi(s+it)}
\]
Let $\mathcal K^\C(M[\sigma])$ denote the space of Floer data on $\Sigma$. Given a Floer datum $K^f\in\mathcal K^\C(M[\sigma])$ and an orbit $x\in\mathscr X(H_t)$, we are interested in the resulting moduli space space $\mathcal C(x)$. This is the space of all maps $u\colon\Sigma\to\widehat{M[\sigma]}$ satisfying \eqref{eq:gen Floer w/ cylinders} and
\[
 \lim_{s\to-\infty}u(\epsilon^f(s,t))=(\phi^\tau)^*x(t),
\]
where $\tau$ is the conformal factor $K^f$ assigns to $\infty\in\ol\Sigma$, and for which
\[
 u(0)\in Y_\sigma,
\]
where $Y_\sigma\subset\hat M[\sigma]$ is the hypersurface which comes from $\hat F\times i\R\subset\Sigma F$. The last condition is the interesting one. Indeed, that is the only place where the stop $\sigma$ comes into the definition of $f_\sigma$. Without it the holomorphic caps would not be forced to avoid any $D_\sigma$, and we would just obtain the unit of symplectic cohomology.
\begin{lemma}\label{lem:f_sigma transversality}
 For generic $K^f\in\mathcal K^\C(M[\sigma])$, all moduli spaces $\mathcal C(x)$ are transversely cut out of dimension $\deg(x)-1$. In this case, Gromov compactness applies, and the codimension $1$  boundary strata of the compactification are in natural bijection with
   \[
    \partial\ol{\mathcal C}(x)=\coprod_{\tild x\in\mathscr X(H_t)}\mathcal Q(\tild x;x)\times\mathcal C(\tild x).
   \]
  \qed
\end{lemma}
Note that if $\deg(x)=2$, then $\tild x$ can only appear when it has degree 1.

\begin{defn}\label{defn:saddle unit}
 A \textbf{saddle unit} of $\sigma$ is any chain
 \[
  f_\sigma=\sum_{\substack{x\in\mathscr X(H_t)\\ \deg(x)=1}}\#\mathcal C(x)\cdot x
 \]
 obtained from a regular Floer datum $K^f\subset\mathcal K^\C(M[\sigma])$. It follows from Lemma \ref{lem:f_sigma transversality} and positivity of intersections that such a chain is in fact a closed element of $SC_\ssigma^1(M[\sigma])$.
\end{defn}

The rest of Section \ref{ch:nondeg stops} is devoted to proving
\begin{prop}\label{prop:nondegeneracy}
 Every strongly nondegenerate stop is weakly nondegenerate.
\end{prop}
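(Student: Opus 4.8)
The plan is to build the Hochschild cycle $y$ by transporting a low-action representative of the Hochschild fundamental class $e=\mathcal{OC}_F^{-1}(\mathbf 1)\in HH_*(\W(F))$ from the fiber into the total space, and then to identify $\mathcal{OC}(y)$ with a saddle unit through a product/localization argument near the central fiber $Y_\sigma\cong\hat F\times i\R$ of $\Sigma F\subset M[\sigma]$. First I would fix the geometry by working in $M[\sigma]=M\glu{\sigma}{\sigma_0}\Sigma F$, so that near $D_\sigma$ the total space is a piece of $F\times\C_1$, with $Y_\sigma=\hat F\times i\R$ and the two stops of $\Sigma F$ sitting over $\pm1$. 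I would take the Floer datum $(\tild H,P_t,J_t,\dotsc)$ defining $SC^*_\ssigma(M[\sigma])$, together with the data defining the caps $\mathcal C(x)$, to be split of the form \eqref{eq:sum H} on a neighborhood $N$ of $Y_\sigma$ — this is exactly condition \eqref{cd:SH Ham nicely split}. For such a choice the $1$-periodic orbits of $H_t$ near $Y_\sigma$ are products of a $1$-periodic orbit of $H_F$ in $F$ with the hyperbolic fixed point of the $\C_1$-factor, and the degree shift from $0$ (the degree in $SC^*(F)$) to $1$ (the degree in $SC^1_\ssigma(M[\sigma])$) is precisely the Morse index of that fixed point. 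Correspondingly, the objects of $\Bss$ appearing below may be taken to be products $L_F\times i\R$ (or small Hamiltonian perturbations), and their low-action chords split as a chord of $H_F$ in $F$ times the constant chord at the saddle point.

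Using the nondegeneracy of $F$ and Corollary \ref{cor:any sequence detects action zero}, I would then choose $\varepsilon$ small, a presentation $\W^\varepsilon(F)$, and a Hochschild cycle $y_F\in CC_{1-n}(\W^\varepsilon(F))$ of length at most $k$ with $[y_F]=e$ and shifted action $A^\varepsilon(y_F)$ as small as desired. Crossing the objects appearing in $y_F$ with $i\R$ and including into $M[\sigma]$ produces interior Lagrangians supported in $\sigma$, and the chords of $y_F$ lift to chords in $M[\sigma]$, so $y_F$ determines a Hochschild chain $y\in CC_{1-n}(\Bss)$ of the same length. Because $y_F$ has small action, every chord of $y$ is localized in an arbitrarily small neighborhood of $D_\sigma$; a monotonicity argument using the maximum principle of Lemma \ref{lem:maximum principle}, the split form of $H$ on $N$, and positivity of intersections with $D_\sigma$ then shows that every rigid $\mu^d$-disk of $\Ws(M[\sigma])$ with all inputs among these chords is contained in $N$, hence after projecting to the two factors splits as a rigid $\mu^d$-disk of $\W^\varepsilon(F)$ times a constant disk in $\C_1$. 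Thus the relevant part of the $A_\infty$-structure of $\Bss$ agrees with that of $\W^\varepsilon(F)$, and $y$ is a closed Hochschild chain.

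To identify $\mathcal{OC}(y)$ with a saddle unit I would apply the same localization principle to the moduli spaces $\mathcal R^1_d(\gamma_d,\dotsc,\gamma_1;x)$: with inputs the low-action chords of $y$, the curves stay in $N$, the output orbit $x$ lies near $Y_\sigma$, and each curve splits as a disk-with-interior-puncture in $F$ (one of those computing $\mathcal{OC}_F(y_F)$) times a disk-with-interior-puncture in $\C_1$ with boundary on translates of $i\R$; the latter factor is rigid and a direct count in $\C_1$ shows it meets $i\R$ exactly once, matching the point constraint $u(0)\in Y_\sigma$ defining $\mathcal C(x)$. Hence $\mathcal{OC}(y)$ is the push of $\mathcal{OC}_F(y_F)=\mathbf 1\in SH^0(F)$ to the central fiber, which is the class of a saddle unit. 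More carefully, I would run this as a single cobordism argument: interpolate between the moduli spaces defining $\mathcal{OC}(y)$ and those defining $f_\sigma$ by shrinking the Lagrangian boundary of the disk-with-interior-puncture down onto $Y_\sigma$; the extra boundary degenerations on one side assemble the Hochschild differential of $y$ (which vanishes) together with the $A_\infty$-relations, while on the other side they assemble the differential of $f_\sigma$. Absorbing the resulting coboundary into $y$ by adding a Hochschild boundary gives $\mathcal{OC}(y)=f_\sigma$ on the nose.

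The main obstacle is the compactness of this last interpolating moduli space — one must rule out energy escaping into the rest of $M[\sigma]$, and in particular a sphere or a fiber disk bubbling off away from $N$, as the disk boundary shrinks. This is exactly the point where the hypothesis $A^{b,k}(e)=0$, rather than mere nondegeneracy of $F$, is needed: a positive lower bound on the action of every representative of $e$ would leave room for such bubbling and would break the product description of the curves. The bookkeeping of the shifted action $A^\varepsilon$ from Section \ref{sec:Hochschild action}, which is designed precisely to track the energy leaks of Lemma \ref{lem:E^geom E^top inequality}, makes the localization estimate uniform over the finitely many curve topologies that can occur once the length is bounded by $k$, and hence lets the argument close.
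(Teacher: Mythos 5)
Your overall skeleton agrees with the paper's: stabilize a low-action representative of the Hochschild fundamental class into the central fiber $\hat F\times\{0\}$ of $\Sigma F\subset M[\sigma]$, use the smallness of the action to confine all relevant holomorphic curves to a split neighborhood, and then identify $\mathcal{OC}$ of the stabilized cycle with a saddle unit. The degree shift by the Morse index of the saddle, the role of Corollary \ref{cor:any sequence detects action zero}, and the observation that $A^{b,k}(e)=0$ is exactly what makes the localization work are all correctly placed.

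There is, however, a genuine gap at the central step. You assert that once a rigid $\mu^d$-disk or $\mathcal{OC}$-annulus is confined to the split region $K\times D_{0.9}$ with split almost complex structure and boundary on product Lagrangians, it ``splits as a rigid disk in $F$ times a constant disk in $\C_1$.'' Confinement plus splitness of $J$ only tells you that the projection to the $\C_1$-factor is itself a perturbed holomorphic map with boundary on (translates of) $i\R$ and all asymptotics at the saddle point; nothing forces that projection to be constant, and there is no a priori reason the nonconstant ones occur in positive index only. This is precisely the problem the paper's construction is built to evade: it replaces $L\times i\R$ by carefully constructed $\Phi$-invariant Lagrangians $\Sigma L$, chooses the Hamiltonians and almost complex structures on $K\times D_{0.9}$ to be invariant under the involution $\Phi(p,z)=(p,-z)$, and then observes that curves \emph{not} contained in the fixed locus $\hat F\times\{0\}$ come in $\Phi$-pairs and cancel in characteristic $2$, so that only central-fiber curves contribute. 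Without either this equivariance argument or an automatic-regularity/index analysis in the $\C_1$-factor (which the paper only supplies \emph{after} the reduction, in Section \ref{sec:transversality for nondegeneracy}, at the level of the linearized operator), your identification $\mathcal{OC}(y)=\Sigma(\mathcal{OC}_F(y_F))$ does not follow. Relatedly, your fallback ``cobordism shrinking the Lagrangian boundary onto $Y_\sigma$'' is not a well-defined degeneration and is not needed once the equivariant cancellation is in place; and the sphere bubbling you worry about is excluded outright by exactness, whereas the real confinement mechanism (escape in the $x$- and $y$-directions of $D_{0.9}$, handled by the monotonicity of Lemma \ref{lem:weak Morse approx} inside the limiting argument of Lemma \ref{lem:no escape with small action}) is more delicate than an appeal to the maximum principle and positivity of intersections.
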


\subsection{Stabilizations}\label{sec:stabilizations}
The proof of Proposition \ref{prop:nondegeneracy} amounts to making a very careful choice of Floer data. To do this, we first consider the case of a stabilization, and there it will be helpful to work with a special Liouville form on $\C_1$. For this, let $\lambda_{\C_1}^0$ be the Liouville form on $\C_1$ constructed in Example \ref{eg:C_n}. Near the origin, it is given by
\[
 \lambda_{\C_1}^0=\left(-1+\frac12\epsilon+x^2+y^2\right)xdy-\left(1+\frac12\epsilon+x^2+y^2\right)ydx.
\]
Let $\kappa$ be the cutoff function of Example \ref{eg:C_n}, so that $\kappa$ is radially invariant, equals 1 when $|z|<\frac14$, and equals 0 when $|z|>\frac12$. The Liouville form $\lambda_{\C_1}=\lambda_{\C_1}^0+d(xy\kappa(|z|))$ given near zero by
\[
 \lambda_{\C_1}=\left(\frac12\epsilon+x^2+y^2\right)xdy-\left(\frac12\epsilon+x^2+y^2\right)ydx
\]
is still invariant under the involution $z\mapsto-z$, and $i\R$ is still invariant under its Liouville vector field $\hat\Z_{\C_1}$. Moreover, it has two very nice properties which $\lambda_{\C_1}^0$ lacks. First, its Liouville vector field agrees to second order at the origin with $Z_{std}=\frac12(x\partial_x+y\partial_y)$. Second, it receives a non-proper Liouville embedding
\[
 I\colon(\C,\lambda_{std})\to(\C_1,\lambda_{\C_1})
\]
which is $\Z/2$-equivariant, strictly preserves the Liouville forms, sends $\R$ into $\R$ and $i\R$ onto $i\R$, and whose image avoids the images of the stops. See Figure \ref{fig:stab vector fields}.

\begin{figure}
 \def\svgwidth{16cm}
 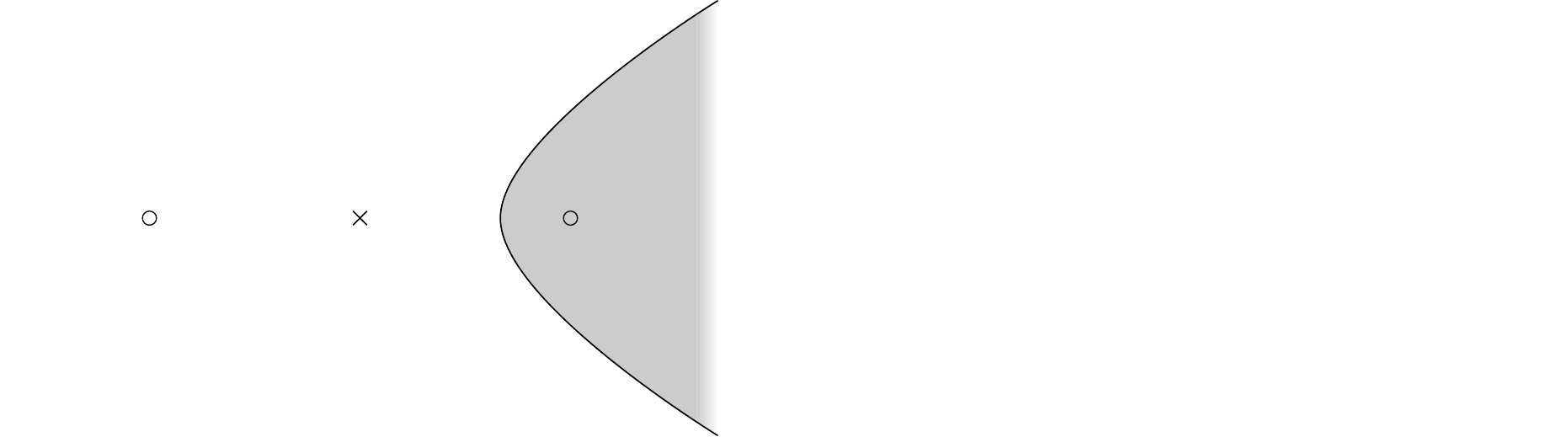
 \caption{The Liouville vector fields for $\lambda_{\C_1}^0$ and $\lambda_{\C_1}$, respectively. The central shaded region on the second picture is the image of the embedding $I$.}\label{fig:stab vector fields}
\end{figure}

With this, let $F$ be a strongly nondegenerate Liouville domain, and consider $N=\Sigma F$ with Liouville form $\lambda_N=\lambda_F+\lambda_{\C_1}$. Denote by $\Phi\colon\hat N\to\hat M$ the involution which is identity in the $\hat F$ component and rotation by $\pi$ in the $\C_1$ component. In particular, $\Phi$ exchanges the two stops in $N$. For $\varepsilon>0$ and an object $L\in\W^\varepsilon(F)$, we can build $\Phi$-invariant Lagrangians in $N$ as follows.

Let $f$ be the unique compactly supported primitive of $\hat\lambda_F|_L$, which exists by the conditions on Lagrangians in $\W^\varepsilon(F)$. Choose $\boldsymbol\tau\colon\R\to\R$ to be a smooth, \emph{even} function which vanishes in a neighborhood of zero and, outside a compact set, is of the form
\[
 \boldsymbol\tau(y)=2\log|y|+\log B
\]
for some constant $B>0$. Write $\phi^\tau$ for the time $\tau$ flow of $\hat Z_N$. Then the map
\[
 (p,y)\mapsto\left(\phi^{\boldsymbol\tau(y)}(p),\left(-f(p)\frac{d}{dy}e^{\boldsymbol\tau(y)},y\right)\right)
\]
is a Lagrangian embedding $\Sigma^{\boldsymbol\tau}_L\colon L\times\R\into(\hat F\times\C,\hat\lambda_F+\hat\lambda_{\C,std})$ whose image is $\Phi$-invariant globally and $\hat Z_{F\times\C}$-invariant outside a compact set. Note that, for $\boldsymbol\tau$ not too wild on the compact part of its domain, we have
\[
 \left|\frac{d}{dy}e^{\boldsymbol\tau(y)}\right|<2B|y|,
\]
which together with the variance condition on $f$ implies that
\begin{equation}
 |x|<2\varepsilon B|y|.
\end{equation}
Let $C>0$ be such that $|I(z)|>1$ for $z\in i\R$ with $|z|>C$, and choose once and for all such a $\boldsymbol\tau$ which additionally vanishes on $[-C,C]$. Then $(Id,I)\circ\Sigma^{\boldsymbol\tau}_L\colon L\times\R\into N$ is a Lagrangian embedding which is split wherever $(Id,I)$ fails to preserve the Liouville form. This implies that its image is a conical Lagrangian in $N$, and we denote it by $\Sigma L$.

In the remainder of this section, we construct Hamiltonians on $N$ which are simultaneously well-adapted to the splitting $F\times\C_1$ and close to being normalizing Hamiltonians. For this, let $H^s$ be a compatible Hamiltonian on $F$ satisfying $dH^s(\hat Z_F)\le2H^s$ globally and $H^s<0.1$ whenever $dH^s(\hat Z_F)\ne2H^s$. Let $f_0\colon\R_{\ge0}\to\R_{\ge0}$ be a smooth, increasing function which satisfies
\begin{enumerate}
 \item There is some $a>0$ such that $f_0(x)=ax$ for $x$ close to zero,
 \item $f_0'(x)\le a$ for $x\le\frac{25}{16}$,
 \item $f_0'(1)=0$,
 \item $f_0(x)=x^2$ for $x$ sufficiently large,
 \item $f_0(2)<0.1$, and
 \item $f_0'(x)\ge2x$ for $x\ge2$.
\end{enumerate}
See Figure \ref{fig:graph for stab ham}.

\begin{figure}
 \def\svgwidth{10cm}
 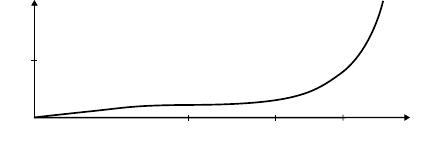
 \caption{}\label{fig:graph for stab ham}
\end{figure}

Given $\alpha>0$, write $H^\alpha\colon N\to\R$ for $\frac1\alpha$ times the compatible Hamiltonian given by Equation \eqref{eq:sum H} with $g=\alpha^2 H^s$ and $f(z)=\alpha f_0(|z^2-1|^2)$. Setting $K=(\alpha H^s)^{-1}((-\infty,2])\subset\hat F$, we have
\begin{equation}\label{eq:H split over disk}
 H^\alpha|_{K\times D_{0.9}}(p,z)=\alpha H^s(p)+f_0(|z^2-1|^2)\quad\mbox{for all $\alpha$ sufficiently small}.
\end{equation}
In this case, $X_{H^\alpha}$ has the crucial property
\begin{subequations}\label{cd:saddle X_H dirs}
 \begin{equation}\label{cd:saddle X_H x-comp}
  dx(X_{H^\alpha})\mbox{ has the opposite sign as }y\mbox{ and}
 \end{equation}
 \begin{equation}\label{cd:saddle X_H y-comp}
  dy(X_{H^\alpha})\mbox{ has the opposite sign as }x,
 \end{equation}
\end{subequations}
where $z=x+iy$ is the coordinate on $D_{0.9}$. 

Let $\kappa\colon\R\to[0,1]$ be a nondecreasing cutoff function with $\kappa(x)=0$ for $x\le1$ and $\kappa(x)=1$ for $x\ge2$. Such a $\kappa$ satisfies $\kappa(f_0(|z^2-1|^2))=0$ for $z\in D_{\frac12}$ and $\kappa(f_0(|z^2-1|^2))=1$ for $z\in D_{0.9}$ with $|\Im(z)|>0.8$. Define $H^N\colon N\to\R$ by
\begin{equation}\label{eq:H^N}
 H^N=cH^\alpha\kappa(H^\alpha).
\end{equation}
where $c>0$ is a small constant which ensures that $H^N$ is $C^2$-small on the interior of $N$. This is not quite a normalizing Hamiltonian on $N$, but it has some very nice properties.

\begin{lemma}\label{lem:H^N superlinearity}
 The following hold.
 \begin{enumerate}
  \item $H^N$ is $\Phi$-invariant on $K\times D_{0.9}$, and there it satisfies \eqref{cd:saddle X_H dirs} whenever $dH^N\ne0$.
  \item $H^N\ge2c$ on $K\times(D_{0.9}\cap\{|\Im(z)|>0.8\})$ and on $\partial K\times D_{0.9}$.
  \item\label{cd:H^N hides the core} $H^N=0$ in a neighborhood of $\{(p,z)\in\hat N\mid\hat Z_N(p,z)=0\}$.
  \item The restriction $H^N|_{\hat F\times\{0\}}$ is a normalizing Hamiltonian on $F$.
  \item Along $K\times\{0\}$, the vector field $X_{H^N}$ vanishes in the $\C_1$-directions to at least third order.
  \item For sufficiently small $a$ in the definition of $f_0$ and sufficiently small $\alpha$ in the definition of $H^\alpha$, the following holds. For all $\tau\ge1$, $H^N_\tau\ge\tau^{3/4}H^N$ everywhere.
 \end{enumerate}
\end{lemma}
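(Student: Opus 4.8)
\subsection*{Proof proposal}

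The plan is to read off each item from the explicit formula \eqref{eq:H^N}, $H^N=cH^\alpha\kappa(H^\alpha)$, together with the splitting \eqref{eq:H split over disk} on $K\times D_{0.9}$ and the defining properties of $f_0$, of the cutoff $\kappa$, and of the compatible Hamiltonian $H^s$. Three structural observations get used repeatedly. First, $H^N=\psi\circ H^\alpha$ for the nondecreasing function $\psi(s)=cs\kappa(s)$, so $X_{H^N}=\psi'(H^\alpha)\,X_{H^\alpha}$ is a pointwise \emph{nonnegative} multiple of $X_{H^\alpha}$ (and wherever $dH^N\ne0$ the factor is strictly positive). Second, the involution $z\mapsto-z$ fixes $|z^2-1|^2$, which is the only way $z$ enters \eqref{eq:H split over disk}. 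Third, on $\{|z|>\tfrac12\}$ the cutoff of Example \ref{eg:C_n} vanishes, so there $\lambda_{\C_1}=u_1^*\lambda_\C$ and $\hat Z_{\C_1}$ is exactly the $u_1$-pullback of the standard radial field; in particular $d(|z^2-1|^2)(\hat Z_{\C_1})=|z^2-1|^2$ identically on $\{|z|>\tfrac12\}$.

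Granting these, items (1)--(5) are short. For (1), $\Phi$-invariance of $H^\alpha$ (hence of $H^N$) on $K\times D_{0.9}$ follows from the reflection symmetry of $|z^2-1|^2$, and the sign conditions \eqref{cd:saddle X_H dirs} for $X_{H^N}$ descend from those already recorded for $X_{H^\alpha}$ by the nonnegative-rescaling observation. For (2), one estimates $|z^2-1|^2$ directly: on $D_{0.9}\cap\{|\Im z|>0.8\}$ it exceeds $2$, so by the explicit form of $f_0$ (Figure \ref{fig:graph for stab ham}) $H^\alpha\ge f_0(|z^2-1|^2)\ge2$ and hence $H^N=cH^\alpha\ge2c$; on $\partial K\times D_{0.9}$ one has $\alpha H^s\equiv2$ by the definition of $K$, so again $H^\alpha\ge2$ and $H^N\ge2c$. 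For (3), the zero set of $\hat Z_N$ is $\{\hat Z_F=0\}\times\{\hat Z_{\C_1}=0\}$, and on a neighborhood of it both $\alpha H^s$ and $f_0(|z^2-1|^2)$ are below $\tfrac12$ once $\alpha$ and the slope $a$ are small, so $H^\alpha<1$ there and $H^N$ vanishes identically. For (4), the splitting gives $H^N(\cdot,0)=cQ\kappa(Q)$ with $Q=H^\alpha(\cdot,0)$ proper, positive, and strictly quadratic on $\hat F$ (a constant multiple of $H^s$ outside a compact set); absorbing $c$ into $Q$ and rescaling $\kappa$ puts this in exactly the form of a normalizing Hamiltonian, with $C^2$-norm controlled by $c$. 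For (5), $f_0'(1)=0$ together with $|z^2-1|^2-1=O(|z|^2)$ forces $f_0(|z^2-1|^2)=f_0(1)+O(|z|^4)$ near $z=0$, so $dH^\alpha$, restricted to the $\C_1$-directions, vanishes to third order along $K\times\{0\}$; the same holds for $X_{H^N}=\psi'(H^\alpha)X_{H^\alpha}$.

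Item (6) is the real content. The first step is to reduce the global inequality $H^N_\tau\ge\tau^{3/4}H^N$ (where $H^N_\tau=\tfrac1\tau(\phi^\tau)^*H^N$ and $\phi^\tau$ is the time-$\log\tau$ flow of $\hat Z_N$) to the pointwise radial estimate $dH^\alpha(\hat Z_N)\ge\tfrac74 H^\alpha$ on $\{H^\alpha>1\}$. Indeed the inequality is trivial where $H^N=0$, and at $x$ with $H^\alpha(x)>1$ the radial estimate makes $H^\alpha$ increase along the forward flow, so the orbit stays in $\{H^\alpha>1\}$ and integration gives $H^\alpha(\phi^\tau x)\ge\tau^{7/4}H^\alpha(x)$; then monotonicity of $s\mapsto s\kappa(s)$ (using $\kappa(H^\alpha(\phi^\tau x))\ge\kappa(H^\alpha(x))$) yields $H^N(\phi^\tau x)\ge\tau^{7/4}H^N(x)$, i.e. $H^N_\tau(x)\ge\tau^{3/4}H^N(x)$. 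To prove the radial estimate: outside a compact set $H^\alpha$ is a genuine quadratic compatible Hamiltonian and $dH^\alpha(\hat Z_N)=2H^\alpha$; on the rest of $\{H^\alpha>1\}$ the splitting applies and, with $u=|z^2-1|^2$,
\begin{equation*}
 dH^\alpha(\hat Z_N)-\tfrac74 H^\alpha=\alpha\bigl(dH^s(\hat Z_F)-\tfrac74 H^s\bigr)+\bigl(f_0'(u)\,d(|z^2-1|^2)(\hat Z_{\C_1})-\tfrac74 f_0(u)\bigr).
\end{equation*}
Where $\alpha H^s$ accounts for a definite fraction of $H^\alpha$ one has $H^s\gg0.1$, hence $dH^s(\hat Z_F)=2H^s$ by the choice of $H^s$, dealing with the first bracket; where $f_0(u)$ accounts for a definite fraction one has $u>2$ (since $f_0<1$ on $[0,2]$), hence $d(|z^2-1|^2)(\hat Z_{\C_1})=u$ and $f_0'(u)\ge2u$, and one arranges $f_0'(u)u-\tfrac74 f_0(u)\ge\varepsilon>0$ throughout $[2,\infty)$ --- consistent with all the constraints on $f_0$, which bound $f_0'$ from above only on $[0,\tfrac{25}{16}]$ --- which controls the second bracket and absorbs the $O(\alpha)$ error from the first in the remaining cases.

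The main obstacle is precisely this radial estimate: one must ensure that $\{H^\alpha>1\}$ does not intrude into the ``bad'' region near the divisors $\{z=\pm1\}\times\hat F$ and near the skeleton, where $\hat Z_{\C_1}$ (hence $\hat Z_N$) degenerates and the estimate could fail, and simultaneously that the target exponent survives as $3/4$ rather than being eroded toward $0$ by the cutoff $\kappa$. Both are bought by the interplay of the $\tfrac14$ of slack between the asymptotic radial rate $2$ and the required rate $\tfrac74$, the freedom in the shape of $f_0$ on $[2,\infty)$, the fact that $H^s$ is chosen to be strictly quadratic already on $\{H^s\ge0.1\}$, and the smallness of $a$ and $\alpha$, which confine $\{H^\alpha>1\}$ either to $\{|z^2-1|^2>2\}$ or to a region where $\alpha H^s$ dominates --- in either case away from the divisors and the core.
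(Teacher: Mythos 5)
Your handling of items (1)--(5) and your overall strategy for item (6) --- reducing $H^N_\tau\ge\tau^{3/4}H^N$ to the infinitesimal inequality $dH^\alpha(\hat Z_N)\ge\tfrac74H^\alpha$ on $\{H^\alpha\ge1\}$, then verifying it case by case and shrinking $a$ and $\alpha$ to absorb the negative contributions --- coincide with the paper's proof. The reduction by integrating the flow and using monotonicity of $s\mapsto s\kappa(s)$ is correct, and your split-region estimate (nonnegativity of $f_0'(u)\,u$ outside $D_{1/2}$, smallness of $a$ inside, smallness of $\alpha$ to control $dH^s(\hat Z_F)$ where $H^s<0.1$) is essentially the one in the text, even if the ``definite fraction'' bookkeeping is left loose.

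There is, however, a genuine gap in your dichotomy ``outside a compact set $H^\alpha$ is quadratic; on the rest of $\{H^\alpha>1\}$ the splitting applies.'' That is false. The defining formula \eqref{eq:sum H} (with $g=\alpha^2H^s$ and $f=\alpha f_0(|z^2-1|^2)$) has interpolation regions where exactly one of $f(z)$, $g(p)$ lies in $[1,2]$, so that the cutoff $h$ is active: concretely $f(z)\in[1,2]$, i.e.\ $f_0(|z^2-1|^2)\in[\tfrac1\alpha,\tfrac2\alpha]$, with $H^s(p)$ small, and symmetrically $g(p)\in[1,2]$ with $|z^2-1|^2$ small. These are nonempty compact sets on which $H^\alpha\gg1$, yet $H^\alpha$ equals neither $\alpha H^s+f_0(|z^2-1|^2)$ nor a strictly quadratic function; for instance, in the first region the $F$-contribution is $\alpha e^{2h(f(z))}H^s\bigl(\phi_F(-h(f(z)),p)\bigr)$, and applying $\hat Z_{\C_1}$ to it produces cross terms involving $h'(f)\,df(\hat Z_{\C_1})$ that your displayed decomposition does not see. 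Being non-quadratic, these regions necessarily sit inside any compact set you excise, so they cannot be absorbed into the ``quadratic at infinity'' case. Roughly half of the paper's proof of item (6) is devoted to exactly this: it isolates the two interpolation regions, expands $dH^\alpha(\hat Z_N)$ from \eqref{eq:sum H}, and shows the cross terms are $O(\alpha)$ while $H^\alpha$ is $O(\tfrac1\alpha)$ there, yielding $dH^\alpha(\hat Z_N)=(2+O(\alpha^2))H^\alpha$ after a further shrinking of $\alpha$. Your proposal needs this additional case; note also that the obstacle you flag at the end (intrusion of $\{H^\alpha>1\}$ near the divisors and the skeleton) is already disposed of by the split-case estimates, whereas the interpolation regions at large $|z|$ or large $H^s$ are where the remaining work actually lies.
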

\begin{proof}
 The first five are immediate consequences of the construction. The sixth follows from a sequence of straightforward but tedious calculations, see \cite{Sylv_pwfc-big}.
\end{proof}

For the rest of Section \ref{ch:nondeg stops}, fix $H^\alpha$ and $H^N$ satisfying the conclusion of Lemma \ref{lem:H^N superlinearity}.

\subsection{Collapsing the energy}\label{sec:collapsing energy}

Moving to the global situation, let $M$ be a pumpkin domain of the form $M'[\sigma]$, where $\sigma$ is strongly nondegenerate with fiber $F$. Choose the Liouville form $\hat\lambda_M$ on $M$ to strictly agree with $\hat\lambda_N$ on the gluing region. We will extend the Hamiltonian $H^N$ to a Hamiltonian $H^M$ and define a family of spaces $\mathcal H_\varepsilon(M)$ as if $H^M$ were a normalizing Hamiltonian. With this, we will be able to control the open-closed maps by degenerating $\varepsilon$.

To begin, assume $\partial N\subset\hat N$ is chosen to be large enough to easily contain both stops, e.g. $\partial N=(H^N)^{-1}(100)$. Let $U\subset\partial N$ be the intersection
\[
 \partial N\cap\sigma_0(\hat F\times\{\Re(z)>0\}),
\]
and let $P_0\subset U$ be the intersection
\[
 \partial N\cap\sigma_0(\hat F\times\{\Re(z)>\rho\}),
\]
where $\rho$ is the width of the stop $\sigma$ on $M'$. Note that $\partial N\setminus P_0$ embeds canonically into $M$, and that this embedding strictly intertwines the contact 1-forms. Let $P\subset U$ be a compact subset containing $P_0$ such that the positive flow of $\hat Z_M$ is proper on $\partial N\setminus P$.

Pick a smooth function $g\colon\partial N\to[0,1]$ which equals $0$ on a neighborhood $P$ and $1$ outside $U$. This gives rise to a function $H^N_g=H^N\tilde g$, where $\tilde g$ is the $\hat Z_N$-invariant extension of $g$. By \eqref{cd:H^N hides the core} in Lemma \ref{lem:H^N superlinearity}, $H^N_g$ is a smooth function on $\hat N$. From the construction of $P$, we see that it extends by $0$ to a smooth function on $\hat M$, which we continue to call $H^N_g$.

Analogously, pick a normalizing Hamiltonian $H^{M'}$ on $M'$, and cut it off with a function $g'$ so that it extends to a smooth function $H^{M'}_{g'}$ on $M$. By choosing the analog of $P$ appropriately, we may assume that $H^{M'}_{g'}$ vanishes on $K\times D_{0.9}$. With this, define
\begin{equation}
 H^M:=H^N_g+H^{M'}_{g'}.
\end{equation}
It has the following properties.
\begin{enumerate}
 \item $H^M$ is proper, nonnegative, and satisfies $dH^M(\hat Z_M)=2H^M$ outside a compact set.
 \item For all $\tau\ge1$, $H^M_\tau\ge\tau^{3/4}H^M$ everywhere.
 \item $H^M=0$ in a neighborhood of $\{p\in\hat M\mid\hat Z_M(p)=0\}$.
 \item $H^M$ agrees with $H^N$ on $K\times D_{0.9}$ and $\hat F\times\{0\}$. In particular, $H^M|_{\hat F\times\{0\}}$ is a normalizing Hamiltonian on $F$.
 \item After possibly replacing all Hamiltonians with $\epsilon$-multiples of themselves, all nonconstant time 1 orbits of $X_{H^M}$ live in the conical region where $dH^M(\hat Z_M)=2H^M$, and moreover none of these occurs in $K\times\{0\}$.
\end{enumerate}

Although it vanishes sometimes, $H^M$ is close to being a compatible Hamiltonian, and so we obtain spaces $\mathcal J(M,H)$ and $\mathcal J^{S^1}_{resc}(M,H)$ of adapted and rescaled almost complex structures. Choose almost complex structures $J^M\in\mathcal J(M,H)$ and $J^M_{resc}\in\mathcal J^{S^1}_{resc}(M,H)$ which on $K\times D_{0.9}$ are split and take the form $(J^F,J_{std})$. We will treat $(H^M,J^M)$ and $(H^M,J^M_{resc})$ as normalizing Floer data for Lagrangian Floer cohomology and symplectic cohomology, respectively.

In that spirit, choose universal and conformally consistent families of Floer data $\mathbf K^\mu$ for the $A_\infty$ structure and $\mathbf K^{\mathcal{OC}}$ for the open-closed maps, along with a Floer datum $K^\C$ for the saddle unit, such that the following hold.
\begin{enumerate}
 \item All Hamiltonians agree up to rescaling with $H^M$ and satisfy
 \[
 d^\Sigma H\wedge\beta\le0
 \]
 globally.
 \item After the same rescaling as with the Hamiltonians, all almost complex structures are split on $K\times D_{0.9}$, and there they take the form $(J^F,J_{std})$ for some potentially domain-dependent $J^F$.
 \item For every domain $\Sigma$ with nonempty boundary, each boundary component $E$ of $\Sigma$ contains disjoint open intervals $E^+$ and $E^-$ such that $\tau_E$ is nonincreasing outside $E^+$ and nondecreasing outside $E^-$.
 \item The rescaling factors are all at least $1$ and satisfy
  \begin{equation}\label{eq:rescaling bound for A_infty disks}
    e^{1-d}\tau_0<\tau_E<\tau_0
  \end{equation}
 for the $A_\infty$ disks $\Sigma^{d+1}\in\mathcal R^{d+1}$,
  \begin{equation}\label{eq:rescaling bound for OC annuli}
    e^{1-d}\tau_-<\tau_E<\tau_-
  \end{equation}
 for the open-closed annuli $\Sigma^1_d\in\mathcal R^1_d$, and
  \[
   \tau=1
  \]
 for the saddle unit.
 \item The 1-form $\beta^\C$ for the saddle unit satisfies $\beta^\C=f(r)d\theta$, where $f$ is a nonincreasing function which equals $0$ near zero and $\frac{-1}{2\pi}$ outside a compact set.
\end{enumerate}
Conditions \eqref{eq:rescaling bound for A_infty disks} and \eqref{eq:rescaling bound for OC annuli} are more difficult than before, but they can still be achieved. In this case the minimal coefficient is $(2^{4/3})^{d-1}$, but $e$ is bigger than $2^{4/3}$, so we're fine. We will use these Floer data as normalizations rather than to define holomorphic curve counts directly, so the complete lack of nondegeneracy of critical points and regularity of moduli spaces is not a problem.

From here, choose for all ${\mathbf m}\in\N$ a nondegenerate compatible Hamiltonian and regular almost complex structure for each pair of Lagrangians, along with a regular Floer datum $(H_t^{\mathbf m},J_t^{\mathbf m})$ for the symplectic cochain complex. Extend these to universal and conformally consistent, regular families of Floer data $\mathbf K^{\mu,{\mathbf m}}$ for the $A_\infty$ operations, $\mathbf K^{\mathcal{OC},{\mathbf m}}$ for the open-closed maps (compatibly with $\mathbf K^{\mu,{\mathbf m}}$), and $K^{\C,{\mathbf m}}$ for the saddle unit, in such a way that the following hold.
\begin{enumerate}
 \item $\mathbf K^{\bullet,{\mathbf m}}$ is $\frac1{2{\mathbf m}}$-close to $\mathbf K^\bullet$ in $C^{\mathbf m}$. Here, we use a notion of closeness which allows small changes at infinity, as in \eqref{cd:close to normalizing Hamiltonian}.
 \item For fixed inputs, the resulting perturbed holomorphic disks satisfy a uniform-in-${\mathbf m}$ maximum principle. This can be achieved by only allowing perturbations near infinity where $d^\Sigma H$ is supported in the set $\{d\beta\le c\cdot dvol\}$ for some fixed negative number $c$.
 \item For every domain $\Sigma$ with nonempty boundary, each boundary component $E$ of $\Sigma$ contains disjoint open intervals $E^+$ and $E^-$ such that $\tau_E$ is nonincreasing outside $E^+$ and nondecreasing outside $E^-$. Additionally, the $\tau_E$ all satisfy \eqref{eq:rescaling bound for A_infty disks} or \eqref{eq:rescaling bound for OC annuli} up to an additive error of $\frac1{\mathbf m}$.
 \item For all
 \[
  \Sigma\in\bigcup_{d=1}^k\ol{\mathcal R}^{d+1}\,\cup\,\bigcup_{d=1}^k\ol{\mathcal R}^1_d
 \]
 where $k$ is the Hochschild length of $F$, there is a function $\tau^{\mathbf m}\colon\Sigma\to[1,\infty)$ such that
 \begin{enumerate}
  \item $\tau^{\mathbf m}$ agrees on each strip or cylinder with the rescaling factor associated to that strip or cylinder, and elsewhere it extends the function $\tau_E$.
  \item The un-rescaled family of almost complex structures $J^{\bullet,{\mathbf m}}_u$, where $u=\frac1{\tau^{\mathbf m}}$, is split on $K\times D_{0.9}$ and agrees with $J_{std}$ in the second component.
  \item The un-rescaled family of Hamiltonians $H^{\bullet,{\mathbf m}}_u$ is $\Phi$-invariant on $K\times D_{0.9}$, and there it satisfies \eqref{cd:saddle X_H x-comp} whenever $|y|>0.1$ and \eqref{cd:saddle X_H y-comp} whenever $|x|>0.1$.
  \item All critical points of $H^{\bullet,{\mathbf m}}_u|_{Y_\sigma}$ lie in the central fiber $K\times\{0\}$. Moreover, along the central fiber $H^{\bullet,{\mathbf m}}_u$ agrees to second order with
  \[
   H^{\bullet,{\mathbf m}}_u|_{K\times\{0\}}+a(y^2-x^2)
  \]
  for some $a>0$.
 \end{enumerate}
 \item For $\Sigma=\C$, the above holds with $u\equiv1$. Further, the 1-form $\beta^{\C,{\mathbf m}}$ satisfies $\beta^{\C,{\mathbf m}}=f_{\mathbf m}(r)d\theta$, where $f_{\mathbf m}$ is a nonincreasing function which equals $0$ near zero and $\frac{-1}{2\pi}$ outside a compact set.
 \item The interior action leaks satisfy the bounds
 \[
  \int_{\Sigma^{d+1}}\max_{\hat M}(d^\Sigma H^{\mu,{\mathbf m}}\wedge\beta^{\mu,{\mathbf m}})<\frac{2de^{d-1}}{\mathbf m\tau_0}
 \]
 for $\Sigma^{d+1}\in\mathcal R^{d+1}$,
 \[
  \int_{\Sigma^1_d}\max_{\hat M}(d^\Sigma H^{\mathcal{OC},{\mathbf m}}\wedge\beta^{\mathcal{OC},{\mathbf m}})<\frac{2de^{d-1}+1}{\mathbf m\tau_0}
 \]
 for $\Sigma^1_d\in\mathcal R^1_d$, and
 \[
  \int_\Sigma\max_{\hat M}(d^\Sigma H^{\C,{\mathbf m}}\wedge\beta^{\C,{\mathbf m}})<\frac1{\mathbf m}.
 \]
 for $\Sigma=\C$.
\end{enumerate}

\begin{rmk}\label{rmk:Z/2 transversality}
 For perturbed holomorphic curves living entirely in $K\times D_{0.9}$, one needs access to $\Z/2$ transversality statements. This is straightforward in the stable case outside the central fiber, and in the unstable case one can argue as in \cite{Khov-Seid}. For $A_\infty$ disks lying entirely in the central fiber, the situation is essentially identical to that in Section 14 of \cite{Seid_plt}, where Seidel's topological argument guarantees transversality. For domains with negative cylindrical ends, however, the topological argument fails. We address the regularity of the moduli spaces of such curves in Section \ref{sec:transversality for nondegeneracy}, where it turns out that our argument only guarantees transversality when ${\mathbf m}$ is large and the full family of domains is compact. This latter condition is why we only ask $H$ to be $\Phi$-invariant up to $d=k$.
\end{rmk}

We are now prepared to begin proving Proposition \ref{prop:nondegeneracy}. To do so, we will want to show that for large ${\mathbf m}$, essentially all relevant holomorphic curves live in the central fiber. In principle one could argue this as a strict Morse degeneration, but for our purposes the following lemma suffices.

\begin{lemma}\label{lem:weak Morse approx}
 Let $\Sigma$ be a finite strip or cylinder $[a,b]\times C$, where $C$ is either $[0,1]$ or $S^1$. If $\Sigma$ is a strip, equip it with Lagrangian labels $\Sigma L_0$ along $[a,b]\times\{0\}$ and $\Sigma L_1$ along $[a,b]\times\{1\}$, where $L_0$ and $L_1$ are objects of $\W^\varepsilon(F)$ for some $\varepsilon>0$. Equip $\Sigma$ with a Floer datum $(H^{\mathbf m},J^{\mathbf m})$ coming from the above choices for some finite ${\mathbf m}$, and suppose $u\colon\Sigma\to\hat M$ is a perturbed holomorphic map whose image lies in $K\times D_{0.9}$. Then
 \begin{equation}\label{eq:Morse approx for y}
  \pm(y\circ u)>0.1\text{ globally }\implies\pm\left(\int_Cy(u(a,t))dt-\int_Cy(u(b,t))dt\right)>0,
 \end{equation}
 $(x,y)$ is the coordinate function on $D_{0.9}$. If $C=S^1$, then we also have
 \begin{equation}\label{eq:Morse approx for x}
  \pm(x\circ u)>0.1\text{ globally }\implies\mp\left(\int_Cx(u(a,t))dt-\int_Cx(u(b,t))dt\right)>0.
 \end{equation}
\end{lemma}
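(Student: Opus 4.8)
The plan is to exploit the key structural property \eqref{cd:saddle X_H dirs} of the Hamiltonian vector field on $K\times D_{0.9}$, which says precisely that in the region where $|y|>0.1$ the flow of $X_{H^{\mathbf m}_u}$ decreases $|y|$, and similarly for $x$. Since $u$ is a perturbed holomorphic curve landing in $K\times D_{0.9}$, where the almost complex structure is split and standard in the $D_{0.9}$-factor, the composition $v:=\pi_{\C}\circ u\colon\Sigma\to D_{0.9}$ satisfies an honest inhomogeneous Cauchy--Riemann equation $\bar\partial v=(X_{H^{\mathbf m}_u})_\C\otimes\beta$ with respect to $j$ and the standard complex structure. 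The point is that the function $y\circ u = \Im v$ is then a subsolution of a linear elliptic equation, so one can run a maximum-principle argument on the finite strip or cylinder $[a,b]\times C$.

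First I would set up the computation of $\int_C y(u(b,t))\,dt - \int_C y(u(a,t))\,dt = \int_{[a,b]\times C} \partial_s(y\circ u)\,ds\,dt$ and use the perturbed holomorphic equation to rewrite $\partial_s(y\circ u)$ in terms of $\partial_t(x\circ u)$ and the Hamiltonian term. Concretely, writing $v = x\circ u + i\,y\circ u$, the equation $\partial_s v + i(\partial_t v - (X_{H^{\mathbf m}_u})_\C) = 0$ (after accounting for the domain-dependent $\beta$, which in the rectangle coordinates of the slowing family is $f(s)\,dt$ or similar) gives $\partial_s(y\circ u) = -\partial_t(x\circ u) + \beta\cdot dx(X_{H^{\mathbf m}_u})$. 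Integrating over $C$, the $\partial_t(x\circ u)$ term dies — for $C=S^1$ because it's a full derivative around the circle, and for $C=[0,1]$ because the boundary conditions $\Sigma L_0, \Sigma L_1$ are contained in the locus where $x$ is controlled; here I'd need the $\Phi$-invariance of $\Sigma L_i$ together with the bound $|x|<2\varepsilon B|y|$ established for the stabilized Lagrangians, so that on the boundary the variation of $x$ is dominated and in fact $x\circ u$ at $t=0,1$ is pinned appropriately (one should check the boundary term $x(u(s,1)) - x(u(s,0))$ integrates to something of the correct sign, or vanishes). Then we are left with $\int_C \partial_s(y\circ u)\,dt = \int_C \beta\cdot dx(X_{H^{\mathbf m}_u})\,dt$, and by \eqref{cd:saddle X_H x-comp} the integrand has the sign of $-\beta\cdot(\operatorname{sign} y)$; since $\beta$ is a nonnegative (or appropriately signed) multiple of $dt$ and $y\circ u$ has constant sign by hypothesis, the whole integral has a definite sign, and integrating once more in $s$ over $[a,b]$ gives \eqref{eq:Morse approx for y}. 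The statement \eqref{eq:Morse approx for x} is proved symmetrically using \eqref{cd:saddle X_H y-comp}, available only when $C=S^1$ because then there is no boundary and no need to control the $t$-derivative term via Lagrangian boundary conditions.

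The main obstacle I expect is the careful treatment of the boundary terms in the strip case: one must verify that $\int_{[a,b]} \big(x(u(s,1)) - x(u(s,0))\big)\,ds$ either vanishes or contributes with the favorable sign. This is where the precise geometry of $\Sigma L$ enters — its $\Phi$-invariance (so that the two boundary components are related by the involution $z\mapsto -z$, forcing a symmetry between the $x$-values), the fact that $\boldsymbol\tau$ is even, and the inequality $|x| < 2\varepsilon B|y|$. I would argue that because the hypothesis forces $y\circ u$ to stay on one side of $0.1$ in absolute value along the whole strip, and because of $\Phi$-invariance of the boundary labels, the $x$-coordinate along $\partial\Sigma$ is small relative to the bulk $y$-displacement, so the boundary contribution cannot overwhelm the sign coming from the Hamiltonian term; alternatively, by choosing the identification of $\Sigma L_i$ with $L_i\times\R$ so that $x\equiv 0$ precisely along the appropriate boundary pieces one can make the boundary term vanish outright. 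A secondary subtlety is bookkeeping the sign of $\beta$ and the orientation conventions, but that is routine once the main inequality is in place.
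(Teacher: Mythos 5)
Your proposal is correct and is essentially the paper's argument: integrate $\partial_s(y\circ u)$ over the strip/cylinder, use the split Cauchy--Riemann equation to trade it for $-\partial_t(x\circ u)$ plus the term $dx(X_{H^{\mathbf m}})$, kill the $t$-derivative by periodicity (cylinder) or the Lagrangian boundary condition (strip), and conclude from condition \eqref{cd:saddle X_H x-comp}. The boundary-term worry you raise is resolved by exactly the second alternative you mention: the vanishing of $\boldsymbol\tau$ on $[-C,C]$, with $C$ chosen so that $|I(z)|>1$ for $z\in i\R$ with $|z|>C$, forces the portion of each $\Sigma L_i$ inside $K\times D_{0.9}$ to lie in $\{x=0\}$, so the boundary contribution vanishes outright rather than merely being dominated.
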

\begin{proof}
 The proof is identical in all four cases, so we restrict to the case $y\circ u>0.1$. Then
 \begin{align*}
  \left(\int_Cy(u(a,t))dt-\int_Cy(u(b,t))dt\right)&=\int_\Sigma-d(y\circ u)(\partial_s)ds\wedge dt\\
  &=\int_\Sigma-dy\circ du\wedge dt\\
  &=\int_\Sigma-dy\circ(du-X_{H^{\mathbf m}}dt)\wedge dt\\
  &=\int_\Sigma dy\circ J^{\mathbf m}\circ(du-X_{H^{\mathbf m}}dt)\circ j\wedge dt\\
  \mbox{\footnotesize (using that $J^{\mathbf m}$ is split)}\quad&=\int_\Sigma-dx\circ(du-X_{H^{\mathbf m}}dt)\circ j\wedge dt\\
  &=\int_\Sigma-d(x\circ u)(\partial_t)ds\wedge dt+\int_\Sigma dx(X_{H^{\mathbf m}})ds\wedge dt\\
  \mbox{\footnotesize (using that $H^{\mathbf m}$ satisfies \eqref{cd:saddle X_H x-comp})}\quad&<\int_\Sigma-d(x\circ u)(\partial_t)ds\wedge dt\\
  &=-\int_a^b\int_C\frac{\partial(x\circ u(s,\cdot))}{\partial t}dt\,ds
  =-\int_a^b0\,ds=0.
 \end{align*}
 In the last step, when $C=[0,1]$, it is crucial that the Lagrangian boundary conditions lie along $\{x=0\}$. This is where we use the vanishing condition on $\boldsymbol\tau$ in the construction of $\Sigma L$.
\end{proof}

\begin{lemma}\label{lem:no escape with small action}
 Let
 \[
  \mathcal D\subset\left(\{\R\times[0,1]\}\cup\mathcal R^{*+1}\cup\mathcal R^1_*\cup\{\C\}\cup\{\R\times S^1\}\right)
 \]
 be a compact space of domains, where
 \[
  \mathcal R^{*+1}:=\bigcup_{d=2}^{\infty}\ol{\mathcal R}^{d+1}\quad\mbox{and}\quad\mathcal R^1_*:=\bigcup_{d=1}^{\infty}\ol{\mathcal R}^1_d.
 \]
 Then there is some ${\mathbf m}_0\in\N$ and $E_0>0$ such that any holomorphic curve $u$ with domain in $\mathcal D$, boundary conditions on Lagrangians of the form $\Sigma L$, Floer datum $K^{\bullet,{\mathbf m}}$ with ${\mathbf m}>{\mathbf m}_0$, geometric energy and output action below $E_0$, and all inputs in the central fiber $K\times\{0\}$ lies entirely in $K\times D_{0.9}$.
 
 In particular, because it's the only place in $K\times D_{0.9}$ which supports any Floer generators, the output of $u$ must also lie in the central fiber.
\end{lemma}
\begin{rmk}
 Any holomorphic cap with Floer datum $K^{\C,{\mathbf m}}$ is assumed to have incidence condition $u(0)\in Y_\sigma$. For such curves the condition that all inputs lie in the central fiber is vacuously true.
\end{rmk}
\begin{proof}
 Suppose not, so that for every $E_0$ there are infinitely many pairs $({\mathbf m},u)$ which satisfy the assumptions but do not lie entirely in $K\times D_{0.9}$. Then we may form a sequence $({\mathbf m}_j,E_j,u_j)$ with ${\mathbf m}_j\to\infty$ and $E_j\to 0$, $u_j$ satisfying the assumptions with respect to ${\mathbf m}_j$ and $E_j$, and such that no $u_j$ lies entirely in $K\times D_{0.9}$.
 
 Because of the conditions on
 \[
  \int\max_{\hat M}(d^\Sigma H^{\bullet,{\mathbf m}}\wedge\beta^{\bullet,{\mathbf m}}),
 \]
 a bound on geometric energy gives a bound on topological energy. Because $\mathcal D$ is compact, this combines with our bound on the output action to bound the actions of the inputs. Thus, the maps $u_j$ satisfy a uniform maximum principle.
 
 Now even though the Lagrangian boundary conditions become singular in the limit, the primitives of $\lambda_M$ on all Lagrangians tend uniformly to zero, which excludes all bubbling. Thus, again using that $\mathcal D$ is compact, elliptic compactness applies, so that any subsequence $z_j$ in the domain of $u_j$ has a subsequence $z_{j'}$ such that $u_{j'}$ converges locally in $C^\infty$ on an increasing family of neighborhoods $U_{j'}$ of $z_{j'}$. Because both the geometric energy and all the actions go to zero, $u_{j'}|_{U_{j'}}$ converges to a constant map at a critical point of $H^M$. On the other hand, the critical locus of $H^M$ coincides with its zero locus, so in fact $u_{j'}|_{U_{j'}}$ converges to a point where $H^M=0$.
 
 This implies that $\lVert du_j\rVert$ converges uniformly over the entire domain to $0$, since otherwise we could find a sequence $z_j$ near which $u_j$ cannot converge to a constant map.
 
 Now, for any limiting domain $\bigcup U_{j'}$ with nonempty boundary the limit curve must lie on a point $\Sigma L\cap(H^M)^{-1}(0)$ for some $L$, and all such points belong to the interior of $K\times D_{0.9}$. Indeed, without rescaling it is immediate from the properties of $H^N$, and because all rescaling factors are at least one the rescaled Hamiltonian vanishes on a strictly smaller portion of $\Sigma L$. Similarly, if the limiting domain has an incidence condition on $Y_\sigma$, then the limit curve must lie on a point $Y_\sigma\cap(H^M)^{-1}(0)$, and again all such points belong to the interior of $K\times D_{0.9}$.
 
 The only possible limiting domain which does not fall into one of the above classes is the infinite cylinder $\R\times S^1$. Thus, for large $j'$, any $u_j'$ which exits $K\times D_{0.9}$ must do so along a long cylinder $S$ with positive end near $Y_\sigma\cap(H^M)^{-1}(0)$. Because $\lVert du_j\rVert$ tends uniformly to zero, we can increase $j'$ to ensure that $u_{j'}(S)$ is supported arbitrarily close to $(H^M)^{-1}(0)$ and that the image of $u_{j'}(s,\cdot)$ has small diameter for all $s$. Because $H^M$ is bounded below on $K\times D_{0.9}$ when $|y|\ge0.8$, the above implies that $u_{j'}(S)$ exits $K\times D_{0.9}$ along $|x|\ge0.4$. This contradicts \eqref{eq:Morse approx for x} in Lemma \ref{lem:weak Morse approx}.
\end{proof}

\begin{figure}
 \def\svgwidth{10cm}
 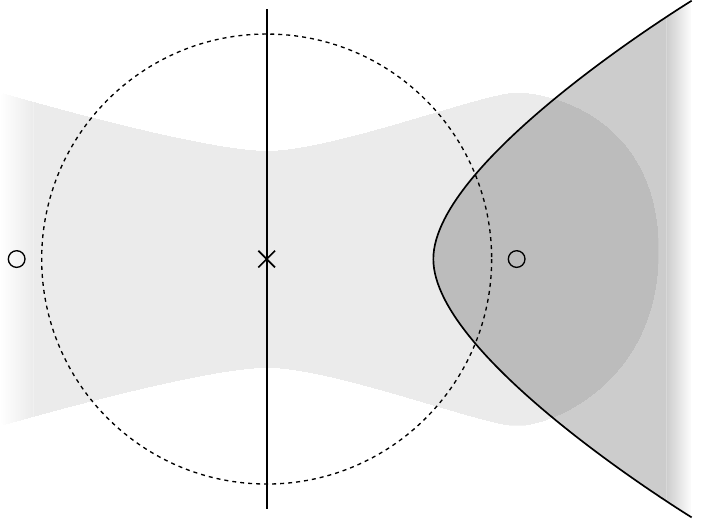
 \caption{The area near the stop $\sigma$ in $M=M'[\sigma]$. The large central shaded region is where $H^M=0$.}\label{fig:trivial gluing}
\end{figure}

\begin{rmk}\label{rmk:confinement for hypersurface to the right}
 Let $Y'$ be some hypersurface which lies to the right of $Y_\sigma$ in the sense of Figure \ref{fig:trivial gluing}. Then holomorphic caps with incidence condition $u(0)\in Y'$ must also have output in the central fiber. This is because the above argument still prevents the output from escaping to the left, while the only generators of symplectic cohomology which are supported entirely to the right of $Y_\sigma$ live in the divisor $D_\sigma$. These are prohibited by the same positivity argument as in the proof of Lemma \ref{lem:SH intersections filter}.
\end{rmk}

\begin{proof}[Proof of Proposition \ref{prop:nondegeneracy}]
 By construction, the restriction of all Floer data to the central fiber are regular Floer data on the central fiber. In particular, $\mathbf K^{\mu,{\mathbf m}}$ gives a choice of $\W^{\frac1{\mathbf m}}(F)$ with respect to the normalizing Hamiltonian $H^M|_{\hat F\times\{0\}}$. Because the Hamiltonians in $\mathbf K^{\mu,{\mathbf m}}$ rotate the imaginary axis counterclockwise, any generator
 \[
  \gamma\in\hom^*_{\W^{\frac1{\mathbf m}}(F)}(L_0,L_1)
 \]
 which lives in $K$ gives rise to a generator
 \[
  \Sigma\gamma:=\gamma\times\{0\}\in\hom^*_{\Ws^{\mathbf m}(M)}(\Sigma L_0,\Sigma L_1)
 \]
 of the same degree living in the central fiber. Here, $\Ws^{\mathbf m}(M)$ is the presentation of $\Ws(M)$ coming from $\mathbf K^{\mu,{\mathbf m}}$. Similarly, any generator $x$ of $SC^*(F)$ for Floer datum $(H^{\mathbf m}_t,J^{\mathbf m}_t)_{\hat F\times\{0\}}$ which lives in $K$ gives rise to a generator
 \[
  \Sigma x:=x\times\{0\}\in SC^{*+1}_\ssigma(M)
 \]
 for Floer datum $(H^{\mathbf m}_t,J^{\mathbf m}_t)$ living in the central fiber. Here, the degree shift comes from the fact that in the disk-direction, the central fiber lives at a saddle point of the Hamiltonian vector field.
 
 Now, because $F$ is strongly nondegenerate, we may choose for all ${\mathbf m}$ a Hochschild fundamental cycle $e_{\mathbf m}$ of length at most the Hochschild length $k$ of $F$ such that $\lim_{{\mathbf m}\to\infty}A^{\frac1{\mathbf m}}(e_{\mathbf m})=0$. For large ${\mathbf m}$, all entries in $e_{\mathbf m}$ live in $K$, and we write $\Sigma e_{\mathbf m}\in CC_*(\mathcal B_\ssigma^{\mathbf m}(\sigma))$ for the corresponding chain in the central fiber, where $\mathcal B_\ssigma^{\mathbf m}(\sigma)$ is as usual the presentation of $\Bss$ coming from $\mathbf K^{\mu,{\mathbf m}}$.  For sufficiently large ${\mathbf m}$, Lemma \ref{lem:no escape with small action} then says that all holomorphic curves contributing to $\delta(\Sigma e_{\mathbf m})$ and $\mathcal{OC}(\Sigma e_{\mathbf m})$ are supported entirely in $K\times D_{0.9}$, and that their outputs live in the central fiber. Because all of the associated Floer data are $\Phi$-invariant on this region and we are working in characteristic 2, the only holomorphic curves which contribute are those which live in the central fiber. This implies that
 \[
  \delta(\Sigma e_{\mathbf m})=\Sigma(\delta(e_{\mathbf m}))=0\quad\mbox{and}\quad\mathcal{OC}(\Sigma e_{\mathbf m})=\Sigma(\mathcal{OC}(e_{\mathbf m})).
 \]
 
 It remains to see that $\Sigma(\mathcal{OC}(e_{\mathbf m}))$ is a saddle unit. For this, note that $\mathcal{OC}(e_{\mathbf m})$ represents $\mathbf 1\in SH^0(F)$, and that it has small action when ${\mathbf m}$ is large. Thus, it is the sum of the Morse minima, as that is the unique such cochain. Playing the same game as for $\mu$ and $\mathcal{OC}$ with holomorphic caps, we see that the saddle unit is also supported in the central fiber, and there it comes from counting holomorphic caps in $F$ with no incidence condition. The output of such caps must have small action, which implies that the saddle unit is also the sum of the Morse minima in $F$, as desired.
\end{proof}
We have in fact proved
\begin{cor}\label{cor:nondegeneracy for all large n}
 For all sufficiently large ${\mathbf m}$, the Floer data $\mathbf K^{\mu,{\mathbf m}}$, $\mathbf K^{\mathcal{OC},{\mathbf m}}$, and $K^{\C,{\mathbf m}}$ witness a Hochschild cycle $y_{\mathbf m}\in CC_{1-n}(\Bss)$ such that $\mathcal{OC}(y_{\mathbf m})$ is a saddle unit.
 
 Moreover, the action of $y_{\mathbf m}$ tends to zero as $\mathbf m$ tends to infinity, the saddle unit $\mathcal{OC}(y_{\mathbf m})$ lives in the central fiber, and $y_{\mathbf m}$ is the stabilization of a Hochschild fundamental chain for the fiber of $\sigma$. This last part means that the objects making up $y_{\mathbf m}$ are all of the form $\Sigma L$ and that the morphisms making up $y_{\mathbf m}$ all live in the central fiber.
\end{cor}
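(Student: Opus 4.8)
The plan is to observe that Corollary~\ref{cor:nondegeneracy for all large n} requires no new geometry: it merely records the by-products of the construction used to prove Proposition~\ref{prop:nondegeneracy}, so the work is bookkeeping. First I would set $y_{\mathbf m}:=\Sigma e_{\mathbf m}$, where $e_{\mathbf m}$ is the length-$\le k$ Hochschild fundamental cycle for $\W^{\frac1{\mathbf m}}(F)$ with $\lim_{{\mathbf m}\to\infty}A^{\frac1{\mathbf m}}(e_{\mathbf m})=0$ chosen in that proof — such a choice exists because $F$ is strongly nondegenerate, by Corollary~\ref{cor:any sequence detects action zero} — and $\Sigma(-)$ is the central-fiber stabilization $L\mapsto\Sigma L$, $\gamma\mapsto\Sigma\gamma=\gamma\times\{0\}$. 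For ${\mathbf m}$ large every Floer generator in $e_{\mathbf m}$ lies in the region $K$, so $y_{\mathbf m}$ is defined. Since each $\Sigma L$ is an interior Lagrangian of $\Sigma F$, hence an object of $\Bss$, and since stabilization preserves the degree of a morphism, $y_{\mathbf m}$ lies in $CC_{1-n}(\Bss)$ — the same degree appearing in Definition~\ref{defn:nondegenerate stop}. This simultaneously gives the last sentence of the corollary: the objects of $y_{\mathbf m}$ are of the form $\Sigma L$ and its morphisms all live in the central fiber, that is, $y_{\mathbf m}$ is literally the stabilization of the fiber's Hochschild fundamental chain $e_{\mathbf m}$.

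Next I would invoke the two outputs of the proof of Proposition~\ref{prop:nondegeneracy}. By Lemma~\ref{lem:no escape with small action}, for ${\mathbf m}$ sufficiently large every holomorphic curve contributing to $\delta(y_{\mathbf m})$ or to $\mathcal{OC}(y_{\mathbf m})$ is confined to $K\times D_{0.9}$ with output in the central fiber; since $\mathbf K^{\mu,{\mathbf m}}$ and $\mathbf K^{\mathcal{OC},{\mathbf m}}$ are $\Phi$-invariant there and we work in characteristic $2$, only curves contained in the central fiber actually count. Hence $\delta(y_{\mathbf m})=\Sigma(\delta e_{\mathbf m})=0$ and $\mathcal{OC}(y_{\mathbf m})=\Sigma(\mathcal{OC}(e_{\mathbf m}))$, and the final paragraph of that proof identifies $\Sigma(\mathcal{OC}(e_{\mathbf m}))$ as a saddle unit supported in the central fiber, built via the cap count with datum $K^{\C,{\mathbf m}}$. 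This yields the first paragraph of the corollary together with the claim that the saddle unit lives in the central fiber.

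The one point needing a separate word is that the action of $y_{\mathbf m}$ tends to zero. Here I would note that stabilization is action-neutral on central-fiber chains: the chord $\Sigma\gamma=\gamma\times\{0\}$ is the product of the $X_{H^M}$-chord $\gamma$ in $\hat F\times\{0\}$ with the constant chord at the saddle point $0$, and since $H^M$ vanishes near the core (property~\ref{cd:H^N hides the core} of Lemma~\ref{lem:H^N superlinearity}, inherited by $H^M$) and the Floer datum is split there, the action of $\Sigma\gamma$ equals that of $\gamma$ with respect to the normalizing Hamiltonian $H^M|_{\hat F\times\{0\}}$. Because $y_{\mathbf m}$ has length $\le k$, $A^{\frac1{\mathbf m}}(e_{\mathbf m})\to0$, and each action summand in $A^{\frac1{\mathbf m}}$ is positive, the unshifted actions of the individual chords appearing in $y_{\mathbf m}$ tend to zero uniformly, and with them any reasonable notion of action of the chain. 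I do not expect a genuine obstacle: the analysis has all been done in the proof of Proposition~\ref{prop:nondegeneracy}, and the only thing worth double-checking is that the action convention of Definition~\ref{defn:nondegenerate stop} is the one for which this stabilization identity holds and that the $\C_1$-factor contributes nothing — precisely because $H^M\equiv0$ near the central fiber.
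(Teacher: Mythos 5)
Your proposal is correct and is exactly the paper's argument: the paper proves this corollary with the single line ``We have in fact proved,'' since the cycle $y_{\mathbf m}=\Sigma e_{\mathbf m}$ and all the listed properties (closedness, $\mathcal{OC}(y_{\mathbf m})$ a saddle unit in the central fiber, vanishing action, stabilized form) are byproducts of the proof of Proposition \ref{prop:nondegeneracy} via Lemma \ref{lem:no escape with small action}, $\Phi$-invariance, and characteristic~2. Your explicit check that stabilization is action-neutral on central-fiber chords is a detail the paper leaves implicit, but it is consistent with the construction ($H^M$ and $\lambda_{\C_1}$ both vanish at the origin of the disk factor) and does not change the route.
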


\subsection{Transversality}\label{sec:transversality for nondegeneracy}
We now verify that the holomorphic curves of the previous section which occur in the compact part of the central fiber are automatically regular. For this, we will deduce transversality on $M$ from transversality on $F$. This means that we will assume the Floer data restricts to a regular choice on the central fiber and not allow ourselves further modifications of it.

Note now that the almost complex structures are strictly split, and the Hamiltonians are split to second order. Hence, the linearized Cauchy-Riemann operator splits. In the disk component, the map is constant, which implies that the disk component of the linearized operator has no parameter associated to changing the conformal structure. This means that the disk component of the linearized operator at a map $u\colon\Sigma\to\hat M$ landing in $K\times\{0\}$ takes the form
\begin{equation}\label{eq:linearized dbar along fiber}
 D^{disk}\ol\partial(u)(\xi)=\frac12\Bigl(\bigl(d\xi-dX_H(u(z))(\xi)\otimes\beta\bigr)+J(z)\circ\bigl(d\xi-dX_H(u(z))(\xi)\otimes\beta\bigr)\circ j\Bigr).
\end{equation}
Now the disk component of $X_H$ along the central fiber depends only on the local rescaling factor on $\Sigma$, and in particular it does not depend on $u$. Moreover, the Liouville vector field agrees to second order along the central fiber with the standard radially expanding one, which means that its flow is holomorphic to second order. This means that \eqref{eq:linearized dbar along fiber} simplifies to
\begin{equation}\label{eq:linearized dbar along fiber, simplified}
 D^{disk}\ol\partial(u)(\xi)=\frac12\Bigl(\bigr(d\xi-X_{H^{disk}}(z)(\xi)\otimes\beta\bigr)+J_{std}\circ\bigl(d\xi-X_{H^{disk}}(z)(\xi)\otimes\beta\bigr)\circ j\Bigr),
\end{equation}
where $H^{disk}$ is a Liouville rescaling of $a\cdot(y^2-x^2)$. This is manifestly independent of $u$, so it allows us to treat transversality by simply studying perturbed holomorphic maps to $\C$ for a saddle Hamiltonian. In particular, we immediately obtain transversality for Floer strips and cylinders, and as noted in Remark \ref{rmk:Z/2 transversality} Seidel's topological argument provides transversality for $A_\infty$ disks.

\begin{lemma}\label{lem:equivariant OC transversality in fiber}
 For all sufficiently large ${\mathbf m}$, the following holds. If $u\colon\Sigma\to K\times\{0\}$ is a holomorphic curve for Floer data $\mathbf K^{\mathcal{OC},{\mathbf m}}|_{K\times\{0\}}$ with at most $k$ inputs, then $u$ is regular as a map to $\hat M$ if and only if it is regular as a map to $\hat F$. Here, as before, $k$ is the Hochschild length of $F$.
\end{lemma}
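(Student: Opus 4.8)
The plan is to reduce regularity on $\hat M$ to regularity on $\hat F$ by exploiting the splitting of the linearized Cauchy--Riemann operator that was established in \eqref{eq:linearized dbar along fiber, simplified}. For a curve $u\colon\Sigma\to K\times\{0\}\subset\hat M$, the normal bundle of $K\times\{0\}$ inside $\hat M$ is exactly the trivial $\C$-bundle coming from the $D^2$-factor (together with its negative, via the $\Sigma F$-structure), and because both $J$ and $H$ are split on $K\times D_{0.9}$ the linearized operator $D\ol\partial(u)$ is block-diagonal: the ``fiber'' block is the linearized operator for $u$ viewed as a curve in $\hat F$, while the ``normal'' block is the operator \eqref{eq:linearized dbar along fiber, simplified} with coefficients depending only on the domain (via the local rescaling factor), not on $u$. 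Thus $u$ is regular on $\hat M$ if and only if both blocks are surjective, and regular on $\hat F$ if and only if the fiber block is surjective. So the lemma is equivalent to the assertion that, for ${\mathbf m}$ large and for the relevant finite family of domains, the normal block of the linearized operator is surjective for every such $u$.

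The key point is therefore a statement about a single model operator: the perturbed $\ol\partial$-operator on sections of the trivial $\C$-bundle over $\Sigma$ associated to the $u$-independent Hamiltonian $H^{disk}$, a Liouville rescaling of $a(y^2-x^2)$. First I would record that this operator has index $n+1-d$ offset correctly so that its cokernel, if any, has the same rank as the cokernel of the full $\hat M$-operator relative to the $\hat F$-operator; this is just additivity of Fredholm indices across the splitting. Next I would argue surjectivity. Since the coefficients of the normal block depend only on the point of $\ol{\mathcal R}^1_d$ (through $\tau^{\mathbf m}$ and the $\Phi$-invariant model Hamiltonian) and not on $u$, and since the incidence condition $u(0)\in Y_\sigma$ cuts down the normal fiber at one interior point, the cokernel of the normal block is a vector space attached continuously to a point of a \emph{compact} family of domains (here we use that we only require $\Phi$-invariance, hence the split form, for $d\le k$, so the parameter space $\bigcup_{d\le k}\ol{\mathcal R}^1_d$ is compact). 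I would then invoke the asymptotic behavior of the saddle: on each negative cylindrical end the relevant return map is that of the linearized flow of $a(y^2-x^2)$, which is hyperbolic, so the asymptotic operator is invertible and the normal block is genuinely Fredholm with no kernel/cokernel contribution from the ends, while at the boundary the Lagrangian label $\Sigma L$ meets $\{x=0\}$ transversally to the normal flow, pinning down the boundary condition. A Sard--Smale / semicontinuity argument over the compact parameter space shows that the dimension of the cokernel of the normal block is uniformly bounded; combined with the positivity-of-index count and an explicit check that the model operator on, say, the disk with one interior puncture and the standard saddle datum is surjective (a direct computation with the hyperbolic flow, or an appeal to Seidel's analysis of such quadratic operators), I conclude the normal block is surjective for \emph{all} configurations once the Floer data are $C^{\mathbf m}$-close to the model, i.e.\ once ${\mathbf m}$ is large.

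Concretely, the steps in order are: (1) set up the splitting $T\hat M|_{K\times\{0\}}=T\hat F\oplus N$ and verify $D\ol\partial(u)$ is block-diagonal, citing \eqref{eq:linearized dbar along fiber, simplified}; (2) observe the normal block is independent of $u$ and depends only on a point of the compact parameter space $\bigcup_{d\le k}\ol{\mathcal R}^1_d$, with hyperbolic asymptotics at the cylindrical ends and a transverse boundary condition from $\Sigma L$; (3) compute that the normal model operator has the expected index and is surjective for one explicit configuration; (4) use upper semicontinuity of $\dim\mathrm{coker}$ over the compact parameter space, together with the index count, to get surjectivity of the normal block uniformly; (5) conclude $D\ol\partial(u)$ is surjective on $\hat M$ iff its $\hat F$-block is, which is the claim. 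The main obstacle I anticipate is step (3)--(4): one must be careful that the incidence condition $u(0)\in Y_\sigma$ interacts correctly with the normal operator --- it is a codimension-two evaluation constraint in $\hat M$ but becomes the $Y_\sigma$-direction versus the genuinely normal $D_\sigma$-direction, and getting the bookkeeping of index and cokernel right there (as well as confirming that no $A_\infty$-breaking or cylinder-breaking stratum sneaks in an extra cokernel jump, which is why compactness of the domain family is essential) is the delicate part; everything else is the standard split-Fredholm argument.
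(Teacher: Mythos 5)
Your reduction to the normal (disk) block is the right first move and matches the paper: the split Floer data make $D\ol\partial(u)$ block-diagonal, the fiber block is the linearization in $\hat F$, and the disk block is the $u$-independent model operator \eqref{eq:linearized dbar along fiber, simplified}. But your argument for surjectivity of the disk block has a real gap. Upper semicontinuity of $\dim\mathrm{coker}$, combined with surjectivity at one explicit configuration, only gives surjectivity on an open neighborhood of that configuration; it says nothing about the rest of a connected compact family, so compactness of $\bigcup_{d\le k}\ol{\mathcal R}^1_d$ does not rescue the argument. Nor can you fall back on Sard--Smale: the whole point of this section is that the Floer data may not be perturbed further (they must stay split, $\Phi$-invariant, and restrict to the already-fixed regular data on the central fiber), so there is no generic parameter left to vary. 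You also import the incidence condition $u(0)\in Y_\sigma$ into this lemma, but that constraint belongs to the saddle-unit caps of Lemma \ref{lem:equivariant saddle unit transversality in fiber}; the open-closed domains $\mathcal R^1_d$ carry no such condition, and the index bookkeeping you sketch around it is therefore off.

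The step you are missing is the index-zero reduction. Since each input chord has degree $0$ and the output orbit has degree $1$, the disk block has Fredholm index $\deg(x_-)-\sum_{i=1}^d\deg(\gamma_i)-\dim_\C(\C)=0$, so surjectivity is equivalent to injectivity, and injectivity can be attacked directly: if $\xi_{\mathbf m}$ were nonzero kernel elements, normalized to $\lVert\xi_{\mathbf m}\rVert_{C^0}=1$, for a sequence $\mathbf m\to\infty$, then because $X_{H^M}$ vanishes to third order in the disk directions along the central fiber the linearized Hamiltonians degenerate uniformly, the geometric energy of $\xi_{\mathbf m}$ tends to zero, and the confinement arguments (Lemmas \ref{lem:weak Morse approx} and \ref{lem:no escape with small action}, with the ``what goes up must come down'' variant in the $y$-direction) trap $\mathrm{image}(\xi_{\mathbf m})$ in $D_{0.9}$, contradicting the normalization. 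This is also exactly where the hypothesis that $\mathbf m$ be sufficiently large is consumed, which your sketch invokes but never actually uses.
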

\begin{proof}
 It suffices to show that with the conformal structure of $\Sigma$ fixed,
 \[
  D^{disk}\ol\partial\colon W^{1,p}(\Sigma,\C)\to L^p(\Sigma,\Omega^1\Sigma\otimes\C)
 \]
 is surjective. For this, note that every input has degree 0, while the output has degree 1, so that
 \[
  \mathrm{ind}(D^{disk}\ol\partial)=\deg(x_-)-\sum_{i=1}^d\deg(\gamma_i)-\dim_\C(\C)=0.
 \]
 Hence, proving that $D^{disk}\ol\partial$ is surjective is equivalent to proving that it is injective.
 
 To that end, suppose not, so that for all ${\mathbf m}$ there is some element $\xi_{\mathbf m}$ in the kernel of $D\ol\partial$ for $H^{disk}$ and $\beta$ coming from a Floer datum $K^{\bullet,{\mathbf m}}$. Without loss of generality, assume that $\lVert\xi_{\mathbf m}\rVert_{C^0}=1$ for all ${\mathbf m}$. Now because $H^{\mathcal{OC},{\mathbf m}}$ converges in $C^\infty$ to $H^M$ and $X_{H^M}$ vanishes to at least third order in the disk directions along the central fiber, the linearized Hamiltonians $H^{disk}$ tend uniformly to zero on the image of $\xi_{\mathbf m}$. This, together with the trivial identity $E^{top}(\xi_{\mathbf m})=0$, implies that
 \[
  \lim_{{\mathbf m}\to\infty}E^{geom}(\xi_{\mathbf m})=0.
 \]
 
 To conclude the proof, note that Lemma \ref{lem:no escape with small action} applies to the maps $\xi_{\mathbf m}$. Indeed, the proof carries through to this situation verbatim, except now we need a new way to prevent the maps from escaping in the $y$-direction. For this, we observe that what goes up must come down: if some portion of $\xi_{\mathbf m}$ reaches $|y|\ge0.8$ for all ${\mathbf m}$, then because the output is at the origin some strip or cylinder must make the return journey to $|y|<0.1$. This contradicts \eqref{eq:Morse approx for y} in Lemma \ref{lem:weak Morse approx}. On the other hand, the conclusion $\mathrm{image}(\xi_{\mathbf m})\in D_{0.9}$ contradicts the assumption that $\lVert\xi_{\mathbf m}\rVert_{C^0}=1$.
\end{proof}

\begin{lemma}\label{lem:equivariant saddle unit transversality in fiber}
 If $u\colon\C\to K\times\{0\}$ is a holomorphic curve for Floer data $K^{\C,{\mathbf m}}|_{K\times\{0\}}$, then $u$ is regular as a map to $\hat M$ if and only if it is regular as a map to $\hat F$. In this case, the evaluation map at zero is transverse to $Y_\sigma$.
\end{lemma}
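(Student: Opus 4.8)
\textbf{Proof plan for Lemma \ref{lem:equivariant saddle unit transversality in fiber}.}

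The strategy is to reduce, exactly as in the proof of Lemma \ref{lem:equivariant OC transversality in fiber}, to the study of the split linearized operator \eqref{eq:linearized dbar along fiber, simplified}. Since $u$ lands in $K\times\{0\}$ and both the almost complex structure and (to second order) the Hamiltonian are split there, the linearized Cauchy--Riemann operator at $u$ decomposes as a direct sum of a fiber component $D^F\ol\partial(u)$ and a disk component $D^{disk}\ol\partial(u)$, the latter being the $u$-independent operator \eqref{eq:linearized dbar along fiber, simplified} associated to the saddle Hamiltonian $H^{disk}$, a Liouville rescaling of $a(y^2-x^2)$. Regularity of $u$ as a map to $\hat M$ is therefore equivalent to regularity of $D^F\ol\partial(u)$ (i.e. regularity of $u$ as a map to $\hat F$) \emph{together with} surjectivity of $D^{disk}\ol\partial(u)$, which --- since the domain $\C$ now carries no moduli and no strip-like/cylindrical end variation affecting the $\C$-factor in this codimension count --- is again equivalent to injectivity of $D^{disk}\ol\partial(u)$ once we verify that the index of the disk component is zero.

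First I would run the index computation. For the saddle unit moduli space $\mathcal C(x)$ with $\deg(x)=1$, Lemma \ref{lem:f_sigma transversality} says the expected dimension in $\hat M$ is $\deg(x)-1=0$; subtracting the contribution of the fiber direction (which carries the full Fredholm index of the $\hat F$-problem) leaves $\mathrm{ind}(D^{disk}\ol\partial)=0$, matching the fact that the central fiber sits at a Morse index $1$ critical point of the $\C_1$-Hamiltonian while the cap contributes a ``$-1$'' from the incidence condition $u(0)\in Y_\sigma$ — concretely $\mathrm{ind}(D^{disk}\ol\partial)=\deg(x)-1-(\dim_{\C}\C-1)=0$. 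So surjectivity $\Leftrightarrow$ injectivity, and it suffices to rule out a nonzero kernel element $\xi$ of the saddle operator. Here I would argue exactly as in Lemma \ref{lem:equivariant OC transversality in fiber}: a hypothetical sequence $\xi_{\mathbf m}$ with $\lVert\xi_{\mathbf m}\rVert_{C^0}=1$ has linearized Hamiltonian $H^{disk}$ tending uniformly to zero along the image (since $X_{H^M}$ vanishes to third order in the disk directions on the central fiber), so $E^{geom}(\xi_{\mathbf m})\to 0$ while $E^{top}(\xi_{\mathbf m})=0$; then the confinement argument of Lemma \ref{lem:no escape with small action} applies — escape in the $x$-direction is excluded by \eqref{eq:Morse approx for x} (using that $\beta^{\C,{\mathbf m}}=f_{\mathbf m}(r)d\theta$ gives the required cylindrical structure) and escape in the $y$-direction by the ``what goes up must come down'' observation combined with \eqref{eq:Morse approx for y}, using the incidence condition at $0$ to supply a return trip. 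This forces $\mathrm{image}(\xi_{\mathbf m})\subset D_{0.9}$, eventually near $(H^M)^{-1}(0)$ with vanishing derivative, contradicting $\lVert\xi_{\mathbf m}\rVert_{C^0}=1$.

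Two points need slightly more care than in the open-closed case and I expect the incidence condition to be the main obstacle. First, the relevant Fredholm problem is the \emph{constrained} one, with the boundary-type condition $u(0)\in Y_\sigma$, so the linearized operator is really $D\ol\partial$ paired with the evaluation $\mathrm{ev}_0\colon W^{1,p}\to T\hat M/TY_\sigma$; I would phrase transversality of $(u,\mathrm{ev}_0)$ as surjectivity of $D^{disk}\ol\partial \oplus \mathrm{ev}_0^{disk}$ in the disk factor, and the injectivity reduction then also yields the final clause — that $\mathrm{ev}_0$ is transverse to $Y_\sigma$ — since on the central fiber $Y_\sigma$ is cut out by the $x$-coordinate, $\mathrm{ev}_0^{disk}$ surjects onto the normal line, and the disk kernel is trivial. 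Second, unlike the $A_\infty$-disk case there is no Seidel-style topological argument available for caps (as flagged in Remark \ref{rmk:Z/2 transversality}), which is precisely why the compactness-and-rescaling argument above is needed and why, as in Lemma \ref{lem:equivariant OC transversality in fiber}, the statement is only asserted for all sufficiently large ${\mathbf m}$; for a single cap $\C$ with the fixed Floer datum $K^{\C,{\mathbf m}}$ this is harmless. I would close by noting that, having established regularity of the disk factor uniformly, the ``if and only if'' follows from the direct-sum decomposition, and the transversality of $\mathrm{ev}_0$ to $Y_\sigma$ is read off from the same splitting.
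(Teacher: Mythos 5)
Your reduction to the split, $u$-independent disk operator \eqref{eq:linearized dbar along fiber, simplified} matches the paper, but from there you take a different route from the paper's, and the route as written has a genuine gap. The unconstrained operator $D^{disk}\ol\partial$ has index $1$ (the $-1$ in $\dim\mathcal C(x)=\deg(x)-1$ comes entirely from the incidence condition), and its kernel is genuinely $1$-dimensional: it contains the rotationally invariant solutions $\xi_p$ given by partial increasing Morse trajectories from $p\in\R$ to the origin along the real axis. These are nonzero, can be normalized to $\lVert\xi_p\rVert_{C^0}=1$, and leave $D_{0.9}$ along the $x$-axis, so any argument that "rules out a nonzero kernel element of the saddle operator" by confinement must fail --- and yours does, because you deploy the linearized incidence condition in the wrong direction. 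Since $Y_\sigma$ meets the disk factor in $i\R$, the constraint $ev_0(\xi)\in TY_\sigma$ pins down the \emph{$x$}-coordinate at $0$, which is exactly what is needed to exclude the $\xi_p$; your write-up instead uses \eqref{eq:Morse approx for x} alone in the $x$-direction and reserves the incidence condition for the $y$-direction, where it is neither available nor needed. Relatedly, your final step is internally inconsistent: transversality of $ev_0$ to the codimension-one hypersurface $Y_\sigma$ requires $ev_0(\ker D^{disk}\ol\partial)+i\R=\C$, which forces the \emph{unconstrained} disk kernel to be nontrivial, so it cannot follow from "the disk kernel is trivial." If you mean the \emph{constrained} kernel (index $0$), then its triviality does simultaneously give surjectivity of $D^{disk}\ol\partial$ and transversality of $ev_0$, but you must then prove injectivity of the constrained operator, i.e.\ exclude $\xi\ne0$ with $D^{disk}\ol\partial\xi=0$ and $\Re(\xi(0))=0$, and that argument has to use the constraint at the domain origin to anchor the "what goes up must come down" estimate in the $x$-direction.

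For comparison, the paper avoids compactness in $\mathbf m$ entirely (note the lemma carries no "for $\mathbf m$ large" hypothesis, unlike Lemma \ref{lem:equivariant OC transversality in fiber}): it shows via asymptotic winding numbers and positivity of intersections that any nonzero kernel element of $D^{disk}\ol\partial$ has no zeros, so $ev_0$ is injective on the kernel; it then uses the $S^1$-symmetry of the domain data to show the kernel is exactly the $1$-dimensional space $X$ of partial Morse trajectories (a $2$-dimensional kernel would force the $S^1$-action to be trivial and hence all solutions rotationally invariant, which puts them back in $X$). Surjectivity follows from the index count, and transversality is then the direct observation $ev_0(X)=\R\pitchfork i\R$. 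If you want to salvage your approach, you would need to (i) set up the constrained Fredholm problem explicitly, (ii) carry the linearized incidence condition through the $x$-direction confinement, and (iii) accept that the conclusion only holds for $\mathbf m$ large; the paper's argument is both sharper and shorter.
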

\begin{proof}
 This time we wish to show both that $D^{disk}\ol\partial$ is surjective and that $ev_0(\ker D^{disk}\ol\partial)$ is transverse to the $y$-axis.
 
 Because the output has degree 1, showing that $D^{disk}\ol\partial$ is surjective amounts to showing that $\ker D^{disk}\ol\partial$ is 1-dimensional. To that end, note first that for any nonconstant solution $\xi$, the asymptotic winding number of $\xi$ around the origin is nonpositive, which implies that the total number of zeros of $\xi$ is nonpositive. By positivity of intersections, this implies that $\xi$ has no zeros. This means that $\xi=0$ whenever $ev_0(\xi)=0$. By linearity, this implies that
 \[
  ev_0\colon\ker D^{disk}\ol\partial\to\C
 \]
 is injective.
 
 Now there is a 1-dimensional space $X$ of rotationally invariant solutions coming from partial increasing Morse trajectories from some point $p\in\R$ to the origin. If $X$ were not all of $\ker D^{disk}\ol\partial$, then because $ev_0$ is injective $\ker D^{disk}\ol\partial$ would be 2-dimensional. Since $\ker D^{disk}\ol\partial$ comes with a continuous $S^1$ action which fixes $X$ pointwise, the entire $S^1$ action must be trivial, which means all solutions are rotationally invariant. On the other hand, any rotationally invariant solution must be a partial Morse trajectory, and the asymptotic condition $\xi(\infty)=0$ implies that all such solutions belong to $X$. This shows that $\ker D^{disk}\ol\partial=X$ is 1-dimensional.
 
 For the second part, note simply that $ev_0(X)=\R$ is indeed transverse to $i\R$.
\end{proof}

\section{Stop removal}\label{ch:stop removal}

\subsection{A filtration on the quotient category}\label{sec:filtration on quotient}
To prove Theorem \ref{thm:quotient}, we will work in Lyubashenko--Ovsienko's model for the quotient of an $A_\infty$-category by a full subcategory \cite{Lyub-Ovsi}, which is the $A_\infty$ version of Drinfeld's construction for dg-categories \cite{Drin_quot}. For an $A_\infty$-category $\mathcal A$ and a full subcategory $\mathcal B\subset\mathcal A$, the quotient $\mathcal A/\mathcal B$ is the $A_\infty$-category with the same objects as $\mathcal A$ and whose morphism spaces are given by
\[
 \hom_{\mathcal A/\mathcal B}(L_0,L_1)=\bigoplus_{k=0}^\infty\bigoplus_{B_i\in\mathcal B}\hom_\mathcal A(B_k,L_1)\otimes\hom_\mathcal A(B_{k-1},B_k)\otimes\dotsm\otimes\hom_\mathcal A(L_0,B_1),
\]
where for $k=0$ the right-hand side is just $\hom_\mathcal A(L_0,L_1)$. The grading is given by
\[
 \deg(\gamma^k\otimes\dotsm\otimes\gamma^0)=\sum_{i=0}^k\deg(\gamma^i)-k.
\]
The differential $\mu^1_{\mathcal A/\mathcal B}$ is the bar differential, i.e.
\[
 \mu^1_{\mathcal A/\mathcal B}(\gamma^k\otimes\dots\otimes\gamma^0)=\sum_{0\le i\le j\le k}
 \gamma^k\otimes\dotsm\otimes\gamma^{j+1}\otimes\mu^{1+j-i}(\gamma^j,\dotsc,\gamma^i)\otimes\gamma^{i-1}\otimes\dotsm\otimes\gamma^0.
\]
The higher operations are similar. Specifically, we have
\[
\begin{gathered}
 \mu^d_{\mathcal A/\mathcal B}\left((\gamma_d^{k_d}\otimes\dotsm\otimes\gamma_d^0),\dotsc,(\gamma_1^{k_1}\otimes\dotsm\otimes\gamma_1^0)\right)\hspace{5cm}\\
 \hspace{2cm}=\sum_{\substack{0\le i\le k_1\\0\le j\le k_d}}\gamma_d^{k_d}\otimes\dotsm\otimes \mu^{i+j+d+\sum_{s=2}^{d-1}k_s}(\gamma_d^j,\dotsc,\gamma_1^{k_1-i})\otimes\dotsm\otimes\gamma_1^0.
\end{gathered}
\]
In this model, $\Ws(M)/\Bss$ is naturally a subcategory of $\W_{\ssigma\setminus\{\sigma\}}(M)/\mathcal B_{\ssigma\setminus\{\sigma\}}(\sigma)$, and $\mathcal{SR}$ is just the inclusion.

Further, we can work in the trivial gluing $M[\sigma]$. Writing $\ssigma'$ for $\ssigma\setminus\{\sigma\}$, this means we will study the inclusion
\begin{equation}\label{eq:SR as inclusion}
 \mathcal{SR}_{inc}\colon\Ws(M[\sigma])/\Bss\into\Wsp(M[\sigma])/\Bsps.
\end{equation}
For the sake of readability, we will write $\hom_\ssigma$ in place of $\hom_{\Ws(M[\sigma])/\Bss}$ and $\hom_{\ssigma'}$ in place of $\hom_{\Wsp(M[\sigma])/\Bsps}$. Similarly, we'll write $\mu^k_{\ssigma'}$ and $\mu^k_\ssigma$ for the $A_\infty$ operations on the quotient categories.

Our main tool in studying the quotient category is an increasing filtration on the morphism spaces $\hom^*_{\ssigma'}(L_0,L_1)$.

\begin{defn}\label{defn:filtration}
 Consider the lexicographic order on $\left(\Z_{\ge0}\right)^2$, namely $(\mathfrak n,\mathfrak m)<(\mathfrak n',\mathfrak m')$ if $\mathfrak n<\mathfrak n'$ or both $\mathfrak n=\mathfrak n'$ and $\mathfrak m<\mathfrak m'$. This has order type $\omega^2$, so in particular it is a well-ordering.
 
 Define $A^*_{\mathfrak n,\mathfrak m}\subset\hom^*_{\ssigma'}(L_0,L_1)$ to be the graded vector subspace generated by
 \[
 \left\{\gamma^k\otimes\dotsm\otimes\gamma^0\;\Big|\,\left(\sum_{i=0}^kn_\sigma(\gamma^i),k\right)<(\mathfrak n,\mathfrak m)\right\}.
 \]
 Then $A^*_{\mathfrak n,\mathfrak m}$ is a subcomplex, and $A^*_{\mathfrak n,\mathfrak m}\subset A^*_{\mathfrak n',\mathfrak m'}$ whenever $(\mathfrak n,\mathfrak m)<(\mathfrak n',\mathfrak m')$. This means that the $A^*_{\mathfrak n,\mathfrak m}$ form an exhausting filtration of $\hom^*_{\ssigma'}(L_0,L_1)$, which we call the \textbf{main filtration}. Note that $A^*_{1,0}=\hom^*_\ssigma(L_0,L_1)$.
\end{defn}

Theorem \ref{thm:quotient} is equivalent to the assertion that $\mathcal{SR}_{inc}$ is fully faithful whenever $\sigma$ is strongly nondegenerate, which follows from the following statement:

\begin{prop}\label{prop:retracting homotopy}
 Assume $\sigma$ is strongly nondegenerate, and let $L_0$ and $L_1$ be interior Lagrangians in $M[\sigma]$. Then there is a homotopy
 \[
  \Delta_y\colon\hom^*_{\ssigma'}(L_0,L_1)\to\hom^{*-1}_{\ssigma'}(L_0,L_1)
 \]
 such that
 \begin{equation}\label{eq:basic retraction def}
 R_y:=\mathrm{id}+\mu_{\ssigma'}^1\Delta_y+\Delta_y\mu_{\ssigma'}^1
 \end{equation}
 is the identity on $A^*_{1,0}=\hom^*_\ssigma(L_0,L_1)$ and strictly decreases the filtration on $A^*_{\mathfrak n,\mathfrak m}$ for $(\mathfrak n,\mathfrak m)>(1,0)$.
 
 The homotopy $\Delta_y$ and $R_y$ depend on a Hochschild cycle $y\in CC_{1-n}(\Bss)$ which is the stabilization of a low-action Hochschild fundamental cycle, i.e. of the type furnished by Corollary \ref{cor:nondegeneracy for all large n}.
\end{prop}

Indeed, we want to show that $\mathcal{SR}_{inc}$ is both injective and surjective on cohomology. For injectivity, let $x\in A^*_{1,0}$ have $\mu_\ssigma^1x=0$, and suppose that $x$ becomes exact in $\hom^*_{\ssigma'}(L_0,L_1)$. By well-ordering, there is some smallest $(\mathfrak n,\mathfrak m)$ such that $x$ becomes exact in $A^*_{\mathfrak n,\mathfrak m}$. Take $z\in A^*_{\mathfrak n,\mathfrak m}$ satisfying $\mu_{\ssigma'}^1(z)=x$. Then
\begin{align*}
 \mu_{\ssigma'}^1(R_yz) &= \mu_{\ssigma'}^1(z)+\mu_{\ssigma'}^1\Delta_y\mu_{\ssigma'}^1(z)\\
			&= x + \mu_{\ssigma'}^1\Delta_y(x) \\
			&= x + \Delta_y\mu_{\ssigma'}^1(x) \qquad\mbox{ because }x\in A^*_{1,0}\\
			&= x
\end{align*}
Now, if $(\mathfrak n,\mathfrak m)>(1,0)$, then $R_yz$ is a primitive for $x$ which lives lower on the main filtration, contradicting the assumption that $A^*_{\mathfrak n,\mathfrak m}$ is the smallest place where $x$ becomes exact. Hence $z\in A^*_{1,0}$, which proves injectivity. For surjectivity, the proof is similar.

The rest of the paper is devoted to proving Proposition \ref{prop:retracting homotopy}.

\subsection{Coproduct disks}
The map $\Delta_y$ giving rise to $R_y$ will be given by a certain coproduct operation, which will come as always from counts of holomorphic disks. We describe these now.

\begin{defn}\label{defn:coproduct disks}
 For nonnegative integers $d,k,l$, let $\mathcal R^{d;k,l}$ be the abstract \textbf{moduli space of coproduct disks}. These are disks $\Sigma$ with $d+k+l+3$ boundary punctures labeled in counterclockwise order as follows.
\begin{enumerate}
 \item $\zeta_i$, for $i$ increasing from $-k$ to $+l$. These will eventually be equipped with positive strip-like ends.
 \item $\zeta_a$, which will eventually be equipped with a negative strip-like end.
 \item $\zeta^j$ for $j$ increasing from $1$ to $d$. These will eventually be equipped with positive strip-like ends.
 \item $\zeta_b$, which will eventually be equipped with a negative strip-like end.
\end{enumerate}
 The punctures $\zeta_0$, $\zeta_a$, and $\zeta_b$ are considered \textbf{distinguished points}. Observe that any disk with $n+3$ punctures can made into an element of some $\mathcal R^{d;k,l}$ with $d+k+l=n$ by specifying and labeling three distinguished points. The compactified moduli space $\ol{\mathcal R}^{d;k,l}$ is diffeomorphic to the associahedron $\ol{\mathcal R}^{(d+k+l+2)+1}$, where the identification can be taken to match $\zeta_0$ with $\zeta_0$. The codimension $r$ boundary faces of $\ol{\mathcal R}^{d;k,l}$ are identified with products of some lower-dimensional $\mathcal R^{d';k',l'}$ with $r$ lower-dimensional associahedra inductively as follows.
 
 For a codimension 1 face, a point corresponds to a pair of disks identified at new boundary punctures $\tild\zeta$, and we may look at the induced labels of boundary punctures on these two disks. One of these disks, which we call $\Sigma_0$, contains two or three distinguished points, while the other disk $\Sigma_1$, contains one or zero. In each case $\Sigma_0$ will be taken to lie in $\mathcal R^{d';k',l'}$, but there are several ways that this can happen.
 \begin{enumerate}
  \item The first possibility is that $\Sigma_0$ contains all three distinguished points. In this case it is identified with an element of $\mathcal R^{d';k',l'}$ by matching up the distinguished points. $\Sigma_1$ is identified with a point of $\mathcal R^{m+1}$ by taking $\tild\zeta$ to be the root.
  \item The second possibility, similar to the first, is that $\Sigma_0$ contains $\zeta_a$ and $\zeta_b$, while $\Sigma_1$ contains $\zeta_0$. In this case $\Sigma_0$ is identified with the element of $\mathcal R^{d';k',l'}$ which has $\zeta_a$ and $\zeta_b$ in the same place and $\zeta_0$ in the position of $\tild\zeta$. $\Sigma_1$ is again identified with a point of $\mathcal R^{m+1}$ by taking $\tild\zeta$ to be the root. By remembering the distinguished point $\zeta_0\in\Sigma_1$, we may upgrade it to an element of $\mathcal R^{m+1,i}$ for some $i$.
  \item The third and fourth possibilities are that $\Sigma_1$ contains $\zeta_a$ or $\zeta_b$. We assume that $\zeta_a\in\Sigma_1$, as the other situation is strictly similar. In this case, for $\Sigma_0$, $\tild\zeta$ takes the place of $\zeta_a$ as the third distinguished point, while $\Sigma_1$ is identified with an element of $\mathcal R^{m+1}$ by setting $\zeta_a$ to be the root.
 \end{enumerate}
 For a higher codimension face, we obtain a decomposition by following a sequence of faces, each of which has codimension 1 in the previous. To see that the decomposition is unique, note that an element $\Sigma$ of the boundary of $\ol{\mathcal R}^{d;k,l}$ is a disk with boundary nodes described by a tree $T$. If at least two distinguished points of $\Sigma$ live on the same component, then that component is the one identified with an element of $\mathcal R^{d';k',l'}$. Otherwise, there is a unique vertex $v$ of $T$ such that every path from $v$ to a vertex containing a distinguished point leaves $v$ along a different edge. The component of $\Sigma$ identified with a point of $\mathcal R^{d';k',l'}$ is the one corresponding to $v$.
\end{defn}

As indicated, a collection of strip-like ends for a disk $\Sigma\in\mathcal R^{d;k,l}$ consists of a positive strip-like end at each puncture $\zeta_i$ and $\zeta^j$, along with a negative strip-like end at each of $\zeta_a$ and $\zeta_b$, such that the images of the ends are pairwise disjoint. A \textbf{universal choice} of strip-like ends for $\mathcal R^{d;k,l}$ consists of, for all $d,k,l\ge0$, a collection of strip-like ends for each $\Sigma\in\mathcal R^{d;k,l}$ which varies smoothly over $\mathcal R^{d;k,l}$ and agrees near the boundary with the collection of ends induced by gluing. As with associahedra, a universal choice of strip-like ends for $\mathcal R^{d;k,l}$ can be constructed inductively, and we fix one once and for all.

Similarly, a \textbf{universal and conformally consistent} choice of Floer data for $\mathcal R^{d;k,l}$ consists of, for all $d,k,l\ge0$, a Floer datum for each $\Sigma\in\mathcal R^{d;k,l}$ which varies smoothly over $\mathcal R^{d;k,l}$, and which additionally satisfies the asymptotic consistency condition of Definition \ref{defn:consistent Floer data} with $\mathcal R^{d+1}$ replaced by $\mathcal R^{d;k,l}$. Let $\mathcal K_\Delta(M[\sigma])$ denote the space of all universal and conformally consistent choices Floer data for $\mathcal R^{d;k,l}$.

For any $\mathbf K_\Delta\in\mathcal K_\Delta(M[\sigma])$, we obtain a perturbed Cauchy-Riemann operator. Given Lagrangians $L_0,\dotsc,L_d$ and $B_{-k-1},\dotsc,B_l$ and chords
\begin{equation}\label{eq:coprod ends location}
 \begin{aligned}
  \gamma_i&\in\mathscr X(B_{i-1},B_i)\quad&\gamma_a&\in\mathscr X(L_0,B_l)\\
  \gamma^j&\in\mathscr X(L_{j-1},L_j)\quad&\gamma_b&\in\mathscr X(B_{-k-1},L_d),
 \end{aligned}
\end{equation}
we can consider the space $\mathcal R^{d;k,l}(\vec\gamma^\star,\vec\gamma_\star;\gamma_b,\gamma_a)$, where $\vec\gamma^\star$ and $\vec\gamma_\star$ are the tuples $(\gamma^d,\dotsc,\gamma^1)$ and $(\gamma_l,\dotsc,\gamma_{-k})$, respectively. This consists of all maps $u\colon\Sigma\to\widehat{M[\sigma]}$, with $\Sigma$ ranging over $\mathcal R^{d;k,l}$, satisfying \eqref{eq:gen Floer} with
\[
 \begin{aligned}
  u(\zeta_i)&=(\phi^{\tau_i})^*\gamma_i\quad&u(\zeta_a)&=(\phi^{\tau_a})^*\gamma_a\\
  u(\zeta^j)&=(\phi^{\tau^j})^*\gamma^j\quad&u(\zeta_b)&=(\phi^{\tau_b})^*\gamma_b
 \end{aligned}
\]
and with the appropriate boundary conditions, where $\tau_i$ is the rescaling factor assigned to $\zeta_i$, and similarly with $\tau^j$, $\tau_a$, and $\tau_b$. As usual, Lemma \ref{lem:maximum principle} tells us that the images of such $u$ are all contained in a fixed compact subset of $\hat M$, so Gromov compactness gives $\mathcal R^{d;k,l}(\vec\gamma^\star,\vec\gamma_\star;\gamma_b,\gamma_a)$ a natural compactification $\ol{\mathcal R}^{d;k,l}(\vec\gamma^\star,\vec\gamma_\star;\gamma_b,\gamma_a)$ whose new points are broken configurations consisting of one element of $\mathcal R^{d';k',l'}((\vec\gamma^\star)',(\vec\gamma_\star)';\gamma_b',\gamma_a')$ for some $d'\le d$, $k'\le k$, and $l'\le l$, along with disks contributing to the $A_\infty$ structure. We list those configurations with exactly two nonconstant components.
\begin{subequations}\label{eq:coproduct breaks}
\begin{equation}\label{eq:coprod left Hochschild breaks}
\begin{gathered}
  \mathcal R^{d;k+1-m,l}(\vec\gamma^\star,(\gamma_l,\dotsc,\gamma_{i+m},\tild\gamma,\gamma_{i-1},\dotsc,\gamma_{-k});\gamma_b,\gamma_a)\\
  \times\mathcal R^{m+1}(\gamma_{i+m-1},\dotsc,\gamma_i;\tild\gamma)
\end{gathered}
\hspace{.6cm}
\begin{gathered}
  \mbox{\footnotesize$2\le m\le-i\le k$}\\
  \mbox{\footnotesize$\tild\gamma\in\mathscr X(B_{i-1},B_{i+m-1})$}
\end{gathered}
\end{equation}
\begin{equation}\label{eq:coprod right Hochschild breaks}
\begin{gathered}
  \mathcal R^{d;k,l+1-m}(\vec\gamma^\star,(\gamma_l,\dotsc,\gamma_{i+m},\tild\gamma,\gamma_{i-1},\dotsc,\gamma_{-k});\gamma_b,\gamma_a)\\
  \times\mathcal R^{m+1}(\gamma_{i+m-1},\dotsc,\gamma_i;\tild\gamma)
\end{gathered}
\hspace{.6cm}
\begin{gathered}
  \mbox{\footnotesize$2\le m\le 1+l-i\le l$}\\
  \mbox{\footnotesize$\tild\gamma\in\mathscr X(B_{i-1},B_{i+m-1})$}
\end{gathered}
\end{equation}
\begin{equation}\label{eq:coprod middle Hochschild breaks}
\begin{gathered}
  \mathcal R^{d;k+i,l-i+1-m}(\vec\gamma^\star,(\gamma_l,\dotsc,\gamma_{i+m},\tild\gamma,\gamma_{i-1},\dotsc,\gamma_{-k});\gamma_b,\gamma_a)\\
  \times\mathcal R^{m+1}(\gamma_{i+m-1},\dotsc,\gamma_i;\tild\gamma)
\end{gathered}
\hspace{.3cm}
\begin{gathered}
  \mbox{\footnotesize$-k\le i\le 0$}\\
  \mbox{\footnotesize$\min\{2,1-i\}\le m\le l-i+1$}\\
  \mbox{\footnotesize$\tild\gamma\in\mathscr X(B_{i-1},B_{i+m-1})$}
\end{gathered}
\end{equation}
\begin{equation}\label{eq:coprod Floer Hochschild breaks}
\begin{gathered}
  \mathcal R^{d;k,l}(\vec\gamma^\star,(\gamma_l,\dotsc,\gamma_{i+1},\tild\gamma,\gamma_{i-1},\dotsc,\gamma_{-k});\gamma_b,\gamma_a)\\
  \times\mathcal R(\gamma_i;\tild\gamma)
\end{gathered}
\hspace{1cm}
\begin{gathered}
  \mbox{\footnotesize$-k\le i\le l$}\\
  \mbox{\footnotesize$\tild\gamma\in\mathscr X(B_{i-1},B_i)$}
\end{gathered}
\end{equation}
\begin{equation}\label{eq:coprod right domain break}
\begin{gathered}
  \mathcal R^{i+j+1+1}(\gamma_l,\dotsc,\gamma_{l-j+1},\tild\gamma,\gamma^i,\dotsc,\gamma^1;\gamma_a)\\
  \times\mathcal R^{d-i;k,l-j}((\gamma^d,\dotsc,\gamma^{i+1}),(\gamma_{l-j},\dotsc,\gamma_{-k});\gamma_b,\tild\gamma)
\end{gathered}
\hspace{.6cm}
\begin{gathered}
  \mbox{\footnotesize$0\le i\le d$}\\
  \mbox{\footnotesize$\max\{0,1-i\}\le j\le l$}\\
  \mbox{\footnotesize$\tild\gamma\in\mathscr X(L_i,B_{l-j})$}
\end{gathered}
\end{equation}
\begin{equation}\label{eq:coprod right Floer break}
\hspace{2cm}
\mathcal R(\tild\gamma;\gamma_a)
  \times\mathcal R^{d;k,l}(\vec\gamma^\star,\vec\gamma_\star;\gamma_b,\tild\gamma)
\hspace{2.2cm}
  \mbox{\footnotesize$\tild\gamma\in\mathscr X(L_0,B_l)$}
\end{equation}
\begin{equation}\label{eq:coprod left domain break}
\begin{gathered}
  \mathcal R^{i+j+1+1}(\gamma^d,\dotsc,\gamma^{d+1-i},\tild\gamma,\gamma_{j-k-1},\dotsc,\gamma_{-k};\gamma_b)\\
  \times\mathcal R^{d-i;k-j,l}((\gamma^{d-i},\dotsc,\gamma^1),(\gamma_l,\dotsc,\gamma_{j-k});\tild\gamma,\gamma_a)
\end{gathered}
\hspace{1cm}
\begin{gathered}
  \mbox{\footnotesize$0\le i\le d$}\\
  \mbox{\footnotesize$\max\{0,1-i\}\le j\le k$}\\
  \mbox{\footnotesize$\tild\gamma\in\mathscr X(B_{j-k-1},L_{d-i})$}
\end{gathered}
\end{equation}
\begin{equation}\label{eq:coprod left Floer break}
\hspace{2.5cm}
\mathcal R(\tild\gamma;\gamma_b)
  \times\mathcal R^{d;k,l}(\vec\gamma^\star,\vec\gamma_\star;\tild\gamma,\gamma_a)
\hspace{2.2cm}
  \mbox{\footnotesize$\tild\gamma\in\mathscr X(B_{-k-1},L_d)$}
\end{equation}
\begin{equation}\label{eq:coprod upper domain break}
\begin{gathered}
  \mathcal R^{d+1-m;k,l}((\gamma^d,\dotsc,\gamma^{i+m},\tild\gamma,\gamma^{i-1},\dotsc,\gamma^1),\vec\gamma_\star;\gamma_b,\gamma_a)\\
  \times\mathcal R^{m+1}(\gamma^{i+m-1},\dotsc,\gamma^i;\tild\gamma)
\end{gathered}
\hspace{.6cm}
\begin{gathered}
  \mbox{\footnotesize$2\le m\le d+1-i\le d$}\\
  \mbox{\footnotesize$\tild\gamma\in\mathscr X(L_{i-1},L_{i+m-1})$}
\end{gathered}
\end{equation}
\begin{equation}\label{eq:coprod upper Floer break}
\begin{gathered}
  \mathcal R^{d;k,l}((\gamma^d,\dotsc,\gamma^{i+1},\tild\gamma,\gamma^{i-1},\dotsc,\gamma^1),\vec\gamma_\star;\gamma_b,\gamma_a)\\
  \times\mathcal R(\gamma^i;\tild\gamma)
\end{gathered}
\hspace{2cm}
\begin{gathered}
  \mbox{\footnotesize$1\le i\le d$}\\
  \mbox{\footnotesize$\tild\gamma\in\mathscr X(L_{i-1},L_i)$}
\end{gathered}
\end{equation}
\end{subequations}
This looks like a lot, but the first four are just different ways that an $A_\infty$ disk can break off on the ``subscript'' input side, of which the first three differ only in the placement of the marked input. The others, in pairs, describe the possible breakings of an $A_\infty$ disk at $\zeta_a$, at $\zeta_b$, and on the ``superscript'' input side.

\begin{lemma}\label{lem:coprod transversality}
 For generic $\mathbf K_\Delta\in\mathcal K_\Delta(M[\sigma])$, all moduli spaces $\mathcal R^{d;k,l}(\vec\gamma^\star,\vec\gamma_\star;\gamma_b,\gamma_a)$ are transversely cut out of dimension
 \[
  \deg(\gamma_a)+\deg(\gamma_b)-\sum_{r=d}^{d}\deg(\gamma^r)-\sum_{s=-k}^l\deg(\gamma_s)+d+k+l-n.
 \]
 In this case, the codimension $1$ boundary strata of the compactifications $\ol{\mathcal R}^{d;k,l}(\vec\gamma^\star,\vec\gamma_\star;\gamma_b,\gamma_a)$ are in natural bijection with the configurations \eqref{eq:coproduct breaks}.
 
 Moreover, $\mathcal R^{d;k,l}(\vec\gamma^\star,\vec\gamma_\star;\gamma_b,\gamma_a)$ is empty whenever
  \[
   n_\sigma(\gamma_a)+n_\sigma(\gamma_b)>\sum_{r=1}^{d}n_\sigma(\gamma^r)+\sum_{s=-k}^ln_\sigma(\gamma_s).
  \]
\qed
\end{lemma}

\subsection{The main homotopy}

We are now prepared to begin constructing the operation $\Delta_y$ which is used in the definition of the basic retraction $R_y$ in \eqref{eq:basic retraction def}. For now, we define a coproduct operation $\Delta_y^0$. This is the \textbf{main homotopy}. Later on, we will define a second operation $h_y$ and set $\Delta_y=\Delta_y^0+h_y$.

For all ${\mathbf m}\in\N$, choose as in Section \ref{sec:collapsing energy} a regular Floer datum $\mathbf K_\Delta^{\mathbf m}$ for the coproduct which satisfies the analogs of Lemmas \ref{lem:weak Morse approx} and \ref{lem:no escape with small action}, but for which the Hamiltonian is not necessarily $\Phi$-invariant on $K\times D_{0.9}$. Denote the resulting moduli spaces by $\mathcal R^{d;k,l}_{\mathbf m}(\vec\gamma^\star,\vec\gamma_\star;\gamma_b,\gamma_a)$.

\begin{figure}
  \def\svgwidth{15.5cm}
  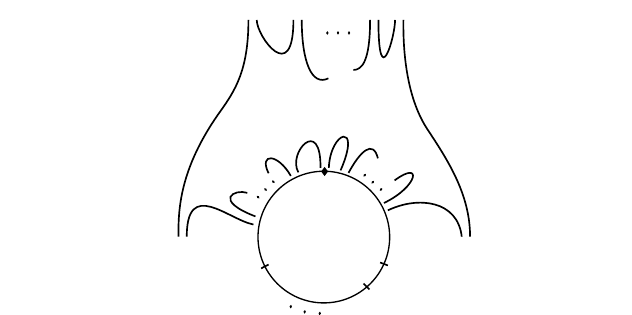
  \caption{}\label{fig:main htopy}
\end{figure}

We now define $\Delta_y$ on generators and extend by linearity. Concretely, for generators
\[
 \gamma=\gamma^m\otimes\dotsm\otimes\gamma^0\in\hom^*_{\ssigma'}(L_0,L_1)
\]
and
\[
 y=\gamma_q\otimes\dotsm\otimes\gamma_1\in CC_*(\Bss),
\]
where $y$ a low-action stabilization (cf. Corollary \ref{cor:nondegeneracy for all large n}), define
\begin{equation}\label{eq:main homotopy}
 \Delta_{y,{\mathbf m}}^0(\gamma)=\sum_{\substack{0\le i\le m+1\\ 0\le d\le m+1-i\\ k+l<q\\ n_\sigma(\gamma^r)=0\,\forall r<i}}
 \hspace{-7cm}\sum_{\substack{\gamma_a,\gamma_b\\ n_\sigma(\gamma_a)=0\\ \hspace{72mm}\deg(\gamma_a)+\deg(\gamma_b)=\sum_{r=i}^{i+d-1}\deg(\gamma^r)+\sum_{s=q-k}^l\deg(\gamma_s)+n-d-k-l}}
 \hspace{-7cm}\#\mathcal R^{d;k,l}_{\mathbf m}((\gamma^{i+d-1},\dotsc,\gamma^i),(\gamma_l,\dotsc,\gamma_{q-k});\gamma_b,\gamma_a)\cdot\widehat\gamma,
\end{equation}
where tuples with increasing or nonexistent indices are the empty tuple $()$, and
\begin{equation}
 \widehat\gamma:=\gamma^m\otimes\dotsm\otimes\gamma^{i+d}\otimes\gamma_b\otimes\gamma_{q-k-1}\otimes\dotsm\otimes\gamma_{l+1}\otimes\gamma_a\otimes\gamma^{i-1}\otimes\dotsm\otimes\gamma^0
\end{equation}
is required to be composable. See Figure \ref{fig:main htopy}. Again the indices for the Hochschild chain $v$ are cyclically ordered. A straightforward calculation gives that $\Delta_{y,{\mathbf m}}^0$ is homogeneous of degree $\deg(y)+n-2$. In particular, $\Delta_{y,{\mathbf m}}^0$ has degree $-1$ when $\deg(y)=1-n$, which is precisely the grading which appears in Definition \ref{defn:nondegenerate stop}.

\begin{rmk}\label{rmk:large m gives good coprod}
 Note that $\Delta_{y,{\mathbf m}}^0$ is in general poorly defined, as \eqref{eq:main homotopy} in principle allows contributions with $d=0$ and $i=0$ or $m+1$. Such terms would be elements of
 \[
  \hom(B_m,L_1)\otimes\dotsm\otimes\hom(L_0,B_1)\otimes\hom(B^v_{q-k},L_0)\otimes\dotsm\otimes\hom(L_0,B^v_{l+1})
 \]
 or
 \[
  \hom(B^v_{q-k},L_1)\otimes\dotsm\otimes\hom(L_1,B^v_{l+1})\otimes\hom(B_m,L_1)\otimes\dotsm\otimes\hom(L_0,B_1),
 \]
 which are not components of $\hom_{\ssigma'}(L_0,L_1)$. However, for large ${\mathbf m}$ and $y$ a low-action stabilization, these terms can be excluded either for energy reasons or by appealing to the analog of Lemma \ref{lem:no escape with small action}. In particular, $\Delta_{y,{\mathbf m}}^0$ is well defined when ${\mathbf m}$ is large and $y$ is the stabilization of a low-action Hochschild fundamental cycle. We will suppress the $\mathbf m$ and write $\Delta_y^0$ to denote $\Delta_{y,{\mathbf m}}^0$ for large but indeterminate ${\mathbf m}$. Later on we will need to increase ${\mathbf m}$ whenever we introduce a new moduli space. This only happens finitely many times, so it is not a problem, and we will do it implicitly without change to the notation.
\end{rmk}

\begin{rmk}
The key condition here is that, in the output, all chords starting with $\gamma_a$ must have crossing number zero with $\sigma$. This is what allows the resulting chain map to interact with the intersection filtration. In particular, we will see that $\mu_{\ssigma'}^1\Delta^0_y+\Delta^0_y\mu_{\ssigma'}^1$ is nontrivial precisely at the smallest $r$ such that $n_\sigma(\gamma^r)\ne0$, where it is homotopic to the identity up to terms lower in the main filtration.
\end{rmk}

We begin by examining the configurations of holomorphic disks which appear in $\Delta_y^0\mu_{\ssigma'}^1(\gamma)$. These come in two types.
\begin{equation}\label{descr:delta mu noninteracting}
 \parbox{14cm}{The first type occurs when the superscript inputs for $\Delta_y^0$ do not include the output for $\mu_{\ssigma'}^1$. In this case, there are two components which are disjoint and do not want to glue together.}
\end{equation}
\begin{equation}\label{descr:delta mu interacting}
 \parbox{14cm}{The second type occurs when the superscript inputs for $\Delta_y^0$ do include the output for $\mu_{\ssigma'}^1$. In this case, the configuration is a broken disk of the form \eqref{eq:coprod upper domain break} or \eqref{eq:coprod upper Floer break}.}
\end{equation}

\begin{figure}
  \def\svgwidth{9cm}
  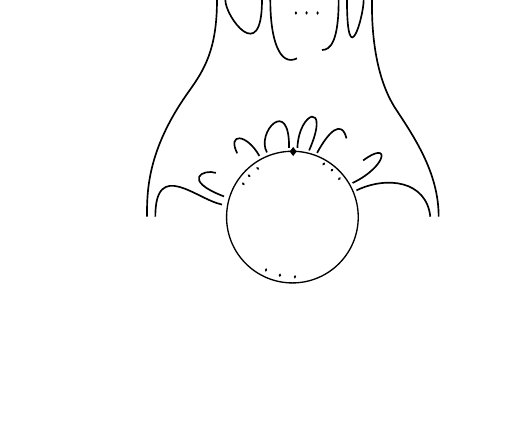
  \caption{A formal annulus contributing to $A_y$.}\label{fig:formal annulus}
\end{figure}

Next, we examine the configurations of holomorphic disks which appear in $\mu_{\ssigma'}^1\Delta_y^0(\gamma)$. These come in five types.
\begin{equation}\label{descr:mu delta noninteracting}
 \parbox{14cm}{The first type occurs when the inputs for $\mu_{\ssigma'}^1$ do not include any of $\gamma_a$, $\gamma_b$, or the $\gamma_i$ coming from the unused components of $y$. In this case, there are two components which are disjoint and do not want to glue together. These configurations are exactly the same as those in \eqref{descr:delta mu noninteracting}, so their contributions to $\Delta_y^0\mu_{\ssigma'}^1+\mu_{\ssigma'}^1\Delta_y^0$ cancel.}
\end{equation}
\begin{equation}\label{descr:mu delta Hochschild differential}
 \parbox{14cm}{The second type occurs when the inputs for $\mu_{\ssigma'}^1$ consist of one or more of the $\gamma_i$ coming from the unused components of $y$. In this case, the configuration consists of two disjoint disks, each of which uses different portions of the Hochschild chain $y$.}
\end{equation}
\begin{equation}\label{descr:mu delta right composition}
 \parbox{14cm}{The third type occurs when the inputs for $\mu_{\ssigma'}^1$ include $\gamma_a$ but not $\gamma_b$. In this case, the configuration is a broken disk of the form \eqref{eq:coprod right domain break} or \eqref{eq:coprod right Floer break}.}
\end{equation}
\begin{equation}\label{descr:mu delta left composition}
 \parbox{14cm}{The fourth type occurs when the inputs for $\mu_{\ssigma'}^1$ include $\gamma_b$ but not $\gamma_a$. In this case, the configuration is a broken disk of the form \eqref{eq:coprod left domain break} or \eqref{eq:coprod left Floer break}.}
\end{equation}
\begin{equation}\label{descr:mu delta annulus}
 \parbox{14cm}{The fifth type occurs when the inputs for $\mu_{\ssigma'}^1$ include both $\gamma_a$ and $\gamma_b$. In this case, the configuration is formally an annulus with two nodes, see Figure \ref{fig:formal annulus}. The outside of this annulus is labeled with some substring of $\gamma$ and an output chord $\tild\gamma$, while the inside is labeled with the entire Hochschild chain $y$. Let
 \[
  A_y\colon\hom^*_{\ssigma'}(L_0,L_1)\to\hom^{*+\deg(y)+n-1}_{\ssigma'}(L_0,L_1)
 \]
 be the linear map obtained by counting only such annuli.}
\end{equation}

We claim that, modulo terms which decrease the main filtration,
\begin{equation}\label{eq:main homotopy simplified}
 \Delta_y^0\mu_{\ssigma'}^1+\mu_{\ssigma'}^1\Delta_y^0=\Delta^0_{\delta y}+A_y.
\end{equation}
For this, it suffices to consider those contributions to $\Delta_y^0\mu_{\ssigma'}^1+\mu_{\ssigma'}^1\Delta_y^0$ which come from configurations which avoid $D_\sigma$. Considering only such configurations, we want to see that the terms coming from  \eqref{descr:delta mu noninteracting}-\eqref{descr:mu delta left composition} add up to the corresponding portion of $\Delta^0_{\delta y}$. To do so, note that there are two types of configurations which contribute to $\Delta^0_{\delta y}$. The first type occurs when the output of the $A_\infty$ disk contributing to $\delta y$ is not an input of $\mathcal R^{d;k,l}(\vec\gamma^\star,\vec\gamma_\star;\gamma_b,\gamma_a)$, so instead it appears in $\widehat\gamma$. In this case, the configuration is precisely what is counted in \eqref{descr:mu delta Hochschild differential}. The remaining type of configuration occurs when the output of the $A_\infty$ disk contributing to $\delta y$ is a component of $\vec\gamma_\star$. In this case, the broken configuration is one of \eqref{eq:coprod left Hochschild breaks}-\eqref{eq:coprod Floer Hochschild breaks}. Because the spaces in \eqref{eq:coproduct breaks} form the boundary of a compact 1-manifold, we are left with terms coming from configurations of the form \eqref{eq:coprod upper domain break}-\eqref{eq:coprod left Floer break}. On the other hand, for spaces of coproduct disks which do not intersect $D_\sigma$, the condition that $n_\sigma(\gamma_a)=0$ is preserved under breaking off an $A_\infty$ disk. Thus, the operation coming from these configurations coincides precisely with the sum of the remaining terms \eqref{descr:delta mu interacting}, \eqref{descr:mu delta right composition}, and \eqref{descr:mu delta left composition}.

Since we will eventually be interested in the case where $y$ is closed, we can ignore the $\delta y$ term in \eqref{eq:main homotopy simplified}. Moreover, since our goal is to show that $R_y$ satisfies the conditions of Proposition \ref{prop:retracting homotopy}, we may ignore all terms of $\Delta_y^0\mu_{\ssigma'}^1+\mu_{\ssigma'}^1\Delta_y^0$ which strictly decrease the main filtration. Because all operations involved satisfy positivity of intersections and $y$ is made up of chords with $n_\sigma=0$, the only way in which they can fail to decrease the filtration is by failing to decrease the length of a generator $\gamma^m\otimes\dotsm\otimes\gamma^0$. For the annulus term $A_y$, this only happens when the broken annuli are labeled with zero or one $\gamma^j$ input. Write $A_y=\sum_{\nu,\mu}A_y^{\nu,\mu}$, where $A_y^{\nu,\mu}$ is the operation coming from those broken holomorphic annuli with $\mu$ superscript inputs and intersection number $\nu$ with $D_\sigma$. This means, for $A_y^{\nu,\mu}(\gamma)\ne0$, we have
\[
 (\mathfrak n,\mathfrak m)(A_y^{\nu,\mu}(\gamma))=(\mathfrak n,\mathfrak m)(\gamma)+(-\nu,1-\mu).
\]
We conclude

\begin{lemma}\label{lem:main htopy effect}
 Let $y\in CC_*(\Bss)$ be a closed, low-action stabilization. Then, up to terms which decrease the main filtration, $\Delta_y^0\mu_{\ssigma'}^1+\mu_{\ssigma'}^1\Delta_y^0=A_y^{0,0}+A_y^{0,1}$.
 \qed
\end{lemma}

\subsection{Closed-open maps}\label{sec:CO maps}

The eventual objective will be to show that $A_y^{0,0}+A_y^{0,1}$ is homotopic to a closed-open operation $\mathcal{CO}^{filt}_x$ depending on a cochain $x\in SC_\ssigma^*(M[\sigma])$, where it will turn out that $x=\mathcal{OC}(y)$. We now construct this operation.

Let $\mathcal R^{0+1}_1$ be the singleton set containing a disk $D^{0,1}_1$ with one interior puncture $\zeta_+$ and one boundary puncture $\zeta_0$. Up to biholomorphism, there is a unique such disk. Equip $\zeta_0$ with a negative strip-like end $\epsilon_0$ and $\zeta_+$ with a positive cylindrical end $\epsilon_+$. As with the punctured disks giving rise to $\mathcal{OC}$, we ask that $\epsilon_+$ has a very special form. Specifically, in the holomorphic coordinates on $D^{0,1}_1$ where $\mathrm{int}(\Sigma)=\{z\in\C\mid0<|z|<1\}$ and $\zeta_0=-1$, we require that
\begin{equation}\label{eq:CO pos end alignment}
\epsilon_+(s,t)=ae^{-2\pi(s+it)}\qquad\text{with $a\in\R$ positive.}
\end{equation}

Going up in dimension, let $\mathcal R^{1+1}_1$ be the space of disks with one interior puncture $\zeta_+$ and two boundary punctures $\zeta_0$ and $\zeta_1$. The corresponding compactified moduli space $\ol{\mathcal R}^{1+1}_1$ is
\begin{equation}\label{OC domain compactification}
 \ol{\mathcal R}^{1+1}_1=\left(\mathcal R^{2+1}\times\mathcal R^{0+1}_1\right)\amalg\mathcal R^{1+1}_1\amalg\left(\mathcal R^{2+1}\times\mathcal R^{0+1}_1\right),
\end{equation}
where the two broken configurations correspond to the two ways of attaching the negative end of $D^{0,1}_1$ to one of the two positive ends of the unique disk $\Sigma^{2+1}\in\mathcal R^{2+1}$. Choose smooth, disjoint $\mathcal R^{1+1}_1$-parametrized families of positive strip-like ends $\epsilon_1$ for $\zeta_1$, negative strip-like ends $\epsilon_0$ for $\zeta_0$, and positive cylindrical ends $\epsilon_+$ for $\zeta_+$, which satisfy the compatibility conditions
\begin{enumerate}
 \item In the gluing charts of the form $[0,a)\times\mathcal R^{2+1}\times\mathcal R^{0+1}_1$ with gluing length $\ell=e^{\frac1\rho}$, where $\rho\in[0,a)$, the families of ends agree to infinite order at $\rho=0$ with those induced by gluing.
 \item For all $\Sigma\in\mathcal R^{1+1}_1$, in the holomorphic coordinates on $\Sigma$ where the interior of $\Sigma$ is the punctured disk $\{z\in\C\mid0<|z|<1\}$ and $\zeta_0=-1$, $\epsilon_+$ satisfies \eqref{eq:CO pos end alignment}.
\end{enumerate}
Unlike with open-closed maps, for careful choices of ends elsewhere the agreement to infinite order could be strengthened to agreement in a neighborhood of the boundary, but there is no benefit to doing so. One could also extend the above and construct a map to Hochschild cohomology as in \cite{Gana_scdwc}, but in our application the higher terms would reduce the main filtration, so we ignore them.

A \textbf{conformally consistent} choice of Floer data for the closed-open maps consists of a Floer datum on $D^{0,1}_1$, along with a Floer datum on $\Sigma$ for each $\Sigma\in\mathcal R^{1+1}_1$ varying smoothly over $\mathcal R^{1+1}_1$, and such that near $\partial\ol{\mathcal R}^{1+1}_1$ it agrees to infinite order with the conformal class of not-quite Floer data determined by gluing. Denote by $\mathcal K^{\mathcal{CO}}(M[\sigma])$ the space of conformally consistent choices of Floer data for the closed-open maps.

Given $\mathbf K\subset\mathcal K^{\mathcal{CO}}(M[\sigma])$, we can consider the resulting holomorphic curves. Given Lagrangian labels $L_i$ and asymptotic ends $\gamma_i$ as in \eqref{eq:gamma_i location} and $x_+\in\mathscr X(H_t)$, we are interested in the spaces
\begin{align*}
 &\mathcal R^{0+1}_1(x_+;\gamma_0)\\
 &\mathcal R^{1+1}_1(x_+,\gamma_1;\gamma_0).
\end{align*}
These consists of all maps $u\colon\Sigma\to\widehat{M[\sigma]}$ for $\Sigma\in\mathcal R^{0+1}_1$ or $\Sigma\in\mathcal R^{1+1}_1$, respectively, satisfying \eqref{eq:gen Floer w/ cylinders} with $u(E_i)\subset(\phi^{\tau_{E}})^*L_i$, $u(\zeta_i)=(\phi^{\tau_i})^*\gamma_i$, and $u(\zeta_+)=(\phi^{\tau_+})^*x_+$.

\begin{lemma}\label{lem:CO transversality}
 For generic $\mathbf K\in\mathcal K^{\mathcal{CO}}(M[\sigma])$, all moduli spaces $\mathcal R^{0+1}_1(x_+;\gamma_0)$ and $\mathcal R^{1+1}_1(x_+,\gamma_1;\gamma_0)$ are transversely cut out and have the following features.
 \begin{enumerate}
  \item $\mathcal R^{0+1}_1(x_+;\gamma_0)$ has dimension $\deg(\gamma_0)-\deg(x_+)$. It is empty unless $n_\sigma(\gamma_0)\le n_\sigma(x_+)$. It has a Gromov compactification whose codimension one boundary strata are in natural bijection with
  \begin{equation}\label{CO^0 boundary strata}
   \coprod_{\tild x\in\mathscr X(H_t)}\left(\mathcal R^{0+1}_1(\tild x;\gamma_0)\times\mathcal Q(x_+;\tild x)\right)\;\amalg\coprod_{\tild\gamma\in\mathscr X(L,L)}\left(\mathcal R(\tild\gamma;\gamma_0)\times\mathcal R^{0+1}_1(x_+;\tild\gamma)\right).
  \end{equation}
  \item $\mathcal R^{1+1}_1(x_+,\gamma_1;\gamma_0)$ has dimension
  \[
   \deg(\gamma_0)-\deg(\gamma_1)-\deg(x_+)+1.
  \]
  It is empty unless $n_\sigma(\gamma_0)\le n_\sigma(\gamma_1)+n_\sigma(x_+)$. It has a Gromov compactification whose codimension one boundary strata are in natural bijection with
  \begin{equation}\label{eq:CO^1 boundary strata}
  \begin{gathered}
   \coprod_{\tild x\in\mathscr X(H_t)}\left(\mathcal R^{1+1}_1(\tild x,\gamma_1;\gamma_0)\times\mathcal Q(x_+;\tild x)\right)\;\amalg\coprod_{\tild\gamma\in\mathscr X(L_0,L_1)}\hspace{-3mm}\left(\mathcal R^{1+1}_1(x_+,\tild\gamma;\gamma_0)\times\mathcal R(\gamma_1;\tild\gamma)\right)\\
   \amalg\coprod_{\tild\gamma\in\mathscr X(L_0,L_1)}\hspace{-3mm}\left(\mathcal R(\tild\gamma;\gamma_0)\times\mathcal R^{1+1}_1(x_+,\gamma_1;\tild\gamma)\right)\\   
   \amalg\coprod_{\tild\gamma\in\mathscr X(L_1,L_1)}\hspace{-3mm}\left(\mathcal R^{2+1}(\tild\gamma,\gamma_1;\gamma_0)\times\mathcal R^{0+1}_1(x_+;\tild\gamma)\right)\;\amalg\coprod_{\tild\gamma\in\mathscr X(L_0,L_0)}\hspace{-3mm}\left(\mathcal R^{2+1}(\gamma_1,\tild\gamma;\gamma_0)\times\mathcal R^{0+1}_1(x_+;\tild\gamma)\right).
  \end{gathered}
  \end{equation}
 \end{enumerate}
\qed
\end{lemma}

\begin{defn}\label{defn:filtered CO component}
Suppose $u\in\mathcal R^{1+1}_1(x_+,\gamma_1;\gamma_0)$ with $x_+\in SC_\ssigma^*(M[\sigma])$ and $n_\sigma(\gamma_0)=n_\sigma(\gamma_1)$. Then, by positivity of intersections, $u$ doesn't pass through $D_\sigma$. Let $\Sigma$ be the domain of $u$, and let $e\colon[0,1]\to\Sigma$ be a path with $e(0)\in E_0$ and $e(1)=\zeta_+$. Since $u$ avoids $D_\sigma$, so does $u\circ e$, and hence the topological intersection number of $u\circ e$ with $\sigma(\hat F\times\R_+)$ is well defined and independent of the choice of $e$. Let $n^{\mathcal{CO}}_\sigma(u)$ be this number. The \textbf{filtered closed-open moduli space} $\mathcal R^{1+1,filt}_1(x_+,\gamma_1;\gamma_0)$ is the connected component of $\mathcal R^{1+1}_1(x_+,\gamma_1;\gamma_0)$ consisting of $u$ with $n^{\mathcal{CO}}_\sigma(u)=0$.

For $n_\sigma(\gamma_0)<n_\sigma(\gamma_1)$, we take $\mathcal R^{1+1,filt}_1(x_+,\gamma_1;\gamma_0)$ to be empty, though one could just as well take it to be all of $\mathcal R^{1+1}_1(x_+,\gamma_1;\gamma_0)$.
\end{defn}

For $x\in SC_\ssigma^*(M[\sigma])$, define $\mathcal{CO}^{filt}_x\colon\hom^*_{\ssigma'}(L_0,L_1)\to\hom^{*+\deg(x)-1}_{\ssigma'}(L_0,L_1)$ to depend linearly on $x$ and, for $x\in\mathscr X(H_t)$ a generator, to satisfy
\begin{equation}\label{eq:CO formula}
\begin{gathered}
 \mathcal{CO}^{filt}_x(\gamma^m\otimes\dotsm\otimes\gamma^0)=\hspace{-15mm}\sum_{\substack{0\le i\le m+1\\ n_\sigma(\gamma^r)=0\,\forall r<i\\ \tild\gamma\text{ making the result composable}\\ \deg(\tild\gamma)=\deg(x)}}\hspace{-15mm}\#\mathcal R^{0+1}_1(x;\tild\gamma)\cdot\gamma^m\otimes\dotsm\otimes\gamma^i\otimes\tild\gamma\otimes\gamma^{i-1}\otimes\dotsm\otimes\gamma^0\\
 +\hspace{-15mm}\sum_{\substack{0\le i\le m\\ n_\sigma(\gamma^r)=0\,\forall r<i\\ \tild\gamma\text{ making the result composable}\\ \deg(\tild\gamma)=\deg(x)+\deg(\gamma^i)-1}}\hspace{-15mm}\#\mathcal R^{1+1,filt}_1(x,\gamma^i;\tild\gamma)\cdot\gamma^m\otimes\dotsm\otimes\gamma^{i+1}\otimes\tild\gamma\otimes\gamma^{i-1}\otimes\dotsm\otimes\gamma^0.
\end{gathered}
\end{equation}

\begin{rmk}\label{rmk:large m gives good CO}
 As with the main homotopy, this is in fact poorly defined in general. However, by choosing a sequence of Floer data $\mathbf K_{\mathbf m}$ with collapsing energies, we can again ensure that the problematic terms don't appear when $\mathbf m$ is large and $x$ is a small-action chain supported on the central fiber. In this case, energy alone doesn't exclude the bad components of $\mathbf R^{0+1}_1(x,\tild\gamma)$, but the analog of Lemma \ref{lem:no escape with small action} implies that $\tild\gamma$ lives in the central fiber. This is impossible for $L_0$ and $L_1$, since they are assumed to be interior Lagrangians of $M$ and not just $M[\sigma]$.
 
 \emph{For subsequent moduli spaces, we will implicitly choose Floer data for each $\mathbf m$ and retroactively increase $\mathbf m$ to exclude the new bad terms.}
\end{rmk}

Note, as with the coproduct, that we have only allowed configurations where all long chords occur after the new one. The filtered moduli space is the object which captures the contributions of $\Delta_y^0$ for which $n_\sigma(\gamma_a)=0$ while $n_\sigma(\gamma_b)>0$.

\subsection{Annuli, part 1}

To relate $A_y^{0,0}+A_y^{0,1}$ with $\mathcal{CO}^{filt}_{\mathcal{OC}(y)}$, we follow Abouzaid's construction in \cite{Abou_gcgfc}. Specifically, we will coherently extend his first and second homotopies to allow for one outer input and verify that the result is a homotopy on $\hom^*_{\ssigma'}(L_0,L_1)$. This section constructs the first homotopy. The result is a homotopy $h^1_y$ such that, for $\delta y=0$, the operation $A_y^{0,0}+A_y^{0,1}+h^1_y\mu_{\ssigma'}^1+\mu_{\ssigma'}^1h^1_y$ counts analytically gluable broken annuli.

For that, let
\[
\mathcal P^0_d=\coprod_{\substack{k,l\ge0\\ m\ge2\\ k+l+m=d+1}}[0,1]\times\ol{\mathcal R}^{m+1}\times\ol{\mathcal R}^{0;k,l}.
\]
Note that this differs slightly from Abouzaid's terminology. First, the superscript of zero means that there are no outer inputs, meaning that we should think of gluing the first input $\zeta_1$ of the $A_\infty$ disk to the first output $\zeta_a$ of the coproduct, and likewise we should glue the last input $\zeta_m$ of the $A_\infty$ disk to the last output $\zeta_b$ of the coproduct. Second, we do not bother identifying paired boundary components. When we consider holomorphic curves with domains in $\mathcal P^0_d$, this means that there will be extra boundary terms which cancel in pairs.

Similarly, let
\[
\begin{gathered}
\hspace{-9cm}\mathcal P^1_d=\hspace{-3mm}\coprod_{\substack{k,l\ge0\\ m\ge2\\ k+l+m=d+1}}[0,1]\times\ol{\mathcal R}^{m+1}\times\ol{\mathcal R}^{1;k,l}\vspace{-7mm}\\
\hspace{35mm}\amalg\coprod_{\substack{k,l\ge0\\ m\ge3\\ k+l+m=d+2}}[0,1]\times\ol{\mathcal R}^{m+1,1}\times\ol{\mathcal R}^{0;k,l}\;\amalg\coprod_{\substack{k,l\ge0\\ m\ge3\\ k+l+m=d+2}}[0,1]\times\ol{\mathcal R}^{m+1,m}\times\ol{\mathcal R}^{0;k,l}.
\end{gathered}
\]
Here, the first term is as before except with an outer input in the coproduct disk. For the other two, we have an extra distinguished input on the $A_\infty$ disk. In this case, we attach $\zeta_a$ to the first nondistinguished input and $\zeta_b$ to the last nondistinguished input.

\begin{defn}\label{defn:first htopy disk components}
Before we can start to choose Floer data, we need some auxiliary definitions. For any disk
\[
 \Sigma\in\ol{\mathcal R}^{m+1}\amalg\ol{\mathcal R}^{m+1,1}\amalg\ol{\mathcal R}^{m+1,m},
\]
let
\[
\begin{aligned}
 \zeta_{first}&=\begin{cases}
                \zeta_1&\text{ for }\Sigma\in\ol{\mathcal R}^{m+1}\amalg\ol{\mathcal R}^{m+1,m}\\
                \zeta_2&\text{ for }\Sigma\in\ol{\mathcal R}^{m+1,1}
               \end{cases}\\
 \zeta_{last}&=\begin{cases}
                \zeta_m&\text{ for }\Sigma\in\ol{\mathcal R}^{m+1}\amalg\ol{\mathcal R}^{m+1,1}\\
                \zeta_{m-1}&\text{ for }\Sigma\in\ol{\mathcal R}^{m+1,m}.
               \end{cases}
\end{aligned}
\]
For a two-component stable disk
\[
 \Sigma\in\partial\left(\ol{\mathcal R}^{m+1}\amalg\ol{\mathcal R}^{m+1,1}\amalg\ol{\mathcal R}^{m+1,m}\right),
\]
the \textbf{main component} of $\Sigma$ is the component which contains at least two of $\{\zeta_0,\zeta_{first},\zeta_{last}\}$. For a stable disk with more than two components, the main component is the one for which $\zeta_0$, $\zeta_{first}$, and $\zeta_{last}$ all lie in different directions.
\end{defn}

\begin{defn}\label{defn:consistent FD first htopy}
 For $p=(t,\Sigma^{m+1},\Sigma^{j;k,l})\in\mathcal P^{\ol j}_d$, a Lagrangian labeling of $p$ consists of a Lagrangian labeling for each of $\Sigma^{m+1}$ and $\Sigma^{j;k,l}$ such that the labels at $\zeta_{first}\in\Sigma^{m+1}$ agree with the labels at $\zeta_a\in\Sigma^{j;k,l}$, and similarly with $\zeta_{last}$ and $\zeta_b$. A \textbf{universal and conformally consistent} choice of Floer data $\mathbf K^{\mathcal P}$ for the first homotopy consists, for all $d\ge1$ and $i\in\{0,1\}$ and each $p=(t,\Sigma^{m+1},\Sigma^{j;k,l})\in\mathcal P^{\ol j}_d$ with Lagrangian labels, of a Floer datum $\mathbf K^{\mathcal P}(p)$ on $\Sigma^{m+1}$ with the corresponding labels, such that $\mathbf K^{\mathcal P}$ varies smoothly on $\mathcal P^{\ol j}_d$ and has the following properties.
 \begin{enumerate}
  \item For $t=0$, $\mathbf K^{\mathcal P}(p)$ agrees up to conformal rescaling with the Floer datum on $\Sigma^{m+1}$ chosen for the $A_\infty$ structure.
  \item For $t=1$, the configuration is gluable to an annulus after a conformal rescaling. Concretely, let $r_\Delta\colon\ol{\mathcal R}^{j;k,l}\to(0,\infty)$ be the unique smooth function with $r_\Delta(\Sigma)=\frac{\tau_b}{\tau_a}$ for $\Sigma\in\mathcal R^{j;k,l}$. Similarly, let
  \[
   r_\mu\colon\coprod\mathcal P^{\ol j}_d\to(0,\infty)
  \]
  be the unique smooth function with
  \[
   r_\mu(p)=\frac{\tau_{last}(p)}{\tau_{first}(p)},
  \]
  where $\tau_{first}$ and $\tau_{last}$ are the rescaling factors that $\mathbf K^{\mathcal P}$ assigns to the ends $\zeta_{first}\in\Sigma^{m+1}$ and $\zeta_{last}\in\Sigma^{m+1}$, respectively. We require that
  \[
   r_\mu(1,\Sigma^{m+1},\Sigma^{j;k,l})=r_\Delta(\Sigma^{j;k,l}).
  \]
  \item If $\Sigma^{m+1}$ is a nontrivial stable disk, then on every component of $\Sigma^{m+1}$ aside from the main component, $\mathbf K^{\mathcal P}(p)$ is conformally equivalent to the Floer datum chosen for that disk as an element of the associahedron.
  \item If $\Sigma^{m+1}$ is a nontrivial stable disk, let $\Sigma_{main}$ be its main component. If $\Sigma_{main}$ doesn't contain $\zeta_{first}$, let $\Sigma_{first}$ be the possibly-nodal connected piece of $\Sigma^{m+1}\setminus\Sigma_{main}$ containing $\zeta_{first}$. Likewise, if $\Sigma_{main}$ doesn't contain $\zeta_{last}$, let $\Sigma_{last}$ be the possibly-nodal connected piece of $\Sigma^{m+1}\setminus\Sigma_{main}$ containing $\zeta_{last}$. Define a probably-nodal disk
  \[
   \Sigma_{big}:=\left(\Sigma_{first}\amalg\Sigma^{j;k,l}\amalg\Sigma_{last}\right)/(\zeta_a=\zeta_{first},\zeta_b=\zeta_{last})\in\ol{\mathcal R}^{j';k',l'}.
  \]
  Then the restriction of $\mathbf K^{\mathcal P}(t,\Sigma^{m+1},\Sigma^{j;k,l})$ to $\Sigma_{main}$ is conformally equivalent to the Floer datum $\mathbf K^{\mathcal P}(t,\Sigma_{main},\Sigma_{big})$.
  \item Suppose $\Sigma^{j;k,l}$ is a nontrivial stable disk, and that $\Sigma_{leaf}\subset\Sigma^{j;k,l}$ is an irreducible $A_\infty$-type component which is only attached to the rest of $\Sigma^{j;k,l}$ at the negative puncture. In other words, $\Sigma_{leaf}$ has one nodal negative puncture, zero other negative punctures, and its positive punctures are all honest positive punctures of $\Sigma^{j;k,l}$ instead of nodes. Define
  \[
   \Sigma_{small}=\Sigma^{j;k,l}\setminus\Sigma_{leaf}.
  \]
  Then $\mathbf K^{\mathcal P}(t,\Sigma^{m+1},\Sigma^{j;k,l})$ is conformally equivalent to $\mathbf K^{\mathcal P}(t,\Sigma^{m+1},\Sigma_{small})$.
 \end{enumerate}
 Denote by $\mathcal K^{\mathcal P}(M[\sigma])$ the space of universal and conformally consistent choices of Floer data for the first homotopy.
\end{defn}

Suppose we have picked some universal choice $\mathbf K^{\mathcal P}\subset\mathcal K^{\mathcal P}(M[\sigma])$. For a generator
\[
 y=\gamma_d\otimes\dotsm\otimes\gamma_1\in CC_*(\Bss),
\]
let $B_i\in\Bss$ be such that $\gamma_i\in\mathscr X(B_{i-1},B_i)$. For $L_0,\dotsc,L_{\ol j}$ interior Lagrangians in $M[\Sigma]$, $\gamma_0\in\mathscr X(L_0,L_{\ol j})$, and
\[
\gamma^\star=\begin{cases}
              \text{nothing}&\text{ for }\ol{j}=0\\
              \gamma^1\in\mathscr X(L_0,L_1)&\text{ for }\ol{j}=1,
             \end{cases}
\]
define
\[
 \mathcal P^{\ol j}_d(y,\gamma^\star;\gamma_0)=\coprod_{\substack{q_a,j,q_b,k,l\ge0\\q_a+j+q_b=\ol{j}\\k+l<d}}\mathcal P^{q_a,j,q_b;k,l}_d(y,\gamma^\star;\gamma_0)
\]
for certain spaces $\mathcal P^{q_a,j,q_b;k,l}_d(y,\gamma^\star;\gamma_0)$. These describe broken annuli with $q_a$ outer inputs between the output and the $a$-node, $j$ outer inputs on the coproduct, and $q_b$ outer inputs between the $b$-node and the output. Concretely, this is the union over all
\[
p=\left(t,\Sigma_\mu,\Sigma_\Delta\right)\in[0,1]\times\mathcal R^{(d-k-l+q_a+q_b+1)+1}\times\mathcal R^{j;k,l}
\]
of the space of all maps
\[
 u\colon\Sigma_\Delta\amalg\Sigma_\mu\to\widehat{M[\sigma]}
\]
satisfying the following conditions.
\begin{enumerate}
 \item Write $u_\Delta:=u|_{\Sigma_\Delta}$ and $u_\mu:=u|_{\Sigma_\mu}$. Then
 \[
  u_\Delta\in\mathcal R^{j;k,l}\left((\gamma^\star),(\gamma_l,\dotsc,\gamma_{d-k});\gamma_b,\gamma_a\right)
 \]
 for some $\gamma_a\in\mathscr X(L_{q_a},B_l)$ and $\gamma_b\in\mathscr X(B_{d-k-1},L_{q_a+j})$.
 \item $u_\mu$ satisfies \eqref{eq:gen Floer} for the Floer datum $\mathbf K^{\mathcal P}(p)$.
 \item Let $\tau_E(p)$ be the boundary rescaling function assigned to $\Sigma_\mu$ by $\mathbf K^{\mathcal P}(p)$. Then
 \[
  u\left(\partial_i\Sigma_\mu\right)\in\begin{cases}
                                                  \left(\phi^{\tau_E(p)}\right)^*L_i&\text{for }i\le q_a\\
                                                  \left(\phi^{\tau_E(p)}\right)^*L_{\ol{j}+(d-k-l+q_a+q_b+1)-i}&\text{for }i\ge d-k-l+q_a+1\\
                                                  \left(\phi^{\tau_E(p)}\right)^*B_{l+i-q_a-1}&\text{otherwise}
                                                 \end{cases}
 \]
 where $\partial_i\Sigma_\mu$ is the portion of the boundary between $\zeta_i$ and $\zeta_{i+1}$, ordered cyclically.
 \item Let $\tau_i$ be the rescaling factor assigned by $\mathbf K^{\mathcal P}(p)$ to $\zeta_i\in\Sigma_\mu$. Then
 \[
  \begin{aligned}
   u(\zeta_0)&=(\phi^{\tau_0})^*\gamma_0&\\
   u(\zeta_{1+q_a})&=(\phi^{\tau_{1+q_a}})^*\gamma_a&\\
   u(\zeta_i)&=(\phi^{\tau_i})^*\gamma_{l+i-q_a-1}&\text{ for $2+q_a\le i\le d-k-l+q_a$}\\
   u(\zeta_{d-k-l+q_a+1})&=(\phi^{\tau_{d-k-l+q_a+1}})^*\gamma_b.&
  \end{aligned}
 \]
 This is exhaustive for $\ol{j}=0$ or $\ol{j}=j=1$. For $\ol{j}=1$ but $j=0$, there is one remaining end. In this case, if $q_a=1$, then we require $u(\zeta_1)=(\phi^{\tau_1})^*\gamma^\star$. Similarly, if $q_b=1$, we require $u(\zeta_{d-k-l+2})=(\phi^{\tau_{d-k-l+2}})^*\gamma^\star$.
\end{enumerate}

For any fixed $y\in CC_*(\Bss)$, because of action, there are only finitely many choices of intermediate chords $(\gamma_a,\gamma_b)$ which $u_\Delta$ can approach. The maximum principle (Lemma \ref{lem:maximum principle}) and the usual Gromov compactness argument then imply that the spaces $\mathcal P^{q_a,j,q_b;k,l}_d(y,\gamma^\star;\gamma_0)$ have Gromov compactifications $\ol{\mathcal P}^{q_a,j,q_b;k,l}_d(y,\gamma^\star;\gamma_0)$ obtained by allowing either or both of $u_\Delta$ and $u_\mu$ to break.

\begin{lemma}\label{lem:first htopy transversality}
For convenience of notation, set $\deg(\gamma^\star)=n_\sigma(\gamma^\star)=0$ whenever $\gamma^\star$ is nothing (i.e. when $\ol{j}=0$). Then for generic $\mathbf K\in\mathcal K^{\mathcal P}(M[\sigma])$, all moduli spaces $\mathcal P^{q_a,j,q_b;k,l}_d(y,\gamma^\star;\gamma_0)$ transversely cut out of dimension $\deg(\gamma_0)-\deg(\gamma^\star)-\deg(y)+\ol{j}+1-n$. They are empty unless $n_\sigma(\gamma_0)\le n_\sigma(\gamma^\star)$. The codimension one boundary strata of their Gromov compactifications consist of all broken configurations $u$ of the following types.
\begin{equation}\label{eq:first htopy coprod breaks}
\parbox{12cm}{In the first type of configuration, $t\in(0,1)$,
\[
u_\Delta\in\partial\ol{\mathcal R^{j;k,l}}((\gamma^?),(\gamma_l,\dotsc,\gamma_{d-k});\gamma_b,\gamma_a),
\]
where
\[
    \gamma^?=\begin{cases}
	      \text{nothing}&\text{ for }j=0\\
	      \gamma^\star=\gamma^1&\text{ for }j=1,
	    \end{cases}
\]
while $u_\mu$ is a map $u_\mu\colon\Sigma_\mu\to\widehat{M[\sigma]}$ satisfying the Cauchy-Riemann equation perturbed by the Floer datum chosen for $(t,\Sigma_\mu,\Sigma_\Delta)$ with the corresponding boundary and asymptotic conditions.}
\end{equation}
\begin{equation}\label{eq:first htopy reducing breaks}
\parbox{12cm}{In the second type of configuration, $t\in(0,1)$,
\[
u_\Delta\in\mathcal R^{j;k,l}(\gamma^?,(\gamma_l,\dotsc,\gamma_{d-k});\gamma_b,\gamma_a),
\]
while $u_\mu$ has broken, such that one component is an honest $A_\infty$ disk $u_\mu^0$ which has neither $\gamma_a$ nor $\gamma_b$ as an input. Such a disk is either a portion of the Hochschild differential on $y$, or it is a 1- or 2-input disk involving $\gamma^\star$ and/or $\gamma_0$. The other disk, $u_\mu^{main}$, is a map $u_\mu^{main}\colon\Sigma_\mu^{main}\to\widehat{M[\sigma]}$ satisfying the Cauchy-Riemann equation perturbed by the Floer datum chosen for $(t,\Sigma_\mu^{main},\Sigma_\Delta)$ with the induced boundary and asymptotic conditions. This configuration is part of some $\mathcal P^{\ol{j}'}_{d'}(y',(\gamma^\star)';\gamma_0)$, where $\ol{j}'=\ol{j}$ unless $\ol{j}=1$ and $u_\mu^0$ involves both $\gamma^1$ and $\gamma_0$.}
\end{equation}
\begin{equation}\label{eq:first htopy transferring breaks}
\parbox{12cm}{In the third type of configuration, $t\in(0,1)$,
\[
u_\Delta\in\mathcal R^{j;k,l}((\gamma^?),(\gamma_l,\dotsc,\gamma_{d-k});\gamma_b,\gamma_a),
\]
while $u_\mu$ has broken, such that one component is an honest $A_\infty$ disk $u_\mu^{in}$ which has $\gamma_a$ or $\gamma_b$ as an input but does not have $\gamma_0$ as its output. The other, $u_\mu^{out}$, is a map $u_\mu^{out}\colon\Sigma_\mu^{out}\to\widehat{M[\sigma]}$ satisfying the Cauchy-Riemann equation perturbed by the Floer datum chosen for $(t,\Sigma_\mu^{out},\Sigma_{big})$ with the induced boundary and asymptotic conditions, where $\Sigma_{big}$ is the broken disk formed by joining the domains of $u_\Delta$ and $u_\mu^{in}$.}
\end{equation}
\begin{equation}\label{eq:first htopy formal annuli}
\parbox{12cm}{In the fourth type of configuration, $t=0$, in which case $u$ is a two-component broken annulus of the type contributing to $A_y^{0,{\ol j}}$. Indeed, $A_y^{0,{\ol j}}$ is a count of precisely such annuli satisfying either (1) $n_\sigma(\gamma^\star)=0$, or (2) $q_a=0$, $n_\sigma(\gamma_a)=0$, and $n_\sigma(\gamma_0)=n_\sigma(\gamma^1)$.}
\end{equation}
\begin{equation}\label{eq:first htopy gluable annuli}
\parbox{12cm}{In the fifth type of configuration, $t=1$, in which case $u$ is a two-component broken annulus which can be glued into an honest perturbed holomorphic annulus.}
\end{equation}
\qed
\end{lemma}

While $\mathcal P^0_d(y;\gamma_0)$ already extends the moduli space giving rise to $A_y^{0,0}$, for $A_y^{0,1}$ we need to look at a connected component of $P^1_d(y,\gamma^1;\gamma_0)$ as hinted by \eqref{eq:first htopy formal annuli}. This is the space $\mathcal P^1_{d,filt}(y,\gamma^1;\gamma_0)$ consisting of those $u\in P^1_d(y,\gamma^1;\gamma_0)$ for which either (1) $n_\sigma(\gamma^1)=0$, or (2) $q_a=0$, $n_\sigma(\gamma_a)=0$, and $n_\sigma(\gamma_0)=n_\sigma(\gamma^1)$. For an equivalent description closer in spirit to the filtered closed-open moduli space, choose for all $\Sigma_\mu$ a path $e\colon[0,1]\to\Sigma_\mu$ starting on the edge $\partial_0\Sigma_\mu$ and ending on $\partial_{1+q_a}\Sigma_\mu$. $\mathcal P^1_{d,filt}(y,\gamma^1;\gamma_0)$ is the space of all $u$ which avoid $D_\sigma$ and for which the topological intersection number $u\circ e$ with $\sigma(\hat F\times\R_+)$ vanishes.

Define a linear map $h^1_y\colon\hom^*_{\ssigma'}(L_0,L_1)\to\hom^{*+\deg(y)+n-2}_{\ssigma'}(L_0,L_1)$ to depend linearly on a low-action stabilization $y$ and, when $y=\gamma_d\otimes\dotsm\otimes\gamma_1\in CC_*(\Bss)$ is a generator, to be given by
\begin{equation}\label{eq:first htopy formula}
\begin{gathered}
 h^1_y(\gamma^m\otimes\dotsm\otimes\gamma^0)\;=\hspace{-15mm}\sum_{\substack{0\le i\le m+1\\ n_\sigma(\gamma^r)=0\,\forall r<i\\ \tild\gamma\text{ making the result composable}\\ \deg(\tild\gamma)=\deg(y)+n-1}}\hspace{-15mm}\#\mathcal P^0_d(y;\tild\gamma)\cdot\gamma^m\otimes\dotsm\otimes\gamma^i\otimes\tild\gamma\otimes\gamma^{i-1}\otimes\dotsm\otimes\gamma^0\\
 +\hspace{-15mm}\sum_{\substack{0\le i\le m\\ n_\sigma(\gamma^r)=0\,\forall r<i\\ \tild\gamma\text{ making the result composable}\\ \deg(\tild\gamma)=\deg(y)+\deg(\gamma^i)+n-2}}\hspace{-15mm}\#\mathcal P^1_{d,filt}(y,\gamma^i;\tild\gamma)\cdot\gamma^m\otimes\dotsm\otimes\gamma^{i+1}\otimes\tild\gamma\otimes\gamma^{i-1}\otimes\dotsm\otimes\gamma^0.
\end{gathered}
\end{equation}
In the same way, we define the \textbf{gluable annulus maps}
\[
 \tild A_y^{0,0}\text{ and }\tild A_y^{0,1}\colon\hom^*_{\ssigma'}(L_0,L_1)\to\hom^{*+\deg(y)+n-1}_{\ssigma'}(L_0,L_1)
\]
via
\begin{subequations}
 \begin{equation}
  \tild A_y^{0,0}(\gamma^m\otimes\dotsm\otimes\gamma^0)\;=\hspace{-15mm}\sum_{\substack{0\le i\le m+1\\ n_\sigma(\gamma^r)=0\,\forall r<i\\ \tild\gamma\text{ making the result composable}\\ \deg(\tild\gamma)=\deg(y)+n}}\hspace{-15mm}\#\left[\mathcal P^0_d(y;\tild\gamma)\right]_{t=1}\cdot\gamma^m\otimes\dotsm\otimes\gamma^i\otimes\tild\gamma\otimes\gamma^{i-1}\otimes\dotsm\otimes\gamma^0
 \end{equation}
 \begin{equation}
  \tild A_y^{0,1}(\gamma^m\otimes\dotsm\otimes\gamma^0)\;=\hspace{-15mm}\sum_{\substack{0\le i\le m\\ n_\sigma(\gamma^r)=0\,\forall r<i\\ \tild\gamma\text{ making the result composable}\\ \deg(\tild\gamma)=\deg(y)+\deg(\gamma^i)+n-1}}\hspace{-15mm}\#\left[\mathcal P^1_{d,filt}(y,\gamma^i;\tild\gamma)\right]_{t=1}\cdot\gamma^m\otimes\dotsm\otimes\gamma^{i+1}\otimes\tild\gamma\otimes\gamma^{i-1}\otimes\dotsm\otimes\gamma^0,
 \end{equation}
\end{subequations}
where the notation $[\cdot]_{t=1}$ refers to the portion of the corresponding moduli space which occurs at $t=1$. This is the portion of the boundary of the 1-dimensional part of the filtered moduli space described in \eqref{eq:first htopy gluable annuli}.

\begin{lemma}\label{lem:first htopy effect}
 Up to terms which decrease the main filtration,
 \begin{equation}\label{eq:first htopy effect}
  h^1_y\mu_{\ssigma'}^1+\mu_{\ssigma'}^1h^1_y=h^1_{\delta y}+A_y^{0,0}+A_y^{0,1}+\tild A_y^{0,0}+\tild A_y^{0,1}.
 \end{equation}
\end{lemma}
\begin{proof}
 Write $h_y^1=(h_y^1)^{0,0}+(h_y^1)^{0,1}$, where $(h_y^1)^{0,0}$ is the part of $h_y^1$ coming from the first sum in \eqref{eq:first htopy formula}, while $(h_y^1)^{0,1}$ is the part coming from the second sum. We begin by analyzing the part of $h^1_y\mu_{\ssigma'}^1+\mu_{\ssigma'}^1h^1_y$ which increases word length. This consists of all ways of applying $(h^1_y)^{0,0}$ and $\mu^1_{\Wsp(M[\sigma])}$ in some order. All such terms cancel except those in which $\mu^1_{\Wsp(M[\sigma])}$ is applied to the output of $h_y^1$, which we see constitute the part of \eqref{eq:first htopy reducing breaks} for $\ol{j}=0$ which do not contribute to the Hochschild differential. We therefore examine the rest of the boundary of the corresponding 1-dimensional moduli space.
 
 The portion of the Hochschild differential not in \eqref{eq:first htopy reducing breaks} appears in \eqref{eq:first htopy coprod breaks}, giving rise to a $(h_{\delta y}^1)^{0,0}$. The rest of \eqref{eq:first htopy coprod breaks} for $\ol{j}=0$ comes from the breaking of an $A_\infty$ disk outputting $\zeta_a$ or $\zeta_b$. Such disks precisely form the contribution of \eqref{eq:first htopy transferring breaks} for a different connected component, and so they cancel in pairs. The remaining terms \eqref{eq:first htopy formal annuli} and \eqref{eq:first htopy gluable annuli} correspond precisely to $A_y^{0,0}$ and $\tild A_y^{0,0}$, respectively, which confirms the portion of \eqref{eq:first htopy effect} which increases word length.
 
 For the portion which preserves word length, in order to avoid leaving the filtered moduli space, we consider only the part of $\mu_{\ssigma'}^1$ which does not decrease intersection number. Among such terms, we are interested in all ways of performing both $\mu^2_{\Wsp(M[\sigma])}$ and $(h^1_y)^{0,0}$ or both $\mu^1_{\Wsp(M[\sigma])}$ and $(h^1_y)^{0,1}$ in some order. These again cancel when the operations take place at different places in $\gamma^m\otimes\dotsm\otimes\gamma^0\in\hom_{\ssigma'}(L_0,L_1)$. The remaining terms include not just the non-Hochschild part of \eqref{eq:first htopy reducing breaks} for $\ol{j}=1$, but also the component of \eqref{eq:first htopy coprod breaks} given by a Floer strip escaping at the outer input of the coproduct, as in \eqref{eq:coprod upper Floer break}. The rest of the argument proceeds as above.
\end{proof}

\subsection{Annuli, part 2}

We can now construct the homotopy between $\tild A_y^{0,0}+\tild A_y^{0,1}$ and $\mathcal{CO}^{filt}_{\mathcal{OC}(y)}$ for a Hochschild cycle $y\in CC_*(\Bss)$. This is the induced effect of the Cardy relation for wrapped Floer theory \cite{Abou_gcgfc,Gana_scdwc} on the quotient category, modified to interact with the intersection filtration.

For $d\ge1$, let $\mathcal A^{0+1}_d$ be the space of conformal annuli $\Sigma$ with the following data
\begin{enumerate}
 \item $d$ punctures on the inner boundary, labeled $\zeta_1$ through $\zeta_d$ in clockwise order. Note that this becomes a standard counterclockwise ordering after exchanging the inner and outer boundary components.
 \item One puncture $\zeta_0$ on the outer boundary component, such that in coordinates
 \begin{equation}\label{eq:annulus modulus}
  \mathrm{int}(\Sigma)=\{z\in\C\mid1<|z|<R\}
 \end{equation}
 for some $R>1$ with $\zeta_d=1$, we have $\zeta_0=-R$.
\end{enumerate}
$\mathcal A^{0+1}_d$ admits a Deligne-Mumford compactification $\ol{\mathcal A}^{0+1}_d$ which is a manifold with stratified boundary. In addition to ordinary smooth corners, it has boundary strata of codimension at least two for which neighborhoods are subvarieties of the standard corner $[0,a)^i\times(-a,a)^j$. This is similar to the situation with multiplihedra. Its codimension 1 boundary components come in three types.
\begin{enumerate}
 \item The first type occurs as some of the inner boundary punctures come together while $R$ remains finite and strictly greater than $1$. Such configurations are described by 
 \begin{equation}\label{eq:broken annuli 0 input Hochschild type}
  \coprod_{\substack{2\le k\le d\\1\le i\le k}}\mathcal A^{0+1}_{d+1-k}\times\mathcal R^{k+1,i}
  \quad\amalg\;\coprod_{\substack{2\le k\le d-1\\1\le i\le d-k}}\mathcal A^{0+1}_{d+1-k}\times\mathcal R^{k+1}
 \end{equation}
 as in \eqref{eq:OC base faces}. As before, the index $i$ in the second term keeps track of where the punctures collided.
 \item The second type occurs as $R$ tends to $1$. Because of the anti-alignment condition on $\zeta_0$ and $\zeta_d$, $\Sigma$ has to break into a nodal configuration in which $\zeta_0$ and $\zeta_d$ are on different irreducible components. Thus, $\Sigma$ must have at least two components, and the codimension 1 condition is that it breaks into exactly two components. Such configurations are described by
 \[
  \coprod_{\substack{k,l\ge0\\ m\ge2\\ k+l+m=d+1}}\mathcal R^{m+1}\times\mathcal R^{0;k,l},
 \]
 which we identify with $\mathrm{int}[\mathcal P^0_d]_{t=1}$, the interior of the portion of $\mathcal P^0_d$ lying over $t=1$.
 \item The third type occurs as $R$ tends to $\infty$. In this case, we obtain two disks attached nodally at their centers, and the anti-alignment condition gives rise to a preferred angular gluing parameter. The configuration is thus parametrized by
 \[
  \mathcal R^{0+1}_1\times\mathcal R^1_d,
 \]
 where the alignment conditions on the cylindrical ends for $\mathcal{CO}$ and $\mathcal{OC}$ implements the restriction on gluing angles. We then obtain boundary charts by introducing a gluing parameter $\rho$ satisfying $\rho=\frac1{\log\ell}$, where $\ell$ is the gluing length for the cylindrical ends.
\end{enumerate}
The higher codimension strata are either combinations of the above or paired boundary strata of $[\mathcal P^0_d]_{t=1}$, or in other words configurations which arise as the boundary of two different components of $[\mathcal P^0_d]_{t=1}$.

Next, let $\mathcal A^{1+1}_d$ for $d\ge1$ be the space of conformal annuli $\Sigma$ with the following data
\begin{enumerate}
 \item $d$ punctures on the inner boundary, labeled $\zeta_1$ through $\zeta_d$ in clockwise order. Note that this becomes a standard counterclockwise ordering after exchanging the inner and outer boundary components.
 \item Two punctures $\zeta_0$ and $\zeta^1$ on the outer boundary component, such that in coordinates \eqref{eq:annulus modulus} with $\zeta_d=1$, we have $\zeta_0=-R$.
\end{enumerate}
$\mathcal A^{1+1}_d$ admits a Deligne-Mumford compactification $\ol{\mathcal A}^{1+1}_d$ which is again a manifold with stratified boundary. Its codimension 1 boundary components come in four types, the first three of which are essentially the same as for $\ol{\mathcal A}^{0+1}_d$.
\begin{enumerate}
 \item The first type occurs as some of the inner boundary punctures come together while $R$ remains finite and strictly greater than $1$. Such configurations are described by 
 \begin{equation}\label{eq:broken annuli 1 input Hochschild type}
  \coprod_{\substack{2\le k\le d\\1\le i\le k}}\mathcal A^{1+1}_{d+1-k}\times\mathcal R^{k+1,i}
  \quad\amalg\;\coprod_{\substack{2\le k\le d-1\\1\le i\le d-k}}\mathcal A^{1+1}_{d+1-k}\times\mathcal R^{k+1}.
 \end{equation}
 \item The second type occurs as $R$ tends to $1$. Such configurations can be identified with $\mathrm{int}[\mathcal P^1_d]_{t=1}$, the interior of the portion of $\mathcal P^1_d$ lying over $t=1$.
 \item The third type occurs as $R$ tends to $\infty$ and is parametrized by
 \[
  \mathcal R^{1+1}_1\times\mathcal R^1_d.
 \]
 \item The fourth type occurs when $\zeta^1$ collides with $\zeta_0$ while $R\in(1,\infty)$. This case is formally similar to the first, but here we reduce the number of outer punctures. The configurations are described by
 \[
  \mathcal R^{2+1}\times\mathcal A^{0+1}_d.
 \]
\end{enumerate}
As before, the higher codimension strata are either combinations of the above or paired boundary strata of $[\mathcal P^1_d]_{t=1}$.

A collection of strip-like ends for an annulus $\Sigma\in\mathcal A^{j+1}_d$ consists of positive strip-like ends $\epsilon_i$ at $\zeta_i$ for $i\in\{1,\dotsc,d\}$ and, if applicable, $\epsilon^1$ at $\zeta^1$, along with a negative strip-like end $\epsilon_0$ at $\zeta_0$, such that the images of the ends are pairwise disjoint. A cylinder for $\Sigma$ is a finite cylinder $\delta\colon[a,b]\times S^1\to\Sigma$ which is disjoint from the strip-like ends and, in the coordinates \eqref{eq:annulus modulus} with $\zeta_d=1$ and $\zeta_0=-R$, takes the form
\[
 \delta(s,t)=ce^{-2\pi(s+it)}\qquad\text{with $c\in\R$ positive.}
\]

A \textbf{universal choice of ends and cylinders} for $\mathcal A^{j+1}_d$ consists, for all $d\ge1$ and $j\in\{0,1\}$, of a collection of strip-like ends for each $\Sigma\in\mathcal A^{j+1}_d$ which varies smoothly over $\mathcal A^{j+1}_d$, along with a cylinder for $\Sigma$ whenever $R\ge2$ which also varies smoothly over $\mathcal A^{j+1}_d$, which satisfy
\begin{enumerate}
 \item The strip-like ends agree to infinite order at the boundary with the collection of strip-like ends induced by gluing.
 \item Near $R=\infty$, the cylinder agrees with the finite cylinder induced by gluing.
 \item When $R=2$, the width $b-a$ of the cylinder is zero.
\end{enumerate}
Fix once and for all a universal choice of strip-like ends and cylinders for $\mathcal A^{j+1}_d$.

Similarly, a \textbf{universal and conformally consistent} choice of Floer data for $\mathcal A^{j+1}_d$ consists, for all $d\ge0$ and $j\in\{0,1\}$, of a Floer datum for each $\Sigma\in\mathcal A^{j+1}_d$ varying smoothly over $\mathcal A^{j+1}_d$, and such that at the boundary it agrees to infinite order with the conformal class of Floer data induced by gluing. It is easy to see that conformal consistency can be achieved, at least away from the $R=1$ boundary of $\ol{\mathcal A}^{j+1}_d$. At the $R=1$ boundary, one needs the observation that, for fixed $d$, the Floer data on paired boundary strata of $\mathcal P^j_d$ agree up to a global conformal factor, so consistency can be extended across the corresponding strata of $\partial\ol{\mathcal A}^{j+1}_d$. Let $\mathcal K^{\mathcal A}(M[\sigma])$ denote the space of all universal and conformally consistent choices of Floer data for $\mathcal A^{j+1}_d$.

Given $\mathbf K^{\mathcal A}\in\mathcal K^{\mathcal A}(M[\sigma])$, we obtain spaces of holomorphic annuli. For $j=0$, these are specified by a generator $y=\gamma_d\otimes\dotsm\otimes\gamma_1\in CC_*(\Bss)$ and a chord $\gamma_0\in\mathscr X(L,L)$ with $L$ an interior Lagrangian in $M[\sigma]$. The resulting moduli space
\[
 \mathcal A^{0+1}_d(y;\gamma_0)
\]
is the space of all maps $u\colon\Sigma\to\widehat{M[\sigma]}$ for $\Sigma\in\mathcal A^{0+1}_d$ satisfying \eqref{eq:gen Floer w/ cylinders} such that $u(\zeta_i)=(\phi^{\tau_i})^*\gamma_i$ and with the corresponding boundary conditions. Similarly, if $L_0$ and $L_1$ are interior Lagrangians of $M[\sigma]$ and $\gamma_0,\gamma^1\in\mathscr X(L_0,L_1)$, then we obtain
\[
 \mathcal A^{1+1}_d(y,\gamma^1;\gamma_0),
\]
the space of perturbed holomorphic curves $u$ with domain in $\mathcal A^{1+1}_d$ such that $u(\zeta_i)=(\phi^{\tau_i})^*\gamma_i$ and $u(\zeta^1)=(\phi^{\tau^1})^*\gamma^1$, and which satisfy the appropriate boundary conditions.

As usual, $\mathcal A^{i+1}_d(y;\gamma^?,\gamma_0)$ have Gromov compactifications $\ol{\mathcal A}^{i+1}_d(y;\gamma^?,\gamma_0)$ obtained by including broken configurations.

\begin{lemma}\label{lem:second htopy transversality}
 For generic $\mathbf K^{\mathcal A}\in\mathcal K^{\mathcal A}(M[\sigma])$, all moduli spaces $\mathcal A^{0+1}_d(y;\gamma_0)$ and $\mathcal A^{1+1}_1(y,\gamma^1;\gamma_0)$ are transversely cut out and have the following features.
 \begin{enumerate}
  \item $\mathcal A^{0+1}_d(y;\gamma_0)$ has dimension $\deg(\gamma_0)-\deg(y)+1-n$. It is empty unless $n_\sigma(\gamma_0)=0$. The codimension one boundary strata of its Gromov compactification consists of all broken configurations of the following types.
  \begin{equation}\label{eq:second htopy 0 input Hochschild breaks}
   \parbox{12cm}{The first type corresponds to the domain hitting a boundary stratum of $\ol{\mathcal A}^{0+1}_d$ of the form \eqref{eq:broken annuli 0 input Hochschild type} or a Floer strip breaking off at a puncture on the inner boundary. In symbols, these are essentially the same as the first four terms of \eqref{eq:OC boundary strata}, though there we separated the chords making up $y$.}
  \end{equation}
  \begin{equation}\label{eq:second htopy 0 input output Floer break}
   \parbox{12cm}{The second type comes from a Floer strip breaking off at $\gamma_0$ and is parametrized by
   \[
    \coprod_{\tild\gamma\in\mathscr X(L,L)}\mathcal R(\tild\gamma;\gamma_0)\times\mathcal A^{0+1}_d(y;\tild\gamma).
   \]
   }
  \end{equation}
  \begin{equation}\label{eq:second htopy 0 input coprod breaks}
   \parbox{12cm}{The third type comes from the domain hitting the $R=1$ boundary and is precisely $[\mathcal P^0_d(y;\gamma_0)]_{t=1}$.}
  \end{equation}
  \begin{equation}\label{eq:second htopy 0 input OC breaks}
   \parbox{12cm}{The fourth type comes from the domain hitting $R=\infty$ and is parametrized by
   \[
    \coprod_{\tild x\in\mathscr X(H_t)}\mathcal R^{0+1}_1(\tild x;\gamma_0)\times\mathcal R^1_d(\gamma_d,\dotsc,\gamma_1;\tild x)
   \]
   }
  \end{equation}
  \item $\mathcal A^{1+1}_1(y,\gamma^1;\gamma_0)$ has dimension
  \[
   \deg(\gamma_0)-\deg(\gamma^1)-\deg(y)+2-n.
  \]
  It is empty unless $n_\sigma(\gamma_0)\le n_\sigma(\gamma^1)$. The codimension one boundary strata of its Gromov compactification consists of all broken configurations of the following types.
  \begin{equation}\label{eq:second htopy 1 input Hochschild breaks}
   \parbox{12cm}{The first type corresponds to the domain hitting a boundary stratum of $\ol{\mathcal A}^{1+1}_d$ of the form \eqref{eq:broken annuli 1 input Hochschild type} or a Floer strip breaking off at a puncture on the inner boundary. In symbols, these are also essentially the same as the first four terms of \eqref{eq:OC boundary strata}.}
  \end{equation}
  \begin{equation}\label{eq:second htopy 1 input outside breaks}
   \begin{gathered}
   \hspace{-1cm}\parbox{12cm}{The second type comes from a Floer strip breaking off at $\gamma^1$ or $\gamma_0$ or from a collision of $\zeta^1$ with $\zeta_0$. Such configurations are parametrized by}\\
    \coprod_{\tild\gamma\in\mathscr X(L_0,L_1)}\mathcal A^{1+1}_d(y,\tild\gamma;\gamma_0)\times\mathcal R(\gamma^1;\tild\gamma)
    \;\amalg\coprod_{\tild\gamma\in\mathscr X(L_0,L_1)}\mathcal R(\tild\gamma;\gamma_0)\times\mathcal A^{1+1}_d(y,\gamma^1;\tild\gamma)\\
    \amalg\coprod_{\tild\gamma\in\mathscr X(L_0,L_0)}\mathcal R^{2+1}(\gamma^1,\tild\gamma;\gamma_0)\times\mathcal A^{0+1}_d(y;\tild\gamma)
    \;\amalg\coprod_{\tild\gamma\in\mathscr X(L_1,L_1)}\mathcal R^{2+1}(\tild\gamma,\gamma^1;\gamma_0)\times\mathcal A^{0+1}_d(y;\tild\gamma).
   \end{gathered}
  \end{equation}
  \begin{equation}\label{eq:second htopy 1 input coprod breaks}
   \parbox{12cm}{The third type comes from the domain hitting the $R=1$ boundary and is precisely $[\mathcal P^1_d(y,\gamma^1;\gamma_0)]_{t=1}$.}
  \end{equation}
  \begin{equation}\label{eq:second htopy 1 input OC breaks}
   \parbox{12cm}{The fourth type comes from the domain hitting $R=\infty$ and is parametrized by
   \[
    \coprod_{\tild x\in\mathscr X(H_t)}\mathcal R^{1+1}_1(\tild x,\gamma^1;\gamma_0)\times\mathcal R^1_d(\gamma_d,\dotsc,\gamma_1;\tild x)
   \]
   }
  \end{equation}
 \end{enumerate}
\qed
\end{lemma}

To extend the filtered versions of the moduli spaces for the closed-open maps and the first homotopy, choose for all $\Sigma\in\mathcal A^{1+1}_d$ a path $e\colon[0,1]\to\Sigma$ such that $e(0)$ is on the outer boundary component to the right of $\zeta_0$ and $e(1)$ is on the inner boundary. Then for any $\gamma_0$ and $\gamma^1$ with $n_\sigma(\gamma_0)=n_\sigma(\gamma_1)$ and any $u\in\mathcal A^{1+1}_d(y;\gamma^1;\gamma_0)$, $u\circ e$ is a path between interior Lagrangians which avoids $D_\sigma$, so it has a well defined intersection number with $\sigma(\hat F\times\R_+)$. Since the chords $\gamma_i$ for $i>0$ have $n_\sigma(\gamma_i)=0$, we can homotope the end of $e$ through $\zeta_i$ without changing the intersection number. This implies that the intersection number is independent of the choice of $e$, and so we call it $n_\sigma^{\mathcal A}(u)$. The space
\[
 \mathcal A^{1+1}_{d,filt}(y,\gamma^+;\gamma_0)
\]
consists of all $u\in\mathcal A^{1+1}_d(y;\gamma^1;\gamma_0)$ which avoid $D_\sigma$ and satisfy $n_\sigma^{\mathcal A}(u)=0$.

The space $\mathcal A^{1+1}_{d,filt}(y,\gamma^+;\gamma_0)$ is a union of connected components of $\mathcal A^{1+1}_d(y,\gamma^+;\gamma_0)$, and its boundary inherits the filtered condition. In other words, they are the same except in the following two ways. First, all annuli, broken annuli, and closed-open maps are replaced by their filtered versions. Second, for $n_\sigma(\gamma_0)=n_\sigma(\gamma^1)>0$, the terms 
\[
 \coprod_{\tild\gamma\in\mathscr X(L_1,L_1)}\mathcal R^{2+1}(\tild\gamma,\gamma^1;\gamma_0)\times\mathcal A^{0+1}_d(y;\tild\gamma)
\]
in \eqref{eq:second htopy 1 input outside breaks} no longer contribute.

Define a linear map $h^2_y\colon\hom^*_{\ssigma'}(L_0,L_1)\to\hom^{*+\deg(y)+n-2}_{\ssigma'}(L_0,L_1)$ to depend linearly on $y$ and, for $y=\gamma_d\otimes\dotsm\otimes\gamma_1\in CC_*(\Bss)$, to be given by
\begin{equation}\label{eq:second htopy formula}
\begin{gathered}
 h^2_y(\gamma^m\otimes\dotsm\otimes\gamma^0)\;=\hspace{-15mm}\sum_{\substack{0\le i\le m+1\\ n_\sigma(\gamma^r)=0\,\forall r<i\\ \tild\gamma\text{ making the result composable}\\ \deg(\tild\gamma)=\deg(y)+n-1}}\hspace{-15mm}\#\mathcal A^{0+1}_d(y;\tild\gamma)\cdot\gamma^m\otimes\dotsm\otimes\gamma^i\otimes\tild\gamma\otimes\gamma^{i-1}\otimes\dotsm\otimes\gamma^0\\
 +\hspace{-15mm}\sum_{\substack{0\le i\le m\\ n_\sigma(\gamma^r)=0\,\forall r<i\\ \tild\gamma\text{ making the result composable}\\ \deg(\tild\gamma)=\deg(y)+\deg(\gamma^i)+n-2}}\hspace{-15mm}\#\mathcal A^{1+1}_{d,filt}(y,\gamma^i;\tild\gamma)\cdot\gamma^m\otimes\dotsm\otimes\gamma^{i+1}\otimes\tild\gamma\otimes\gamma^{i-1}\otimes\dotsm\otimes\gamma^0.
\end{gathered}
\end{equation}

By essentially the same argument as for Lemma \ref{lem:first htopy effect}, we conclude
\begin{lemma}\label{lem:second htopy effect}
 Up to terms which decrease the main filtration,
 \begin{equation}\label{eq:second htopy effect}
  h^2_y\mu_{\ssigma'}^1+\mu_{\ssigma'}^1h^2_y=h^2_{\delta y}+\tild A_y^{0,0}+\tild A_y^{0,1}+\mathcal{CO}^{filt}_{\mathcal{OC}(y)}.
 \end{equation}
 \qed
\end{lemma}

\subsection{The last homotopy}\label{sec:last htopy}

Our goal now is to construct, for a saddle unit $f_\sigma\in SC_\ssigma^1(M[\sigma])$, a homotopy $h^3_{f_\sigma}$ between $\mathcal{CO}^{filt}_{f_\sigma}$ and an operation $\mathrm{id}_\sigma$ which, while not the identity, induces the identity on the portion of the associated graded of $\hom^*_{\ssigma'}(L_0,L_1)$ which does not lie in $\hom^*_\ssigma(L_0,L_1)$.

Thus, let $K^f$ be a Floer datum on $\C$ giving rise to $f_\sigma$ as in \ref{defn:saddle unit}. Denote by $D^Y$ the closed unit disk with a puncture $\zeta_0^Y$ at $-1$, which we equip with a negative strip-like end $\epsilon_0^Y$ and a family of finite cylinders $\delta^Y$ as follows. Let $D^{0,1}_1$ be as in Section \ref{sec:CO maps} with negative strip-like end $\epsilon_0$. Then $\epsilon_0$ induces $\epsilon_0^Y$ via the unique biholomorphism $D^{0,1}_1\to D^Y\setminus\{0\}$. For the cylinders, we are interested in a $(0,\frac12]$-parametrized family
\[
 \delta^{D,Y}(\rho)\colon[a_\rho,b_\rho]\times S^1\to\mathrm{int}(D^Y)
\]
which satisfies
\begin{enumerate}
 \item For $\rho$ close to $0$, $\delta^{D,Y}(\rho)$ agrees with the finite cylinder obtained by gluing $\epsilon^f$ on $\C$ to $\epsilon_+$ on $D^{0,1}_1$ with length $e^{\frac1\rho}$. Here, we are implicitly using the biholomorphism from the glued surface to $D^Y$ which sends $0\in\C$ to $0\in D^Y$ and $\zeta_0$ to $\zeta_0^Y$.
 \item $b_{\frac12}=a_{\frac12}$. In other words, at $\rho=\frac12$, the cylinder has width zero.
\end{enumerate}
We will think of this data as a $(0,1]$-parametrized space of Riemann surfaces with boundary, ends, and cylinders $D^Y(\rho)$, which for $\rho\in(0,\frac12]$ is equipped with the strip-like end $\epsilon_0^Y$ and finite cylinder $\delta^{D,Y}(\rho)$ and for $\rho>\frac12$ is equipped only with $\epsilon_0^Y$.

We consider smooth families of Floer data $\mathbf K^{D,Y}(\rho)$ on $D^Y(\rho)$ such that, in a gluing chart near $\rho=0$, $\mathbf K^{D,Y}$ extends smoothly to $0$, where it is conformally equivalent to the Floer data chosen for $\C$ and $D^{0,1}_1$. Let $\mathcal K^{D,Y}(M[\sigma])$ denote the space of such families.

In addition to the above data, choose $p_D\colon(0,1]\to[0,1]$ to be a nondecreasing smooth function which is $0$ on $(0,\frac13]$ and $1$ on $[\frac12,1]$. Similarly, choose a smooth isotopy $Y_\sigma(\rho)$ of properly embedded hypersurfaces in $\widehat{M[\sigma]}$ which avoid $\sigma(\hat F\times\R_{\ge0})$ for all $\rho$ and satisfy the following conditions. First, $Y_\sigma(\rho)=Y_\sigma$ for $\rho\le\frac14$. Second, $Y_\sigma(\rho)\in\mathrm{image}(\sigma)$ for $\rho\ge\frac13$, and moreover $Y_\sigma(1)$ is transverse to all chords between interior Lagrangians which appear in $\Wsp(M[\sigma])$.

Let $L$ be an interior Lagrangian of $M[\sigma]$, and let $\gamma\in\mathscr X(L,L)$. Given a family of Floer data $\mathbf K^{D,Y}\in\mathcal K^{D,Y}(M[\sigma])$, let $\mathcal D^Y(\gamma)$ be the union over all $\rho\in(0,1]$ of the spaces $\mathcal D^Y_\rho(\gamma)$ of maps
\[
 u\colon D^Y\to\widehat{M[\sigma]}
\]
satisfying the following conditions.
\begin{enumerate}
 \item $u$ satisfies \eqref{eq:gen Floer w/ cylinders} for $\mathbf K^{D,Y}(\rho)$.
 \item $u(z)\in(\phi^{\tau_E(\rho)(z)})^*L$ for $z\in\partial D^Y$, and $u(\zeta_0^Y)=(\phi^{\tau(\rho)})^*\gamma$, where $\tau$ is the conformal factor that $\mathbf K^{D,Y}$ assigns to $\zeta_0^Y$.
 \item $u(p_D(\rho))\in Y_\sigma(\rho)$.
\end{enumerate}

Gromov compactness applies to $\mathcal D^Y(\gamma)$, and in fact the situation is better than expected. Namely, suppose $\rho_i$ is a sequence with $p_D(\rho_i)<1$ but $\lim p_D(\rho_i)=1$, and that $u_i\in\mathcal D^Y_{\rho_i}(\gamma)$ is a Gromov convergent sequence. Then we expect the Gromov limit to contain a bubble component with the incidence condition. However, in this case everything is exact, so all bubbles are constant. Thus, the incidence condition on the bubble is equivalent to an incidence condition on $\partial D^Y$, which means the incidence condition is a point of $L\cap Y_\sigma(\lim \rho_i)$. However, $Y_\sigma(\rho)$ lies in the image of $\sigma$ whenever $p_D(\rho)=1$, while $L$ is an interior Lagrangian, which means that no such point exists. This shows that, in fact, $\mathcal D^Y_\rho(\gamma)$ is empty for $p_D(\rho)$ sufficiently close to 1.

Applying the usual transversality argument, we now obtain
\begin{lemma}\label{lem:D^Y transversality}
 For generic $\mathbf K^{D,Y}\in\mathcal K^{D,Y}(M[\sigma])$, the moduli spaces $\mathcal D^Y(\gamma)$ are transversely cut out of dimension $\deg(\gamma)$ for all interior Lagrangians $L$ and all $\gamma\in\mathscr X(L,L)$. They are empty if $n_\sigma(\gamma)>0$. The codimension one boundary strata of their Gromov compactifications are in canonical identification with
   \[
    \coprod_{\tild x\in\mathscr X(H_t)}\mathcal R^{0+1}_1(\tild x;\gamma)\times\mathcal C(\tild x)\;\amalg\coprod_{\tild\gamma\in\mathscr X(L,L)}\mathcal R(\tild\gamma;\gamma)\times\mathcal D^Y(\tild\gamma)
   \]
  \qed
\end{lemma}

Fix a regular $\mathbf K^{D,Y}\in\mathcal K^{D,Y}(M[\sigma])$. We now repeat the above with one input. Choose a diffeomorphism $\Psi\colon[0,1]\to\ol{\mathcal R}^{1+1}_1$ such that $\Psi(0)$ is nodal at the first positive puncture of $\Sigma^{2+1}\in\mathcal R^{2+1}$ and $\Psi(1)$ is nodal at the second positive puncture of $\Sigma^{2+1}$. Let $Z^Y$ be the strip $\R\times[0,1]$, where $\R\times\{i\}$ is $\partial_i Z^Y$, and the ends $-\infty$ and $+\infty$ are labeled $\zeta_0^Y$ and $\zeta_1^Y$, respectively. Then $\Psi$ induces a $(0,1)$-parametrized family of strip-like ends $\epsilon^{Z,Y}_q$ on $Z^Y$ by specifying, for $q\in(0,1)$, the embedding $\Psi(q)\into Z^Y$ which sends $\partial\Psi(q)$ to $\partial Z^Y$, $\zeta_i$ to $\zeta_i^Y$, and $\zeta_+$ to a point on $\{0\}\times(0,1)$. Extend this to a $(0,1]\times(0,1)$-parametrized family $\boldsymbol\epsilon^{Z,Y}$ with the following properties
\begin{enumerate}
 \item For $(\rho,q)\in(0,1]\times(0,1)$ with $\rho$ small or $q\le\frac14$ or $q\ge\frac34$, $\boldsymbol\epsilon^{Z,Y}(\rho,q)$ agrees with $\epsilon^{Z,Y}_q$.
 \item For $\rho$ close to $1$ and $q\in[\frac14,\frac34]$, $\boldsymbol\epsilon^{Z,Y}(\rho,q)$ agrees up to shift with the canonical strip-like ends on $Z$.
\end{enumerate}

Similarly, choose a smooth $(0,\frac12]\times(0,1)$-parametrized family of finite cylinders $\delta^{Z,Y}$ on $Z^Y$ as follows.
\begin{enumerate}
 \item For $(\rho,q)\in(0,1]\times(0,1)$ with $\rho$ small, $\delta^{Z,Y}(\rho,q)$ agrees with the finite cylinder obtained by gluing $\epsilon^f$ on $\C$ to $\epsilon_+$ on $\Psi(q)$ with length $e^{\frac1\rho}$.
 \item For $q$ close to $0$ or $1$, $\delta^{Z,Y}(\rho,q)$ agrees with the finite cylinder induced by gluing $D^Y(\rho)$ to the appropriate input of $\Sigma^{2+1}$ with length dictated by consistency with $\Psi$.
 \item For $\rho=\frac12$ and any $q$, the cylinder $\delta^{Z,Y}(\rho,q)$ has width zero.
\end{enumerate}
We then think of this data as a $(0,1]\times(0,1)$-parametrized space of Riemann surfaces with boundary, ends, and cylinders $Z^Y(\rho,q)$, which for $\rho\in(0,\frac12]$ is equipped with the strip-like ends $\boldsymbol\epsilon^{Z,Y}$ and finite cylinder $\delta^{D,Y}(\rho)$ and for $\rho>\frac12$ is equipped only with $\boldsymbol\epsilon^{Z,Y}$.

Consider the space $\mathcal K^{Z,Y}(M[\sigma])$ of smooth, $(0,1]\times(0,1)$-parametrized families of Floer data $\mathbf K^{Z,Y}$ on $Z^Y$ with the following properties.
\begin{enumerate}
 \item For $(\rho,q)\in(0,1]\times(0,1)$ with $\rho$ small, $\mathbf K^{Z,Y}(\rho,q)$ extends smoothly to $\rho=0$, where it agrees up to conformal equivalence with the Floer data chosen for $\C$ and $\Psi(q)$.
 \item For $q$ close to $0$ or $1$, $\mathbf K^{Z,Y}(\rho,q)$ is conformally close in the sense of Definition \ref{defn:consistent Floer data} to the Floer datum induced by gluing $D^Y(\rho)$ to the appropriate input of $\Sigma^{2+1}$ with length dictated by consistency with $\Psi$.
 \item For $\rho=1$ and $q\in[\frac14,\frac34]$, $\mathbf K^{Z,Y}(\rho,q)$ is conformally equivalent to the Floer perturbation $(H^{0,1},dt,1)$ which gives rise to the Floer differential.
\end{enumerate}

Finally, we choose a function $p_Z\colon(0,1]\times(0,1)\to Z^Y$ extending $p_D$ in the following sense. For $q$ near $0$ or $1$, $p_D(\rho)$ can be thought of as a point on the thick part of $D^Y$, and we require $p_Z(\rho,q)$ to be the point of $Z^Y$ with corresponds to $p_D(\rho)$ under gluing. For $\rho$ close to $0$, $p_Z(\rho,q)$ agrees with the point of $Z^Y(\rho,q)$ coming from the origin in $\C$ under the gluing of $\C$ to $\Psi(q)$. For $\rho$ close to $1$, we require that $p_Z(\rho,q)$ is $\rho$-independent and depends on $q$ in the following way. For $q\le\frac13$ or $q\ge\frac23$, $p_Z(\rho,q)$ is on the boundary of $Z^Y$. For $q\in[\frac13,\frac23]$, the $[0,1]$-component of $p_Z$ increases monotonically from $0$ to $1$.

Let $L_0$ and $L_1$ be interior Lagrangians of $M[\sigma]$, and let $\gamma_0,\gamma_1\in\mathscr X(L_0,L_1)$. For a universal choice $\mathbf K^{Z,Y}\in\mathcal K^{Z,Y}(M[\sigma])$, the corresponding space of holomorphic strips is called $\mathcal Z^Y(\gamma_1;\gamma_0)$ and is the union over all $(\rho,q)\in(0,1]\times(0,1)$ of the spaces $\mathcal Z^Y_{\rho,q}(\gamma)$ of maps
\[
 u\colon Z^Y\to\widehat{M[\sigma]}
\]
satisfying the conditions
\begin{enumerate}
 \item $u$ satisfies \eqref{eq:gen Floer w/ cylinders} for $\mathbf K^{Z,Y}(\rho,q)$.
 \item $u(z)\in(\phi^{\tau_E(\rho)(z)})^*L_i$ for $z\in\partial_i D^Y$, and $u(\zeta_i^Y)=(\phi^{\tau_i(\rho,q)})^*\gamma_i$, where $\tau_i$ is the conformal factor that $\mathbf K^{Z,Y}$ assigns to $\zeta_i^Y$.
 \item $u(p_Z(\rho,q))\in Y_\sigma(\rho)$.
\end{enumerate}

The compactness situation is the same as before, and we have
\begin{lemma}\label{lem:Z^Y transversality}
 For generic $\mathbf K^{Z,Y}\in\mathcal K^{Z,Y}(M[\sigma])$, the moduli spaces $\mathcal Z^Y(\gamma_1;\gamma_0)$ are transversely cut out of dimension $\deg(\gamma_0)-\deg(\gamma_1)+1$ for all interior Lagrangians $L_0,L_1$ and all $\gamma_0,\gamma_1\in\mathscr X(L_0,L_1)$. They are empty if $n_\sigma(\gamma_0)>n_\sigma(\gamma_1)$. The codimension one boundary strata of their Gromov compactifications are in canonical identification with
 \[
   \begin{gathered}
    \coprod_{\tild x\in\mathscr X(H_t)}\mathcal R^{1+1}_1(\tild x,\gamma_1;\gamma_0)\times\mathcal C(\tild x)\;\amalg\coprod_{\tild\gamma\in\mathscr X(L_0,L_1)}\mathcal Z^Y(\tild\gamma;\gamma_0)\times\mathcal R(\gamma_1;\tild\gamma)\\
    \amalg\coprod_{\tild\gamma\in\mathscr X(L_1,L_1)}\mathcal R^{2+1}(\tild\gamma,\gamma_1;\gamma_0)\times\mathcal D^Y(\tild\gamma)\;\amalg\coprod_{\tild\gamma\in\mathscr X(L_0,L_0)}\mathcal R^{2+1}(\gamma_1,\tild\gamma;\gamma_0)\times\mathcal D^Y(\tild\gamma)\\
    \amalg\coprod_{\tild\gamma\in\mathscr X(L_0,L_1)}\mathcal R(\tild\gamma;\gamma_0)\times\mathcal Z^Y(\gamma_1;\tild\gamma)\;\amalg\coprod_{\substack{t\in(0,1)\\ \gamma_0(t)\in Y_\sigma(1)}}\!\tild{\mathcal R}(\gamma_1;\gamma_0).
   \end{gathered}
 \]
 
 Of course, in the case where $\deg(\gamma_0)=\deg(\gamma_1)$, the last term only occurs when $\gamma_0=\gamma_1$ by regularity of Floer strips.
  \qed
\end{lemma}

As with the other homotopies, there is a filtered version of $\mathcal Z^Y(\gamma_1;\gamma_0)$. This is obtained by choosing, for all $(\rho,q)$, a path $e\colon[0,1]\to Z^Y$ such that $e(0)\in\partial_0 Z^Y$ and $e(1)=p_Z(\rho,q)$. The filtered component $\mathcal Z^Y_{filt}(\gamma_1;\gamma_0)$ consists of all $u\in\mathcal Z^Y(\gamma_1;\gamma_0)$ such that $u$ avoids $D_\sigma$ and $u\circ e$ has topological intersection number zero with $\sigma(\hat F\times\R_+)$. The condition that $Y_\sigma(\rho)$ avoids $\sigma(\hat F\times\R_{\ge0})$ ensures that this is indeed a connected component of $\mathcal Z^Y(\gamma_1;\gamma_0)$.

Fixing a regular $\mathbf K^{Z,Y}\in\mathcal K^{Z,Y}(M[\sigma])$, define a linear map $h_{f_\sigma}^3\colon\hom^*_{\ssigma'}(L_0,L_1)\to\hom^{*-1}_{\ssigma'}(L_0,L_1)$ by
\begin{equation}\label{eq:third htopy formula}
\begin{gathered}
 h_{f_\sigma}^3(\gamma^m\otimes\dotsm\otimes\gamma^0)=\hspace{-15mm}\sum_{\substack{0\le i\le m+1\\ n_\sigma(\gamma^r)=0\,\forall r<i\\ \tild\gamma\text{ making the result composable}\\ \deg(\tild\gamma)=0}}\hspace{-15mm}\#\mathcal D^Y(\tild\gamma)\cdot\gamma^m\otimes\dotsm\otimes\gamma^i\otimes\tild\gamma\otimes\gamma^{i-1}\otimes\dotsm\otimes\gamma^0\\
 +\hspace{-15mm}\sum_{\substack{0\le i\le m\\ n_\sigma(\gamma^r)=0\,\forall r<i\\ \tild\gamma\text{ making the result composable}\\ \deg(\tild\gamma)=\deg(\gamma^i)-1}}\hspace{-15mm}\#\mathcal Z^Y_{filt}(\gamma^i;\tild\gamma)\cdot\gamma^m\otimes\dotsm\otimes\gamma^{i+1}\otimes\tild\gamma\otimes\gamma^{i-1}\otimes\dotsm\otimes\gamma^0.
\end{gathered}
\end{equation}

As always (cf. Remark \ref{rmk:large m gives good coprod}), one needs to exclude certain bad terms to make $h_{f_\sigma}^3$ well defined. The argument for this is essentially the same as before, with the moving incidence condition handled as in Remark \ref{rmk:confinement for hypersurface to the right}.

We are finally rewarded for the bizarre filtered moduli spaces:
\begin{lemma}\label{lem:third htopy effect}
 Let $f_\sigma$ be a saddle unit, and let $\gamma=\gamma^m\otimes\dotsm\otimes\gamma^0\in\hom^*_{\ssigma'}(L_0,L_1)$ have $\mathfrak n(\gamma)=\sum_{i=0}^mn_\sigma(\gamma_i)>0$. Then, up to terms which decrease the main filtration,
 \begin{equation}\label{eq:third htopy effect}
  \left(h_{f_\sigma}^3\mu^1_{\ssigma'}+\mu^1_{\ssigma'}h_{f_\sigma}^3\right)(\gamma)=\mathcal{CO}^{filt}_{f_\sigma}(\gamma)+\gamma
 \end{equation}
\end{lemma}
\begin{proof}
 Following the usual argument, we obtain
 \[
  \left(h_{f_\sigma}^3\mu^1_{\ssigma'}+\mu^1_{\ssigma'}h_{f_\sigma}^3\right)(\gamma)=\mathcal{CO}^{filt}_{f_\sigma}(\gamma)+\hspace{-1cm}\sum_{\substack{0\le i\le m\\ n_\sigma(\gamma^r)=0\,\forall r<i\hspace{1cm}}}\hspace{-32mm}\sum_{\substack{t\in(0,1)\\ \hspace{6mm}\gamma^i(t)\in Y_\sigma(1)\\ \hspace{2cm}\gamma^i(t')\not\in\sigma(\hat F\times\R_+)\text{ for any }t'<t}}\hspace{-22mm}\#\tild{\mathcal R}(\gamma^i;\gamma^i)\cdot\gamma.
 \]
 The coefficient $\#\tild{\mathcal R}(\gamma^i;\gamma^i)$ is of course $1$, but we include it for clarity. Examining the conditions on the sums, we see that the only $i$ which contributes is the smallest $i$ such that $n_\sigma(\gamma^i)\ne0$. For this $\gamma^i$, let $t_0\in(0,1)$ be the first time at which $\gamma^i$ intersects $\sigma(\hat F\times\R_+)$. Since $\gamma^i$ starts outside the image of $\sigma$, it crosses $Y_\sigma(1)$ topologically once before $t_0$, and hence the sum contributes a total coefficient of $1$.
\end{proof}

For $y\in CC_*(\Bss)$ a closed low action stabilization with $\mathcal{OC}(y)=f_\sigma$ a saddle unit, set
\[
 h_y:=h_y^1+h_y^2+h_{f_\sigma}^3.
\]
This is the last ingredient we need to prove the stop removal formula:

\begin{proof}[Proof of Proposition \ref{prop:retracting homotopy} and Theorem \ref{thm:quotient}]
 Since $\sigma$ is strongly nondegenerate, Corollary \ref{cor:nondegeneracy for all large n} provides us for all sufficiently large $\mathbf m$ with a cycle $y_{\mathbf m}\in CC_*(\Bss)$ of small action such that $\mathcal{OC}({y_{\mathbf m}})$ is a saddle unit for Floer datum $K^{\C,\mathbf m}$. Define a linear map $\Delta_{y_{\mathbf m}}\colon\hom^*_{\ssigma'}(L_0,L_1)\to\hom^{*-1}_{\ssigma'}(L_0,L_1)$ on generators $\gamma$ by
 \[
  \Delta_{y_{\mathbf m}}(\gamma)=\begin{cases}
            \left(\Delta_{y_{\mathbf m}}^0+h_{y_{\mathbf m}}\right)\gamma&\text{ if }(\mathfrak n,\mathfrak m)(\gamma)\ge(1,0)\\
            0&\text{ otherwise.}
           \end{cases}
 \]
 As noted in Remarks \ref{rmk:large m gives good coprod} and \ref{rmk:large m gives good CO}, this is well defined for sufficiently large $\mathbf m$. Write $y={y_{\mathbf m}}$ for such an $\mathbf m$, and let $R_y=\mathrm{id}+\mu^1_{\ssigma'}\Delta_y+\Delta_y\mu^1_{\ssigma'}$ be the basic retraction as in Section \ref{sec:filtration on quotient}. Then tautologically $R_y|_{A^*_{1,0}}=\mathrm{id}$, since $A^*_{1,0}=\hom^*_\ssigma(L_0,L_1)$ is a subcomplex and $\Delta_y|_{A^*_{1,0}}=0$. So it suffices to show that, for $\gamma$ a generator with $\mathfrak n(\gamma)>0$, $(\mathfrak n,\mathfrak m)(R_y\gamma)<(\mathfrak n,\mathfrak m)(\gamma)$. Working modulo terms which decrease the main filtration, we compute
 \begin{align*}
  R_y\gamma &= \gamma + \left(\mu^1_{\ssigma'}\Delta^0_y+\Delta^0_y\mu^1_{\ssigma'}\right)\gamma + \left(\mu^1_{\ssigma'}h^1_y+h^1_y\mu^1_{\ssigma'}\right)\gamma + \left(\mu^1_{\ssigma'}h^2_y+h^2_y\mu^1_{\ssigma'}\right)\gamma + \left(\mu^1_{\ssigma'}h^3_{f_\sigma}+h^3_{f_\sigma}\mu^1_{\ssigma'}\right)\gamma \\
  &= \gamma + \left(A^{0,0}_y+A^{0,1}_y\right)\gamma + \left(A_y^{0,0}+A_y^{0,1}+\tild A_y^{0,0}+\tild A_y^{0,1}\right)\gamma\\
  &\hspace{6cm}+\left(\tild A_y^{0,0}+\tild A_y^{0,1}+\mathcal{CO}^{filt}_{\mathcal{OC}(y)}\right)\gamma+\left(\mathcal{CO}^{filt}_{f_\sigma}(\gamma)+\mathrm{id}\right)\gamma.
 \end{align*}
 Here, we have used Lemmas \ref{lem:main htopy effect}, \ref{lem:first htopy effect}, \ref{lem:second htopy effect}, and \ref{lem:third htopy effect}. Note that they apply even though $\Delta_y|_{A^*_{1,0}}$ was defined to be zero. This is because all operations are given by a count of holomorphic curves, so they satisfy positivity of intersection. This means that in any count which fails to decrease the main filtration, all intermediate terms have the same intersection number $\mathfrak n$ as the input and the output. In particular, they never witness the discrepancy $\Delta_{y_{\mathbf m}}|_{A^*_{1,0}}\ne\left(\Delta_{y_{\mathbf m}}^0+h_{y_{\mathbf m}}\right)|_{A^*_{1,0}}$.
 
 Now, using that $\mathcal{OC}(y)=f_\sigma$, the terms on the right-hand-side all cancel pairwise, which proves the Proposition, and hence the Theorem.
\end{proof}

\end{document}